\documentclass[10pt]{amsart}
\usepackage{amsmath, amscd, amsthm} 
\usepackage{amssymb, amsfonts} 
\usepackage{remreal} 
\usepackage[all]{xy} 
\setcounter{secnumdepth}{1}
\setcounter{tocdepth}{1}
\subjclass[2000]{19E20, 19E15, 14F43}
\begin{document}
\title{Vanishing Theorems for Real Algebraic Cycles}
\author{Jeremiah Heller}
\email{heller@math.northwestern.edu}
\author{Mircea Voineagu}
\email{voineagu@usc.edu}

\begin{abstract}
We establish the analogue of the Friedlander-Mazur conjecture for Teh's reduced Lawson homology groups of real varieties, which says that the reduced Lawson homology of a real quasi-projective variety $X$ vanishes in homological degrees larger than the dimension of $X$ in all weights. As an application we obtain a vanishing of homotopy 
groups of the mod-2 topological groups of averaged cycles and a characterization in a range of 
indices of the motivic cohomology of a real variety as homotopy groups of the complex of 
averaged equidimensional cycles. We also establish an equivariant Poincare duality between 
equivariant Friedlander-Walker real morphic cohomology and dos Santos' real Lawson homology. We use this together with an equivariant extension of the mod-2  Beilinson-Lichtenbaum conjecture to compute some real Lawson homology groups in terms of Bredon cohomology.

%

\end{abstract}
\maketitle
\tableofcontents
\section{Introduction}
Let $X$ be a quasi-projective  real variety. The Galois group $G = Gal(\C/\R)$ acts on $\mcal{Z}_{q}(X_{\C})$, the topological group of $q$-cycles on the complexification. Cycles on the real variety $X$ correspond to cycles on $X_{\C}$ which are fixed by conjugation. Inside the topological group of $\mcal{Z}_{q}(X_{\C})^{G}$ of cycles fixed by conjugation is the topological group $\mcal{Z}_{q}(X_{\C})^{av}$ of averaged cycles which are the cycles of the form $\alpha + \overline{\alpha}$. The space of reduced cycles on $X$ is the quotient topological group 
\begin{equation*}
 \mcal{R}_{q}(X) = \frac{\mcal{Z}_{q}(X_{\C})^{G}}{\mcal{Z}_{q}(X_{\C})^{av}}.
\end{equation*}
 Homotopy groups of some of the above abelian topological groups are related to classical  topological invariants. For example for $X$ a projective real variety we obtain the singular homology groups $\pi_{*}\mcal{R}_{0}(X)= H _*(X(\mathbb{R}),\mathbb{Z}/2)$  \cite{Teh:real} and   $\pi_*\mcal{Z}_{0}(X_{\C})^{av}= H _*(X_{\C}(\mathbb{C})/G,\mathbb{Z})$  \cite{LLM:real}, as well as Bredon homology $\pi_{*}\mcal{Z}_{0}(X) = H_{*,0}(X_{\C}(\C);\underline{\Z})$ \cite{LF:Gequiv}.  Other homotopy groups are related  to classical algebraic geometry invariants. For example $\pi _0(Z _r(X _\mathbb{C})^G)$  computes the  group of algebraic cycles of dimension r on $X$ modulo real algebraic equivalence \cite{FW:real} and  consequently  with  $\mathbb{Z}/n$ coefficients equals the Chow group $CH _r(X)\tensor\mathbb{Z}/n$ (see Proposition \ref{fcoeffagr}).   However most of  these homotopy groups  remain a mysterious combination of topological and algebraic information of the real variety $X$. 

These homotopy groups are hard to compute and examples are few. Nonetheless an examination of existing computations shows that these homotopy groups are all zero in large degrees. For example, in (\cite{Lam:t}) Lam proves that
$$
\mcal{R}_{q}(\mathbb{P}^n _\mathbb{R})\simeq \prod _{i=0}^{n-q}K(\mathbb{Z}/2,i) .
$$
In particular $\pi _k(\mcal{R}_{q}(\mathbb{P}^n _\mathbb{R}))=0$ for $k>n-q$. Similar 
vanishing results are seen in the computations of \cite{LLM:quat} for a real variety $X$ with the property that its complexification is the quaternionic projective space (see Example \ref{exR}).

In \cite{Teh:HT} Teh  proves a conditional  Harnack-Thom type theorem  for the homotopy 
groups of reduced algebraic cycles on $X$ which holds under the assumption that these homotopy groups are all finitely generated and they are zero in high degrees. In the case of divisors he shows that
$$
\pi _k\mcal{R} _{d-1}(X)=0
$$
when $k\geq 3$ for any smooth projective real variety $X$ of dimension $d$. 

The main theorem of this paper provides this vanishing in general  and should be viewed as a massive generalization of both the classical vanishing of singular homology groups of a manifold in degree larger than the manifold and  of  the  vanishing results discussed above.
\begin{theorem}
\label{mth}
Let $X$ be a quasi-projective real variety. Then
\begin{equation*}
\pi_{k}\mcal{R}_{q}(X) = 0
\end{equation*}
for $k\geq \dim X -q +1$.
\end{theorem}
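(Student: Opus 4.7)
The plan is a double induction: an outer induction on $\dim X$ using a localization long exact sequence for reduced cycles to reduce the quasi-projective case to a smooth projective one, and an inner induction on the codimension $\dim X - q$, with the base case $q = 0$ handled by Teh's real Dold-Thom theorem $\pi_k\mcal{R}_0(X) = H_k(X(\R);\Z/2)$, which vanishes for $k > \dim X(\R) \le \dim X$.

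First I would establish that the Friedlander-Gabber fiber sequence $\mcal{Z}_q(Y_\C) \to \mcal{Z}_q(X_\C) \to \mcal{Z}_q(U_\C)$, for a closed embedding $Y \hookrightarrow X$ with open complement $U$, descends to a fiber sequence $\mcal{R}_q(Y) \to \mcal{R}_q(X) \to \mcal{R}_q(U)$ of reduced cycle groups. Since $\mcal{R}_q$ is the cokernel of the norm map $1+\sigma$ acting on the $G = \Z/2$-module $\mcal{Z}_q(X_\C)$, this amounts to a Tate-cohomology comparison together with a homotopy-fixed-points versus genuine-fixed-points identification. Combined with resolution of singularities, such a localization sequence lets one induct on $\dim X$ and reduces the vanishing for a quasi-projective $X$ to the corresponding vanishing on a smooth projective compactification.

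For $X$ smooth projective the strategy is to convert the homological vanishing into a cohomological vanishing, where the range is more transparent. Concretely, I would invoke the equivariant Poincar\'e duality proved elsewhere in the paper, relating equivariant Friedlander-Walker morphic cohomology to dos Santos' real Lawson homology, and apply it together with the equivariant extension of the mod-$2$ Beilinson-Lichtenbaum comparison to identify the relevant portion of $\pi_k\mcal{R}_q(X)$ with mod-$2$ Bredon cohomology groups $H^{*,*}_G(X(\C);\underline{\Z/2})$ of the $G$-CW complex $X(\C)$. Since $X(\C)$ has real dimension $2\dim X$ with fixed locus $X(\R)$ of dimension $\dim X$, classical Bredon-cohomological dimension bounds cut off in exactly the expected range, and keeping track of the bidegrees under duality produces precisely the bound $k \geq \dim X - q + 1$.

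The step I expect to be the main obstacle is the localization reduction for reduced cycles. While the complex cycle sequence is a topological fibration, neither taking $G$-fixed subgroups nor taking the image of $1+\sigma$ preserves fibrations in general, so showing that the induced sequence of cokernels $\mcal{R}_q(Y) \to \mcal{R}_q(X) \to \mcal{R}_q(U)$ is still a fiber sequence requires genuine work, likely via a Tate spectral-sequence argument or an explicit model-categorical construction of the reduced cycle functor. This is where the essential real content of the passage from the complex Friedlander-Mazur vanishing to its reduced analogue lives; once it is in place, the remaining ingredients are either classical or are the companion theorems advertised in the abstract.
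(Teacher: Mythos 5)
Your outer skeleton -- localization sequences for $\mcal{R}_q$ plus stratification to reduce everything to the smooth projective case -- is the same as the paper's, but you have the difficulty in the wrong place. The localization step you flag as the main obstacle is handled in the paper by elementary point-set arguments: Lemma \ref{ccom} and the tractability results of the appendix give short exact sequences of topological abelian groups of the $CW$ homotopy type, and such sequences are automatically fibration sequences; no Tate spectral sequence or homotopy-fixed-point comparison is needed. The genuine work is in the smooth projective case, and there your proposal has a gap.

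Equivariant Poincare duality (Corollary \ref{PD}) and the equivariant Beilinson-Lichtenbaum theorem (Corollary \ref{eBL}) compute the \emph{fixed-point} groups, i.e. $\pi_k\mcal{Z}^{q}/2(X_{\C})^{G}$ (equivariant morphic cohomology / real Lawson homology), in terms of Bredon cohomology of $X_{\C}(\C)$; they say nothing directly about the quotient $\mcal{R}^{q}(X)=\mcal{Z}^{q}(X_{\C})^{G}/\mcal{Z}^{q}(X_{\C})^{av}$, and your proposal never explains how the averaged subgroup is to be controlled. Moreover the asserted ``classical Bredon-cohomological dimension bound'' does not give the stated range: take $X$ to be a complex variety regarded as a real variety, so that $G$ acts freely on $X_{\C}(\C)=X(\C)\amalg X(\C)$; then $H^{q-k,q}(X_{\C}(\C);\underline{\Z/2})\cong H^{2q-k}_{sing}(X(\C);\Z/2)$, which is nonzero for $q<k\leq 2q$ in general, while $\mcal{R}_{q}(X)=0$. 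So the fixed-point/Bredon groups do not vanish in the range $k\geq q+1$, and no identification of $\pi_k\mcal{R}_q(X)$ with them plus a dimension count can yield the optimal bound. What the paper actually does is compare the two fiber sequences (fixed/full/averaged, and averaged/fixed/reduced) for the algebraic and the topological mod-$2$ cocycle spaces, use the Milnor conjecture over $\C$ and over $\R$ with the five lemma to show that $\Phi^{av}$ and then $\overline{\Phi}$ are isomorphisms in the needed range, and then -- this is the missing idea -- prove in Section \ref{subdual} (Corollary \ref{rdual}) that $\pi_k\widetilde{\mcal{R}}^{q}_{top}(W)\cong H^{q-k}_{sing}(W^{G};\Z/2)$ for $k\geq 2$, by a fixed-point reduction in which the free part of $W$ contributes nothing to the reduced theory. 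The vanishing for $k\geq q+1$ is then the trivial vanishing of singular cohomology of the real points $X(\R)$ in negative degrees, not an equivariant bound on $X(\C)$; Teh's duality $\mcal{R}^{q}(X)\simeq\mcal{R}_{d-q}(X)$ converts this cocycle statement into the homological one. Your base case via $\pi_k\mcal{R}_0(X)=H_k(X(\R);\Z/2)$ is consistent with this, but the advertised induction on codimension has no actual induction step and cannot substitute for the Section \ref{subdual} computation.
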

 In the case of divisors our result improves the previously known vanishing range. The case of real
 projective space described above shows that the theorem's vanishing range is optimal.

The homotopy groups of reduced algebraic cycles $R _q(X)$ define a homology theory for real 
quasi-projective varieties $X$ introduced in \cite{Teh:real} which is defined by $RL _qH _n(X)=\pi _{n-q}(\mcal{R}_{q}(X))$ for $n\geq q$ and called reduced Lawson homology. In 
this notation our vanishing result reads $RL _qH _n(X)=0$ for any $n>\dim(X)$. Thus our 
vanishing result shows that the  Friedlander-Mazur conjecture holds for the reduced Lawson 
homology of real varieties.

 The homotopy groups of $\mcal{R}_{q}(X)$ fit into a long exact sequence
\begin{equation*}
 \cdots \to \pi_{k+1}\mcal{R}_{q}(X) \to \pi_{k}\mcal{Z}_{q}(X_{\C})^{av} \to L_{q}H\R_{q-k,q}(X) \to \pi_{k}\mcal{R}_{q}(X) \to \cdots
\end{equation*}
where $L_{q}H\R_{q-k,q}(X) = \pi_{k}\mcal{Z}_{q}(X_{\C})^{G}$ is the real Lawson homology introduced by dos Santos in \cite{DS:real}.
As a consequence of Suslin rigidity the homotopy groups of $\mcal{R}_{q}(X)$ are also related to motivic cohomology of $X$
\begin{equation*}
 \cdots  \to \pi_{k}z_{equi}(\A^{q}_{\C},0)(X_{\C}\times \Delta^{\bullet}_{\C})^{av} \to H_{\mcal{M}}^{2q-k}(X; \Z(q)) \to \pi_{k}\mcal{R}_{q}(X) \to \cdots .
\end{equation*}

Thus an immediate corollary of the vanishing theorem is an identification of the homotopy groups of the space of averaged cycles and of the complex of averaged equidimensional cycles. 
\begin{corollary}
Let $X$ be a smooth quasi-projective real variety. Then for any $k\geq \dim X -q+1$
\begin{equation*}
L_{q}H\R_{q-k,q}(X) = \pi_{k}\mcal{Z}_{q}(X_{\C})^{av}
\end{equation*}
 and 
\begin{equation*}
H_{\mcal{M}}^{2q-k}(X;\Z(q)) = \pi_{k}z_{equi}(\A^{q}_{\C},0)(X_{\C}\times \Delta^{\bullet}_{\C})^{av}.
\end{equation*}
\end{corollary}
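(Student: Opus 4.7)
The plan is to derive both equalities as direct consequences of Theorem \ref{mth} applied to the two long exact sequences displayed immediately before the corollary.

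For the first identification, I would isolate the four-term piece
\begin{equation*}
\pi_{k+1}\mcal{R}_{q}(X) \to \pi_{k}\mcal{Z}_{q}(X_{\C})^{av} \to L_{q}H\R_{q-k,q}(X) \to \pi_{k}\mcal{R}_{q}(X)
\end{equation*}
of the long exact sequence relating reduced cycles, averaged cycles, and dos Santos' real Lawson homology. Under the hypothesis $k\geq \dim X - q +1$, both indices $k$ and $k+1$ lie in the vanishing range of Theorem \ref{mth}, so the outer two terms are zero and the middle map is an isomorphism, giving the first claim.

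For the second identification I would apply the same reasoning to the Suslin-rigidity long exact sequence
\begin{equation*}
\pi_{k+1}\mcal{R}_{q}(X) \to \pi_{k}z_{equi}(\A^{q}_{\C},0)(X_{\C}\times \Delta^{\bullet}_{\C})^{av} \to H_{\mcal{M}}^{2q-k}(X;\Z(q)) \to \pi_{k}\mcal{R}_{q}(X),
\end{equation*}
where smoothness of $X$ is used to ensure that this sequence is available (this is where the extra hypothesis in the corollary, absent from the theorem, enters). Again the two outer terms vanish in the stated range, producing the desired isomorphism.

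There is essentially no obstacle once Theorem \ref{mth} and the two long exact sequences are in hand; the only thing to check is the bookkeeping that $k \geq \dim X - q + 1$ simultaneously forces the vanishing of $\pi_k \mcal{R}_q(X)$ and $\pi_{k+1}\mcal{R}_q(X)$, which it does since $k+1$ also exceeds $\dim X - q$. The real content has already been packaged into the theorem and into the construction of the two long exact sequences; the corollary is simply their immediate combination.
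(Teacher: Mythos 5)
Your argument for the first identity is exactly the paper's: it is Corollary \ref{m=av}, proved by feeding Theorem \ref{main} into the long exact sequence of the fibration $\mcal{Z}_{q}(X_{\C})^{av}\to\mcal{Z}_{q}(X_{\C})^{G}\to\mcal{R}_{q}(X)$, and your bookkeeping ($k$ and $k+1$ both in the vanishing range) is correct.

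For the second identity there is a caveat you have glossed over. The paper does not construct a long exact sequence whose third term is literally $\pi_{k}\mcal{R}_{q}(X)$ (reduced cycles of \emph{dimension} $q$); what it constructs (Section 6, the proposition on reduced equidimensional cycles together with Lemma \ref{sra} and the rigidity statement) is the sequence coming from $0\to z_{equi}(\A^{q}_{\C},0)(X_{\C}\times\Delta^{\bullet}_{\C})^{av}\to z_{equi}(\A^{q}_{\C},0)(X_{\C}\times\Delta^{\bullet}_{\C})^{G}\to r_{equi}(\A^{q},0)(X\times\Delta^{\bullet})\to 0$, whose fixed-point term computes $H^{2q-k}_{\mcal{M}}(X;\Z(q))$ for smooth $X$ and whose reduced term has homotopy groups $\pi_{k}\mcal{R}^{q}(X)\iso\pi_{k}\mcal{R}_{\dim X-q}(X)$ (codimension $q$, via Teh's duality). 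The vanishing that actually powers this step is therefore $\pi_{k}\mcal{R}^{q}(X)=0$ for $k\geq q+1$, which is how the paper records the result in the body (the corollary there is stated for $2q-k\leq q-1$). Your proof borrows the introduction's display verbatim and invokes the vanishing of $\pi_{k}\mcal{R}_{q}(X)$ in the range $k\geq\dim X-q+1$; to make the second identity rigorous you must either justify a sequence with that particular reduced term (which the paper never does, and which is dimensionally the wrong group), or replace it by the sequence above and run the argument with the codimension-$q$ vanishing range. So the first half of your proposal coincides with the paper; the second half needs the identification of the reduced equidimensional complex with $\mcal{R}^{q}(X)$ (Suslin rigidity plus duality), which is where the genuine content, and the smoothness hypothesis, actually enter.
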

Theorem \ref{mth} also implies that the mod-2 homotopy groups of the topological group of average cycles satisfy an optimal vanishing (see also Example \ref{avopt}).
\begin{corollary} 
 Let $X$ be a smooth projective real variety of dimension $d$. Then
\begin{equation*}
 \pi_{n}\frac{\mcal{Z}_{p}(X_{\C})^{av}}{2\mcal{Z}_{p}(X_{\C})^{av}} = 0
\end{equation*}
for $n\geq 2d-2p+1$.
\end{corollary}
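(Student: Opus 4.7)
The plan is to derive the statement from Theorem~\ref{mth} by a snake-lemma argument applied to the short exact sequence of topological abelian groups
\begin{equation*}
 0 \to \mcal{Z}_p(X_\C)^{av} \to \mcal{Z}_p(X_\C)^G \to \mcal{R}_p(X) \to 0.
\end{equation*}
Since $\alpha + \overline{\alpha} = 2\alpha$ for any $\alpha \in \mcal{Z}_p(X_\C)^G$, the inclusion $2\mcal{Z}_p(X_\C)^G \subseteq \mcal{Z}_p(X_\C)^{av}$ holds, so $\mcal{R}_p(X)$ is annihilated by~$2$, while $\mcal{Z}_p(X_\C)^G$ is torsion-free as a subgroup of the free abelian group $\mcal{Z}_p(X_\C)$. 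Applying the snake lemma for the self-map of the above sequence given by multiplication by~$2$ therefore yields the four-term exact sequence
\begin{equation*}
 0 \to \mcal{R}_p(X) \to \mcal{Z}_p(X_\C)^{av}/2\mcal{Z}_p(X_\C)^{av} \to \mcal{Z}_p(X_\C)^G/2\mcal{Z}_p(X_\C)^G \to \mcal{R}_p(X) \to 0,
\end{equation*}
which I would split into two short exact sequences with common middle term $K := \mcal{Z}_p(X_\C)^{av}/2\mcal{Z}_p(X_\C)^G$.

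Feeding each of these two short exact sequences into the resulting homotopy long exact sequence and invoking Theorem~\ref{mth} (which gives $\pi_k\mcal{R}_p(X) = 0$ for $k \geq d-p+1$) yields the isomorphism
\begin{equation*}
\pi_n\bigl(\mcal{Z}_p(X_\C)^{av}/2\mcal{Z}_p(X_\C)^{av}\bigr) \cong \pi_n\bigl(\mcal{Z}_p(X_\C)^G/2\mcal{Z}_p(X_\C)^G\bigr)
\end{equation*}
for every $n \geq d-p+2$. This reduces the corollary to the claim that $\pi_n(\mcal{Z}_p(X_\C)^G/2\mcal{Z}_p(X_\C)^G) = 0$ for $n \geq 2d-2p+1$, and via the Bockstein sequence associated to $0 \to \mcal{Z}_p(X_\C)^G \xrightarrow{2} \mcal{Z}_p(X_\C)^G \to \mcal{Z}_p(X_\C)^G/2 \to 0$ this claim in turn reduces to showing that $\pi_n\mcal{Z}_p(X_\C)^G = 0$ for $n \geq 2d-2p+1$ together with the vanishing of the $2$-torsion of the top group $\pi_{2d-2p}\mcal{Z}_p(X_\C)^G$.

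Both of these properties are the $G$-equivariant counterparts of the Friedlander-Lawson duality $\mcal{Z}_p(Y)\simeq \mathrm{Map}(Y(\C),\mcal{Z}_0(\A^{d-p}_{\C}))$ for smooth projective complex $Y$ of dimension $d$, applied to $Y=X_\C$ with its conjugation action: the $G$-equivariant mapping space has homotopy concentrated in degrees at most $2(d-p)$, and its top homotopy group is the Bredon cohomology group $H^0_G(X_\C(\C);\underline{\Z})$, which is free abelian. This input is exactly the equivariant Poincar\'e duality between equivariant Friedlander-Walker real morphic cohomology and dos Santos's real Lawson homology advertised in the abstract, and it is the principal external ingredient of the argument; once it is available the corollary follows from the four-term snake sequence and the Bockstein sequence by a straightforward diagram chase.
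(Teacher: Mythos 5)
Your opening bookkeeping is fine and in fact overlaps with the paper's argument: the snake-lemma four-term sequence splits into the two short exact sequences $0 \to \mcal{R}_p(X) \to \mcal{Z}_p(X_\C)^{av}/2\mcal{Z}_p(X_\C)^{av} \to \mcal{Z}_p(X_\C)^{av}/2\mcal{Z}_p(X_\C)^{G} \to 0$ and $0 \to \mcal{Z}_p(X_\C)^{av}/2\mcal{Z}_p(X_\C)^{G} \to \mcal{Z}_p(X_\C)^{G}/2\mcal{Z}_p(X_\C)^{G} \to \mcal{R}_p(X) \to 0$, and the first of these, combined with Theorem \ref{main}, is exactly how the paper concludes. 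The fatal problem is the input you feed into your final Bockstein step. You need, with \emph{integral} coefficients, that $\pi_n\mcal{Z}_p(X_\C)^{G}=0$ for $n\geq 2d-2p+1$ and that $\pi_{2d-2p}\mcal{Z}_p(X_\C)^{G}$ has no $2$-torsion, and you justify this by a claimed duality $\mcal{Z}_p(X_\C)\simeq \map{X_\C(\C)}{\mcal{Z}_0(\A^{d-p}_{\C})}$. No such equivalence exists: Friedlander--Lawson duality (and its equivariant form, Corollary \ref{PD}) identifies the cycle space with the space of \emph{algebraic} cocycles $\mcal{Z}^{d-p}(X_\C)$, not with the topological mapping space. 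The passage from algebraic cocycles to topological ones is precisely the hard comparison, and it is an equivalence only in a range and only with $\Z/2^{k}$ coefficients, via the Milnor conjecture (Corollaries \ref{MCC}, \ref{MCR}, \ref{eBL}). Integrally, the vanishing $\pi_n\mcal{Z}_p(X_\C)=0$ for $n>2(d-p)$ is the Friedlander--Mazur conjecture for the complex variety $X_\C$, which is open, and the analogous statement for the fixed-point space (equivalently, vanishing of integral real morphic cohomology in negative degrees) is likewise unavailable, as is your identification of the top integral homotopy group with a torsion-free Bredon group. So your last reduction cannot be completed with known results.

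The repair is to never leave mod-$2$ coefficients, which is what the paper does: instead of Bocksteining back to the integral group $\mcal{Z}_p(X_\C)^{G}$, it uses Teh's short exact sequence $0 \to \mcal{Z}_p(X_\C)^{G}/2\mcal{Z}_p(X_\C)^{G} \to \mcal{Z}_p(X_\C)/2\mcal{Z}_p(X_\C) \to \mcal{Z}_p(X_\C)^{av}/2\mcal{Z}_p(X_\C)^{G} \to 0$, where the mod-$2$ homotopy of the first two terms is controlled by the mod-$2$ Beilinson--Lichtenbaum statements together with Poincar\'e duality and Proposition \ref{binj} (Corollaries \ref{MCC}, \ref{MCR}, \ref{qimav}, \ref{PD}), and these vanish for $n\geq 2d-2p+1$; this kills $\pi_n(\mcal{Z}_p(X_\C)^{av}/2\mcal{Z}_p(X_\C)^{G})$ in the stated range, after which your first short exact sequence and Theorem \ref{main} finish the proof. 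Two smaller points you should also address: the short exact sequences of topological groups must be shown to be homotopy fiber sequences (this is where tractability, the $CW$-homotopy types of Corollary \ref{ehtype}, and Proposition \ref{qimav} enter, rather than a purely algebraic snake lemma), and your reduction only yields the isomorphism for $n\geq d-p+2$, which misses $n=2d-2p+1$ when $p=d$ (harmless, since $\mcal{Z}_d(X_\C)$ is discrete, but it needs to be said).
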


An essential ingredient in the proof of our vanishing theorem is the  Milnor conjecture proved by Voevodsky in \cite{Voev:miln}. The Milnor conjecture relates motivic cohomology and etale cohomology while real morphic cohomology naturally compares with Bredon cohomology. We need to know that these cycle maps are suitably related which is done in Theorem \ref{cyccomp}, 
\begin{theorem}
\label{dg}
Let $X$ be a smooth quasi-projective real variety. The diagram commutes
\begin{equation*}
 \xymatrix{
 L^{q}H\R^{q-k,q}(X;\Z/2) \ar[d]_-{\Phi} && \ar[ll]_{\iso}\H^{2q-k,q}_{\mcal{M}}(X;\Z/2)  \ar[d]^-{cyc}  \\
H^{q-k,q}(X_{\C}(\C);\underline{\Z/2})  \ar[r] & H_{G}^{2q-k}(X_{\C}(\C);\Z/2)\ar[r] &  H^{2q-k}_{et}(X;\mu_{2}^{\otimes q}),
}
\end{equation*}
where $H^{p-q,q}(X_{\C}(\C);\underline{\Z/2})$ denotes Bredon cohomology and $H_{G}^{p}(X_{\C}(\C);\Z/2)$ denotes Borel cohomology.
\end{theorem}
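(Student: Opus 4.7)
The plan is to view every object in the diagram as a functor of $X$ and to produce a common geometric source for both compositions around the square.

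First I would refine the cycle class map $cyc$ by lifting it through equivariant Borel cohomology. For a smooth real variety the natural comparison map $H_G^{*}(X_\C(\C);\Z/2)\to H^{*}_{et}(X;\mu_{2}^{\otimes *})$ is an isomorphism in the degrees of interest, by Cox's theorem (or, more generally, the equivariant Beilinson--Lichtenbaum comparison referenced elsewhere in the paper). Under this identification the composite $H_{\mcal{M}}^{2q-k,q}(X;\Z/2)\to H^{2q-k}_{et}(X;\mu_{2}^{\otimes q})$ becomes the realization map sending an algebraic cycle to its $G$-invariant cocycle on $EG\times_{G} X_\C(\C)$; so the right vertical arrow followed by the lower-right horizontal arrow agrees, modulo the Cox isomorphism, with a Borel-level cycle-to-cocycle map.

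Next I would show that this same Borel-level map factors through the left column of the diagram. The top isomorphism is itself induced by realizing an algebraic cycle representing a motivic class as a topological cycle, so the composite motivic to morphic is tautologically identified with topological realization. The map $\Phi$ is, by its definition in terms of cycle spaces, the next step of the same realization: an equivariant topological cycle produces an equivariant cocycle, i.e.\ a Bredon class in $H^{q-k,q}(X_\C(\C);\underline{\Z/2})$. Finally the bottom-left map from Bredon cohomology with constant Mackey coefficients to Borel cohomology is the standard forgetful map, which sends a Bredon cocycle to its underlying cocycle on the Borel construction. Composing these three arrows yields precisely the equivariant topological realization of an algebraic cycle, which is the Borel-level cycle map from the previous paragraph.

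Commutativity of the original square thus reduces to the assertion that two presentations of the same equivariant cycle-to-cocycle map coincide: one coming from a Bloch--Ogus / Suslin-style construction of $cyc$, the other from the Friedlander--Walker / dos Santos model used to define $\Phi$. I expect the main technical obstacle to be matching these two presentations explicitly. In practice this likely requires fixing a common simplicial model, for instance the cochain complex on $EG_{\bullet}\times_{G} X_\C(\C)$ paired against the simplicial cycle complex $z_{equi}(\A^{q}_{\C},0)$ and its $G$-fixed subcomplex, and then tracing a chosen representative cycle through both constructions to verify that they produce the same cocycle at the Borel level.
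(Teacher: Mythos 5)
There is a genuine gap. Your argument correctly identifies that both composites around the square ought to be ``the'' equivariant cycle class map into Borel/etale cohomology, but the assertion that they coincide is exactly the content of the theorem, and your proposal defers it to ``fixing a common simplicial model \ldots and tracing a chosen representative cycle through both constructions.'' This last step is not carried out, and it is not a routine verification: the motivic cycle map $cyc$ with $\Z/2$-coefficients is not defined by a naive cycle-to-cocycle assignment that one can trace on representatives (it arises from the comparison of the motivic complex with $\tau_{\leq q}\mathbb{R}\pi_{0*}\mu_{2}^{\otimes q}$ in the sense of Suslin--Voevodsky), whereas $\Phi$ and the Bredon-to-Borel map live in the Friedlander--Walker/dos Santos cycle-space world. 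Your second paragraph, asserting that under Cox's isomorphism the composite $H^{2q-k,q}_{\mcal{M}}(X;\Z/2)\to H^{2q-k}_{et}(X;\mu_2^{\otimes q})$ ``becomes'' a Borel-level topological realization, is precisely the statement in need of proof, so the proposal as written is circular at its key point.

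The paper closes this gap not by an explicit chain-level comparison but by a rigidity argument that your outline is missing. One first packages all four theories as presheaves of complexes $\Z/2(q)$, $\Z/2(q)^{sst}$, $\Z/2(q)^{top}$, $\Z/2(q)^{Bor}$ satisfying Nisnevich descent, shows that the etale sheafification of $\Z/2(q)^{Bor}$ is quasi-isomorphic to $\mu_2^{\otimes q}$, and hence obtains a map of complexes $\Z/2(q)\to B_2(q)=\tau_{\leq q}\mathbb{R}\pi_{0*}\mu_2^{\otimes q}$ through the left-bottom route. The decisive input is then the Suslin--Voevodsky computation $\mathrm{Hom}_{DM^{-}(\R)}(\Z/2(q),B_{2}(q))=\Z/2$ (together with the identification of this Hom group with maps in the Nisnevich derived category), so that any two nonzero maps automatically coincide; it remains only to check that the composite through $\Phi$ is nonzero, which is done by evaluating at $\spec\C$. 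Without this uniqueness principle (or a comparably detailed explicit comparison, which would be a substantial undertaking), your proposal does not yet establish the commutativity.
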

This suggests that there  are possible advantages in replacing the map on Chow groups of real cycles into Borel cohomology with the map into Bredon cohomology since in many respects  Bredon cohomology behaves better than Borel cohomology. An application of this idea will be given in a forthcoming paper. 

Together with the mod-2 Beilinson-Lichtenbaum conjecture for real and complex varieties (which is a consequence of the Milnor conjecture by \cite{SV:BK}) we conclude  an  equivariant Beilinson-Lichtenbaum type theorem for an equivariant extension of Friedlander-Walker's real morphic cohomology groups (see Definition \ref{emco}).
\begin{theorem}
 Let $X$ be a smooth quasi-projective real variety and $k>0$. The cycle map
\begin{equation*}
 \Phi:L^{q}H\R^{r,s}(X; \Z/2^{k}) \to H^{r,s}(X_{\C}(\C); \underline{\Z/2^{k}})
\end{equation*}
is an isomorphism if $r\leq 0$ (and $s\leq q$) and an injection if $r=1$ (and $s\leq q$).
\end{theorem}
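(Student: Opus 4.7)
The plan is to use the commutative diagram of Theorem~\ref{dg} as the main engine, combining it with the mod-$2$ Beilinson--Lichtenbaum theorem and with the Bredon-to-Borel and Borel-to-étale comparisons. The argument proceeds by a sequence of reductions, after which the conclusion falls out of a diagram chase.

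First, I reduce to $\Z/2$-coefficients. The short exact sequences $0\to \Z/2^{k-1}\to \Z/2^k\to \Z/2\to 0$ induce compatible long exact sequences on both $L^qH\R^{*,*}(X;-)$ and $H^{*,*}(X_\C(\C);\underline{-})$, with $\Phi$ a morphism between them. A five-lemma induction on $k$ reduces isomorphism (resp.\ injection) for $\Z/2^k$ to the same statement for $\Z/2$. Next, for $s\le q$ I appeal to an equivariant Lawson suspension isomorphism identifying $L^qH\R^{r,s}(X;\Z/2)$ with $L^sH\R^{r,s}(X;\Z/2)$, compatible with the cycle map; this reduces the proof to the case $s = q$, where Theorem~\ref{dg} applies directly.

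In the $s = q$ case, the top horizontal arrow of the diagram of Theorem~\ref{dg} is the real morphic/motivic comparison, already known to be an isomorphism. The right vertical is the motivic-to-étale cycle map, which by the mod-$2$ Beilinson--Lichtenbaum theorem \cite{SV:BK} (a consequence of Voevodsky's proof of the Milnor conjecture) is an isomorphism when the motivic degree $r+q$ is at most the Tate weight $q$, i.e.\ $r\le 0$, and an injection when $r+q = q+1$, i.e.\ $r = 1$. The bottom composition factors as the Bredon-to-Borel comparison $H^{r,q}(X_\C(\C);\underline{\Z/2}) \to H^{r+q}_G(X_\C(\C);\Z/2)$ followed by Cox's isomorphism $H^*_G(X_\C(\C);\Z/2)\cong H^*_{et}(X;\mu_2^{\otimes *})$, valid for smooth real $X$. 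The Bredon-to-Borel map is itself an isomorphism in the range $r\le 0$ (injection for $r = 1$), which can be read off from the isotropy cofiber sequence $EG_+\to S^0\to \widetilde{EG}$ and a weight-degree analysis of the geometric-fixed-point contributions with constant coefficients $\underline{\Z/2}$. Chasing the diagram with these inputs yields that $\Phi$ is an iso (resp.\ injection) in the asserted range.

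The main obstacle is pinning down the Bredon-to-Borel comparison in the precise range $r\le 0$; this requires a careful analysis of which equivariant bigradings the geometric-fixed-point contributions occupy. Once that is established, the remaining ingredients (the Bockstein and Lawson-suspension reductions, Theorem~\ref{dg}, the Milnor conjecture, and Cox's theorem) plug in essentially by inspection.
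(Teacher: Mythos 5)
Your treatment of the weight-$q$ case ($s=q$) essentially reproduces the paper's proof of Corollary \ref{MCR}: Bockstein reduction to $\Z/2$, the commutative square of Theorem \ref{cyccomp}, Beilinson--Lichtenbaum via the Milnor conjecture, Cox's theorem (note you still need the finite-dimensionality count to upgrade the resulting surjection $H^{*}_{G}(X_{\C}(\C);\Z/2)\to H^{*}_{et}(X;\mu_2^{\otimes q})$ to an isomorphism, since Cox only gives an abstract isomorphism of groups), and the Bredon-to-Borel comparison, which is exactly Proposition \ref{binj}. The genuine gap is your reduction of general weight $s\le q$ to $s=q$. The ``equivariant Lawson suspension isomorphism'' $L^{q}H\R^{r,s}(X;\Z/2)\cong L^{s}H\R^{r,s}(X;\Z/2)$, compatible with cycle maps, is not proved in the paper and does not follow from the real or complex suspension theorems: those relate cycle spaces of different varieties (or different dimensions on a suspension), whereas comparing $\mcal{Z}^{q}(X_{\C})$ with $\mcal{Z}^{s}(X_{\C})$ in a fixed bidegree is an equivariant stabilization statement of Friedlander--Mazur type. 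Indeed, even nonequivariantly the analogous identification $H^{n}_{\mcal{M}}(X;\Z/2(q))\cong H^{n}_{\mcal{M}}(X;\Z/2(n))$ for $q\ge n$ is itself a consequence of Beilinson--Lichtenbaum, so invoking such an isomorphism as an input is essentially circular.

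The paper avoids any suspension input by arguing at the space level. The cycle map is an equivariant map $\mcal{Z}^{q}/2^{k}(X_{\C})\to \map{X_{\C}(\C)}{\mcal{Z}/2^{k}_{0}(\A^{q}_{\C})}$; let $F_q$ denote its equivariant homotopy fiber. Corollary \ref{MCR} gives $\pi_{m}(F_q^{G})=0$ for $m\ge q-1$, and Corollary \ref{MCC} (the complex Milnor conjecture applied to $X_{\C}$) gives $\pi_{m}(F_q)=0$ for $m\ge q-1$; hence $\Omega^{q-1}F_q$ is $G$-weakly contractible, and the long exact sequence in $RO(G)$-graded homotopy groups for the looped fiber sequence yields an isomorphism on $\pi_{q-r,q-s}$ for $r\le 0$ and an injection for $r=1$, simultaneously for every $s\le q$. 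In other words, the fixed-point and underlying comparisons already control all twists; no stability theorem is needed. To repair your outline, replace the suspension step by this fiber argument (or supply an independent proof of the stabilization you invoke, which would be a new result beyond what the paper provides).
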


Using Friedlander-Voevodsky duality for bivariant cycle theory we show in Corollary \ref{PD} that the equivariant morphic cohomology and real Lawson homology groups are isomorphic through a Poincare duality. As a consequence the equivariant Beilinson-Lichtenbaum says that in a range we may compute the mod-2 real Lawson homology groups in terms of mod-2 Bredon cohomology. This allows a computation for curves with integral coefficients.
\begin{corollary}
 Let $X$ be a smooth real curve. Then 
\begin{equation*}
 L^{q}H\R^{r,s}(X; \Z) \to H^{r,s}(X_{\C}(\C); \underline{\Z})
\end{equation*}
is an isomorphism for any $q\geq 0$, $r\leq q$, and $s\leq q$.
\end{corollary}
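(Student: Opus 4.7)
The plan is to bootstrap the integral isomorphism from the preceding mod-$2^k$ equivariant Beilinson-Lichtenbaum theorem, using the one-dimensionality of the curve to extend the range in $r$ and a Bockstein argument to upgrade to integer coefficients.

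First I would apply Poincare duality (Corollary \ref{PD}) to recast the cycle map $L^qH\R^{r,s}(X;\Z) \to H^{r,s}(X_\C(\C);\underline{\Z})$ as a map from real Lawson homology to equivariant Bredon homology. Since $\dim X = 1$, Theorem \ref{mth} gives $\pi_k \mcal{R}_q(X) = 0$ for $k \geq 2 - q$. Feeding this into the long exact sequence
\begin{equation*}
\cdots \to \pi_{k+1}\mcal{R}_q(X) \to \pi_k \mcal{Z}_q(X_\C)^{av} \to \pi_k \mcal{Z}_q(X_\C)^G \to \pi_k \mcal{R}_q(X) \to \cdots
\end{equation*}
identifies the homotopy groups of averaged cycles with those of conjugation-fixed cycles throughout the claimed range, so the cycle map is effectively the map from averaged cycles to Bredon cohomology controlled by the commutative diagram of Theorem \ref{dg}.

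Second, the preceding equivariant Beilinson-Lichtenbaum theorem gives the cycle map as a mod-$2^k$ isomorphism for $r \leq 0$ and as a mod-$2^k$ injection at $r = 1$. To extend from $r \leq 0$ to $r \leq q$ for a curve, I would combine this input with the vanishing $\pi_k\mcal{R}_q(X) = 0$ above: in each additional degree $0 < r \leq q$ the discrepancy between source and target is governed by a reduced Lawson group that has been killed by Theorem \ref{mth}, and a five lemma applied to Theorem \ref{dg} propagates the isomorphism across the claimed range. To pass from mod-$2^k$ to integer coefficients, I would then use the Bockstein sequence $0 \to \Z \xrightarrow{2^k} \Z \to \Z/2^k \to 0$ and finite generation of both sides for a curve (via $\mathrm{Pic}$, $CH_0$, and finiteness of Bredon cohomology of the compact conjugation Riemann surface $X_\C(\C)$); the odd-primary component is treated separately by direct identification of both sides with Chow or topological invariants on the curve.

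The main obstacle I anticipate is the range extension from $r \leq 0$ up to $r \leq q$: the preceding Beilinson-Lichtenbaum theorem covers only the smaller range, and closing the gap relies delicately on Theorem \ref{mth} together with the correct bookkeeping under Poincare duality. The handling of odd torsion and the verification of finite generation are expected to be technical but not conceptually new.
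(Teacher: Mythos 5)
The central step of your plan---closing the gap $0 < r \le q$ by combining Theorem \ref{mth} with ``a five lemma applied to Theorem \ref{dg}''---does not work, and this is where the proposal breaks down. The vanishing of the reduced Lawson groups $\pi_k\mcal{R}_q(X)$ only compares the fixed and the averaged cycles \emph{within} the algebraic theory (and, dually, within the topological theory); it gives no information about the comparison map between the algebraic and the topological sides, which is precisely what the corollary asserts to be an isomorphism. In the gap range the relevant homotopy degrees $q-r$ lie strictly below $q$, where the Beilinson--Lichtenbaum input (Corollaries \ref{MCC}, \ref{MCR}, \ref{eBL}) is simply unavailable: none of $\Phi$, $\Phi^{G}$, $\Phi^{av}$ is known to be an isomorphism there, so there is no five-lemma configuration with four isomorphisms from which to propagate. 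Moreover, Theorem \ref{dg} is a commutative diagram of comparison maps with $\Z/2$-coefficients into Borel and etale cohomology, not an exact sequence, so the five lemma cannot be applied to it; and identifying $\pi_k\mcal{Z}_q(X_{\C})^{av}$ with $\pi_k\mcal{Z}_q(X_{\C})^{G}$ does not convert the cycle map into ``a map from averaged cycles to Bredon cohomology''---averaged cycles compute $H_*(X_{\C}(\C)/G;\Z)$, not Bredon (co)homology.

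What actually closes the gap in the paper is the observation that for a curve the weights $q\ge 1$ are at least $\dim X$: by Corollary \ref{PD} together with the compatibility of the dualities with the cycle maps (Remark \ref{fpa}), the cycle map in weight $q\ge 1$ is identified with the cycle map on spaces of zero-cycles, namely on $\mcal{Z}_{1-q}(X_{\C})=\mcal{Z}_{0}(X_{\C}\times\A^{q-1}_{\C})$, where the equivariant Dold--Thom theorem gives an \emph{integral} isomorphism outright---no mod-$2$ input, no Bockstein, no finite-generation argument is needed there. For $q=0$ the claimed range $r\le 0$, $s\le 0$ coincides exactly with the range of Corollary \ref{eBL}, so no range extension is required; the integral statement is then obtained by combining the mod-$2^k$ isomorphisms with a transfer argument showing $L^{0}H\R^{r,s}(X;A)\to H^{r,s}(X_{\C}(\C);\underline{A})$ is an isomorphism whenever $2$ is invertible in $A$. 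Your proposed odd-primary step (``direct identification of both sides with Chow or topological invariants on the curve'') is too vague to substitute for this, and the Bockstein/finite-generation machinery is unnecessary once the weight $q\ge 1$ case is handled by duality.
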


The space of reduced cocycles on $X$, related via Poincare duality with the space of reduced cycles, is defined as
\begin{equation*}
\mcal{R}^{q}(X)=\frac{\mcal{Z}^{q}(X_{\C})^{G}}{\mcal{Z}^{q}(X_{\C})^{av}}
\end{equation*}
where $\mcal{Z}^{q}(X_{\C})$ is the space of algebraic cocycles on $X_{\C}$ and $\mcal{Z}^{q}(X_{\C})^{G}$ agrees with the space of real cocycles introduced by Friedlander-Walker in \cite{FW:real} (see Proposition \ref{nqp}). There is a natural comparison map
\begin{equation*}\label{cmap}
\mcal{R}^{q}(X) \to \map{X(\R)}{\mcal{R}_{0}(\A^{q})}
\end{equation*}
and since $\mcal{R}_{0}(\A^{q})=K(\Z/2,q)$ this provides a natural map
\begin{equation}\label{tcycm}
cyc _{k}: \pi_{k}\mcal{R}^{q}(X) \to H^{q-k}_{sing}(X(\R);\Z/2)
\end{equation}
which is the cycle map for reduced morphic cohomology groups defined in \cite{Teh:real}. Via Poincare duality the vanishing theorem is equivalent to the statement that $cyc_{k}$ is an isomorphism for $k>q$.

Via the Milnor conjecture over $\C$ and over $\R$ we can deduce an isomorphism  $\pi_{k}\mcal{R}^{q}(X)\to\pi_{k}\mcal{R}^{q}_{top}(X)$ for $k\geq q$.  Here $\mcal{R}^{q}_{top}(X)$ is the group of ``reduced topological cocycles''. For a precise definition see Section \ref{subdual}, but essentially this is a version of the quotient group $\map{X_{\C}(\C)}{\mcal{Z}_{0}(\A^{q}_{\C})}^{G}/\map{X_{\C}(\C)}{\mcal{Z}_{0}(\A^{q}_{\C})}^{av}$ which has reasonable homotopical properties (such as fitting into a homotopy fiber sequence involving $\map{X_{\C}(\C)}{\mcal{Z}_{0}(\A^{q}_{\C})}^{G}$ and $\map{X_{\C}(\C)}{\mcal{Z}_{0}(\A^{q}_{\C})}^{av}$).

The final ingredient for our vanishing theorem is now provided by Corollary \ref{rdual} which shows that for $X$ projective,
\begin{equation*}
\widetilde{\Phi}:\pi_{k}\mcal{R}^{q}(X)\rightarrow \pi_{k}\mcal{R}^{q} _{top}(X) 
\end{equation*}
agrees with the cycle map $\pi_{k}\mcal{R}^{q}(X) \to H^{q-k}_{sing}(X(\R);\Z/2)$ for $k\geq 2$.

Here is a short outline of the paper. In the second section we review the equivariant homotopy 
used in the paper. The third section is dedicated to introducing the topological spaces of cycles 
we study and proving some basic properties that we use and for which we don't find exact 
references in the literature. In the fourth section we prove a Poincare Duality between 
equivariant morphic cohomology  and real Lawson homology. In the fifth section we discuss the cycle maps from equivariant morphic cohomology and Bredon cohomology and equivariant 
applications of the Beilinson-Lichtenbaum conjecture. The sixth section is devoted to the proof 
of our main vanishing Theorem. One of the main technical ingredients of this proof is left for 
section seven where we reinterpret the cycle map \ref{tcycm} from reduced Lawson homology 
groups to the singular homology in a manner needed to prove our vanishing theorem. The 
paper ends with two appendixes where we prove and recollect a few results on topological 
monoids used in the paper.

The authors would like to thank Eric Friedlander, Christian Haesemeyer and Mark Walker for helpful discussions.

\vskip 0.5cm
\textit{Notation:} By a quasi-projective $k$-variety we mean a reduced and separated quasi-projective scheme of finite type over a field $k$. We write $Sch/k$ for the category of quasi-projective $k$-varieties  and  $Sm/k$ for the subcategory of smooth quasi-projective $k$-varieties. Except in section 2, $G$ always denotes $Gal(\C/\R)$ and $\sigma\in G$ denotes the nontrivial element.

\section{Equivariant Homotopy and Cohomology}

We recall the basic definitions and theorems we need from equivariant  homotopy theory. For more details see \cite{May:equi}. In this paper we will only work with $G=\Z/2$, but since no simplification results in the basic definitions, we let $G$ denote an arbitrary finite group.  The category $\GTop$ of $G$-spaces consists of compactly generated spaces equipped with a left $G$-action and morphisms are continuous $G$-equivariant maps. If $X$ is a $G$-space and $H\subseteq G$ is a subgroup write $X^{H}$ for the subspace of all points fixed by $H$. The category $\GTop_*$ of based $G$-spaces consists of  $G$-spaces $X$ together with a $G$-invariant basepoint $x\in X$ and maps are base-point preserving equivariant maps. A space together with a disjoint, invariant base-point will be denoted $X_{+}$.


\subsection{Equivariant homotopy theory} 
Let $I$ denote the unit interval with trivial $G$-action. A $G$-homotopy between two 
equivariant maps $f,g:X \to Y$ is an equivariant map $F:X\times I \to Y$ such that 
$F|_{X\times\{0\}} = f$ and $F|_{X\times\{1\}} = g$.  An equivariant map $f:X\to Y$ is an \textit{equivariant homotopy equivalence} provided there is an equivariant map $g:Y \to X$ 
such that both $f\circ g $ and $g \circ f$ are $G$-equivalently homotopic to the identity.
An equivariant map $f:X\to Y$ is a \textit{$G$-weak equivalence} provided both 
$f^{H}:X^{H}\to Y^{H}$ is a non-equivariant weak equivalence for all subgroups $H\subseteq G$. Formally inverting the $G$-weak equivalences gives the homotopy category of 
$G$-spaces. Similarly inverting the based $G$-weak equivalences between based $G$-spaces we 
obtain the based $G$-homotopy category. Write $[ X,Y]_{G}$ for classes of based maps  in the homotopy category of based maps.

A $G$-$CW$ complex $X$ is a topological union $X = \cup X_{n}$ of $G$-spaces such that $X_{0}$ is a disjoint union of orbits $G/H$ and $X_{n}$ is obtained from $X_{n-1}$ by attaching cells of the form  $D^{n}\times G/H$ via attaching maps  $\sigma:S^{n-1}\times G/H \to X_{n-1}$.

The equivariant Whitehead theorem holds for $G$-$CW$ complexes. That is, if $f:X\to Y$ is a $G$-equivariant weak equivalence between $G-CW$-complexes then $f$ is a $G$-homotopy equivalence.

A map $A\to X$ is said to have the \textit{homotopy extension property} with respect to $Z$ if   for any  equivariant partial homotopy $H:X\times\{0\}\coprod_{A\times\{0\}}A\times I \to Z$  there is an equivariant map $H'$ making the diagram below commute
\begin{equation*}
 \xymatrix{
X\times\{0\}\coprod_{A\times\{0\}}A\times I \ar[r]^-{H}\ar[d] & Z \\
X\times I \ar@{-->}[ur]^{\exists H'} & .
}
\end{equation*}
 An equivariant \textit{cofibration} $A\hookrightarrow X$ is an equivariant map which has the homotopy extension property with respect to all $Z$ in $\GTop$. Inclusions of sub-$G$-$CW$complexes $A\subseteq X$ are equivariant cofibrations. 

Let $V$ be a real representation of $G$, write $S^{V}$ for the one-point compactification of $V$. The $V$th homotopy group of a based $G$-space $X$ is
\begin{equation*}
 \pi_{V} X = [S^{V}, X]_{G} .
\end{equation*}
Note that $S^{V}$ always has at least two fixed points, $0$ and $\infty$.

When $G=\Z/2$ and $V = \R^{p,q}$, where $V =\R^{p+q}$ with $G$ acting trivially on the first $p$-components and on the last $q$-components the $G$ action is multiplication by $-1$ we use the notation
\begin{equation*}
 \pi_{p,q}X = \pi_{\R^{p,q}}X .
\end{equation*}


\subsection{Borel homology and cohomology}
The \textit{Borel-equivariant cohomology } of $X$ with coefficients in an abelian group $A$ is defined to be the ordinary singular cohomology of the homotopy orbit space of $X$:
\begin{equation*}
 H^{p}_{G}(X; A) = H^{p}((X\times EG)/G ; A).
\end{equation*}
Similarly the \textit{Borel-equivariant homology} is defined to be
\begin{equation*}
 H_{p}^{G}(X; A) = H_{p}((X\times EG)/G ; A).
\end{equation*}

When $X$ has free $G$-action then $ (X\times EG)/G \to X/G$ is a homotopy equivalence and therefore when $X$ has free $G$-action $ H^{p}(X/G; A) \iso H^{p}_{G}(X; A)$
and $H_{p}(X/G; A) \iso H_{p}^{G}(X; A)$.

\subsection{Mackey functors}
Bredon homology and cohomology take Mackey functors as coefficients. There are several equivalent ways to define a Mackey functor \cite{May:equi}. Classically for $G$ a finite group one defines a Mackey functor as follows . Let $\mcal{F}_{G}$ denote the category of finite $G$-sets as objects and with equivariant set maps as morphisms. A \textit{Mackey functor} $\underline{M}$ consists of a pair of abelian-group valued functors $\underline{M}=(M^*, M_*)$ on $\mcal{F}_{G}$, with $M^*$ contravariant and $M_*$ covariant. The functors $M^*$ and $M_*$ satisfy the following requirements.
\begin{enumerate}
 \item $M^{*}$,$M_{*}$ take the same value on objects and convert disjoint unions of $G$-sets into products of abelian groups. 
\item When 
\begin{equation*}
\xymatrix{
S \ar[r]^{\sigma'}\ar[d]^{\beta'} & T \ar[d]^{\beta} \\
U \ar[r]^{\sigma}  & V 
}
\end{equation*}
is a pull-back square of finite $G$-sets then
\begin{equation*}
\xymatrix{
M(S) \ar[r]^{M_{*}(\sigma')} & M(T) \\
M(U) \ar[u]^{M^{*}(\beta')} \ar[r]^{M_{*}(\sigma)} & M(V) \ar[u]^{M^{*}(\beta)}
}
\end{equation*}
is a commutative square of abelian groups.
\end{enumerate}
Given an abelian group $A$, the \textit{constant Mackey functor} $\underline{A}$ is the Mackey functor which on objects $\underline{A}(G/K) = A$ and on a map $f:G/H\to G/K$, $M^*(f) = id$ and $M_*(f)$ is multiplication by the index $[K:H]$.

\subsection{Bredon homology and cohomology}
\textit{Bredon cohomology (homology)} with coefficients in a Mackey functor $\underline{M}$ is a cohomology (homology) theory $H^{*}(-;\underline{M}))$ ($H_{*}(-;\underline{M})$ graded by $RO(G)$. For $V \in RO(G)$ there is an equivariant Eilenberg-Maclane space $K(\underline{M},V)$ which represents the reduced cohomology 
$\tilde{H}^{V}(X; \underline{M})$ for a based $G$-space, 
\begin{equation*}
 \tilde{H}^{V}(X; \underline{M}) = [X , K(\underline{M},V)]_{G} .
\end{equation*}

When $G=\Z/2$ then $RO(G) = \Z\oplus\Z$ with generators $\R^{1,0}$ and $\R^{0,1}$. We use the convention that $H^{p,q}(X;\underline{M}) = H^{\R^{p,q}}(X;\underline{M})$ (and similarly for homology). 

If $A$ is an abelian group  (with trivial $G$-action) then $H^{p,0}(X;\underline{A}) = H^{p}_{sing}(X/G; A)$. More generally, Borel  and Bredon cohomology are related by the natural isomorphism 
\begin{equation*}
 H^{p,q}(X\times EG; \underline{A}) \iso H^{p+q}_{G}(X; A(q))
\end{equation*}
where $A(q)$ is $A$ with $\sigma$ acting by $(-1)^{q}$ (see \cite[Proposition 1.15]{DSLF:quat}).

\subsection{Equivariant Dold-Thom theorem}
Let $X$ be a compactly generated Hausdorff space. The free abelian group on the points of $X$ is defined to be $ \mcal{Z}_{0}(X) = [\coprod_{d}SP^{d}(X)]^{+}$, where $SP^{d}(X)$ is 
the $d$th symmetric product on $X$ and $(-)^{+}$ denotes group completion of the displayed monoid which is topologized via the quotient topology.

The degree homomorphism $\deg:\mcal{Z}_{0}(X) \to \Z$ is defined by $\deg(\sum n_{i}x_i) = \sum n_i$ is a continuous homomorphism. Write $\mcal{Z}_{0}(X)_{0}$ for the kernel of this map. Notice that there is an isomorphism of topological groups $\mcal{Z}_{0}(X) \iso \mcal{Z}_{0}(X_+)_{0}$.

If $X$ is a $G$-space then the action on $X$ induces a $G$-action on $\mcal{Z}_{0}(X)$ and $\mcal{Z}_{0}(X)_{0}$. By \cite[Corollary 2.9]{LF:Gequiv} when $X$ is a $G$-$CW$ complex so is $\mcal{Z}_{0}(X)$. 

The classical Dold-Thom theorem says that $\pi_{n}\mcal{Z}_{0}(X)_{0} = \tilde{H}_{n}(X;\Z)$ and the equivariant Dold-Thom theorem proved by Lima-Filho \cite{LF:Gequiv} and dos Santos \cite{DS:equiDT} says that 
\begin{equation*}
\pi_{V}\mcal{Z}_{0}(X)_{0} = \tilde{H}_{V}(X;\underline{\Z}).
\end{equation*}
In particular $\mcal{Z}_{0}(S^{V})_{0}$ is an Eilenberg-Maclane space $K(\underline{Z}, V)$.

\section{Topological Spaces of Cycles}\label{topspcyc}
\subsection{Group completions of monoids}

Let $M$ be a compactly generated Hausdorff topological abelian monoid. The \textit{naive group completion} of $M$ is the quotient of $M\times M$ by the monoid action of $M$ where $M$ acts by $(a,b) \mapsto (m+a, m+b)$. Write $M^{+}$ for this abelian group, which is 
topologized as the quotient of $M\times M$. Recall that $M$ is said to satisfy the \textit{cancellation property} if $a + m = b+m$ implies that $a = b$ for any $a,b,m\in M$. 
When $M$ satisfies cancellation then the naive group completion can be described as $M^{+}= M\times M/\sim$, where $(a,b) \sim (c,d)$ if $a+d = b +c$. 

Naive group completion does not generally behave well topologically. For example it may 
happen that $M^{+}$ is not a Hausdorff topological group nor is it clear how homotopy 
invariants of $M$ and $M^{+}$ are related. Friedlander-Gabber \cite{FG:cyc} and Lima-Filho \cite{Li:completion} have studied conditions under which the naive group completion of a 
topological monoid is a Hausdorff group and $M\to M^{+}$ is a homotopy group completion. All of the 
topological monoids with which we work are tractable monoids in the sense of 
Friedlander-Gabber (see Appendix \ref{tract}) and in particular the naive group completion of these groups are homotopy group completions. 

Our main objects of interest are the group completions of submonoids of the Chow monoids of effective algebraic cycles on algebraic varieties. Let $k$ be a field of characteristic 0 and $j:Y\subseteq \P^n_k$ be a projective $k$-variety. The Chow variety $\mcal{C}_{q}(Y,j) = \coprod_{d\geq 0}\mcal{C}_{q,d}(Y,j) $ of effective $q$-dimensional cycles on $Y$ is an (infinite, disjoint) union of projective $k$-varieties. See \cite{F:ac} for details.

\subsection{Cycle-spaces over $\C$}

Let $Z$ be a complex variety.  Denote the set of complex points equipped with the analytic topology by  $Z(\C)^{an}$. Since there will be little chance for confusion we will often simply write this space as $Z(\C)$ with the topology understood. If $j:Y\subseteq \P^n_{\C}$ is a projective variety then $\mcal{C}_q(Y,j)(\C)$ is a topological monoid and we will generally omit $j$ from the notation since the homeomorphism type of this space is independent of $j$. 

The monoid $\mcal{C}_q(Y)(\C)^{an}$ is tractable  and therefore the naive group completion is a homotopy group completion. Write  
\begin{equation*}
 \mcal{Z}_q(Y) = (\mcal{C}_q(Y)(\C))^{+} 
\end{equation*}
for the naive group group completion of this monoid.  Define the filtration $\{0\} \subseteq\cdots \subseteq \mcal{Z}_{q,\leq d}(Y) \subseteq \mcal{Z}_{q,\leq d+1}(Y) \subseteq \cdots \subseteq \mcal{Z}_{q}(Y)$ by
\begin{equation*}
\mcal{Z}_{q,\leq d}(Y) = \left( \coprod_{d_1+d_2 \leq d} \mcal{C}_{q,d_1}(Y)(\C)\times \mcal{C}_{q,d_2}(Y)(\C) \right )/ \sim \subseteq \mcal{Z}_{q}(Y). 
\end{equation*}
By \cite{Li:completion} each $\mcal{Z}_{q,\leq d}(Y)$ is a closed, compact Hausdorff space and $\mcal{Z}_{q}(Y)$ has the weak topology with respect to this filtration. 

When $U$ is quasi-projective with projectivization $U\subseteq \overline{U}$ then define $\mcal{Z}_{q}(U) = \mcal{Z}_{q}(\overline{U})/\mcal{Z}_{q}(U_{\infty})$ where $U_{\infty} = \overline{U}\backslash U$. The images of $\mcal{Z}_{q,\leq k}(\overline{U})$ in the quotient $\mcal{Z}_{q}(U)$ give a filtration by compact subspaces (and $\mcal{Z}_{q}(U)$ has the weak topology with respect to this filtration). This definition is independent of choice of projectivization \cite{LF:qproj}, \cite{FG:cyc}.

\subsection{Cycle-spaces over $\R$}

Suppose that $Z$ is a real variety. Write the set of $\R$-points equipped with the analytic topology  as $Z(\R)^{an}$ or simply $Z(\R)$ with the topology understood. 

Let $Y$ be a projective real variety. Consider the topological monoid $\mcal{C}_q(Y)(\R)$. As 
explained in the proof of \cite[Proposition 8.2]{FW:real} (see Proposition \ref{gtract}), the 
topological monoid $\mcal{C}_q(Y)(\R)$ is tractable and therefore its naive group completion 
is a homotopy group completion. Write
\begin{equation*}
 \mcal{Z}_{q}(Y) = (\mcal{C}_{q}(Y)(\R))^{+}
\end{equation*}
for the naive group completion. 

Suppose that $U$ is a quasi-projective real variety with projectivization $U\subseteq \overline{U}$. Define the topological group of $q$-cycles on the quasi-projective real variety $U$ to be
\begin{equation*}
 \mcal{Z}_{r}(U) = \mcal{Z}_{r}(\overline{U})/\mcal{Z}_{r}(\overline{U}\backslash U).
\end{equation*}

If $X$ is a real variety and $\pi: X_{\C} \to X$ is its complexification then $G$ acts on $X_{\C}(\C)$ and  induces a homeomorphism
\begin{equation*}
 X(\R) \xrightarrow{\iso} X_{\C}(\C)^G.
\end{equation*}

In particular if $X$ is a projective real variety then by \cite[Proposition 1.1]{F:ac}  $\mcal{C}_q(X_{\C}) = \mcal{C}_{r}(X)_{\C}$ and so we have the isomorphism of topological monoids
\begin{equation*}
 \mcal{C}_{r}(X)(\R) \xrightarrow{\iso} \mcal{C}_{r}(X_{\C})(\C)^{G} .
\end{equation*}

\begin{proposition}
 Let $U$ be a quasiprojective real variety. Then
\begin{equation*}
\mcal{Z}_{r}(U) \xrightarrow{\iso} \mcal{Z}_{r}(U_{\C})^{G}
\end{equation*}
is an isomorphism of topological abelian groups. In particular the group $\mcal{Z}_{r}(U)$ is independent of projectivization $U\subseteq \overline{U}$.
\end{proposition}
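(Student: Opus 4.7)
The plan is to first establish the projective case by comparing naive group completions, then descend to the quasi-projective case by a quotient argument. The map $\mcal{Z}_{r}(U) \to \mcal{Z}_{r}(U_{\C})^{G}$ is the natural one obtained by viewing a real cycle as a $G$-invariant complex cycle; it is visibly a continuous homomorphism, and the task is to show it is a bijection and a homeomorphism.

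For $U = Y$ projective, I would start from the monoid isomorphism $\mcal{C}_{r}(Y)(\R) \xrightarrow{\iso} \mcal{C}_{r}(Y_{\C})(\C)^{G}$ just recalled. Applying naive group completion produces a continuous homomorphism landing in $\mcal{Z}_{r}(Y_{\C})^{G}$. For surjectivity, a $G$-invariant class $[\alpha, \beta]$ in $\mcal{Z}_{r}(Y_{\C})$ satisfies $\alpha + \sigma\beta = \beta + \sigma\alpha$ in the cancellative Chow monoid, so the integral cycle $\alpha - \beta$ is $G$-invariant. Decomposing $\alpha - \beta = Z^{+} - Z^{-}$ into its positive and negative parts, the uniqueness of this decomposition (supports are disjoint) forces both $Z^{+}$ and $Z^{-}$ to be $G$-invariant and therefore to lie in $\mcal{C}_{r}(Y)(\R)$. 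Since $\deg Z^{+} + \deg Z^{-} \leq \deg \alpha + \deg \beta$, the resulting surjection respects the filtrations by degree. For the topology, each level $\mcal{Z}_{r,\leq d}(Y)$ is compact Hausdorff (from tractability of the real Chow monoid) and $\mcal{Z}_{r,\leq d}(Y_{\C})^{G}$ is compact Hausdorff as a closed subspace of $\mcal{Z}_{r,\leq d}(Y_{\C})$. A continuous bijection of compact Hausdorff spaces is a homeomorphism, and since both filtrations give the weak topology on their colimits, the map is an isomorphism of topological groups.

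For the quasi-projective case I would apply the projective result to $\overline{U}$ and to $U_{\infty} = \overline{U}\setminus U$ and then show that taking $G$-fixed points commutes with the quotient $\mcal{Z}_{r}(U_{\C}) = \mcal{Z}_{r}(\overline{U}_{\C})/\mcal{Z}_{r}((U_{\infty})_{\C})$. Algebraically, this uses $H^{1}(G, \mcal{Z}_{r}((U_{\infty})_{\C})) = 0$: as a $\Z[G]$-module it is a permutation module on irreducible subvarieties, hence a direct sum of copies of $\Z$ (from $G$-stable subvarieties) and $\Z[G]$ (from conjugate pairs), each of which has vanishing first group cohomology. Topologically, performing the positive-negative decomposition modulo cycles supported on $U_{\infty}$ lifts any $G$-fixed element of the quotient to the source at the same filtration level, and the same compact-Hausdorff argument promotes each filtration-level bijection to a homeomorphism. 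Independence of $\mcal{Z}_{r}(U)$ from the choice of projectivization then follows from the corresponding fact for $\mcal{Z}_{r}(U_{\C})$ proved in \cite{LF:qproj} and \cite{FG:cyc}. The main obstacle is the topological step in the quasi-projective case, namely verifying that the subspace topology on $\mcal{Z}_{r}(U_{\C})^{G}$ coincides with the quotient topology coming from $\mcal{Z}_{r}(\overline{U}_{\C})^{G}$; the degree control furnished by positive-negative decomposition is precisely what allows one to compare the two topologies filtration-by-filtration using compact Hausdorff spaces.
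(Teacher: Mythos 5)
Your proposal is correct, and its second half is essentially the paper's own argument: reducing to the projective case by comparing the quotient presentation $\mcal{Z}_{r}(U)=\mcal{Z}_{r}(\overline{U})/\mcal{Z}_{r}(\overline{U}\setminus U)$ with its complexified fixed-point analogue, and proving that taking $G$-fixed points commutes with this quotient by choosing a representative with no components supported on $\overline{U}\setminus U$ (so that $\eta-\overline{\eta}$ supported at infinity forces $\eta=\overline{\eta}$) and then upgrading the filtration-level bijections to homeomorphisms by compactness -- this is precisely Lemma \ref{ccom}; your $H^{1}(G,\mcal{Z}_{r}((U_{\infty})_{\C}))=0$ observation is a pleasant but redundant extra, since the representative argument already gives the group-level surjectivity. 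Where you genuinely differ is the projective case. The paper gets it in one stroke from the general monoid statement (Proposition \ref{appcycagr}): for a cancellative Hausdorff monoid $M$ with $G$-action, $(M^{G})^{+}\to (M^{+})^{G}$ is a topological isomorphism, continuity of the inverse coming from the explicit continuous map $(a,b)\mapsto (a+\sigma b,\, b+\sigma b)$ on the defining quotient $M\times M\to M^{+}$; no compactness or degree filtration enters, and the same lemma is reused later for averaged cycles and for cocycle monoids. You instead work inside the Chow monoid: the positive--negative decomposition of the $G$-invariant integral cycle $\alpha-\beta$ produces a $G$-fixed preimage of bounded degree, and continuity of the inverse is then extracted filtration-by-filtration from compactness. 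This is valid, but two points deserve to be made explicit: what the compactness argument really needs is that the source level $\mcal{Z}_{r,\leq d}(Y)$ is compact and the target $\mcal{Z}_{r,\leq d}(Y_{\C})^{G}$ is Hausdorff (asserting Hausdorffness of the real group at the outset is unnecessary and would be mildly circular), and you must know that the real group $\mcal{Z}_{r}(Y)$ carries the weak topology of its degree filtration -- the real analogue of the Lima-Filho fact the paper quotes over $\C$, which does hold because quotient maps commute with such colimits. In exchange for these extra checks, your route is more concrete and shows directly why a fixed class of degree $\leq d$ lifts to a real cycle pair of the same degree, whereas the paper's route is shorter and more broadly reusable.
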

\begin{proof}
For $U$ projective this follows immediately from Proposition \ref{appcycagr}. The quasi-projective case now follows by a comparison of short exact sequences of topological abelian groups
\begin{equation*}
 \xymatrix{
0 \ar[r] & \mcal{Z}_{r}(\overline{U}\backslash U) \ar[r]\ar[d] & \mcal{Z}_{r}(\overline{U}) \ar[r]\ar[d] & \mcal{Z}_{r}(U) \ar[r]\ar[d] & 0 \\
0 \ar[r] & \mcal{Z}_{r}(\overline{U}_{\C}\backslash U_{\C})^{G} \ar[r] & \mcal{Z}_{r}(\overline{U}_{\C})^{G} \ar[r] & \mcal{Z}_{r}(U_{\C})^{G} \ar[r] & 0,
}
\end{equation*}
where the exactness of the bottom row is a consequence of the Lemma \ref{ccom}.
\end{proof}

\begin{remark}\label{addqp}
Let $U$ be a quasi-projective real variety. Since $+:\mcal{Z}_{k}(U_{\C})\times\mcal{Z}_{k}(U_{\C}) \to \mcal{Z}_{k}(U_{\C})$ is closed we see by taking $G$-fixed points that $+:\mcal{Z}_{k}(U) \times \mcal{Z}_{k}(U) \to \mcal{Z}_{k}(U)$ is a closed map for any real variety $U$.
\end{remark}

\begin{lemma} \label{ccom}
 Let $Y\subseteq X$ be a closed subvariety of a real projective variety. Then 
\begin{equation*}
 \mcal{Z}_{r}(X_{\C})^{G} / \mcal{Z}_{r}(Y_{\C})^{G} \xrightarrow{\iso} (\mcal{Z}_{r}(X_{\C})/\mcal{Z}_{r}(Y_{\C}))^{G}
\end{equation*}
and
\begin{equation*}
 \mcal{Z}_{r}(X_{\C})^{av} / \mcal{Z}_{r}(Y_{\C})^{av} \xrightarrow{\iso} (\mcal{Z}_{r}(X_{\C})/\mcal{Z}_{r}(Y_{\C}))^{av}
\end{equation*}
are isomorphisms of topological groups.
\end{lemma}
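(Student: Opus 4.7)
The plan is to first establish both isomorphisms as identifications of abstract groups via the canonical decomposition of algebraic cycles, and then promote these to topological isomorphisms using the Chow-variety filtration by compact Hausdorff subspaces.

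First I would exploit that every $r$-cycle on $X_{\C}$ admits a unique decomposition $\alpha = \alpha_{U} + \alpha_{Y}$, where $\alpha_{Y}$ collects the irreducible components of $\alpha$ lying in $Y_{\C}$ and $\alpha_{U}$ has none. Since $\sigma$ preserves $Y_{\C}$ this decomposition is $G$-equivariant, giving a splitting of abstract $G$-modules
\begin{equation*}
\mcal{Z}_{r}(X_{\C}) \;=\; \mcal{Z}_{r}(Y_{\C}) \,\oplus\, \mcal{Z}_{r}(U_{\C}),
\end{equation*}
with $U = X \smallsetminus Y$, where $\mcal{Z}_{r}(U_{\C}) \iso \mcal{Z}_{r}(X_{\C})/\mcal{Z}_{r}(Y_{\C})$ is identified with the cycles on $X_{\C}$ having no component in $Y_{\C}$. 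Both the fixed-point functor $(-)^{G}$ and the image of the endomorphism $1+\sigma$ preserve finite direct sums of $G$-modules, hence
\begin{equation*}
\mcal{Z}_{r}(X_{\C})^{G} = \mcal{Z}_{r}(Y_{\C})^{G} \oplus \mcal{Z}_{r}(U_{\C})^{G}, \qquad \mcal{Z}_{r}(X_{\C})^{av} = \mcal{Z}_{r}(Y_{\C})^{av} \oplus \mcal{Z}_{r}(U_{\C})^{av}.
\end{equation*}
Quotienting by the $Y$-summand in each line yields the two claimed algebraic bijections.

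To promote these to homeomorphisms I would work with the filtration of $\mcal{Z}_{r}(X_{\C})$ by the compact Hausdorff subspaces $\mcal{Z}_{r,\leq d}(X_{\C})$. This filtration is $G$-stable and, by the very definition of $\mcal{Z}_{r}(U_{\C})$, surjects onto the corresponding filtration of $\mcal{Z}_{r}(U_{\C})$. The canonical decomposition of cycles is degree non-increasing, so the induced maps
\begin{equation*}
\mcal{Z}_{r,\leq d}(X_{\C})^{G} \twoheadrightarrow \mcal{Z}_{r,\leq d}(U_{\C})^{G} \quad\text{and}\quad (1+\sigma)\mcal{Z}_{r,\leq d}(X_{\C}) \twoheadrightarrow (1+\sigma)\mcal{Z}_{r,\leq d}(U_{\C})
\end{equation*}
are still surjective. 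Each is a continuous surjection between compact Hausdorff spaces, hence a quotient map whose fibers realize exactly the equivalence relation modulo the relevant $Y$-subgroup. Colimiting in $d$ and identifying these filtrations as cofinal with the filtrations on $\mcal{Z}_{r}(X_{\C})^{G}/\mcal{Z}_{r}(Y_{\C})^{G}$ and $\mcal{Z}_{r}(X_{\C})^{av}/\mcal{Z}_{r}(Y_{\C})^{av}$ respectively gives the topological isomorphisms.

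The main obstacle is precisely this last topological point: verifying that $\mcal{Z}_{r}(X_{\C})^{G}/\mcal{Z}_{r}(Y_{\C})^{G}$ and $\mcal{Z}_{r}(X_{\C})^{av}/\mcal{Z}_{r}(Y_{\C})^{av}$ carry the weak topology generated by images of the $\mcal{Z}_{r,\leq d}$, so that a homeomorphism at each filtration level assembles into a global homeomorphism. This is where the tractability of the underlying Chow monoids established in \cite{FG:cyc} and \cite{FW:real} is essential, since the canonical decomposition itself is not continuous and a direct openness argument is unavailable.
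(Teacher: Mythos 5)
Your proposal is correct and follows essentially the same route as the paper: both reduce to the compact filtration pieces $\mcal{Z}_{r,\leq d}$, where your degree-non-increasing equivariant splitting is just a repackaging of the paper's choice of representatives with no components in $Y_{\C}$ (which forces a fixed class to have a fixed canonical representative), and both conclude by compactness that the continuous bijections at each filtration level are homeomorphisms before passing to the colimit. The only points you leave implicit --- that the averaged filtration $(1+\sigma)\mcal{Z}_{r,\leq d}$ interleaves with $\mcal{Z}_{r}(X_{\C})^{av}\cap\mcal{Z}_{r,\leq d}$, and that the weak-topology bookkeeping for closed subgroups and for quotients by closed subgroups is elementary (tractability is not the ingredient needed here) --- are easily supplied and do not affect correctness.
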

 \begin{proof}
Consider the quotient maps $\pi: \mcal{Z}_{r}(X_{\C}) \to \mcal{Z}_{r}(X_{\C})/\mcal{Z}_{r}(Y_{\C})$ and $q:\mcal{Z}_{r}(X_{\C})^{G}\to \mcal{Z}_{r}(X_{\C})^{G}/\mcal{Z}_{r}(Y_{\C})^{G}$. Consider the filtration $\{(\pi\mcal{Z}_{r}(X_{\C})_{\leq d}\})^{G}$ of $(\mcal{Z}_{r}(X_{\C})/\mcal{Z}_{r}(Y_{\C}))^{G}$ and the filtration $\{q(\mcal{Z}_{r}(X_{\C})^{G}_{\leq d})\}$ of  $\mcal{Z}_{r}(X_{\C})^{G} / \mcal{Z}_{r}(Y_{\C})^{G}$. 
These spaces have the weak topology given by these filtrations so it is enough to see that 
\begin{equation*}
 q (\mcal{Z}_{r}(X_{\C})_{\leq d}^{G})\xrightarrow{} (\pi\mcal{Z}_{r}(X_{\C})_{\leq d})^{G}
\end{equation*}
is a homeomorphism for all $d$.

First we show that $\mcal{Z}_{r}(X_{\C})_{\leq d}^{G}\to (\pi\mcal{Z}_{r}(X_{\C})_{\leq d})^{G}$ is surjective. 
If $[\eta]\in (\pi\mcal{Z}_{r}(X_{\C})_{\leq d})^{G}$, we can choose a representative $\eta= \sum n_{V} V\in \mcal{Z}_{r}(X_{\C})_{\leq d}$  such that each $V\nsubseteq Y_{\C}$.  Since $\eta -\overline{\eta} \in \mcal{Z}_{r}(Y_{\C})$ (and each $\overline{V} \nsubseteq Y_{\C}$) we see that $\eta = \overline{\eta}$ and therefore the map
\begin{equation*}
 \mcal{Z}_{r}(X_{\C})_{\leq d}^{G} \xrightarrow{} (\pi\mcal{Z}_{r}(X_{\C})_{\leq d})^{G}
\end{equation*}
is surjective. 
The map
\begin{equation*}
 q\mcal{Z}_{r}(X_{\C})_{\leq d}^{G}\xrightarrow{} (\pi\mcal{Z}_{r}(X_{\C})_{\leq d})^{G}
\end{equation*}
is easily seen to be injective and since $\mcal{Z}_{r}(X_{\C})_{\leq d}^{G}$ is compact this map is closed so is a homeomorphism.

The second statement about the topological group of averaged cycles is proved in a similar fashion.
 \end{proof}

\subsection{Spaces of algebraic cocycles}
 In this section we recall the construction of topological monoids of algebraic cocycles \cite{FL:algco, F:algco}. Let $X$, $Y$ be quasi-projective real varieties over $k=\C$ or $\R$. Write $\Mor{k}{X}{Y}$ for the set of continuous algebraic maps between $X$ and $Y$. When $X$ is semi-normal  then $\Mor{k}{X}{Y} = \Hom{}{X}{Y}$.
Friedlander-Walker  construct ``analytic'' topologies on $\Mor{k}{X}{Y}$ in \cite{FW:funcspc} for $k=\C$ and in \cite{FW:real} for $k=\R$. The set of continuous algebraic maps with this topology will be written $\Mor{k}{X}{Y}^{an}$.  By \cite[Lemma 1.2]{FW:real} 
$\Mor{\R}{X}{Y}^{an} = (\Mor{\C}{X_{\C}}{Y_{\C}}^{an})^{G}.$ 

When $X$,$Y$ are projective real varieties and $W$,$Z$ are projective complex varieties then this topology coincides with the subspace topology induced by the inclusions
\begin{equation*}
\Mor{\C}{W}{Z}  \subseteq \map{W(\C)}{Z(\C)}
\end{equation*}
and
\begin{equation*}
 \Mor{\R}{X}{Y}  \subseteq \map{X_{\C}(\C)}{Y_{\C}(\C)}^{G},
\end{equation*}
where $\map{-}{-}$ denotes the space of continuous maps is with compact-open topology. When the domain is only quasi-projective then the analytic topology on the algebraic mapping spaces is no longer the compact-open topology but rather the topology of convergence with bounded degree (see \cite[Appendix A]{FL:dual}).

Let $W$ be a quasi-projective complex variety and $Z$ be a projective complex variety. Write $d= \dim W$. Let $\mcal{C}_{r}(Z)(W)$ denote the monoid of effective cycles on $W\times Z$ equidimensional or relative dimension $r$ on $W$. This is made into a topological monoid via the subspace topology induced by the inclusion
\begin{equation*}
\mcal{C}_{r}(Z)(W) \subseteq \mcal{C}_{d+r}(W \times Z) \eqdef \frac{\mcal{C}_{d+r}(\overline{W}\times Z)}{\mcal{C}_{d+r}(W_{\infty}\times Z)}
\end{equation*}
where $W\subseteq \overline{W}$ a projective closure with closed complement $W_{\infty} = \overline{W}\backslash W$. This topology may also be described as follows. Let 
\begin{equation*}
\mcal{E}_{r}(Z)(W) \subseteq \mcal{C}_{r+d}(\overline{W}\times Z)
\end{equation*}
 denote the constructable submonoid consisting of effective cycles whose restriction to $W\times Z$ is equidimensional of relative dimension $r$ over $W$. By \cite[Proposition 1.8]{F:algco} the topology on $\mcal{C}_{r}(Z)(W)$ (given by the subspace topology above) coincides with the quotient topology given by
\begin{equation*}
 \mcal{C}_{r}(Z)(W) = \frac{\mcal{E}_{r}(Z)(W)}{\mcal{C}_{r+d}(W_{\infty}\times Z)}.
\end{equation*}

Define the topological group of equidimensional cycles of relative dimension r over $W$ as $\mcal{Z}_{r}(Z)(W) = [\mcal{C}_{r}(Z)(W)]^{+}$ (where as usual the naive group completion is given the quotient topology). Since $\mcal{C}_{r}(Z)(W)$ is a tractable monoid the naive group completion is a homotopy group completion. 

In \cite{F:ac} it is shown that a morphism of varieties $f:W\to \mcal{C}_{r}(Z)$ has an associated graph in $\mcal{Z}_{f} \in \mcal{C}_{r}(Z)(W)$. By \cite[Proposition A.1]{FL:dual} this  defines an isomorphism of topological monoids
$ \Gamma: \Mor{\C}{W}{\mcal{C}_r(Z)} \to \mcal{C}_{r}(Z)(W)$ for any normal, quasi-projective complex variety $W$ by \cite[Proposition A.1]{FL:dual}. Therefore the graph map $\Gamma$ also induces an isomorphism of topological abelian groups 
\begin{equation*}
 \Gamma: \Mor{\C}{W}{\mcal{C}_0(Z)}^{+} \to \mcal{Z}_{0}(W)(Z),
\end{equation*}
for any normal, quasi-projective variety $W$ and any projective variety $Z$.
The composite of $\Gamma$ and the continuous inclusion $\mcal{Z}_{0}(W)(Z)\subseteq \mcal{Z}_{\dim W}(W\times Z)$ defines the duality map
\begin{equation*}
\mcal{D}: \Mor{\C}{W}{\mcal{C}_0(Z)}^{+} \xrightarrow{\Gamma} \mcal{Z}_{0}(W)(Z) \subseteq \mcal{Z}_{d}(W\times Z).
\end{equation*}
While this is a continuous injective homomorphism it is not a topological embedding (see \cite{FL:dual}).

\begin{lemma}(c.f. \cite[Proposition 2.9]{Teh:real})
\begin{enumerate}
\item If $W$ is a normal quasi-projective complex variety and $Z _1\subset Z _2$ is a closed subvariety of a complex projective variety then
$$
\Mor{\C}{W}{\mcal{C}_{0}(Z_{1})}^{+}\subseteq \Mor{\C}{W}{\mcal{C}_{0}(Z_{2})}^{+}
$$
 is a closed subspace. 

\item If $U$ is a normal quasi-projective real variety and $Y\subseteq Z$ is a closed subvariety of a projective real variety then
$$
\Mor{\R}{U}{\mcal{C}_{0}(Y)}^{+}\subseteq \Mor{\R}{U}{\mcal{C}_{0}(Z)}^{+}
$$
is a closed subspace.
\end{enumerate}
\end{lemma}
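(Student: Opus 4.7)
My plan is to use the graph isomorphism $\Gamma$ to recast the statement in terms of equidimensional cycle spaces, reduce closedness to the compact filtration pieces given by degree, and derive the real case from the complex case by passing to $G$-fixed points.

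For part (1), the isomorphism $\Gamma : \Mor{\C}{W}{\mcal{C}_0(Z)}^+ \xrightarrow{\iso} \mcal{Z}_0(Z)(W)$ recalled above is natural in $Z$, so it is enough to show that $\mcal{Z}_0(Z_1)(W)$ is a closed subspace of $\mcal{Z}_0(Z_2)(W)$. Both groups carry the weak topology with respect to their compact Hausdorff pieces $\mcal{Z}_{0,\leq k}(Z_i)(W)$ coming from the degree filtration on the underlying Chow monoids, so the task reduces to showing, for every $k$, that $\mcal{Z}_{0,\leq k}(Z_2)(W) \cap \mcal{Z}_0(Z_1)(W)$ is closed in $\mcal{Z}_{0,\leq k}(Z_2)(W)$.

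The key claim is the set-theoretic identity $\mcal{Z}_{0,\leq k}(Z_2)(W) \cap \mcal{Z}_0(Z_1)(W) = \mcal{Z}_{0,\leq k}(Z_1)(W)$. Given a class $\alpha$ on the left, pick a representative $(f,g)$ with $f,g \in \mcal{C}_{0,\leq k}(Z_2)(W)$. Each of $f,g$ is effective and equidimensional of relative dimension $0$ over $W$, so every irreducible component is equidimensional over $W$. Grouping components according to whether they lie in $W \times Z_1$ gives decompositions $f = f_1 + f_2$ and $g = g_1 + g_2$ with $f_1, g_1 \in \mcal{C}_{0,\leq k}(Z_1)(W)$ and the irreducible components of $f_2, g_2$ meeting $W \times (Z_2 \setminus Z_1)$. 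The hypothesis that $f - g$ is supported in $W \times Z_1$ forces $f_2 = g_2$, so $\alpha = [f_1, g_1] \in \mcal{Z}_{0,\leq k}(Z_1)(W)$. The inclusion $\mcal{Z}_{0,\leq k}(Z_1)(W) \hookrightarrow \mcal{Z}_{0,\leq k}(Z_2)(W)$ is a continuous injection between compact Hausdorff spaces (injectivity on the naive group completions comes from the cancellation and tractability of the Chow monoids), hence a closed embedding. This proves (1).

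Part (2) follows by taking $G$-fixed points: the identification $\Mor{\R}{U}{Y}^{an} = (\Mor{\C}{U_\C}{Y_\C}^{an})^G$ together with the real analogue of $\Gamma$ realizes $\Mor{\R}{U}{\mcal{C}_0(Y)}^+ \subseteq \Mor{\R}{U}{\mcal{C}_0(Z)}^+$ as the $G$-fixed points of the closed $G$-invariant inclusion $\Mor{\C}{U_\C}{\mcal{C}_0(Y_\C)}^+ \subseteq \Mor{\C}{U_\C}{\mcal{C}_0(Z_\C)}^+$ from (1), and the $G$-fixed point functor preserves closed inclusions. The main obstacle is the decomposition step in the third paragraph: one must verify that partitioning an effective equidimensional cycle according to containment of its components in $W \times Z_1$ keeps each summand effective, equidimensional over $W$, and of bounded degree. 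Everything else (continuous injections between compact Hausdorff spaces are closed embeddings; $G$-fixed points respect closed subspaces) rests on the tractability and cancellation of the Chow monoids already recalled.
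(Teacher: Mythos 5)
Your reduction to the degree filtration breaks down at the point you yourself flag as the main obstacle, but for a different reason than the one you check. The pieces $\mcal{Z}_{0,\leq k}(Z_i)(W)$ are \emph{not} compact: the monoid $\mcal{C}_{0}(Z)(W)$ is topologized as a quotient of the constructable submonoid $\mcal{E}_{0}(Z)(W)\subseteq \mcal{C}_{d}(\overline{W}\times Z)$, and equidimensionality over $W$ is only a constructable (not closed) condition on the Chow variety, while $W$ itself is merely quasi-projective. So the bounded-degree pieces are quotients of constructable, generally non-closed, subsets of projective Chow varieties and fail to be compact even when $W$ is projective (e.g.\ degree-bounded curves in $\P^{1}\times\P^{1}$ with no fiber components). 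Hence the step ``continuous injection between compact Hausdorff spaces is a closed embedding'' does not apply. In addition, the assertion that $\mcal{Z}_{0}(Z)(W)$ carries the weak topology with respect to this filtration is not established anywhere for the equidimensional cocycle groups: the paper proves such a statement only for the cycle spaces $\mcal{Z}_{q}(Y)$; for $\mcal{Z}_{0}(Z)(W)$ the topology is the quotient topology on $\mcal{C}_{0}(Z)(W)^{\times 2}$, and your reduction to filtration pieces needs a separate argument. Your set-theoretic identity (decomposing representatives by components contained in $W\times Z_{1}$ and cancelling) is fine, and the real case via $G$-fixed points is exactly how the paper proceeds, but the topological core of part (1) is missing.

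What the paper does instead is work at the level of the quotient topology directly. First it shows the inclusion of monoids is closed, using the identity $\mcal{C}_{0}(Z_{1})(W)=\mcal{C}_{d}(W\times Z_{1})\cap \mcal{C}_{0}(Z_{2})(W)$ together with the closedness of $\mcal{C}_{d}(W\times Z_{1})\subseteq \mcal{C}_{d}(W\times Z_{2})$ (a consequence of the closed-quotient Lemma \ref{qmcl}). Then, writing $\pi:\mcal{C}_{0}(Z_{2})(W)^{\times 2}\to \mcal{Z}_{0}(Z_{2})(W)$ for the quotient map, it identifies
\begin{equation*}
\pi^{-1}\mcal{Z}_{0}(Z_{1})(W)=\mcal{C}_{0}(Z_{1})(W)^{\times 2}+\Delta\bigl(\mcal{C}_{0}(Z_{2})(W)\bigr),
\end{equation*}
which is the precise analogue of your decomposition identity, and concludes that this set is closed because addition on the equidimensional cocycle monoid is a proper, hence closed, map (step (1) of Proposition \ref{Gcocomm}). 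That properness of addition is the ingredient your argument would need in place of compactness; without it (or some substitute controlling the quotient topology on the naive group completion), the closedness of $\mcal{Z}_{0}(Z_{1})(W)$ in $\mcal{Z}_{0}(Z_{2})(W)$ does not follow.
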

\begin{proof}
For the first statement it is equivalent to show that $\mcal{Z}_{r}(Z_{1})(W) \subseteq \mcal{Z}_{r}(Z_{2})(W)$ is closed. Using Lemma \ref{qmcl} we see that $\mcal{C} _{r+d}(W\times Z _1)\subset \mcal{C} _{r+d}(W\times Z _2)$ is closed and since $\mcal{C}_{r}(Z_1)(W) = \mcal{C} _{r+d}(W\times Z _1)\cap \mcal{C}_{r}(Z_{2})(W)$ we conclude that $\mcal{C}_{r}(Z_1)(W)\subset\mcal{C}_{r}(Z_{2})(W)$ is a closed subspace.
Write $\pi:\mcal{C}_{r}(Z_{2})(W)^{\times 2} \to \mcal{Z}_{r}(Z_{2})(W)$ for the quotient. Then $\pi^{-1}\mcal{Z}_{r}(Z_{1})(W) =\mcal{C}_{r}(Z_{1})(W)^{\times 2} + \Delta(\mcal{C}_{r}(Z_{2})(W)) $ (where $\Delta$ denotes the diagonal) is closed by step (1) in Proposition \ref{Gcocomm}. Therefore $\mcal{Z}_{r}(Z_{1})(W) \subseteq \mcal{Z}_{r}(Z_{2})(W)$ is closed.

The second statement follows immediately from the first statement together with Proposition \ref{appcycagr} and \cite[Lemma 1.2]{FW:real}. 
\end{proof}

\begin{definition}
\begin{enumerate}
 \item 
Let $W$ be a quasi-projective complex variety. The space of algebraic $q$-cocyles is defined to be
\begin{equation*}
 \mcal{Z}^{q}(W) = \frac{\Mor{\C}{W}{\mcal{C}_{0}(\P_{\C}^{q})}^{+}}{\Mor{\C}{W}{\mcal{C}_{0}(\P_{\C}^{q-1})}^{+}}
\end{equation*}

\item 
Let $U$ be a quasi-projective real variety. The space of real algebraic $q$-cocyles is defined to be
\begin{equation*}
 \mcal{Z}^{q}(U) = \frac{\Mor{\R}{U}{\mcal{C}_{0}(\P_{\R}^{q})}^{+}}{\Mor{\R}{U}{\mcal{C}_{0}(\P_{\R}^{q-1})}^{+}}
\end{equation*}
\end{enumerate}
\end{definition}

\begin{proposition}\label{Gcocomm}
 Let $U$ be a normal quasi-projective real variety then
\begin{equation*}
 \frac{(\Mor{\C}{U_{\C}}{\mcal{C}_0(\P^q_{\C})}^{an,+})^{G}}{(
\Mor{\C}{U_{\C}}{\mcal{C}_0(\P^{q-1}_{\C})}^{an,+})^{G}} \xrightarrow{\iso} \left(\frac{\Mor{\C}{U_{\C}}{\mcal{C}_0(\P^q_{\C})}^{an,+}}{
\Mor{\C}{U_{\C}}{\mcal{C}_0(\P^{q-1}_{\C})}^{an,+}}\right)^{G}
\end{equation*}
and
\begin{equation*}
 \frac{(\Mor{\C}{U_{\C}}{\mcal{C}_0(\P^q_{\C})}^{an,+})^{av}}{(
\Mor{\C}{U_{\C}}{\mcal{C}_0(\P^{q-1}_{\C})}^{an,+})^{av}} \xrightarrow{\iso} \left(\frac{\Mor{\C}{U_{\C}}{\mcal{C}_0(\P^q_{\C})}^{an,+}}{
\Mor{\C}{U_{\C}}{\mcal{C}_0(\P^{q-1}_{\C})}^{an,+}}\right)^{av}
\end{equation*}
are isomorphisms of topological groups. 
\end{proposition}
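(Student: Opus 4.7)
The proposition is a cocycle analogue of Lemma \ref{ccom}, and my plan is to imitate that proof essentially verbatim after a change of viewpoint. First I would invoke the graph isomorphism
\[
\Gamma: \Mor{\C}{U_{\C}}{\mcal{C}_{0}(\P^{q}_{\C})}^{an,+} \xrightarrow{\iso} \mcal{Z}_{0}(\P^{q}_{\C})(U_{\C})
\]
of \cite[Proposition A.1]{FL:dual}, available because $U$ is normal, together with its analogue for $\P^{q-1}_{\C}$. This lets me treat the cocycle groups as equidimensional cycle groups, which by the quotient-topology description recalled in the text (i.e.\ $\mcal{C}_{r}(Z)(W) = \mcal{E}_{r}(Z)(W)/\mcal{C}_{r+d}(W_{\infty}\times Z)$) carry a filtration by compact subspaces indexed by bounded degree on $\overline{U_{\C}}\times\P^{q}_{\C}$. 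The quotient $\mcal{Z}^{q}(U_{\C})$ then has a corresponding filtration by compact subspaces, and so do its $G$-fixed and averaged subgroups. As in Lemma \ref{ccom}, it is enough to verify that the stated continuous bijections induce homeomorphisms at each filtration level; the weak-topology argument then globalizes.

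\textbf{$G$-fixed case.} Given $[\eta]$ in the right-hand side, I would choose a representative $\eta=\sum n_{V}V\in \Mor{\C}{U_{\C}}{\mcal{C}_{0}(\P^{q}_{\C})}^{an,+}$, thought of via $\Gamma$ as a cycle on $\overline{U_{\C}}\times\P^{q}_{\C}$, with no component $V$ contained in $\overline{U_{\C}}\times\P^{q-1}_{\C}$ (any such component represents the zero class anyway and can be discarded). Since $G$ preserves $\overline{U_{\C}}\times\P^{q-1}_{\C}$, the conjugate components $\sigma V$ avoid it as well. The hypothesis $\eta-\sigma\eta\in\Mor{\C}{U_{\C}}{\mcal{C}_{0}(\P^{q-1}_{\C})}^{an,+}$ forces every irreducible component of $\eta-\sigma\eta$ to lie in $\overline{U_{\C}}\times\P^{q-1}_{\C}$; but those components are a subset of the $V$'s and $\sigma V$'s, so $\eta-\sigma\eta=0$ and $\eta$ is already $G$-fixed. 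This gives surjectivity at each filtration level. Injectivity is immediate, and since the filtered piece of $A^{G}$ is compact and that of $(A/B)^{G}$ is Hausdorff, the continuous bijection is closed, hence a homeomorphism.

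\textbf{Averaged case.} The algebraic kernel-computation amounts to showing
\[
\Mor{\C}{U_{\C}}{\mcal{C}_{0}(\P^{q}_{\C})}^{an,av} \cap \Mor{\C}{U_{\C}}{\mcal{C}_{0}(\P^{q-1}_{\C})}^{an,+} = \Mor{\C}{U_{\C}}{\mcal{C}_{0}(\P^{q-1}_{\C})}^{an,av}.
\]
If $\alpha+\sigma\alpha$ lies in the $\P^{q-1}_{\C}$-cocycle subgroup, I split $\alpha$ into components inside $\overline{U_{\C}}\times\P^{q-1}_{\C}$ and components strictly outside it. Comparing coefficients on either side of $\alpha+\sigma\alpha\in B$ (and using that conjugation preserves the splitting) forces the outside part of $\alpha$ to vanish, so $\alpha\in B$ and $\alpha+\sigma\alpha\in B^{av}$. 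The topological refinement follows by the same compact-to-Hausdorff argument applied to the filtered pieces.

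\textbf{Main obstacle.} The only new content beyond Lemma \ref{ccom} is verifying that the filtration by compact subspaces transfers correctly to the cocycle setting: one needs the subgroup $\Mor{\C}{U_{\C}}{\mcal{C}_{0}(\P^{q-1}_{\C})}^{an,+}$ to be closed in $\Mor{\C}{U_{\C}}{\mcal{C}_{0}(\P^{q}_{\C})}^{an,+}$ (which is exactly the content of the preceding lemma) and one needs each filtered piece to remain compact and Hausdorff after quotienting and taking $G$-fixed points or averaged subgroups. Once this bookkeeping is in place, the proof of Lemma \ref{ccom} transfers without essential modification.
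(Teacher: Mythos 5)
Your set-theoretic bookkeeping (choosing representatives with no component inside $\overline{U_{\C}}\times\P^{q-1}_{\C}$ and comparing components) is essentially the paper's step showing the maps are continuous bijections, but the topological half of your argument has a genuine gap: the compact-filtration mechanism of Lemma \ref{ccom} is simply not available for cocycle spaces. The bounded-degree pieces of $\Mor{\C}{U_{\C}}{\mcal{C}_0(\P^q_{\C})}^{an,+}$ are not compact: equidimensionality cuts out only a constructable, generally non-closed, submonoid $\mcal{E}_{r}(Z)(W)\subseteq\mcal{C}_{r+d}(\overline{W}\times Z)$ (limits of equidimensional cycles can acquire vertical components), and for quasi-projective $U$ one further passes to a quotient by $\mcal{C}_{r+d}(W_{\infty}\times Z)$. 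Nor can you import the compact filtration of $\mcal{Z}_{d}(\overline{U_{\C}}\times\P^{q}_{\C})$ through the graph map, because the duality map $\mcal{D}$ is a continuous injection but not a topological embedding (as the paper notes), and the topology on the cocycle group is not claimed to be the weak topology of any compact filtration. So the step ``the filtered piece of $A^{G}$ is compact and that of $(A/B)^{G}$ is Hausdorff, hence the continuous bijection is closed'' has no analogue here; this is not bookkeeping but the actual difficulty.

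The paper's proof replaces compactness by a properness/closedness mechanism: it first reduces to the effective cocycle monoids via Lemma \ref{lemtop1} and Proposition \ref{appcycagr} (naive group completion commutes with the relevant quotients and with $G$-fixed and averaged parts), then shows addition on $\mcal{C}_{r}(Y_{\C})(U_{\C})$ is proper (it is a pullback of the proper addition on Chow monoids, since a sum is equidimensional iff both summands are), and finally invokes Lemma \ref{qmcl} to see that the quotient map $\mcal{C}_{r}(Y_{\C})(U_{\C})\to\mcal{C}_{r}(Y_{\C})(U_{\C})/\mcal{C}_{r}(Y'_{\C})(U_{\C})$ is closed, which makes the continuous bijection on fixed (resp.\ averaged) points a closed map, hence a homeomorphism. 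To repair your argument you would have to supply this (or an equivalent) closedness input rather than compactness. One smaller point: in the averaged case, $\alpha+\sigma\alpha\in B$ does not force the ``outside'' part $\alpha_{out}$ to vanish (e.g.\ $\alpha_{out}=V-\sigma V$); it only forces $\alpha_{out}+\sigma\alpha_{out}=0$, which still gives $\alpha+\sigma\alpha=\alpha_{in}+\sigma\alpha_{in}\in B^{av}$, so the intended conclusion survives, but as stated that sentence is incorrect.
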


\begin{proof}
By Lemma \ref{lemtop1} and Proposition \ref{appcycagr} it is enough to show that for $Y'\subseteq Y$ a closed subvariety of a projective real variety and $U$ a quasiprojective real variety that
\begin{equation*}
 \frac{\mcal{C}_{r}(Y_{\C})(U_{\C})^{G}}{\mcal{C}_{r}(Y'_{\C})(U_{\C})^{G}}\xrightarrow{} \left(\frac{\mcal{C}_{r}(Y_{\C})(U_{\C})}{\mcal{C}_{r}(Y'_{\C})(U_{\C})}\right)^{G}
\end{equation*}
is an isomorphism of topological monoids.

We proceed in several steps.
\begin{enumerate}
\item The map
\begin{equation*}
+: \mcal{C}_{r}(Y_{\C})(U_{\C})\times\mcal{C}_{r}(Y_{\C})(U_{\C}) \to \mcal{C}_{r}(Y_{\C})(U_{\C})
\end{equation*}
is a proper map. Observe that if $\alpha + \beta$ is equidimensional then both $\alpha$ and $\beta$ are equidimensional and therefore
\begin{equation*}
 \xymatrix{
\mcal{C}_{r}(Y_{\C})(U_{\C})\times\mcal{C}_{r}(Y_{\C})(U_{\C}) \ar[r]^-{+}\ar[d] & \mcal{C}_{r}(Y_{\C})(U_{\C}) \ar[d]\\
 \mcal{C}_{r+k}(U_{\C}\times Y_{\C})\times  \mcal{C}_{r+k}(U_{\C}\times Y_{\C}) \ar[r]^-{+} &  \mcal{C}_{r+k}(U_{\C}\times Y_{\C})
}
\end{equation*}
is a pull-back square. Since addition is a proper map on effective cycles we see that it is a proper map for effective cocycles as well.



\item The map
\begin{equation*}
 \frac{\mcal{C}_{r}(Y_{\C})(U_{\C})^{G}}{\mcal{C}_{r}(Y'_{\C})(U_{\C})^{G}}\xrightarrow{} \left(\frac{\mcal{C}_{r}(Y_{\C})(U_{\C})}{\mcal{C}_{r}(Y'_{\C})(U_{\C})}\right)^{G}
\end{equation*}
is easily seen to be a continuous bijection by an argument similar to the one used in Lemma \ref{ccom}.


\item Finally since $\mcal{C}_{r}(Y_{\C})(U_{\C}) \to \mcal{C}_{r}(Y_{\C})(U_{\C})/\mcal{C}_{r}(Y'_{\C})(U_{\C})$ is a closed map by Lemma \ref{qmcl} we conclude that the continuous bijection
\begin{equation*}
 \frac{\mcal{C}_{r}(Y_{\C})(U_{\C})^{G}}{\mcal{C}_{r}(Y'_{\C})(U_{\C})^{G}}\xrightarrow{} \left(\frac{\mcal{C}_{r}(Y_{\C})(U_{\C})}{\mcal{C}_{r}(Y'_{\C})(U_{\C})}\right)^{G}
\end{equation*}
is a closed map and therefore a topological isomorphism.
\end{enumerate}

The second statement for average cocycles is proved in a similar fashion, using Proposition \ref{avl} and that $C _r(Y _{\C})(U _{\C})^{av}\subseteq C _r(Y_{\C})(U _{\C})$ is closed.

\end{proof}


As with the topological group of cycles on a real variety $X$ we may view the topological group of real cocycles as the topological group of cycles on the complexification which are fixed by the Galois action. 
\begin{proposition} \label{nqp}
Let $X$ be a normal quasi-projective real variety. Then
\begin{equation*}
\mcal{Z}^{q}(X) = \mcal{Z}^{q}(X_{\C})^{G}.
\end{equation*}
\end{proposition}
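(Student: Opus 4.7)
The plan is to verify the identification by chasing the definitions and invoking the results already established in this section: Proposition \ref{Gcocomm} to commute $G$-fixed points past the quotient defining the cocycle space, Proposition \ref{appcycagr} to commute $G$-fixed points past naive group completion, and \cite[Lemma 1.2]{FW:real} to pass from fixed morphisms into a complexified target to morphisms of real varieties.

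Starting from the definition, I would write
\begin{equation*}
\mcal{Z}^{q}(X_{\C})^{G} = \left(\frac{\Mor{\C}{X_{\C}}{\mcal{C}_0(\P^q_{\C})}^{+}}{\Mor{\C}{X_{\C}}{\mcal{C}_0(\P^{q-1}_{\C})}^{+}}\right)^{G}
\end{equation*}
and apply Proposition \ref{Gcocomm} (which is already in the form needed, using that $X$ is normal) to obtain the topological isomorphism
\begin{equation*}
\mcal{Z}^{q}(X_{\C})^{G} \iso \frac{(\Mor{\C}{X_{\C}}{\mcal{C}_0(\P^q_{\C})}^{+})^{G}}{(\Mor{\C}{X_{\C}}{\mcal{C}_0(\P^{q-1}_{\C})}^{+})^{G}}.
\end{equation*}
This reduces the question to a statement for each $q$ separately, namely that $(\Mor{\C}{X_{\C}}{\mcal{C}_0(\P^q_{\C})}^{an,+})^{G}$ is naturally isomorphic to $\Mor{\R}{X}{\mcal{C}_0(\P^q_{\R})}^{an,+}$.

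For this identification I would split the verification into two steps. The first is that $G$-fixed points commute with the naive group completion of the tractable monoid of algebraic morphisms, which is precisely the content of Proposition \ref{appcycagr}, yielding
\begin{equation*}
(\Mor{\C}{X_{\C}}{\mcal{C}_0(\P^q_{\C})}^{an,+})^{G} \iso \bigl((\Mor{\C}{X_{\C}}{\mcal{C}_0(\P^q_{\C})}^{an})^{G}\bigr)^{+}.
\end{equation*}
The second step identifies $G$-invariant complex algebraic morphisms with real algebraic morphisms via \cite[Lemma 1.2]{FW:real}, applied to the target $\mcal{C}_0(\P^q_{\R})$, using that $(\mcal{C}_0(\P^q_{\R}))_{\C} = \mcal{C}_0(\P^q_{\C})$ by \cite[Proposition 1.1]{F:ac}. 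Assembling these two isomorphisms and substituting into the previous display gives the desired identification $\mcal{Z}^{q}(X_{\C})^{G} \iso \mcal{Z}^{q}(X)$.

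The main technical obstacle, already handled upstream, is ensuring that naive group completion commutes with $G$-fixed points topologically: algebraically this comes from the fact that a class $[a,b] \in M^{+}$ being $G$-fixed forces $\sigma a - a = \sigma b - b$, which by cancellation lets one replace $(a,b)$ by a genuinely $G$-fixed representative, but topologically one needs tractability together with closedness of the addition map (as in Remark \ref{addqp} and Lemma \ref{qmcl}) to promote the resulting continuous bijection to a homeomorphism. Once one has Propositions \ref{Gcocomm} and \ref{appcycagr} in hand, the proof of this proposition amounts only to concatenating the displayed chain of topological isomorphisms.
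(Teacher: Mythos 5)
Your proposal is correct and follows essentially the same route as the paper: the paper's proof likewise combines Proposition \ref{Gcocomm} (fixed points commute with the quotient), Proposition \ref{appcycagr} (fixed points commute with naive group completion), and the identification $\Mor{\R}{X}{\mcal{C}_0(\P^q_{\R})} = \Mor{\C}{X_{\C}}{\mcal{C}_0(\P^q_{\C})}^{G}$ from \cite[Lemma 1.2]{FW:real}. Your extra remarks on why group completion commutes with fixed points topologically are simply an expansion of what Proposition \ref{appcycagr} already supplies.
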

\begin{proof}
This follows from Proposition \ref{appcycagr} together with the previous proposition since 
$\Mor{\R}{X}{\mcal{C}_0(\P^q_{\R})} = \Mor{\C}{X_{\C}}{\mcal{C}_0(\P^q_{\C})}^{G}$. 
\end{proof}
\begin{remark}
 The space $\mcal{Z}^{q}(X_{\C})$ has the equivariant homotopy type of a $G$-$CW$-complex (see Corollary \ref{ehtype}).
\end{remark}

When $W=X_{\C}$ is the complexification of a quasi-projective real variety and $Z = Y_{\C}$ is the complexification of a projective real variety the graph map is an equivariant morphism. In particular the duality map
\begin{equation*}
 \mcal{D}:\mcal{Z}^{q}(X_{\C})\to \mcal{Z}_{d}(X_{\C}\times \A^{q}_{\C})
\end{equation*}
is an equivariant continuous map. 

\begin{definition}
\begin{enumerate}
\item Let $X$ be a projective real variety. Define the topological group of \textit{averaged cocycles} to be 
\begin{equation*}
\mcal{Z}^{q}(X_{\C})^{av} = \{f + \sigma \cdot f | f\in \mcal{Z}^{q}(X_{\C})\}\subseteq \mcal{Z}^{q}(X_{\C}).
\end{equation*}

\item Let $X$ be a normal projective real variety. Define the topological group of \textit{reduced cocycles} to be the quotient topological group
\begin{equation*}
 \mcal{R}^{q}(X) = \frac{\mcal{Z}^{q}(X_{\C})^{G}}{\mcal{Z}^{q}(X_{\C})^{av}} .
\end{equation*}
Lemma \ref{avl} shows that $\mcal{R}^{q}(X)$ is a Hausdorff topological group.
\end{enumerate}
\end{definition}
In \cite{Teh:real} Teh defines 
$$ 
\mcal{R}_{0}(Y)(X)=\frac{\Mor{\C}{X_{\C}}{\mcal{C}_{0}(Y_{\C})}^{+,G}}{ \Mor{\C}{X_{\C}}{\mcal{C}_{0}(Y_{\C})}^{+,av}}
$$ 
and defines the reduced cocycles are defined as 
\begin{equation*}
\mcal{R}^{q}(X) = \frac{\mcal{R}_{0}(\P^{q})(X)}{\mcal{R}_{0}(\P^{q-1})(X)}
\end{equation*} 
for any real normal projective variety X and real projective variety Y.
 
By Proposition \ref{Gcocomm} this definition and the one above give isomorphic topological groups.

\begin{lemma}\label{avl}(c.f. \cite[Proposition 2.4]{Teh:real})
Let $X$ be a real projective variety. The subset of averaged cocycles $\mcal{Z}^{q}(X_{\C})^{av} \subseteq \mcal{Z}^{q}(X_{\C})$ is a closed subgroup. 
\end{lemma}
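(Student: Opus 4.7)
My plan is to express $\mcal{Z}^{q}(X_{\C})^{av}$ as the image of the continuous norm homomorphism
\begin{equation*}
N: \mcal{Z}^{q}(X_{\C}) \to \mcal{Z}^{q}(X_{\C}), \qquad f\mapsto f + \sigma\cdot f,
\end{equation*}
and exploit the weak topology on $\mcal{Z}^{q}(X_{\C})$ with respect to its filtration by compact subspaces $\{F_{e}\}_{e\geq 0}$ inherited from the tractable monoid structure on $\Mor{\C}{X_{\C}}{\mcal{C}_{0}(\P^{q}_{\C})}^{+}$. Since a subset is closed in the weak topology iff its intersection with each $F_{e}$ is closed in $F_{e}$, it suffices to show $\mcal{Z}^{q}(X_{\C})^{av}\cap F_{e}$ is closed in $F_{e}$ for every $e$.

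The key claim is that for each $e$ there is an integer $d=d(e)$ such that
\begin{equation*}
\mcal{Z}^{q}(X_{\C})^{av}\cap F_{e}\subseteq N(F_{d}).
\end{equation*}
Granting this, $F_{d}$ is compact and $N$ is continuous, so $N(F_{d})$ is compact; combined with the Hausdorff property of $\mcal{Z}^{q}(X_{\C})$ (which follows from the tractable monoid argument of Appendix \ref{tract} together with Proposition \ref{Gcocomm}), $N(F_{d})$ is closed. Hence $\mcal{Z}^{q}(X_{\C})^{av}\cap F_{e} = N(F_{d})\cap F_{e}$ is closed in $F_{e}$, finishing the proof.

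To establish the degree bound, given $\eta\in \mcal{Z}^{q}(X_{\C})^{av}\cap F_{e}$ I choose the minimal-form representation $\eta = \eta^{+}-\eta^{-}$ with $\eta^{\pm}$ effective equidimensional cocycles having no common irreducible components; minimal form is unique, and since $\eta$ is $G$-invariant both $\eta^{\pm}$ are $G$-invariant. The assumption that $\eta$ is averaged forces each $\sigma$-invariant irreducible component of $\eta^{\pm}$ to appear with even coefficient, and conjugate pairs $(V,\sigma V)$ to appear with equal coefficients. This permits a decomposition $\eta^{+} = 2A + (B + \sigma B)$ and $\eta^{-} = 2C + (D + \sigma D)$; setting $f := A + B - C - D$ gives $f + \sigma f = \eta$ with the degree of $f$ bounded by half of the degree data of $\eta$, hence $f\in F_{d(e)}$ for an explicit $d(e)$. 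The main obstacle is carrying out this minimal-form argument inside the quotient $\mcal{Z}^{q}(X_{\C}) = \Mor{\C}{X_{\C}}{\mcal{C}_{0}(\P^{q}_{\C})}^{+}/\Mor{\C}{X_{\C}}{\mcal{C}_{0}(\P^{q-1}_{\C})}^{+}$: one must verify that the decomposition can be made at the level of effective representatives in the numerator (using closedness of the denominator from the preceding lemma and that the ambient monoid has the cancellation property), and that the averaged condition on the class $\eta$ descends to the required parity condition on component coefficients.
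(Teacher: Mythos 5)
Your strategy has a genuine gap at its topological core: the filtration of $\mcal{Z}^{q}(X_{\C})$ that you invoke is not by compact subspaces, so the step ``$N(F_{d})$ is compact, hence closed'' has no basis. Compactness of bounded-degree filtration pieces is a feature of the cycle spaces — $\mcal{Z}_{q,\leq d}(Y)$ is compact because Chow varieties of projective varieties are projective — but not of cocycle spaces. The pieces of the tractable filtration of $\Mor{\C}{X_{\C}}{\mcal{C}_{0}(\P^{q}_{\C})}^{+}$ are built from $\Mor{\C}{X_{\C}}{\mcal{C}_{0,d}(\P^{q}_{\C})}$, which is a countable union of quasi-projective, generally non-proper, pieces (already for $X=\P^{1}_{\R}$, $q=1$, $d=1$ it contains the morphisms $\P^{1}\to\P^{1}$ of every degree); equivalently, equidimensionality cuts out only a constructible, not closed, subset of the Chow monoid $\mcal{C}_{d+0}(X_{\C}\times\P^{q}_{\C})$, since graphs can degenerate to cycles with vertical components. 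This is precisely why the duality map $\mcal{D}$ is continuous but not a topological embedding. Without compact $F_{d}$, your reduction ``closed iff closed in each $F_{e}$, and the intersection is $N(F_{d})\cap F_{e}$'' proves nothing, and no other reason is offered for $N(F_{d})$ (or $\im N$) to be closed.

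The paper's proof supplies exactly the compactness you are missing by pushing a convergent sequence $[f_{n}]+\overline{[f_{n}]}\to[\gamma]$ forward along $\mcal{D}$ into $\mcal{Z}_{d}(X_{\C}\times\A^{q}_{\C})$, where the bounded-degree pieces $\mcal{Z}_{d,\leq k}$ \emph{are} compact; one extracts a convergent subsequence of the graphs $\Gamma([f_{n}])$ with limit $[g]$ satisfying $[g]+\overline{[g]}=\Gamma([\gamma])$. The second ingredient — which your sketch correctly identifies as ``the main obstacle'' but does not carry out — is that this limit $g$ need not be equidimensional, and the averaged condition only holds modulo $\Mor{\C}{X_{\C}}{\mcal{C}_{0}(\P^{q-1}_{\C})}^{+}$; the paper handles this by choosing a representative of $g$ with no components inside $X_{\C}\times\P^{q-1}_{\C}$, showing the error term is equidimensional, and then performing your even-coefficient/conjugate-pair splitting to produce an equidimensional $g'$ with $g'+\overline{g'}=g+\overline{g}$, hence an averaged cocycle mapping to $\Gamma([\gamma])$. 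So your parity decomposition is the right combinatorial step (it is essentially the paper's definition of $g'$), but it must be run on limits taken in the cycle space, not on elements of a purported compact filtration of the cocycle space; as written, the proposal's closedness mechanism fails and the quotient/equidimensionality verification is left open. A repair along your lines would be to prove that $\mcal{Z}^{q}(X_{\C})^{av}=\mcal{D}^{-1}\bigl(\mcal{Z}_{d}(X_{\C}\times\A^{q}_{\C})^{av}\bigr)$ via that same splitting argument and then use closedness of averaged cycles (Remark \ref{addqp}) in the cycle group — which is, in substance, the paper's route.
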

\begin{proof}
Write $\overline{f}$ for $\sigma\cdot f$ and $\overline{V}$ for $\sigma\cdot V$. 

Suppose that $\{[f_n]+\overline{[f_{n}]}\}$ is a sequence in $\mcal{Z}^{q}(X_{\C})^{av}$ which converges in $\mcal{Z}^{q}(X_{\C})$. Write $[\gamma] = \lim_{n\to \infty} [f_{n}] + \overline{[f_{n}]}$ for its limit. We need to conclude that $[\gamma]$ is an averaged cocycle. 

The set $\{[f_n] + [\overline{f_n}]\} \cup \{[\gamma]\}\subseteq \mcal{Z}^{q}(X_{\C})$ is compact. Applying the duality map to this set yields the compact subset 
\begin{equation*}
\{\Gamma([f_n]) + \Gamma([\overline{f_n}])\} \cup \{\Gamma([\gamma])\} \subseteq 
\mcal{Z}_{d}( X_{\C}\times \A^{q}_{\C}). 
\end{equation*}
Since this is a compact subset  it lies in  $\mcal{Z}_{d,\leq k}(X_{\C}\times 
\A^{q}_{\C})$ for some $k$. 

The sequence $\{[g_{n}]\}\subseteq \mcal{Z}_{d,\leq k}(X_{\C}\times 
\A^{q}_{\C})$ has a convergent subsequence.  Write $\{[g_{n_{i}}] \}$ for this convergent subsequence and write $\lim_{n_{i}\to \infty} [g_{n_{i}}]= [g] \in \mcal{Z}_{d,\leq k}(X_{\C}\times \A^{q}_{\C})$ for its limit. Note that $[g]$ satisfies $[g] +\overline{[g]} = \Gamma(\gamma)$. Since $\Gamma$ is injective and its image consists precisely of equidimensional cycles, we are done if we can find an equidimensional cycle $[g']$ such that $[g'] + \overline{[g']} = [g] + \overline{[g]}$.


Choose a representative $\gamma\in \Mor{\C}{X_{\C}}{\mcal{C}_0(\P^q_{\C})}^{+}$ of 
$[\gamma]$. Choose a representative $g= \sum n_V V\in \mcal{Z}_{d}(X_{\C}\times\P^{q}_{\C})$ of $[g]$ such that if $n_V \neq 0$ 
then $V\nsubseteq\P^{q-1}\times X$. Since $[g] + \overline{[g]} = \Gamma([\gamma]) \in 
\mcal{Z}_{d}(X_{\C}\times \A_{\C}^{q})$ we see that $g + \overline{g} = \sum (n_V + 
n_{\overline{V}} )V = \Gamma(\gamma) + h$ where $h\in \mcal{Z}_{d}(X_{\C}\times \P_{\C}^{q-1})$. Write $h 
= \Sigma m_{W} W$. Since  $V\nsubseteq \P^{q-1}$ whenever $n_{V}\neq 0$ we see that if 
$m_{W} \neq 0$ then a term of $-m_{W}W$ must appear in $\Gamma(\gamma)$. In particular $h$ is equidimensional. Consequently $g + \overline{g}$ is equidimensional.

If $n_{V} + n_{\overline{V}} \neq 0$ then $V$ is equidimensional. Define 
\begin{equation*}
g' = \sum_{n_{V} + n_{\overline{V}}\neq 0} n_{V} V.
\end{equation*}
 Since $g'$ is an equidimensional cycle there is an $f\in (\Mor{\C}{X_{\C}}{\mcal{C}_0(\P^q_{\C})}^{+})^{G}$ 
such that $\Gamma(f) = g'$. Since $\Gamma([f] + \overline{[f]}) = [g'] + \overline{[g']} = [g] + \overline{[g]} = \Gamma([c])$ and $\Gamma$ is injective, we conclude that $[c] = [f] + \overline{[f]}$.
\end{proof}

A continuous algebraic map $f:W\to V$ between two complex varieties induces a continuous map $f:W(\C)\to V(\C)$. Friedlander-Lawson \cite[Proposition 4.1]{FL:algco} show that this defines a continuous map
\begin{equation}\label{morphcomp}
\Phi:\mcal{Z}^{r}(W) \to \map{W(\C)}{\mcal{Z}_{0}(\A^{r}_{\C})},
\end{equation}
where the mapping space between two topological spaces is given with the compact-open topology. If $Y$ is a real variety this provides a continuous equivariant comparison map
\begin{equation*}
 \Phi:\mcal{Z}^{r}(Y_{\C}) \to \map{Y_{\C}(\C)}{\mcal{Z}_{0}(\A^{r}_{\C})}
\end{equation*}
of topological abelian groups.

\begin{definition}(Real Morphic Cohomology)
Friedlander-Walker \cite{FW:real} define real morphic cohomology of a quasi-projective real variety by 
\begin{equation*}
 L^qH\R^{n}(X)=\pi_{2q-n}\mcal{Z}^{q}(X)
\end{equation*}
for $2q-n\geq 0$.
\end{definition}

 We will be using  an equivariant extension of their theory for normal quasi-projective real varieties defined below.
 \begin{definition}(Equivariant Morphic Cohomology) 
 \label{emco}
 Let $X$ be a normal quasi-projective variety. Then the equivariant morphic cohomology is (in equivariant homotopy indexing notation)
$$L^{q}H\R^{k,r}(X)=\pi_{q-k,q-r}\mcal{Z}^{q}(X_{\C}),$$
for $q-k, q-r \geq 0$.
\end{definition} 
By Proposition \ref{nqp} we see that 
$$L^qH\R^{n}(X)=\pi_{2q-n}\mcal{Z}^{q}(X_{\C})^{G}$$
so Friedlander-Walker's real morphic cohomology groups are a part of the equivariant morphic cohomology, $L^{q}H\R^{q-r,q}(X)=\pi_{r,0}\mcal{Z}^{q}(X_{\C})=L^{q}H\R^{2q-r}(X)$.

In \cite{DS:real} dos Santos defines real Lawson homology.
\begin{definition}(Real Lawson Homology) For any quasi-projective real variety $X$, the real Lawson homology is defined by
$$L_qH\R_{n,m}(X) = \pi_{n-q,m-q}\mcal{Z}_q(X_{\C}),$$
for $n-q,m-q\geq 0$. 
\end{definition}

\begin{definition}
Let $X$ be a quasi-projective real variety. The following definitions are taken from \cite{LLM:real}.
\begin{enumerate}

\item Define the \textit{space of averaged cycles} $\mcal{Z}_q(X)^{av}$ to be
\begin{equation*}
 \mcal{Z}_q(X_{\C})^{av} =\im(N) \subseteq \mcal{Z}_q(X_{\C})^{G},
\end{equation*}
so  $\mcal{Z}_q(X_{\C})^{av} \subseteq \mcal{Z}_q(X_{\C})^{G}$ is the subgroup generated by cycles of the form $Z + \overline{Z}$ and given the subspace topology.  By Remark \ref{addqp} this is a closed subgroup. Here $N:  \mcal{Z}_q(X_{\C}) \to \mcal{Z}_q(X_{\C})$ is defined by $N(Z)=Z+\overline{Z}$.

\item Define the \textit{space of reduced cycles}  $\mcal{R}_q(X)$ to be the quotient group
\begin{equation*}
 \mcal{R}_q(X) = \frac{\mcal{Z}_q(X_{\C})^{G}}{\mcal{Z}_q(X_{\C})^{av}}.
\end{equation*}
\end{enumerate}
\end{definition}

\begin{remark}
 These spaces all have the homotopy type of a $CW$-complex (see Corollary \ref{ehtype}).
\end{remark}

Teh \cite{Teh:real} defines the \textit{reduced real Lawson homology} of $X$ to be
\begin{equation*}
 RL_qH_{n}(X) = \pi_{n-q}\mcal{R}_q(X),
\end{equation*}
for $n\geq q$. According to Lemma \ref{ccom} this definition coincides with the definition given in \cite{Teh:real} in the case of a quasi-projective variety.

\begin{example}
Let $X$ be a projective real variety.
\begin{enumerate}
\item \cite[Lemma 8.4]{LLM:real} The space of averaged zero-cycles computes the singular homology of the quotient of analytic space of complex points
 \begin{equation*}
\pi_k \mcal{Z}_{0}(X_{\C})^{av} = H_k(X(\C)/G;\Z) .
\end{equation*}
\item By the equivariant Dold-Thom theorem \cite{DS:equiDT} the space of fixed zero-cycles computes (a portion of) Bredon cohomology 
\begin{equation*}
\pi_k\mcal{Z}_{0}(X_{\C})^{G} = H_{k,0}(X(\C);\underline{\Z}) .
\end{equation*} 

\item \cite[Proposition 2.7]{Teh:real} The space of reduced real cycles computes the singular homology with $\Z/2$ coefficients of the analytic space of real points
\begin{equation*}
\pi_{k}\mcal{R}_{0}(X) = H_{k}(X(\R);\Z/2).
\end{equation*}
\end{enumerate}
\end{example}

\section{Poincare Duality}\label{realdual}

In this section we use the duality  for bivariant cycle homology in \cite{FV:biv} to establish a duality between Lawson homology of a real variety and real morphic cohomology. This together with the 
duality between Lawson homology and morphic cohomology \cite{FL:dual} gives an 
equivariant duality between the algebraic cocycle spaces and algebraic cycle spaces for the complexification of a real variety. 

The material and methods used here closely parallel  \cite[Section 3]{FW:ratisos} where Friedlander-Walker  reformulate Lawson homology and morphic cohomology for complex varieties.

\subsection{Recognition Principle}

Let $F(-)$ be a presheaf sets (respectively simplicial sets, or abelian groups) on $Sch/\R$. If $T$ is a topological space then define $F(T)$ by the filtered colimit
\begin{equation*}
 F(T) = \colim_{T\to V(\R)} F(V).
\end{equation*}

In particular we obtain a simplicial set (respectively a bisimplicial set, or simplicial abelian group) by 
\begin{equation}
 d\mapsto F(\Delta^{d}_{top}).
\end{equation}

We record an analogue of the recognition principle \cite[Theorem 2.3]{FW:ratisos} which is needed to move the duality for bivariant cycle homology to a duality for real Lawson homology and morphic cohomology. 
Friedlander-Walker's proof in the complex case uses the $uad$-topology which is essentially due to Deligne. 
\begin{defn}
\begin{enumerate}
\item A continuous map of topological spaces $f: S\to T$ is said to satisfy \textit{cohomological descent} if for any sheaf $A$ of abelian groups on $T$ the natural map 
\begin{equation*}
 H^*(T, A) \to H^{*}(N_T(S), f^*A)
\end{equation*}
is an isomorphism. Here $N_T(S)\to T$ is the Cech nerve of $f$, i.e. $N_T(S)$ is the simplicial space which in degree $n$ is the $n+1$-fold fiber product of $S$ over $T$.
A map $f:S\to T$ is said to be of \textit{universal cohomological descent} provided the pullback $S\times_{T}T'\to T'$ along any continuous map $T'\to T$ is again of cohomological descent.

\item The $uad$-topology on $Sch_{\R}$ is the Grothendieck topology associated to the pretopology generated by collections $\{U_i\to X\}$ such that $\coprod U_i(\R)^{an}\to X(\R)^{an}$ is a surjective map of universal cohomological descent.
\end{enumerate}
\end{defn}
\begin{example}
\begin{enumerate}
\item A  proper and surjective map of real varieties $X\to Y$ which induces a surjective map of real points is a $uad$-cover. Indeed, in this case $X(\R)^{an}\to Y(\R)^{an}$ is a proper surjective map of topological spaces, and therefore is a map of universal cohomological descent (see \cite[5.3.5]{Deligne:hodgeIII}).
\item Any Nisnevich cover is a $uad$-cover. Any $cdh$-cover is a $uad$-cover. In particular every real variety $X$ is locally smooth in the $uad$ topology because resolution of singularities implies there is a $cdh$-cover $X' \to X$, with $X'$ smooth.
\item Unlike the complex case not every etale-cover is a $uad$-cover (e.g. $\spec \C \to \spec \R$ is an etale cover but not a $uad$-cover).
\end{enumerate}
\end{example}
Here is the recognition principle.
\begin{theorem}[{\cite[Theorem 2.2]{FW:ratisos}}]\label{recog}
Suppose that $F\to G$ is a natural transformation of presheaves of abelian groups on $Sch_{\R}$. If $F_{uad} \xrightarrow{} G_{uad}$ is an isomorphism of $uad$-sheaves, then 
\begin{equation*}
 F(\Delta^{\bullet}_{top}) \to G(\Delta^{\bullet}_{top})
\end{equation*}
 is a homotopy equivalence of simplicial abelian groups.
\end{theorem}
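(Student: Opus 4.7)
The plan is to rephrase the statement as a vanishing claim and then leverage universal cohomological descent together with simplicial subdivision. First, because $F(T) = \colim_{T \to V(\R)^{an}} F(V)$ is a filtered colimit, the assignment $F \mapsto F(\Delta^{\bullet}_{top})$ commutes with kernels and cokernels and therefore is exact on presheaves of abelian groups. Breaking the four-term sequence $0 \to \ker(F\to G) \to F \to G \to \mathrm{coker}(F\to G) \to 0$ into two short exact sequences and taking the associated long exact sequence of simplicial homotopy groups, I reduce the theorem to the following vanishing statement: if $K$ is a presheaf of abelian groups on $Sch_{\R}$ with $K_{uad}=0$, then $K(\Delta^{\bullet}_{top})$ is contractible.

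To establish the vanishing, I take a class $\alpha \in \pi_n K(\Delta^{\bullet}_{top})$ represented by some $s \in K(V)$ together with a continuous map $\phi:\Delta^n_{top} \to V(\R)^{an}$. The hypothesis $K_{uad}=0$ produces, for each $V$ and each such $s$, a $uad$-cover $\{U_i \to V\}$ on which $s$ pulls back to zero in every $K(U_i)$. The goal is to translate this local vanishing into a simplicial null-homotopy of $\alpha$ in $K(\Delta^{\bullet}_{top})$.

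The mechanism for that translation is iterated barycentric subdivision. I would subdivide $\Delta^n_{top}$ finely enough that the composite with $\phi$ factors, on each small subsimplex, through one of the topological spaces $U_i(\R)^{an}$; the existence of such a lift is supplied by the universal cohomological descent property of $\coprod_i U_i(\R)^{an} \to V(\R)^{an}$ applied to the pullback $\Delta^n_{top} \times_{V(\R)^{an}} \coprod_i U_i(\R)^{an} \to \Delta^n_{top}$, combined with the contractibility of each small subsimplex. Once such a subdivision and lifting are in place, the pullback of $s$ vanishes piecewise, and the standard prism decomposition of $\Delta^n_{top} \times I$ converts these piecewise vanishings into the desired simplicial null-homotopy of $\alpha$.

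The main obstacle is precisely this transfer step between the algebraic and topological settings: the defining property of the $uad$-topology is cast in terms of sheaves on the real analytic spaces $V(\R)^{an}$, while the classes in question live in sections of the algebraic presheaf $K$ indexed by the filtered colimit $K(\Delta^d_{top}) = \colim K(V)$. The delicate point is verifying that the cohomological descent vanishing of $s$ along the $uad$-cover is detected at a finite stage of this colimit after enough subdivision. Here one exploits two structural inputs: every real variety is $uad$-locally smooth (by resolution of singularities, giving $cdh$-covers which are $uad$-covers), and proper surjective maps of real varieties inducing surjections on real points are of universal cohomological descent. Together these guarantee that the algebraic models refining $\phi$ through the cover exist within the cofinal system of the colimit, so that the local vanishing assembles into an honest simplicial boundary.
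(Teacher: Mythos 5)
Your opening reduction is fine: since $F(\Delta^{d}_{top})$ is a filtered colimit of the groups $F(V)$, the functor $F\mapsto F(\Delta^{\bullet}_{top})$ is exact, and the theorem does reduce to showing that $K_{uad}=0$ forces $K(\Delta^{\bullet}_{top})$ to be acyclic. The problem is the engine you then use: the claim that after sufficiently fine subdivision the map $\phi:\Delta^{n}_{top}\to V(\R)^{an}$ factors, on each small subsimplex, through one of the $U_{i}(\R)^{an}$, and that this lifting is ``supplied by the universal cohomological descent property.'' Universal cohomological descent is a statement about recovering sheaf cohomology from the Cech nerve of the cover; it provides no continuous sections, not even piecewise ones after subdivision. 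Concretely, take $V=\A^{2}_{\R}$ and let $U\to V$ be the blow-up of the origin: it is proper and surjective on real points, hence a $uad$-cover by the paper's first example. Choose an interior point $p\in\Delta^{2}_{top}$ and define $\phi$ in polar coordinates about $p$ by $(r,\alpha)\mapsto (r\cos(\alpha+1/r),\,r\sin(\alpha+1/r))$, with $\phi(p)=0$. A lift of $\phi$ through the blow-up on any closed set containing $p$ forces the direction $\alpha+1/r$ to converge as one approaches $p$ inside that set, and a short estimate on small circles about $p$ shows that no finite closed cover of $\Delta^{2}_{top}$ — in particular no subdivision — admits lifts on all of its members. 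So the step that was to convert $uad$-local vanishing of $s$ into vanishing at a finite stage of the colimit $K(\Delta^{d}_{top})=\colim K(V)$ does not exist, and the prism/null-homotopy conclusion does not follow. Note also that your mechanism never actually uses cohomological descent, only surjectivity of $\coprod U_{i}(\R)^{an}\to V(\R)^{an}$; the fact that the definition of the $uad$-topology insists on the descent condition is itself a signal that a section-lifting argument cannot be the right one.

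For comparison, the paper does not reprove the statement at all: it cites Friedlander--Walker's proof of the complex recognition principle and observes that it goes through verbatim once $X(\C)^{an}$ is replaced by $Y(\R)^{an}$, the only new input being that $Y(\R)^{an}$ may be triangulated. That argument exploits descent cohomologically — via the Cech nerve of the pulled-back cover and triangulations of the analytic spaces — rather than via any local factorization of simplices through the cover. To make your sketch into a proof you would have to replace the lifting step by an argument of that kind, at which point you are essentially reproducing Friedlander--Walker rather than giving an independent route.
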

\begin{proof}
Friedlander-Walker's proof given in \cite{FW:ratisos} works by changing the space $X(\C)^{an}$ associated with a complex variety with the space $Y(\R)^{an}$ associated to a real variety together with the fact that $Y(\R)^{an}$ may be triangulated.
\end{proof}

\begin{corollary} \label{saqiso}

Suppose that $f:F \to G$ is a map of presheaves of simplicial abelian 
 groups such that  $F(V) \to G(V)$ is a homotopy equivalence for any smooth $V$. Then the map of simplicial abelian groups
$\diag F(\Delta^{\bullet}_{top}) \to \diag G(\Delta^{\bullet}_{top})$ is a homotopy equivalence. 
\end{corollary}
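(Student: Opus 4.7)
The plan is to bootstrap the recognition principle (Theorem \ref{recog}), which is stated for presheaves of abelian groups, up to presheaves of simplicial abelian groups by applying it to each homotopy-group presheaf. For $n\geq 0$, consider the presheaves of abelian groups $V \mapsto \pi_n F(V)$ and $V \mapsto \pi_n G(V)$ on $Sch/\R$. The hypothesis that $F(V)\to G(V)$ is a weak equivalence for every smooth $V$ is equivalent to the assertion that $\pi_n F(V) \to \pi_n G(V)$ is an isomorphism for every smooth $V$ and every $n$. By resolution of singularities in characteristic zero, any real variety admits a $cdh$-cover, hence a $uad$-cover, by smooth varieties, as recorded in the examples preceding Theorem \ref{recog}. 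Consequently, the induced map of $uad$-sheafifications $(\pi_n F)_{uad}\to (\pi_n G)_{uad}$ is an isomorphism, and Theorem \ref{recog} yields a homotopy equivalence of simplicial abelian groups
\begin{equation*}
\pi_n F(\Delta^\bullet_{top}) \xrightarrow{\simeq} \pi_n G(\Delta^\bullet_{top})
\end{equation*}
for each $n\geq 0$.

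Since filtered colimits of simplicial abelian groups commute with taking homotopy groups, the simplicial abelian group $\pi_n F(\Delta^\bullet_{top})$ is precisely the result of taking the $n$th homotopy group in the internal simplicial direction of the bisimplicial abelian group $F(\Delta^\bullet_{top})$, while keeping the external $\Delta^\bullet_{top}$-direction free (and similarly for $G$). To convert this termwise isomorphism on internal homotopy into a weak equivalence of diagonals, I would invoke the standard spectral sequence for a bisimplicial abelian group $A$,
\begin{equation*}
E^2_{p,q} = \pi_p \pi_q A \Longrightarrow \pi_{p+q}\, \diag A,
\end{equation*}
applied to both $A = F(\Delta^\bullet_{top})$ and $A = G(\Delta^\bullet_{top})$. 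The previous paragraph provides isomorphisms on the $E^2$-pages in every bidegree, and strong convergence together with the spectral sequence comparison theorem then force the induced map $\diag F(\Delta^\bullet_{top}) \to \diag G(\Delta^\bullet_{top})$ to be a weak equivalence of simplicial abelian groups.

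The step I expect to require the most care is the passage from isomorphism of the homotopy presheaves on smooth varieties to isomorphism of the associated $uad$-sheaves; this is where the existence of smooth $uad$-covers (via resolution of singularities) enters essentially, and it is the reason the hypothesis on smooth $V$ alone suffices. Once that step is in place, the rest is a formal reassembly: apply Theorem \ref{recog} one $n$ at a time, then combine via the bisimplicial spectral sequence.
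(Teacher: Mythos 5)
Your argument is correct and is essentially the intended one: the paper treats this corollary as a direct consequence of Theorem \ref{recog}, and its own use of the same technique (compare the proof of Proposition \ref{fcoeffagr}) proceeds exactly as you do, applying the recognition principle to the homotopy-group presheaves and reassembling with the Bousfield--Friedlander spectral sequence for the bisimplicial abelian group. The only step worth making fully explicit is your "consequently": pass to the kernel and cokernel presheaves of $\pi_n F \to \pi_n G$, which vanish on all smooth varieties and hence have trivial $uad$-sheafification because every real variety admits a smooth $cdh$- (hence $uad$-) cover, and then conclude by exactness of sheafification that $(\pi_n F)_{uad} \to (\pi_n G)_{uad}$ is an isomorphism.
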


\subsection{Poincare Duality}
Let $X$ be a variety over a field $k$ of characteristic zero. Recall the presheaf $z_{equi}(X,r)(-)$ of equidimensional $r$-cycles. This is the unique $qfh$-sheaf on $Sch/k$ such that for a normal variety $U$ the group $z_{equi}(X,r)(U)$ is the free abelian group on closed, irreducible subvarieties $V\subseteq U\times_{k} X$ which are equidimensional of relative dimension $r$ over some irreducible component of $U$.

If $X$ and $Y$ are real varieties then $G=Gal(\C/\R)$ acts on the group $z_{equi}(X_{\C}, r)(U_{\C})$ by $\sigma\cdot [V\subseteq U_{\C}\times_{\C} X_{\C}] = [\sigma V \subseteq U_{\C}\times_{\C} X_{\C}]$.

 \begin{lemma}
Let $X$ and $U$ be real varieties. Then
\begin{equation*}
z_{equi}(X, r)(U)\xrightarrow{\pi^{*}}(z_{equi}(X_{\C}, r)(U_{\C}))^{G}
\end{equation*}
is a natural isomorphism where $\pi:(U\times_{\R} X)_{\C} \to U\times_{\R} X$.
\end{lemma}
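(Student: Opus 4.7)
The plan is to exhibit $\pi^*$ as the cycle-theoretic incarnation of Galois descent for closed subschemes, and then to check that the equidimensionality conditions are compatible. Set $T = U \times_{\R} X$ so that $T_{\C} = U_{\C} \times_{\C} X_{\C}$ and $\pi: T_{\C} \to T$ is the base change along $\spec \C \to \spec \R$. Since $\C/\R$ is a finite Galois extension of characteristic zero, $\pi$ is a faithfully flat étale degree-two $G$-cover, and Galois descent tells us that a closed subscheme of $T_{\C}$ descends to $T$ if and only if it is stable under the $G$-action. This induces a bijection
\begin{equation*}
\{V \subseteq T \text{ irreducible closed}\} \xrightarrow{\;\sim\;} \{G\text{-orbits of irreducible closed } W \subseteq T_{\C}\},
\end{equation*}
obtained by sending $V$ to the set of irreducible components of $V_{\C}$, whose inverse sends an orbit $\{W\}$ or $\{W,\sigma W\}$ to the descended subvariety $W$ (resp.\ $W\cup \sigma W$) viewed as a subscheme of $T$.

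Next I would identify $\pi^*$ with this bijection at the level of cycles. Because $\pi$ is flat and $\C/\R$ is separable, for any irreducible $V \subseteq T$ the fiber product $V_{\C} = V\times_{\spec\R}\spec\C$ is reduced, so
\begin{equation*}
\pi^*[V] \;=\; \sum_{W \text{ irred.\ comp.\ of } V_{\C}} [W],
\end{equation*}
with all multiplicities equal to one. Hence extending $\Z$-linearly, the bijection above produces an isomorphism of free abelian groups
\begin{equation*}
z_{0}(T) \;\xrightarrow{\;\sim\;}\; z_{0}(T_{\C})^{G},
\end{equation*}
whose inverse collapses a $G$-invariant cycle $\sum n_W [W]$ into a sum over $G$-orbits with coefficients read off any representative. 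Injectivity of $\pi^*$ is automatic from the bijection; surjectivity uses that $G$-invariance forces $n_W = n_{\sigma W}$, so the cycle is an integral combination of orbit sums, each coming from a unique irreducible subvariety of $T$.

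It remains to check that this isomorphism restricts to one between the equidimensional subgroups. If $V\subseteq T$ is irreducible and equidimensional of relative dimension $r$ over an irreducible component $U_0 \subseteq U$, then since equidimensionality is preserved by flat base change, each irreducible component $W$ of $V_{\C}$ maps onto some irreducible component of $U_{0,\C}$ and is equidimensional of relative dimension $r$ over it. Conversely, if each irreducible component $W$ of $V_{\C}$ is equidimensional of relative dimension $r$ over an irreducible component of $U_{\C}$, then faithfully flat descent of fiber dimension along $U_{\C} \to U$ shows that $V$ is equidimensional of relative dimension $r$ over the image component $U_0$ of $U$. Naturality in $U$ is immediate since the whole construction is defined in terms of base change along $\spec\C\to\spec\R$.

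The only mildly delicate point is bookkeeping of multiplicities in step two: one must genuinely use characteristic zero (so $V_{\C}$ is reduced) to rule out any factor from inseparability, and one must keep straight the two possible orbit types (a $G$-stable irreducible $W$ versus a two-element orbit $\{W,\sigma W\}$) when matching $\pi^*[V]$ to the descent bijection. Once that is in hand, the rest is formal.
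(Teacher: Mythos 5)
Your overall strategy (Galois descent for cycles plus a comparison of equidimensionality under the base change $U_{\C}\to U$) is the same as the paper's, but as written there are two genuine gaps. First, you compute with the \emph{naive} description of $z_{equi}(X,r)(U)$ as the free abelian group on irreducible closed $V\subseteq U\times_{\R}X$ equidimensional over a component of $U$. In the paper (following Suslin--Voevodsky) that description is only the value of the sheaf on \emph{normal} $U$; for general $U$ the presheaf is defined as the $qfh$-sheaf extension, and relative cycles over a non-normal base are not simply equidimensional cycles on $U\times X$. So your descent bijection proves a statement about naive cycle groups, not about $z_{equi}(X,r)(U)$ as it is defined, and the lemma is stated for arbitrary real $U$. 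The paper's first step --- reduce to $U$ normal because the normalization is a $qfh$-cover --- is exactly what bridges this, and it is absent from your argument. Once that reduction is in place, your Galois-descent identification of $Cycl(U\times X)$ with $Cycl((U\times X)_{\C})^{G}$ is fine (the paper simply quotes \cite[Lemma 2.3.2]{SV:rel} for it).

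Second, the converse direction of the equidimensionality comparison is not just bookkeeping. You dismiss it with ``faithfully flat descent of fiber dimension,'' but equidimensionality in the sense used here (EGA IV 13.3) is a stronger condition than constancy of fiber dimension, and this implication is where the paper does its real work: having reduced to $U$ normal, so that $U_{\C}$ is normal, it deduces that an equidimensional $\tilde f\colon V_{\C}\to U_{\C}$ is open with equidimensional local rings \cite[Corollaire 14.4.6]{EGAIVpt3}, and then descends openness and equidimensionality of the local rings along the faithfully flat map $U_{\C}\to U$ using \cite[Corollaire 2.6.4, Proposition 7.1.3]{EGAIVpt2}. Your version asserts the descent for arbitrary $U$, where normality is not available, and gives no argument that the full equidimensionality condition (not merely fiber dimension) comes down. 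The forward direction via flat base change \cite[Proposition 13.3.8]{EGAIVpt3} matches the paper; it is the descent direction that needs to be supplied, after the reduction to normal $U$.
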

\begin{proof}
It suffices to check this for $U$ normal, since normalization is a $qfh$-cover. By \cite[Lemma 2.3.2]{SV:rel} $\pi^*:Cycl(U\times X) \to Cycl((U\times X)_{\C})^{G}$ is an isomorphism, where $Cycl(W)$ denotes the group of cycles on $W$. 
We are done if we see that $f:V\to U$ is equidimensional if and only if $\tilde{f}:V_{\C}\to U_{\C}$ is equidimensional. By \cite[Proposition 13.3.8]{EGAIVpt3} if $f$ is equidimensional then so is $\tilde{f}$. Suppose that $\tilde{f}:V_{\C}\to U_{\C}$ is equidimensional. Since  $U_{\C}$ is normal,  $\tilde{f}:V_{\C}\to U_{\C}$ is an open mapping and for all $v'\in V$ the local rings $\mcal{O}_{V_{\C},v'}$ are equidimensional by \cite[Corollaire 14.4.6]{EGAIVpt3}. By \cite[Corollaire 2.6.4, Proposition 7.1.3]{EGAIVpt2} the map $f:V\to U$ is open and $\mcal{O}_{V,v}$ is equidimensional for all $v\in V$ since $U_{\C}\to U$ is faithfully flat and therefore $f$ is equidimensional.
\end{proof}

In the proof of \cite[Proposition 2.4]{FW:real} it is shown that for any presheaf $F(-)$ of sets on $Sch/\C$ and any topological space $T$ the natural map
\begin{equation*}
 \colim_{T\to V(\R)} F(V_{\C}) \xrightarrow{\iso} \colim_{T\to U(\C)} F(U)
\end{equation*}
is an isomorphism.  In the first indexing set $V$ ranges over real varieties and in the second  $U$ ranges over complex varieties.

In particular $ z_{equi}(X_{\C}, r)(Y_{\C}\times_{\C} T)$ may be computed via the filtered colimit
\begin{equation*}
 z_{equi}(X_{\C}, r)(Y_{\C}\times_{\C} T) = \colim_{T\to V(\R)}z_{equi}(X_{\C}, r)(Y_{\C}\times_{\C} V_{\C}),
\end{equation*}
which equips $z_{equi}(X_{\C},r)(Y_{\C}\times_{\C}T)$ with an action of $G$. 
Filtered colimits  commute with  fixed points and so 
\begin{equation*}
 z_{equi}(X,r)(Y\times_{\R} T) \to (z_{equi}(X_{\C}, r)(Y_{\C}\times_{\C} T))^{G} = \colim_{T\to V(\R)}(z_{equi}(X_{\C}, r)(Y_{\C}\times_{\C} V_{\C}))^{G}
\end{equation*}
is an isomorphism.

For X projective we have the natural isomorphism of presheaves (in fact of $qfh$-sheaves) of abelian groups on $Sch_{\R}$ (see \cite[Lemma 4.4.14]{SV:rel}
\begin{equation*}
z_{equi}(X,r)(-) \iso \Mor{\R}{-}{\mcal{C}_r(X)}^{+}.
\end{equation*}

The following is the real analogue of \cite[Proposition 3.1]{FW:ratisos}.
\begin{proposition}\label{sing}
Let $T$ be a compactly generated Hausdorff topological space and $X$ a quasi-projective real 
variety. There is a natural map of abelian groups
\begin{equation*}
 z_{equi}(X,r)(T) \to \Hom{cts}{T}{\mcal{Z}_{r}(X)}
\end{equation*}
given by sending $(f:T\to U(\R),\alpha\in z_{equi}(X,r)(U))$ to the function $t\mapsto \alpha|_{f(t)}$.

This map is contravariant for continuous maps of compactly-generated Hausdorff spaces $T'\to 
T$, covariant for proper maps $X\to X'$ and contravariant for flat maps $X'\to X$ (with a shift 
in dimension).

When $X$ is a projective real variety the induced map of simplicial abelian groups
\begin{equation*}
 z_{equi}(X,r)(\Delta^{\bullet}_{top}) \to \sing_{\bullet}\mcal{Z}_r(X)
\end{equation*}
is the natural homotopy equivalence
\begin{equation*}
 \left[ \sing_{\bullet}(\mcal{C}_{r}(X)^{an})\right]^{+} \xrightarrow{\wkeq} \sing_{\bullet}\mcal{Z}_{r}(X) .
\end{equation*}

More generally, for a quasi-projective real variety $X$ with projectivization $X\subset \overline{X}$ this map fits into a comparison of homotopy fiber sequences
\begin{equation} \label{tdgrm}
 \xymatrix{
z_{equi}(\overline{X}\sm X, r)(\Delta^{\bullet}_{top}) \ar[d]\ar[r] & z_{equi}(\overline{X},r)(\Delta_{top}^{\bullet}) \ar[r]\ar[d] & z_{equi}(X, r)(\Delta^{\bullet}_{top}) \ar[d] \\
\sing_{\bullet}\mcal{Z}_{r}(\overline{X}\sm X) \ar[r] & \sing_{\bullet}\mcal{Z}_{r}(\overline{X}) \ar[r] & \sing_{\bullet}\mcal{Z}_r(X).
}
\end{equation}
Therefore the map 
\begin{equation*}
 z_{equi}(X,r)(\Delta_{top}^{\bullet}) \to \sing_{\bullet}\mcal{Z}_r(X)
\end{equation*}
is a natural weak equivalence for any quasi-projective real variety $X$.
\end{proposition}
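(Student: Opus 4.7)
My plan is to follow the template of Friedlander-Walker's proof in the complex case (\cite[Proposition 3.1]{FW:ratisos}), adapted to the real setting via the recognition principle of Theorem \ref{recog}. First, I would construct the map: given $\alpha\in z_{equi}(X,r)(U)$ and a real point $u\in U(\R)$, the equidimensionality of $\alpha$ over $U$ guarantees that the scheme-theoretic fiber $\alpha|_u$ is a well-defined $r$-cycle on $X$, and classical continuity in the Chow topology implies that $u\mapsto\alpha|_u$ defines a continuous map $U(\R)^{an}\to\mcal{Z}_r(X)$. Composing with $f\colon T\to U(\R)$ and passing to the filtered colimit over $T\to V(\R)$ produces the claimed map. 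The functorialities (contravariance in $T$, covariance for proper $X\to X'$, contravariance with a dimension shift for flat $X'\to X$) are formal consequences of the corresponding functorialities of Chow varieties and equidimensional cycles.

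For $X$ projective I would identify the map on $\Delta^\bullet_{top}$ with the natural equivalence $[\sing_\bullet\mcal{C}_r(X)^{an}]^+\xrightarrow{\wkeq}\sing_\bullet\mcal{Z}_r(X)$. Using the $qfh$-sheaf identification $z_{equi}(X,r)(-)\cong \Mor{\R}{-}{\mcal{C}_r(X)}^+$ on normal varieties, the source is
$$z_{equi}(X,r)(\Delta^\bullet_{top}) = \colim_{\Delta^\bullet\to V(\R)}\Mor{\R}{V}{\mcal{C}_r(X)}^+,$$
and the map of the proposition becomes the composition of the natural inclusion into $\sing_\bullet\mcal{C}_r(X)^{an}$ (after group completion) with the natural map $[\sing_\bullet\mcal{C}_r(X)^{an}]^+\to \sing_\bullet\mcal{Z}_r(X)$. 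The latter is a weak equivalence because $\mcal{C}_r(X)^{an}$ is tractable, so its naive group completion $\mcal{Z}_r(X)$ is a homotopy group completion. The former is a weak equivalence by Corollary \ref{saqiso} applied to the presheaf map $V\mapsto\sing_\bullet\Mor{\R}{V}{\mcal{C}_r(X)}^{an}\to V\mapsto\sing_\bullet\map{V(\R)^{an}}{\mcal{C}_r(X)^{an}}$, which on smooth $V$ is a levelwise equivalence since Friedlander-Walker's analytic topology on the algebraic morphism space agrees suitably with the compact-open topology when the target is projective.

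For the quasi-projective case I would assemble the diagram \eqref{tdgrm} and argue by the five lemma. The short exact sequence of topological groups $0\to \mcal{Z}_r(\overline{X}\sm X)\to \mcal{Z}_r(\overline{X})\to \mcal{Z}_r(X)\to 0$ is a principal fibration, hence a homotopy fiber sequence after applying $\sing_\bullet$. On the top row, the localization triangle for equidimensional cycles from \cite{FV:biv}, combined with the recognition principle, yields a homotopy fiber sequence after evaluating on $\Delta^\bullet_{top}$. Naturality of the map from step one supplies the three vertical arrows, and commutativity follows from the fiberwise definition. Since the left two vertical arrows are weak equivalences by the projective case already handled, the associated long exact sequences of homotopy groups together with the five lemma imply that the third is a weak equivalence.

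The main obstacle is verifying, in the projective case, that the natural transformation to which Corollary \ref{saqiso} is applied is indeed a levelwise weak equivalence on smooth varieties. This reduces to comparing the Friedlander-Walker analytic topology on $\Mor{\R}{V}{\mcal{C}_r(X)}$ with the compact-open topology on $\map{V(\R)^{an}}{\mcal{C}_r(X)^{an}}$. The key technical input is the triangulability of $V(\R)^{an}$ underlying Theorem \ref{recog} and the cohomological descent argument for the $uad$-topology, which together ensure that enough algebraic singular simplices are available to detect the homotopy type of the continuous mapping space.
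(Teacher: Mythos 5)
Your overall architecture --- construct the map, identify it in the projective case, then assemble the quasi-projective case from the comparison of fiber sequences (top row via \cite{FV:biv}, homotopy invariance and the recognition principle, bottom row via tractability) --- is the paper's, and your treatment of naturality and of the quasi-projective assembly is fine; the paper constructs the map by taking $G$-fixed points of the equivariant complex map of \cite[Proposition 3.1]{FW:ratisos} rather than redoing the fiberwise construction over $\R$, but that is a minor difference. The genuine gap is in your projective step. You reduce it to the claim that for every smooth $V$ the map $\sing_{\bullet}\Mor{\R}{V}{\mcal{C}_r(X)}^{an}\to\sing_{\bullet}\map{V(\R)^{an}}{\mcal{C}_r(X)^{an}}$ is a weak equivalence and then invoke Corollary \ref{saqiso}. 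That levelwise claim is false: the agreement of the Friedlander--Walker analytic topology with the subspace topology induced by the compact-open topology (for projective domain) only says the inclusion of the algebraic morphism space into the continuous mapping space is a topological embedding; it does not make the inclusion a weak equivalence, and spaces of algebraic morphisms generally have a completely different homotopy type from spaces of continuous maps --- indeed the comparison between morphic-type invariants and topological mapping-space invariants is exactly what is hard in this subject and only holds in ranges after group completion. Corollary \ref{saqiso} cannot repair this, since it requires the levelwise equivalence as an input, and triangulability of $V(\R)^{an}$ and $uad$-descent (your proposed fix) are irrelevant to producing it.

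What actually makes the projective case work, and what the paper uses, is different in kind: for a compact Hausdorff $T$ (in particular $T=\Delta^{d}_{top}$) there is an isomorphism of monoids $z_{equi}^{eff}(X_{\C},r)(T)\iso\Hom{cts}{T}{\mcal{C}_r(X_{\C})}$ by \cite[Corollary 4.3]{FW:sstfct}; this is essentially tautological, because a continuous map $T\to\mcal{C}_{r,d}(X_{\C})^{an}$ is itself an index in the colimit defining $z_{equi}^{eff}(X_{\C},r)(T)$, carrying the tautological equidimensional cycle, so no approximation of continuous maps by algebraic maps from smooth varieties is needed. This isomorphism is equivariant, and taking $G$-fixed points gives $z_{equi}^{eff}(X,r)(T)\iso\Hom{cts}{T}{\mcal{C}_r(X)}$, hence a levelwise identification $z_{equi}(X,r)(\Delta^{\bullet}_{top})\iso\left[\sing_{\bullet}(\mcal{C}_{r}(X)^{an})\right]^{+}$; tractability of $\mcal{C}_r(X)$ together with Quillen's homotopy group completion theorem (which you do cite correctly) then identifies this with $\sing_{\bullet}\mcal{Z}_r(X)$ up to homotopy. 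With that substitution in place of your Corollary \ref{saqiso} argument, the rest of your outline goes through.
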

\begin{proof}
The map 
\begin{equation*}
z_{equi}(X_{\C},r)( T)=\colim_{T\to W(\C)} z_{equi}(X_{\C}, r)(W) \to \Hom{cts}{T}{\mcal{Z}_{r}(X_{\C})}
\end{equation*}
 given sending $(f:T\to W(\C), \alpha\in z_{equi}(X_{\C}, r)(W))$ to the function $t\mapsto \alpha_{|{f(t)}}$ is shown to be well-defined in \cite[Proposition 3.1]{FW:ratisos} and to satisfy the stated naturality properties. Observe that if $W=V_{\C}$ is the complexification of a real variety then $\overline{\alpha_{|{f(t)}}} =\overline{\alpha}_{|{\overline{f(t)}}}$. Therefore composing with the natural isomorphism
\begin{equation*}
 \colim_{T\to V(\R)}z_{equi}(X_{\C},r)(V_{\C}) \xrightarrow{\iso} \colim_{T\to W(\C)}z_{equi}(X_{\C},r)(W)
\end{equation*}
gives a well-defined equivariant map
\begin{equation*}
 z_{equi}(X_{\C},r)(T) \to \Hom{cts}{T}{\mcal{Z}_{r}(X_{\C})}
\end{equation*}
which by taking fixed points induces the map
\begin{equation*}
 z_{equi}(X,r)(T)= z_{equi}(X_{\C},r)(T)^{G} \to \Hom{cts}{T}{\mcal{Z}_{r}(X_{\C})}^{G} = \Hom{cts}{T}{\mcal{Z}_{r}(X)},
\end{equation*}
which is the map of the proposition and satisfies the stated naturality properties.

When $X$ is a projective real variety and $T$ is a compact Hausdorff space, the map $z_{equi}^{eff}(X_{\C},r)(T) \to \Hom{cts}{T}{\mcal{C}_r(X_{\C})}$ is an isomorphism by \cite[Corollary 4.3]{FW:sstfct}. Since this is an equivariant map, taking fixed points yields the isomorphism of monoids
\begin{equation*}
 z_{equi}^{eff}(X,r)(T) \xrightarrow{\iso} \Hom{cts}{T}{\mcal{C}_r(X)}.
\end{equation*}

Therefore the map
\begin{equation*}
z_{equi}(X, r)({\Delta^{\bullet}_{top}})\xrightarrow{\iso} [\Hom{cts}{\Delta^{\bullet}_{top}}{\mcal{C}_{r}(X)^{an}}]^{+} \to \sing_{\bullet}\mcal{Z}_{r}(X)
\end{equation*}
is a homotopy equivalence by Quillen's theorem \cite[App Q]{FM:filt} on homotopy group completions of simplicial abelian monoids.

Finally the diagram (\ref{tdgrm}) commutes by the naturality properties of the map $z_{equi}(X,r)(T) \to \Hom{cts}{T}{\mcal{Z}_{r}(X)}$ . By \cite[5.12,8.1]{FV:biv}, Proposition \ref{htpyinv} (homotopy invariance), and  Theorem \ref{recog} (recognition principle)  the upper row  of the diagram (\ref{tdgrm}) is a homotopy fiber sequence. Comparing the upper and lower homotopy fiber sequence  yields the final statement of the proposition. 

\end{proof}

\begin{proposition}\label{singmor}
For a quasi-projective real variety $U$, projective real variety $Y$, and compact Hausdorff space $T$,  there is a natural map of abelian groups
\begin{equation*}
 z_{equi}(Y,0)(U\times_{\R} T) \to \Hom{cts}{T}{\Mor{\R}{U}{\mcal{C}_{0}(Y)}^{+}}
\end{equation*}
given by sending $(f,\alpha)$ to the function $t\mapsto \alpha_{|{f(t)}}$. 
\end{proposition}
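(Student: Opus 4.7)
The plan is to mirror the construction of Proposition \ref{sing}, building the map first at the complex level in a $G$-equivariant manner and then taking $G$-fixed points. For a complex variety $W$ and an element $\alpha\in z_{equi}(Y_{\C},0)(U_{\C}\times_{\C} W)$, I would send $\alpha$ to the function $w\mapsto \alpha_{|w}$, where $\alpha_{|w}$ denotes pullback of the cycle $\alpha$ along the inclusion $(\mathrm{id},w):U_{\C}\hookrightarrow U_{\C}\times W$. Equidimensionality of $\alpha$ over $U_{\C}\times W$ of relative dimension $0$ is preserved under restriction to a fiber, so $\alpha_{|w}$ lies in $z_{equi}(Y_{\C},0)(U_{\C})$, which is naturally identified with a subgroup of $\Mor{\C}{U_{\C}}{\mcal{C}_{0}(Y_{\C})}^{+}$ via the $qfh$-sheaf property (passing to a normalization if necessary). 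Composing with any continuous $f:T\to W(\C)$ would give the function $t\mapsto \alpha_{|f(t)}$, and after passing to the filtered colimit
$$z_{equi}(Y_{\C},0)(U_{\C}\times_{\C} T)=\colim_{T\to W(\C)}z_{equi}(Y_{\C},0)(U_{\C}\times_{\C} W)$$
this yields the complex analogue of the desired map.

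To descend to the real case, I would rewrite the colimit over real varieties $T\to V(\R)$ using the natural isomorphism from the proof of \cite[Proposition 2.4]{FW:real}, which makes $G$ act compatibly on both sides. The identity $\overline{\alpha_{|f(t)}}=\overline{\alpha}_{|\overline{f(t)}}$ shows the complex map is $G$-equivariant. Taking $G$-fixed points, then combining with the preceding lemma $z_{equi}(Y,0)(U\times_{\R} T)=z_{equi}(Y_{\C},0)(U_{\C}\times_{\C} T)^{G}$, with the identification $(\Mor{\C}{U_{\C}}{\mcal{C}_{0}(Y_{\C})}^{+})^{G}=\Mor{\R}{U}{\mcal{C}_{0}(Y)}^{+}$ (Proposition \ref{appcycagr} and \cite[Lemma 1.2]{FW:real}), and with the fact that fixed points commute with continuous-function spaces on spaces with trivial $G$-action, would produce the sought-after map. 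The naturality in $T$ and the analogous contravariance/covariance properties will follow formally from the construction.

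The main obstacle is verifying continuity of $w\mapsto \alpha_{|w}$ into $\Mor{\C}{U_{\C}}{\mcal{C}_{0}(Y_{\C})}^{an,+}$ when $U_{\C}$ is only quasi-projective. Because the Friedlander-Walker analytic topology on this mapping space is the topology of convergence with bounded degree rather than the compact-open topology, one must check that over any compact $K\subseteq W(\C)$ the family $\{\alpha_{|w}\}_{w\in K}$ has uniformly bounded degree and that continuity in $W(\C)$ translates into convergence in the cocycle topology. This amounts to an exponential-law statement saying that a continuous algebraic map $U_{\C}\times W\to \mcal{C}_{0}(Y_{\C})$ adjoints to a continuous map $W(\C)\to \Mor{\C}{U_{\C}}{\mcal{C}_{0}(Y_{\C})}^{an,+}$; this should reduce to the bounded-degree description of the cocycle topology recalled in Section \ref{topspcyc} together with the analogous fact for cycle spaces (\cite[Corollary 4.3]{FW:sstfct}) used in Proposition \ref{sing}.
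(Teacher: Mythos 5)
Your proposal follows essentially the same route as the paper: construct the map at the complex level, observe it is $G$-equivariant (using $\overline{\alpha_{|f(t)}}=\overline{\alpha}_{|\overline{f(t)}}$ and the colimit over real varieties), and take fixed points, identifying $z_{equi}(Y_{\C},0)(U_{\C}\times_{\C}T)^{G}$ with $z_{equi}(Y,0)(U\times_{\R}T)$ and $(\Mor{\C}{U_{\C}}{\mcal{C}_{0}(Y_{\C})}^{+})^{G}$ with $\Mor{\R}{U}{\mcal{C}_{0}(Y)}^{+}$. The only difference is that the paper does not reconstruct the complex-level map at all: it simply cites Friedlander--Walker (\cite[Proposition 3.3]{FW:ratisos}), which is exactly the statement whose continuity in the bounded-degree topology you flag as the ``main obstacle''; so that remaining verification is discharged by citation rather than by the exponential-law argument you sketch, and the rest of your argument matches the paper's.
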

\begin{proof}
The map 
\begin{equation*}
z_{equi}(Y_{\C},0)(U_{\C}\times_{\C} T) \to \Hom{cts}{T}{\Mor{\C}{U_{\C}}{\mcal{C}_{0}(Y_{\C})}^{+}}
\end{equation*}
from \cite[Proposition 3.3]{FW:ratisos}  is equivariant and therefore taking fixed points induces the natural map of abelian groups
\begin{equation*}
z_{equi}(Y,0)(U\times_{\R} T) \to  \Hom{cts}{T}{\Mor{\R}{U}{\mcal{C}_{0}(Y)}^{+}}.
\end{equation*}
\end{proof}

Let $Y$ be a projective real variety and $U$ a normal quasi-projective real variety of dimension $d$ with projectivization $U\subseteq X$ and closed complement $X_{\infty}=X\setminus U$. Write $\mcal{E}_{r}(Y)(U)\subseteq \mcal{C}_{r+d}(Y\times_{\R} X)$ for the submonoid consisting of those cycles of dimension $r+d$ on $Y\times X$ whose restriction to $U$ is equidimensional of relative dimension $r$ over $U$. This is a constructable embedding. This can be seen by arguing as in \cite{F:algco} for the complex case. The subspace topology on this monoid agrees with the quotient topology $\mcal{C}_{r}(Y)(U) = \mcal{E}_{r}(Y)(U)/\mcal{C}_{r+d}(Y\times X_{\infty})$ by the same reasoning as in \cite[Proposition 1.8]{F:algco}. The topological group of equidimensional cycles is the naive groups completion $\mcal{Z}_{r}(Y)(U) = \mcal{C}_{r}(Y)(U)^{+}$. Since these are tractable monoids, they are related to equidimensional cocycles via the homotopy fiber sequence
\begin{equation*}
 \mcal{Z}_{r+d}(Y\times X_{\infty}) \to (\mcal{E}_{r}(Y)(U))^{+} \to \mcal{Z}_{r}(Y)(U).
\end{equation*}
%

Define the presheaf $e(U,Y,r)(-)$ to be the pull-back of presheaves
\begin{equation*}
 \xymatrix{
e(U,Y,r)(-) \ar@{^{(}->}[r]\ar[d] & z_{equi}^{eff}(Y\times X, r+d)(-) \ar[d]\\
z_{equi}^{eff}(Y, r)(U\times - ) \ar@{^{(}->}[r] & z^{eff}_{equi}(Y\times U, r+d)(-) .
}
\end{equation*}
We have for each quasi-projective real variety $V$ the short exact sequence of abelian groups
\begin{equation*}
 0\to z_{equi}(Y\times X_{\infty}, r+d)(V) \to (e(U,Y,r)(V))^{+} \to z_{equi}(Y,r)(U\times V) \to 0.
\end{equation*}

\begin{proposition}\label{eiso}
Let $Y$ be a projective real variety, $U$ a normal quasi-projective real variety, and $T$ a compact Hausdorff space. Then
\begin{equation*}
 e(U,Y,r)(T)\xrightarrow{\iso} \uphom{T}{\mcal{E}_{r}(Y)(U)}
\end{equation*}
is an isomorphism.
\end{proposition}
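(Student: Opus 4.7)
The plan is to reduce the claim to the known isomorphism for the ambient projective case and then check that the additional equidimensionality condition transfers correctly under the resulting bijection. Let $U \subseteq X$ be a projectivization. Applying the argument of Proposition \ref{sing} (whose proof establishes the effective analogue via \cite[Corollary 4.3]{FW:sstfct} and $G$-fixed points) to the projective real variety $Y \times X$ in dimension $r+d$ yields an isomorphism of monoids
$$z_{equi}^{eff}(Y \times X, r+d)(T) \xrightarrow{\iso} \uphom{T}{\mcal{C}_{r+d}(Y \times X)}.$$

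I would then construct the candidate map on $e(U,Y,r)(T)$ by sending a representing pair $(f \colon T \to V(\R), \alpha \in e(U,Y,r)(V))$ to $t \mapsto \alpha|_{f(t)}$. This lands in $\mcal{E}_{r}(Y)(U)$ because the equidimensionality of $\alpha|_{Y \times U \times V}$ over $U \times V$, built into the definition of $e(U,Y,r)$, is preserved under base change to each point of $V$. Continuity and independence of representative follow from the naturality of the construction of Proposition \ref{singmor}. This fits into the commutative square
$$\xymatrix{
e(U,Y,r)(T) \ar[d] \ar@{^{(}->}[r] & z_{equi}^{eff}(Y \times X, r+d)(T) \ar[d]^{\iso} \\
\uphom{T}{\mcal{E}_{r}(Y)(U)} \ar@{^{(}->}[r] & \uphom{T}{\mcal{C}_{r+d}(Y \times X)},
}$$
so injectivity of the left vertical arrow is immediate.

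The main step, and the place I expect the real work, is surjectivity. Given a continuous map $\phi \colon T \to \mcal{E}_{r}(Y)(U)$, composing with the inclusion into $\mcal{C}_{r+d}(Y \times X)$ and applying the right isomorphism produces a representing pair $(f \colon T \to V(\R), \alpha \in z_{equi}^{eff}(Y \times X, r+d)(V))$. I would then argue that the subset
$$V^{\circ} = \{\, v \in V \mid \alpha|_{Y \times U \times \{v\}} \text{ is equidimensional of relative dimension } r \text{ over } U \,\}$$
is Zariski open in $V$. This is a consequence of Chevalley's upper semi-continuity of fiber dimension (EGA IV, \S 13) applied to the maps $W \cap (Y \times U \times V) \to U \times V$ as $W$ ranges over the irreducible components of the support of $\alpha$, together with the fact that equidimensionality of an effective cycle over a smooth base is an open condition on the base (this is the only non-formal input and closely mirrors the classical argument used in the construction of $\mcal{C}_{r}(Y)(U)$). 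Since $\phi(t) = \alpha|_{f(t)}$ lies in $\mcal{E}_{r}(Y)(U)$ for every $t \in T$ by hypothesis, $f(T) \subseteq V^{\circ}(\R)$, so $f$ factors through $V^{\circ}$. The pair $(f, \alpha|_{V^{\circ}})$ is then an element of $e(U,Y,r)(V^{\circ})$ representing $\phi$, giving the required surjectivity.
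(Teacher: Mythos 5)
The construction of your comparison map and the injectivity argument are fine and parallel the paper's setup, but the surjectivity step contains a genuine gap: the locus $V^{\circ}$ is not Zariski open in general. Membership in $\mcal{E}_{r}(Y)(U)$ is only a \emph{constructable} condition on $\mcal{C}_{r+d}(Y\times X)$ (this is precisely why the paper introduces $\mcal{E}_{r}(Y)(U)$ as a constructable submonoid, not an open one), and correspondingly $V^{\circ}$ is constructable but need not be open. Concretely, take $Y=X=\P^{1}_{\R}$, $U=X\setminus\{\infty\}$, $r=0$, $d=1$, $V=\A^{1}_{\R}$, and let $\alpha\subseteq Y\times X\times V$ be the irreducible surface $\{(y,[t:1],t)\}$: it is equidimensional of relative dimension $1$ over $V$, but for $t\neq 0$ the cycle $\alpha|_{t}$ restricted to $Y\times U$ is a full vertical fiber over a point of $U$ (so not equidimensional of relative dimension $0$), while for $t=0$ the restriction is the zero cycle, which does lie in $\mcal{E}_{0}(Y)(U)$. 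Hence $V^{\circ}=\{0\}$ is closed and not open. Your appeal to Chevalley fails at the last step: properness of $Y\times U\times V\to U\times V$ does make the locus of excessive fiber dimension closed in $U\times V$, but $V^{\circ}$ is the complement of its image under the projection $U\times V\to V$, and that projection is not proper because $U$ is only quasi-projective, so the image need not be closed --- the offending vertical components can slide off into $Y\times(X\setminus U)$ exactly as in the example. Moreover, even granting $f(T)\subseteq V^{\circ}(\R)$, a constructable subset is not an object of the indexing category of the colimit, and $f$ need not factor continuously through a variety mapping bijectively onto $V^{\circ}$, so the putative element $(f,\alpha|_{V^{\circ}})$ of $e(U,Y,r)(V^{\circ})$ is not defined.

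The paper avoids this issue by a different route: for a real variety $V$ the graph isomorphism $\Mor{\R}{V}{\mcal{C}_{r+d}(Y\times X)}\iso z_{equi}^{eff}(Y\times X,r+d)(V)$ restricts to an isomorphism $\Mor{\R}{V}{\mcal{E}_{r}(Y)(U)}\iso e(U,Y,r)(V)$, where the left-hand side consists of continuous algebraic maps whose image lies in the constructable subset, and then the isomorphism $e(U,Y,r)(T)\iso\uphom{T}{\mcal{E}_{r}(Y)(U)}$ for compact Hausdorff $T$ is obtained as in \cite[Corollary 4.3]{FW:funcspc}, whose content is exactly the representability of continuous maps from compact spaces into constructable subsets of projective varieties by the colimit of algebraic mapping sets. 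So the missing ingredient in your argument is this Friedlander--Walker statement for constructable sets; replacing the openness claim by an application of that result to $\mcal{E}_{r}(Y)(U)\subseteq\mcal{C}_{r+d}(Y\times X)$ repairs the proof and is what the paper does.
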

\begin{proof}
 Observe that if $V$ is a quasi-projective real variety then the isomorphism
$\Mor{\R}{V}{\mcal{C}_{r+d}(Y\times X)} \iso z_{equi}^{eff}(Y\times X,r+d)(V)$ restricts to give the isomorphism $\Mor{\R}{V}{\mcal{E}(Y)(U)}\iso e(U,Y,r)(V)$. Here if $E\subseteq W$ is a constructable subset then $\Mor{}{V}{E}\subseteq \Mor{}{V}{W}$ is the subset consisting of those continuous algebraic maps whose image is contained in $E$.

The isomorphism $e(U,Y,r)(T)\xrightarrow{\iso} \uphom{T}{\mcal{E}_{r}(Y)(U)}$ now follows as in \cite[Corollary 4.3]{FW:funcspc}.
\end{proof}

\begin{proposition}\label{smheq}
Let $U$ be a normal quasi-projective real variety and $Y$ a projective real variety. The map  of simplicial abelian groups from Proposition \ref{singmor}
\begin{equation*}
 z_{equi}(Y,0)(U\times \Delta^{\bullet}_{top}) \to \sing_{\bullet}(\Mor{\R}{U}{\mcal{C}_{0}(Y)}^{+})
\end{equation*}
is a homotopy equivalence.
\end{proposition}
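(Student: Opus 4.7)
The plan is to reduce the statement to known equivalences by comparing two homotopy fiber sequences.

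On the topological side, the paragraph preceding Proposition \ref{eiso} supplies the homotopy fiber sequence
\begin{equation*}
\mcal{Z}_{d}(Y\times X_{\infty}) \to (\mcal{E}_{0}(Y)(U))^{+} \to \mcal{Z}_{0}(Y)(U),
\end{equation*}
where by the graph isomorphism $\Gamma$ the third term is identified with $\Mor{\R}{U}{\mcal{C}_{0}(Y)}^{+}$. Applying $\sing_{\bullet}$ produces a homotopy fiber sequence of simplicial abelian groups whose last term is the target of our map. On the algebraic side, the pull-back square defining $e(U,Y,0)$ gives a natural short exact sequence of presheaves
\begin{equation*}
0 \to z_{equi}(Y\times X_{\infty},d)(-) \to (e(U,Y,0)(-))^{+} \to z_{equi}(Y,0)(U\times -) \to 0,
\end{equation*}
and evaluating on $\Delta^{\bullet}_{top}$ yields another homotopy fiber sequence of simplicial abelian groups whose third term is $z_{equi}(Y,0)(U\times \Delta^{\bullet}_{top})$, the source of our map.

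Next I would use the maps of Propositions \ref{sing} and \ref{singmor}, together with the inclusion $e(U,Y,0) \hookrightarrow z_{equi}^{eff}(Y\times X,d)^{+}$, to assemble a strictly commuting comparison between these two fiber sequences; the map induced on third terms is precisely the one in the proposition. Proposition \ref{sing} applied to the projective variety $Y\times X_{\infty}$ gives a homotopy equivalence on the leftmost terms. For the middle terms, Proposition \ref{eiso} furnishes a natural isomorphism of simplicial abelian monoids $e(U,Y,0)(\Delta^{\bullet}_{top}) \iso \sing_{\bullet}\mcal{E}_{0}(Y)(U)$; taking naive group completions and invoking Quillen's theorem on homotopy group completions of simplicial abelian monoids (exactly as in the proof of Proposition \ref{sing}, using tractability of $\mcal{E}_{0}(Y)(U)$) yields the weak equivalence
\begin{equation*}
(e(U,Y,0)(\Delta^{\bullet}_{top}))^{+} \xrightarrow{\simeq} \sing_{\bullet}((\mcal{E}_{0}(Y)(U))^{+}).
\end{equation*}
The five lemma applied to the long exact sequences of homotopy groups then delivers the desired homotopy equivalence on the third terms.

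The main obstacle I anticipate is verifying the strict commutativity of the comparison ladder, namely that the map of Proposition \ref{singmor} agrees, under the identification of Proposition \ref{eiso}, with the map induced on $\sing_{\bullet}$ by the topological inclusion $(\mcal{E}_{0}(Y)(U))^{+} \to \mcal{Z}_{0}(Y)(U) \iso \Mor{\R}{U}{\mcal{C}_{0}(Y)}^{+}$. This reduces to tracing the ``restriction to a point'' formula $\alpha \mapsto \alpha|_{f(t)}$ that defines both the map of Proposition \ref{sing} and that of Proposition \ref{singmor}, combined with the constructable-subset isomorphism $\Mor{\R}{V}{\mcal{E}_{0}(Y)(U)} \iso e(U,Y,0)(V)$ from the proof of Proposition \ref{eiso}; beyond this bookkeeping the argument is formal.
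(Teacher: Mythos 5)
Your proposal is essentially the paper's own proof: it compares the algebraic homotopy fiber sequence coming from the pull-back square defining $e(U,Y,0)$ with the topological one for $\mcal{E}_{0}(Y)(U)$, using Proposition \ref{sing} on the fiber terms and Proposition \ref{eiso} together with Quillen's group-completion theorem on the middle terms, and then concludes from the long exact sequences. The only point the paper states explicitly that you leave implicit is that the right-hand horizontal maps are surjective on $\pi_{0}$, which is what allows the comparison of long exact sequences to close in the lowest degree.
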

\begin{proof}
%
By proposition \ref{eiso} we have $e(U,Y,r)(\Delta^{\bullet}_{top}) \iso \sing_{\bullet}\mcal{E}_{r}(Y)(U)$. Now by taking group completions,  tractability of the monoid $\mcal{E}_{r}(Y)(U)$ and Quillen's theorem \cite[App Q]{FM:filt} we conclude that 
\begin{equation*}
e(U,Y,r)(\Delta^{\bullet}_{top})^{+} \xrightarrow{\wkeq} \sing_{\bullet}(\mcal{E}_{r}(Y)(U)^{+})
\end{equation*}
is a homotopy equivalence. 

We conclude the proposition by comparing homotopy fiber sequences of simplicial abelian groups
\begin{equation*}
 \xymatrix{
z_{equi}(Y\times X_{\infty}, r+d)(\Delta^{\bullet}_{top}) \ar[r]\ar[d] & (e(U,Y,r)(\Delta^{\bullet}_{top}))^{+} \ar[r]\ar[d] & z_{equi}(Y,r)(U\times \Delta^{\bullet}_{top}) \ar[d]  \\
\sing_{\bullet}\mcal{Z}_{r+d}(Y\times X_{\infty}) \ar[r] & \sing_{\bullet}(\mcal{E}_{r}(Y)(U)^{+}) \ar[r] & \sing_{\bullet}\mcal{Z}^{r}(Y)(U) .
}
\end{equation*}
The left arrow is a homotopy equivalence by Proposition \ref{sing}, we have just seen that the middle map is a homotopy equivalence, the right horizontal maps induce a surjection on $\pi_{0}$ and so we conclude that $z_{equi}(Y,r)(U\times\Delta^{\bullet}_{top}) \to \sing_{\bullet}\mcal{Z}^{r}(Y)(U)$ is a homotopy equivalence.
\end{proof}

\begin{proposition}\label{htpyinv}
The presheaves $z_{equi}(X,r)(\Delta^{\bullet}_{top}\times -)$ are homotopy invariant in the sense that the map of complexes
\begin{equation*}
z_{equi}(X,r)(\Delta^{\bullet}_{top}) \to z_{equi}(X,r)(\Delta^{\bullet}_{top}\times_{\R} \Delta_{\R}^1) 
\end{equation*}
 is a quasi-isomorphism.
\end{proposition}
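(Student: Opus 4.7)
The plan is to apply the recognition principle (Corollary \ref{saqiso}) to propagate the algebraic $\A^1$-homotopy invariance of the Suslin complex of $z_{equi}(X,r)$ to the topological setting.

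First, I would cite the real analogue of the standard bivariant cycle homotopy invariance theorem of Friedlander-Voevodsky \cite{FV:biv}: for any smooth real variety $V$, the projection $V\times\A^1_\R\to V$ induces a weak equivalence of simplicial abelian groups
\begin{equation*}
z_{equi}(X,r)(V\times\Delta^\bullet_\R) \xrightarrow{\ \simeq\ } z_{equi}(X,r)(V\times\A^1_\R\times\Delta^\bullet_\R).
\end{equation*}
The Suslin-Voevodsky prism construction used to produce the chain homotopy between $i_0^*$ and $i_1^*$ is characteristic-free and works over $\R$ equally well.

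Second, I would regard both sides as presheaves of simplicial abelian groups on $Sch/\R$ (in the variable $V$) with a natural transformation between them, and apply Corollary \ref{saqiso}. Since the transformation is a weak equivalence on every smooth $V$, the corollary produces a weak equivalence of diagonal simplicial abelian groups obtained by evaluating on $\Delta^\bullet_{top}$:
\begin{equation*}
\operatorname{diag} z_{equi}(X,r)(\Delta^\bullet_{top}\times\Delta^\bullet_\R) \xrightarrow{\ \simeq\ } \operatorname{diag} z_{equi}(X,r)(\Delta^\bullet_{top}\times\A^1_\R\times\Delta^\bullet_\R).
\end{equation*}

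Third, I would identify each of these diagonals with the simplicial abelian group appearing in the statement, by collapsing the auxiliary cosimplicial direction $\Delta^\bullet_\R$. Concretely, the augmentation $\Delta^\bullet_\R \to \operatorname{Spec}\R$ yields natural maps $z_{equi}(X,r)(\Delta^\bullet_{top}\times_\R -) \to \operatorname{diag} z_{equi}(X,r)(\Delta^\bullet_{top}\times\Delta^\bullet_\R\times_\R -)$, and these should be shown to be weak equivalences. Since $\Delta^n_\R(\R) \cong \R^n$ is contractible, one expects this to hold by a cofinality argument in the filtered colimit $z_{equi}(X,r)(T)=\colim_{T\to W(\R)} z_{equi}(X,r)(W)$, or alternatively by an explicit contracting chain homotopy in the $\Delta^\bullet_\R$-variable mimicking the algebraic Suslin-Voevodsky prism argument.

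The main obstacle is this third step, which requires carefully disentangling the topological cosimplicial direction $\Delta^\bullet_{top}$ from the algebraic cosimplicial direction $\Delta^\bullet_\R$ inside the colimit definition of $z_{equi}(X,r)$ on topological spaces. An alternative route that avoids the bisimplicial bookkeeping altogether would be to prove directly that $i_0^* = i_1^*$ in the derived category as maps $z_{equi}(X,r)(\Delta^\bullet_{top}\times_\R\A^1_\R) \to z_{equi}(X,r)(\Delta^\bullet_{top})$; since these are both sections of the projection-induced map, this would force the projection-induced map to be a quasi-isomorphism. Such a chain homotopy could be assembled from the morphisms $\Delta^{n+1}_\R \to \A^1_\R\times\Delta^n_\R$ provided by the cosimplicial scheme structure, precomposed with the colimit-defining maps $\Delta^d_{top}\to W(\R)$.
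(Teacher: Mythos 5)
Your primary route (steps 1--3) does not close: step 3, which you correctly flag as the main obstacle, is essentially the statement being proved. Collapsing the auxiliary algebraic cosimplicial direction amounts to showing that $z_{equi}(X,r)(\Delta^{\bullet}_{top}) \to z_{equi}(X,r)(\Delta^{\bullet}_{top}\times \Delta^{n}_{\R})$ is a quasi-isomorphism for each $n$, and since $\Delta^{n}_{\R}\iso \A^{n}_{\R}$ this is (an iterate of) the proposition itself; so as written the argument is circular, and the heavy input of steps 1--2 (Friedlander--Voevodsky homotopy invariance plus Corollary \ref{saqiso}) buys nothing until that circle is broken. The paper's proof is much softer: it simply invokes the argument of \cite[Lemma 1.2]{FW:compK}, which is a purely formal statement valid for an \emph{arbitrary} presheaf $F$ on $Sch/\R$. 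One uses the topological contractibility of $\A^{1}_{\R}(\R)^{an}=\R$: a prism map $\Delta^{n+1}_{top}\to \Delta^{n}_{top}\times[0,1]\subseteq \Delta^{n}_{top}\times \A^{1}_{\R}(\R)$, composed with a colimit-index map $\Delta^{n}_{top}\to W(\R)$, produces a new index map $\Delta^{n+1}_{top}\to (W\times\A^{1}_{\R})(\R)$, and this assembles into a simplicial homotopy on $F(\Delta^{\bullet}_{top}\times\A^{1}_{\R})$. No properties of $z_{equi}$, no duality, and no recognition principle are needed.

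Your alternative route points in this direction but has two genuine defects. First, the homotopy must be built in the topological simplicial direction: the complexes $z_{equi}(X,r)(\Delta^{\bullet}_{top})$ and $z_{equi}(X,r)(\Delta^{\bullet}_{top}\times\A^{1}_{\R})$ carry no $\Delta^{\bullet}_{\R}$ direction, so the algebraic cosimplicial morphisms $\Delta^{n+1}_{\R}\to \A^{1}_{\R}\times\Delta^{n}_{\R}$ cannot act as such; what saves the idea is that their real points restrict to the usual topological prism decomposition $\Delta^{n+1}_{top}\to \Delta^{n}_{top}\times[0,1]$, which is the form in which they must be used (as above). Second, and more seriously, the inference ``$i_{0}^{*}\simeq i_{1}^{*}$ and both are retractions of $p^{*}$, hence $p^{*}$ is a quasi-isomorphism'' is invalid: a map admitting a one-sided inverse need not be a quasi-isomorphism, even if all such retractions agree up to homotopy. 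What is actually needed is a homotopy $p^{*}\circ i_{0}^{*}\simeq id$ on $z_{equi}(X,r)(\Delta^{\bullet}_{top}\times\A^{1}_{\R})$; this is obtained by applying the same prism construction to the auxiliary presheaf $V\mapsto z_{equi}(X,r)(V\times\A^{1}_{\R})$ after pulling back along the contraction $\A^{1}_{\R}\times\A^{1}_{\R}\to\A^{1}_{\R}$, $(x,t)\mapsto tx$, whose restrictions at $t=1$ and $t=0$ give $id$ and $p^{*}i_{0}^{*}$. With those two repairs your sketch becomes the argument of \cite[Lemma 1.2]{FW:compK}, i.e.\ the paper's proof.
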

\begin{proof}
 The same argument as in \cite[Lemma 1.2]{FW:compK}.
\end{proof}

The duality theorem for bivariant cycle theory \cite[Theorem 7.4]{FV:biv} says that for real varieties $X$, $U$ with $U$ smooth of dimension $d$, the natural inclusion
\begin{equation}\label{FVdual}
 \mcal{D}:z_{equi}(X, r)(U\times_{\R} - ) \hookrightarrow z_{equi}(X\times_{\R} U, r+d)(-)
\end{equation}
induces a quasi-isomorphism of complexes
\begin{equation*}
 \mcal{D}:z_{equi}(X,r)(U\times_{\R} \Delta^{\bullet}_{\R}) \xrightarrow{\simeq} 
z_{equi}(X\times_{\R} U, r+d)(\Delta^{\bullet}_{\R}).
\end{equation*}

\begin{proposition}\label{fvsstdual}
 For a smooth real variety $U$ and a quasi-projective real variety $X$ the map
\begin{equation*}
z_{equi}(X,r)(U\times_{\R}\Delta^{\bullet}_{top}) \xrightarrow{\mcal{D}} z_{equi}(X\times_{\R} U, r+d)(\Delta^{\bullet}_{top}) 
\end{equation*}
is a quasi-isomorphism.
\end{proposition}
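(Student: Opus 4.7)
The plan is to bootstrap the algebraic Friedlander--Voevodsky duality \eqref{FVdual} from $\Delta^{\bullet}_{\R}$ to $\Delta^{\bullet}_{top}$ using the recognition principle (Corollary \ref{saqiso}) together with homotopy invariance (Proposition \ref{htpyinv}). The bridge will be a bisimplicial abelian group whose two directions are algebraic and topological.

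First, I would introduce the presheaves of simplicial abelian groups on $Sch/\R$
\begin{equation*}
\mcal{F}(V) = z_{equi}(X,r)(U\times_{\R}V\times_{\R}\Delta^{\bullet}_{\R}),\quad
\mcal{G}(V) = z_{equi}(X\times_{\R}U, r+d)(V\times_{\R}\Delta^{\bullet}_{\R}),
\end{equation*}
together with the natural transformation $\mcal{F}\to\mcal{G}$ induced by $\mcal{D}$. For smooth $V$, applying \eqref{FVdual} to the smooth variety $U\times V$ gives a quasi-isomorphism from $\mcal{F}(V)$ to $z_{equi}(X\times U\times V,\, r+d+\dim V)(\Delta^{\bullet}_{\R})$, while \eqref{FVdual} applied to $V$ shows that $\mcal{G}(V)$ maps by a quasi-isomorphism to the same complex; the resulting triangle commutes by naturality of the graph embedding, so $\mcal{F}(V)\to\mcal{G}(V)$ is itself a quasi-isomorphism. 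Corollary \ref{saqiso} then provides a homotopy equivalence of diagonal bisimplicial abelian groups
\begin{equation*}
\diag\mcal{F}(\Delta^{\bullet}_{top})\xrightarrow{\simeq}\diag\mcal{G}(\Delta^{\bullet}_{top}),
\end{equation*}
where $\diag\mcal{F}(\Delta^{\bullet}_{top})_n=z_{equi}(X,r)(U\times\Delta^n_{top}\times\Delta^n_{\R})$ and similarly for $\mcal{G}$.

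It remains to identify these diagonals up to quasi-isomorphism with the complexes appearing in the proposition. For each fixed $n$, an iterated application of Proposition \ref{htpyinv} (using $\Delta^m_{\R}\cong\A^m_{\R}$) shows that the augmentation from the constant simplicial abelian group $z_{equi}(X,r)(U\times\Delta^n_{top})$ into the simplicial object $m\mapsto z_{equi}(X,r)(U\times\Delta^n_{top}\times\Delta^m_{\R})$ is a quasi-isomorphism. The standard bisimplicial principle (row-wise quasi-isomorphism forces diagonal quasi-isomorphism) then produces a quasi-isomorphism $z_{equi}(X,r)(U\times\Delta^{\bullet}_{top})\to\diag\mcal{F}(\Delta^{\bullet}_{top})$, and the analogous statement for $\mcal{G}$. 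Chaining the three weak equivalences yields the proposition.

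The main technical obstacle is verifying the row-wise homotopy invariance: Proposition \ref{htpyinv} as stated concerns $z_{equi}(X,r)(\Delta^{\bullet}_{top}\times -)$, not the enlarged presheaf $V\mapsto z_{equi}(X,r)(U\times\Delta^n_{top}\times V)$ on $Sch/\R$ required here. One must check that the $\A^1$-homotopy argument of \cite[Lemma 1.2]{FW:compK} underlying Proposition \ref{htpyinv} extends across the extra factor $U\times\Delta^n_{top}$; this ultimately reduces to the fact that the linear contraction of $\A^1$ onto a point induces a simplicial homotopy on $z_{equi}(X,r)$ evaluated on any compact space crossed with the algebraic simplex.
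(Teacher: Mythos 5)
Your proposal is correct and is essentially the paper's own proof: both compare $z_{equi}(X,r)(U\times_{\R}\Delta^{\bullet}_{top})$ with the bisimplicial object $z_{equi}(X,r)(U\times_{\R}\Delta^{\bullet}_{\R}\times_{\R}\Delta^{\bullet}_{top})$ using homotopy invariance (Proposition \ref{htpyinv}), and then apply Corollary \ref{saqiso} to the map of presheaves $W\mapsto z_{equi}(X,r)(U\times_{\R}\Delta^{\bullet}_{\R}\times W)\to z_{equi}(X\times_{\R}U,r+d)(\Delta^{\bullet}_{\R}\times_{\R}W)$, which is a quasi-isomorphism for smooth $W$ by Friedlander--Voevodsky duality. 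The only cosmetic difference is that you derive this parametrized duality statement from (\ref{FVdual}) by a two-out-of-three argument over $z_{equi}(X\times U\times W, r+d+\dim W)(\Delta^{\bullet}_{\R})$, whereas the paper invokes \cite[Theorem 7.4]{FV:biv} for it directly, and you spell out the row-wise augmentation and diagonal step that the paper summarizes as ``by homotopy invariance.''
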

\begin{proof}
Consider the commutative diagram
\begin{equation*}
\begin{CD}
z_{equi}(X,r)(U\times_{\R}\Delta^{\bullet}_{top}) @>{\mcal{D}}>> z_{equi}(X\times_{\R} U, r+d)(\Delta^{\bullet}_{top}) \\
@V{\pi^*}VV @VV{\pi^{*}}V \\
 z_{equi}(X,r)(U\times_{\R} \Delta^{\bullet}_{\R}\times_{\R}\Delta^{\bullet}_{top}) @>{\mcal{D}}>> z_{equi}(X\times_{\R} U, r+d)(\Delta^{\bullet}_{\R}\times_{\R}\Delta^{\bullet}_{top}) .
\end{CD}
\end{equation*}
 The vertical arrows are quasi-isomorphisms by homotopy invariance. The bottom right arrow is a quasi-isomorphism by Corollary \ref{saqiso} since 
\begin{equation*}
 z_{equi}(X,r)(U\times_{\R} \Delta^{\bullet}_{\R}\times W) \to z_{equi}(X\times_{\R} U, r+d)(\Delta^{\bullet}_{\R}\times_{\R} W).
\end{equation*}
is a quasi-isomorphism for all smooth real varieties $W$ by \cite[Theorem 7.4]{FV:biv} and therefore the top horizontal map is a quasi-isomorphism as well.
\end{proof}

\begin{lemma}\label{Dcomp}
 Let $Y$ be a projective real variety and $U$ a smooth real variety. The following diagram commutes
\begin{equation*}
 \begin{CD}
  z_{equi}(Y,0)(U\times_{\R} \Delta^{\bullet}_{top}) @>{\mcal{D}}>> z_{equi}(U\times_{\R} Y,d)(\Delta^{\bullet}_{top}) \\
@VVV @VVV \\
\sing_{\bullet}\Mor{\R}{U}{\mcal{C}_{0}(Y)}^{+} @>{\mcal{D}}>> \sing_{\bullet}\mcal{Z}_{d}(U\times_{\R} Y)
 \end{CD}
\end{equation*}
where the vertical maps are the ones from Proposition \ref{singmor} and Proposition \ref{sing}.
\end{lemma}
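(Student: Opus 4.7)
The plan is to verify commutativity at the level of generators, since all four corners are simplicial abelian groups and both horizontal maps are additive and simplicial. A typical element in simplicial degree $n$ of $z_{equi}(Y,0)(U\times_{\R}\Delta^{n}_{top})$ is represented by a pair $(f:\Delta^{n}_{top}\to V(\R),\, \alpha)$ where $V$ is a real variety and $\alpha$ is an irreducible closed subvariety of $U\times_{\R}V\times_{\R}Y$ that is equidimensional of relative dimension $0$ over $U\times_{\R}V$. It suffices to chase such an element around the square and verify that both paths produce the same element in $\sing_{\bullet}\mcal{Z}_{d}(U\times_{\R}Y)$.

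First I trace the top-then-right path. By definition (\ref{FVdual}), the top duality $\mcal{D}$ is the inclusion $z_{equi}(Y,0)(U\times_{\R}V)\hookrightarrow z_{equi}(U\times_{\R}Y,d)(V)$, which retains the same underlying cycle $\alpha$ but now regards it as equidimensional of relative dimension $d=\dim U$ over $V$. The right vertical arrow of Proposition \ref{sing} then sends $(f,\alpha)$ to the continuous function $t\mapsto \alpha|_{f(t)}\in\mcal{Z}_{d}(U\times_{\R}Y)$, where $\alpha|_{f(t)}$ is the pull-back along $f(t)\colon\{*\}\to V(\R)$. Next I trace the left-then-bottom path. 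The left vertical arrow of Proposition \ref{singmor} sends $(f,\alpha)$ to the function $t\mapsto \alpha|_{f(t)}$, now regarded via the graph isomorphism $\Gamma\colon\Mor{\R}{U}{\mcal{C}_{0}(Y)}^{+}\xrightarrow{\iso}\mcal{Z}_{0}(Y)(U)$ as an element of $\Mor{\R}{U}{\mcal{C}_{0}(Y)}^{+}$. The bottom duality map is the composition $\mcal{D}=\iota\circ\Gamma$ where $\iota\colon\mcal{Z}_{0}(Y)(U)\hookrightarrow\mcal{Z}_{d}(U\times_{\R}Y)$ is the inclusion of equidimensional cycles into all cycles, so this route sends $(f,\alpha)$ to $t\mapsto \iota(\alpha|_{f(t)})\in\mcal{Z}_{d}(U\times_{\R}Y)$.

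Both routes thus produce the function $t\mapsto \alpha|_{f(t)}$, and the key point reducing their equality to a tautology is that pull-back along $f(t)\colon\{*\}\to V(\R)$ commutes with the duality inclusion: $(\mcal{D}\alpha)|_{f(t)}=\iota(\alpha|_{f(t)})$ as cycles in $\mcal{Z}_{d}(U\times_{\R}Y)$. This is immediate from the construction, since both $\mcal{D}$ on the top row and $\iota$ on the bottom simply forget the equidimensionality over the $V$-factor (respectively the $U$-factor) and retain the same underlying cycle on $U\times_{\R}Y$. I do not expect any genuine obstacle in this verification; the only care required is bookkeeping between the three equivalent descriptions of a relative dimension-zero equidimensional cycle over $U$ (namely $z_{equi}(Y,0)(U)$, $\Mor{\R}{U}{\mcal{C}_{0}(Y)}^{+}$, and the subspace of equidimensional cycles in $\mcal{Z}_{d}(U\times_{\R}Y)$), all of which are naturally isomorphic by the results recalled in Section \ref{topspcyc}. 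Additivity and naturality in the topological variable $T=\Delta^{n}_{top}$ extend the identity from an irreducible generator to the whole diagram of simplicial abelian groups.
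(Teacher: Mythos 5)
Your proof is correct, but it takes a different route from the paper. The paper disposes of the lemma in two lines: it cites the complex-variety version of exactly this square from Friedlander--Walker (\cite[Proposition 3.3]{FW:ratisos}), observes that all four maps in the complex diagram are $G$-equivariant, and obtains the real statement by passing to $G$-fixed points --- consistent with how Propositions \ref{sing} and \ref{singmor} themselves were deduced. You instead verify commutativity directly over $\R$ by chasing a generator $(f:\Delta^{n}_{top}\to V(\R),\alpha)$ around the square, which amounts to re-proving the Friedlander--Walker compatibility rather than quoting it. Your chase is sound; the one point you wave at as ``immediate from the construction'' --- that $(\mcal{D}\alpha)|_{f(t)}=\iota(\alpha|_{f(t)})$ in $\mcal{Z}_{d}(U\times_{\R}Y)$ --- deserves to be pinned down as the naturality of the duality inclusion $\mcal{D}:z_{equi}(Y,0)(U\times_{\R}-)\hookrightarrow z_{equi}(U\times_{\R}Y,d)(-)$ of (\ref{FVdual}) as a map of presheaves, applied to the point $\spec\R\xrightarrow{f(t)}V$: that is precisely the statement that the fiber of $\alpha$ over $f(t)$, computed as a relative cycle over $U\times V$ restricted to $U\times\{f(t)\}$, agrees with the fiber of the same underlying cycle viewed as a relative cycle over $V$. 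Granting that, additivity and compatibility with the filtered colimit and the simplicial structure finish the argument as you say. What each approach buys: the paper's is shorter and reuses the complex case in the spirit of the whole section, while yours is self-contained over $\R$ and makes the underlying ``same cycle, two bookkeepings'' content of the square explicit, at the cost of redoing work already available in the cited reference.
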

\begin{proof}
 By \cite[Proposition 3.3]{FW:ratisos} the diagram of equivariant maps of simplicial sets
\begin{equation*}
 \begin{CD}
  z_{equi}(Y_{\C},0)(U_{\C}\times_{\C} \Delta^{\bullet}_{top}) @>>{\mcal{D}}> z_{equi}(U_{\C}\times_{\C} Y_{\C},d)(\Delta^{\bullet}_{top}) \\
@VVV @VVV \\
\sing_{\bullet}\Mor{\C}{U_{\C}}{\mcal{C}_{0}(Y_{\C})}^{+} @>>{\mcal{D}}> \sing_{\bullet}\mcal{Z}_{d}(U_{\C}\times_{\C} Y_{\C})
 \end{CD}
\end{equation*}
commutes. Taking fixed points yields the result.
\end{proof}

Write  
\begin{equation*}
z_{equi}(\P^{q/q-1}_{\R},0)(U) = coker ( z_{equi}(\P^{q-1}_{\R},0)(U)\to z_{equi}(\P^{q}_{\R},0)(U) )
\end{equation*}
for the cokernel of the map of presheaves induced by $\P^{q-1}_{\R}\subseteq \P^{q}_{\R}$.
\begin{proposition}\label{qisoseq}
 Let $U$ be a smooth real variety of dimension $d$. The sequence of natural maps of complexes below consist of quasi-isomorphisms.
\begin{multline}\label{morsst}
z_{equi}(\A^q_{\R}, 0)(U\times_{\R}\Delta^{\bullet}_{top})  \leftarrow z_{equi}(\P^{q/q-1}_{\R},0)(U\times_{\R}\Delta^{\bullet}_{top}) \to \\
\to  \frac{\sing_{\bullet}(\Mor{\R}{U}{\mcal{C}_0(\P^{q}_{\R}})^{+}}{\sing_{\bullet}(\Mor{\R}{U}{\mcal{C}_0(\P^{q-1}_{\R}})^{+}} 
\to \sing_{\bullet}\mcal{Z}^{q}(U).
\end{multline}
\end{proposition}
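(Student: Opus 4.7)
The plan is to verify each of the three arrows in the chain is individually a quasi-isomorphism.

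For the leftmost arrow, I would appeal to the localization theorem for equidimensional cycles (\cite[Theorem 5.11]{FV:biv}): the cokernel presheaf $z_{equi}(\P^{q/q-1}_{\R},0)(-) = z_{equi}(\P^q_{\R},0)(-)/z_{equi}(\P^{q-1}_{\R},0)(-)$ agrees with $z_{equi}(\A^q_{\R},0)(-)$ after $cdh$-sheafification, hence after $uad$-sheafification since every $cdh$-cover is a $uad$-cover. The recognition principle (Theorem \ref{recog}) combined with homotopy invariance (Proposition \ref{htpyinv}) then transports this sheaf-theoretic isomorphism into a quasi-isomorphism of complexes after evaluation on $U\times_{\R}\Delta^{\bullet}_{top}$.

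For the middle arrow, I would apply Proposition \ref{smheq} with $Y=\P^q_{\R}$ and $Y=\P^{q-1}_{\R}$ to get quasi-isomorphisms on numerator and denominator separately. The short exact sequences defining the cokernels on each side (using closedness of $\mcal{C}_{0}(\P^{q-1}_{\R})\subseteq\mcal{C}_{0}(\P^q_{\R})$ from the closed-subspace lemma) assemble into a morphism of short exact sequences of simplicial abelian groups, and the resulting comparison of long exact homotopy sequences together with the five-lemma forces the map on cokernels to be a quasi-isomorphism as well.

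For the rightmost arrow, one compares $\sing_\bullet A /\sing_\bullet B$ with $\sing_\bullet(A/B)$ where $A=\Mor{\R}{U}{\mcal{C}_{0}(\P^q_{\R})}^{+}$ and $B=\Mor{\R}{U}{\mcal{C}_{0}(\P^{q-1}_{\R})}^{+}$, so that $A/B=\mcal{Z}^{q}(U)$. The preceding lemma shows $B\hookrightarrow A$ is a closed subgroup inclusion of tractable topological abelian groups, so $B\to A\to A/B$ is a principal quasi-fibration; the resulting long exact sequence in homotopy matches the one coming from $0\to\sing_\bullet B\to\sing_\bullet A\to\sing_\bullet A/\sing_\bullet B\to 0$, and the five-lemma finishes. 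The principal technical hurdle is the first step: one must carefully verify that the localization theorem for equidimensional cycles applies verbatim over $\R$ and then bridge the $cdh$ and $uad$ topologies through the recognition principle, since the remaining two steps are largely formal once Proposition \ref{smheq} is in hand.
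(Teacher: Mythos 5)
Your handling of the middle and right arrows is essentially the paper's: Proposition \ref{smheq} on numerator and denominator plus the five lemma for the middle map, and tractability of $\Mor{\R}{U}{\mcal{C}_0(\P^{q}_{\R})}$ (so that the quotient defining $\mcal{Z}^{q}(U)$ is a fibration sequence up to homotopy) for the last map. The gap is in your first step. Theorem \ref{recog} converts an isomorphism of $uad$-sheafifications $F_{uad}\iso G_{uad}$ into a homotopy equivalence $F(\Delta^{\bullet}_{top})\to G(\Delta^{\bullet}_{top})$, i.e.\ evaluation at $\Delta^{\bullet}_{top}$ only. Here you need the statement at $U\times_{\R}\Delta^{\bullet}_{top}$, so you must apply the recognition principle (or Corollary \ref{saqiso}) to the twisted presheaves $V\mapsto F(U\times V)$; and an isomorphism of \emph{their} $uad$-sheafifications is not a formal consequence of $F_{uad}\iso G_{uad}$, because the $cdh$/$uad$ cover of $U\times V$ that trivializes a section (this is where platification/resolution enters in the Friedlander--Voevodsky localization theorem) need not be pulled back from a cover of $V$. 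The repair requires an objectwise input on smooth schemes rather than a sheaf-level one: by the localization theorem combined with the comparison of $cdh$-hypercohomology with naive homology on smooth schemes (the same \cite[5.12, 8.1]{FV:biv} combination invoked in the proof of Proposition \ref{sing}), the map $z_{equi}(\P^{q/q-1}_{\R},0)(V\times\Delta^{\bullet}_{\R})\to z_{equi}(\A^{q}_{\R},0)(V\times\Delta^{\bullet}_{\R})$ is a quasi-isomorphism for every smooth $V$; taking $V=U\times W$ with $W$ smooth, then using homotopy invariance (Proposition \ref{htpyinv}) and Corollary \ref{saqiso}, gives the quasi-isomorphism at $U\times\Delta^{\bullet}_{top}$. (Alternatively one can argue as in Proposition \ref{fcoeffagr}, showing the kernel and cokernel homotopy presheaves are homotopy invariant presheaves with transfers vanishing on smooth schemes, hence $uad$-locally trivial.)

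It is also worth noting that the paper takes a different route for this arrow and needs no localization theorem at all: using the duality maps of Proposition \ref{fvsstdual} and the comparison maps of Proposition \ref{sing}, the sequence $z(\P^{q-1})(U\times\Delta^{\bullet}_{top})\to z(\P^{q})(U\times\Delta^{\bullet}_{top})\to z(\A^{q})(U\times\Delta^{\bullet}_{top})$ is compared with $\sing_{\bullet}\mcal{Z}_{r+d}(\P^{q-1}\times U)\to\sing_{\bullet}\mcal{Z}_{r+d}(\P^{q}\times U)\to\sing_{\bullet}\mcal{Z}_{r+d}(\A^{q}\times U)$, which is a homotopy fiber sequence because the Chow monoids are tractable; the first arrow of (\ref{morsst}) then follows by a diagram chase, the $U\times(-)$ difficulty having already been absorbed once and for all in the proof of Proposition \ref{fvsstdual}. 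Your approach can be completed as indicated above, but it re-proves with localization what the paper extracts from duality plus the topological fibration sequence.
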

\begin{proof}
 That the first map of diagram (\ref{morsst}) is a quasi-isomorphism follows from consideration of the comparison diagram
\begin{equation*}
 \xymatrix@-1pc{
z(\P^{n-1}, r)(U\times\Delta^{\bullet}_{top}) \ar[d]^{\mcal{D}}\ar[r] & z(\P^{n},r)(U\times\Delta_{top}^{\bullet}) \ar[r]\ar[d]^{\mcal{D}} & z(\A^{n}, r)(U\times\Delta^{\bullet}_{top}) \ar[d]^{\mcal{D}} \\
z(\P^{n-1}\times U, r+d)(\Delta^{\bullet}_{top}) \ar[d]\ar[r] & z(\P^{n}\times U,r+d)(\Delta_{top}^{\bullet}) \ar[r]\ar[d] & z(\A^{n}\times U , r+d)(\Delta^{\bullet}_{top}) \ar[d] \\ 
\sing_{\bullet}\mcal{Z}_{r+d}(\P^{n-1}\times U) \ar[r] & \sing_{\bullet}\mcal{Z}_{r+d}(\P^{n}\times U) \ar[r] & \sing_{\bullet}\mcal{Z}_{r+d}(\A^{n}\times U).
}
\end{equation*}
The vertical arrows are all quasi-isomorphisms by Proposition \ref{fvsstdual} and by Proposition \ref{sing}. Because $\mcal{C}_{k}(V)$ is a tractable monoid, the bottom row is homotopy equivalent to a short exact sequence of simplicial abelian groups and therefore the top rows are as well. It follows immediately that the first arrow of diagram \ref{morsst} is a quasi-isomorphism. The second arrow of diagram (\ref{morsst}) is a quasi-isomorphism by Proposition \ref{smheq} and the last arrow of the diagram is a quasi-isomorphism because $\Mor{\R}{U}{\mcal{C}_{0}(\P_{\R}^{n})}$ is a tractable monoid.
\end{proof}

\begin{definition}
 If $k<0$ then define $\mcal{Z}_{k}(X)$ to be $\mcal{Z}_{0}(X\times \A^{-k})$. 
\end{definition}

We can now conclude the duality for real morphic cohomology and real Lawson homology. 

\begin{corollary}
 Let $U$ be a smooth real variety of dimension $d$. Then
\begin{equation*}
 \mcal{Z}^{q}(U_{\C})^{G} \xrightarrow{\mcal{D}} \mcal{Z}_{d}(\A^{q}_{\C}\times_{\C} 
U_{\C})^{G} \xleftarrow{\simeq} \mcal{Z}_{d-q}(U_{\C})^{G}
\end{equation*}
is a natural homotopy equivalence.

In particular it induces the natural isomorphism
\begin{equation*}
L^{q}H\R^{n}(U)\xrightarrow{\iso} L_{d-q}H\R_{d-n,d}(U). 
\end{equation*}
\end{corollary}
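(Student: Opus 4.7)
The plan is to use the bivariant-cycle framework of the preceding propositions to transport the Friedlander--Voevodsky duality into the geometric duality statement for cycle and cocycle spaces. First I would apply $\sing_\bullet$ throughout: since each topological abelian group in sight has the homotopy type of a $CW$-complex, $\sing_\bullet$ detects weak equivalences of the underlying spaces, and Proposition \ref{nqp} lets me identify $\mcal{Z}^q(U_\C)^G$ with the real cocycle space $\mcal{Z}^q(U)$ of the presheaf framework (and analogously for cycle spaces). It then suffices to produce a zigzag of natural quasi-isomorphisms of simplicial abelian groups connecting $\sing_\bullet \mcal{Z}^q(U_\C)^G$, $\sing_\bullet \mcal{Z}_d(\A^q_\C\times_\C U_\C)^G$, and $\sing_\bullet \mcal{Z}_{d-q}(U_\C)^G$.

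For the duality leg, I would assemble the chain
\begin{equation*}
\sing_\bullet \mcal{Z}^q(U) \xleftarrow{\simeq} z_{equi}(\A^q_\R, 0)(U\times_\R \Delta^\bullet_{top}) \xrightarrow[\simeq]{\mcal{D}} z_{equi}(\A^q_\R\times_\R U, d)(\Delta^\bullet_{top}) \xrightarrow{\simeq} \sing_\bullet \mcal{Z}_d(\A^q_\C\times_\C U_\C)^G,
\end{equation*}
whose first arrow is provided by Proposition \ref{qisoseq}, whose middle arrow is the Friedlander--Voevodsky duality quasi-isomorphism of Proposition \ref{fvsstdual}, and whose final arrow is Proposition \ref{sing}. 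Lemma \ref{Dcomp}, applied separately with target $Y=\P^q_\R$ and $Y=\P^{q-1}_\R$ and then passed to the quotient, guarantees that this composite represents the geometric duality map $\mcal{D}$ after $\sing_\bullet$.

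For the flat-pullback leg $\mcal{Z}_{d-q}(U_\C)^G \to \mcal{Z}_d(\A^q_\C\times_\C U_\C)^G$ induced by the projection $\A^q_\C\times U_\C\to U_\C$, I would invoke the equivariant Lawson suspension theorem of dos Santos, or equivalently derive the quasi-isomorphism $z_{equi}(U,d-q)(\Delta^\bullet_{top}) \to z_{equi}(\A^q_\R\times_\R U, d)(\Delta^\bullet_{top})$ by iterating the homotopy invariance of Proposition \ref{htpyinv} together with the projective bundle formula, and then combine with Proposition \ref{sing} applied on both sides. Concatenating these two legs produces the asserted natural zigzag of weak equivalences, and the ``in particular'' isomorphism on graded groups then follows by taking the relevant homotopy groups and unwinding the definitions of $L^qH\R^n(U)$ and $L_{d-q}H\R_{d-n,d}(U)$ in terms of the homotopy groups of the cycle and cocycle spaces.

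The principal technical obstacle is the compatibility of the geometric duality map $\mcal{D}$ with the Friedlander--Voevodsky duality at the level of simplicial abelian groups, which is exactly the content of Lemma \ref{Dcomp}; once this is in hand, the rest of the argument reduces to chaining the quasi-isomorphisms already established and applying naturality.
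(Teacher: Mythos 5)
Your proposal is correct and follows essentially the same route as the paper: the paper's proof assembles exactly the zigzag you describe, combining Proposition \ref{qisoseq}, the Friedlander--Voevodsky duality quasi-isomorphism of Proposition \ref{fvsstdual}, Proposition \ref{sing}, and Lemma \ref{Dcomp} into one commutative diagram, and handles the second leg by dos Santos' homotopy invariance for real Lawson homology (your "equivariant suspension" citation is the same ingredient; the detour through Proposition \ref{htpyinv} plus a projective bundle formula is unnecessary and addresses the wrong variable, but your primary route is the paper's).
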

\begin{proof}
%
This follows from Proposition \ref{fvsstdual}, Lemma \ref{Dcomp}, Proposition \ref{sing}, Proposition \ref{qisoseq}, and homotopy invariance \cite[Proposition 4.15]{DS:real}. Indeed these show that the following diagram is commutative and the left hand maps are homotopy equivalences, 
\begin{equation*}
\xymatrix@-1pc{
z_{equi}(\A_{\R}^{q},0)(U\times\Delta^{\bullet}_{top}) \ar[d]^{\mcal{D}} & z_{equi}(\P_{\R}^{q/q-1},0)(U\times\Delta^{\bullet}_{top}) \ar[l]\ar[d]^{\mcal{D}}\ar[r] & \\
z_{equi}(\A^{q}_{\R}\times U, d)(\Delta^{\bullet}_{top}) & z_{equi}(\P^{q/q-1}_{\R}\times U, d)(\Delta^{\bullet}_{top}) \ar[l]\ar[r] & \\
\ar[r] & \frac{\sing_{\bullet}(\Mor{\R}{U}{\mcal{C}_0(\P^{q}_{\R}})^{+}}{\sing_{\bullet}(\Mor{\R}{U}{\mcal{C}_0(\P^{q-1}_{\R}})^{+}} \ar[r]\ar[d]^{\mcal{D}} & \sing_{\bullet}\mcal{Z}^{q}(U) \ar[d]^{\mcal{D}} \\
\ar[r] & \frac{\sing_{\bullet}\mcal{Z}_{d}(\P^{q}_{\R}\times U)}{\sing_{\bullet}\mcal{Z}_{d}(\P^{q-1}_{\R}\times U)} \ar[r] & \sing_{\bullet}\mcal{Z}_{d}(\A^{q}\times U) .
}
\end{equation*}
 Therefore the right hand map is also a homotopy equivalence
\end{proof}

 Combining Friedlander-Lawson's duality between Lawson homology and morphic cohomology over $\C$ 
and the duality over $\R$ immediately gives an equivariant duality theorem.

\begin{corollary} \label{PD}
 Let $U$ be a smooth real variety of dimension $d$. The sequence of maps 
\begin{equation*}
\mcal{Z}^{q}(U_{\C}) \to \mcal{Z}_{d}(U_{\C}\times_{\C} \A^{q}_{\C}) \leftarrow \mcal{Z}_{d-q}(X_{\C})
\end{equation*}
consists of $G$-equivariant homotopy equivalences. In particular
\begin{equation*}
L^{q}H\R^{n,m}(U)\xrightarrow{\iso} L_{d-q}H\R_{d-n,d-m}(U). 
\end{equation*}
for all smooth quasi-projective real varieties $U$.
\end{corollary}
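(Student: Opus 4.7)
The strategy is to upgrade the non-equivariant Friedlander--Lawson duality over $\C$ to a $G$-equivariant statement by combining it with the $\R$-duality just established in the preceding corollary, then read off the Poincare duality isomorphism by taking $RO(G)$-graded homotopy.

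First I would check that both maps in the displayed zig-zag are $G$-equivariant morphisms of $G$-spaces. The duality map $\mcal{D}:\mcal{Z}^{q}(U_{\C})\to\mcal{Z}_{d}(U_{\C}\times_{\C}\A^{q}_{\C})$ is equivariant: as noted in Section~\ref{topspcyc}, for the complexification of a real variety the graph map $\Gamma:\Mor{\C}{U_{\C}}{\mcal{C}_{0}(\P^{q}_{\C})}^{+}\to\mcal{Z}_{0}(U_{\C})(\P^{q}_{\C})$ is an equivariant morphism, and the inclusion into $\mcal{Z}_{d}(U_{\C}\times_{\C}\P^{q}_{\C})$ followed by the restriction to $\A^{q}_{\C}$ is plainly equivariant. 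The second map $\mcal{Z}_{d-q}(U_{\C})\to\mcal{Z}_{d}(U_{\C}\times_{\C}\A^{q}_{\C})$ is the $\A^{q}_{\C}$-suspension/flat pull-back map, which is also $G$-equivariant.

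Next I would verify that both maps are $G$-weak equivalences. Since all the spaces involved have the equivariant homotopy type of $G$-$CW$ complexes (Corollary~\ref{ehtype}), the equivariant Whitehead theorem then promotes $G$-weak equivalences to $G$-equivariant homotopy equivalences. Being a $G$-weak equivalence amounts to two checks: (i) the underlying map of spaces is a (non-equivariant) weak equivalence, and (ii) the induced map on $G$-fixed points is a weak equivalence. For the duality map $\mcal{D}$, check (i) is the Friedlander--Lawson duality between morphic cohomology and Lawson homology over $\C$, while (ii) is precisely the preceding corollary (the real duality between $\mcal{Z}^{q}(U_{\C})^{G}$ and $\mcal{Z}_{d}(\A^{q}_{\C}\times_{\C}U_{\C})^{G}$). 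For the suspension map $\mcal{Z}_{d-q}(U_{\C})\to\mcal{Z}_{d}(U_{\C}\times_{\C}\A^{q}_{\C})$, both (i) and (ii) follow from dos Santos' equivariant homotopy invariance \cite[Proposition~4.15]{DS:real}, which is a $G$-weak equivalence statement by construction.

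Finally, taking $\pi_{V}(-)=[S^{V},-]_{G}$ for $V=\R^{q-n,q-m}$ on the zig-zag of $G$-equivariant homotopy equivalences yields an isomorphism
\begin{equation*}
\pi_{q-n,q-m}\mcal{Z}^{q}(U_{\C})\xrightarrow{\iso}\pi_{q-n,q-m}\mcal{Z}_{d-q}(U_{\C}),
\end{equation*}
which translates into the indexing of Definitions~\ref{emco} as $L^{q}H\R^{n,m}(U)\iso L_{d-q}H\R_{d-n,d-m}(U)$.

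The main obstacle is not any single step but the bookkeeping needed to ensure that the duality map is \emph{genuinely} equivariant at the level of spaces (so that taking fixed points recovers the real-analogue map of the preceding corollary) and that every constituent space is a $G$-$CW$ complex so one can invoke the Whitehead theorem. Once those are in place, the combination of Friedlander--Lawson over $\C$ and the preceding real corollary does all the work.
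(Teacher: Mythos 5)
Your proposal is correct and follows essentially the same route as the paper: the paper obtains Corollary~\ref{PD} by combining the non-equivariant Friedlander--Lawson duality over $\C$ (the underlying weak equivalence) with the real duality of the preceding corollary (the fixed-point weak equivalence), the duality and suspension maps being equivariant, and the spaces having $G$-$CW$ homotopy type so that $G$-weak equivalences are $G$-homotopy equivalences. Taking $RO(G)$-graded homotopy groups then gives the stated isomorphism, exactly as you describe.
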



\begin{remark}\label{fpa}
A smooth $G$-manifold $M$ equipped such that the action of $G$ on its tangent bundle makes it into a real $n$-bundle satisfies an equivariant Poincare duality,
\begin{equation*}
 \mcal{P}: H^{p,q}(M;\underline{\Z}) \xrightarrow{\iso} H_{n-p,n-q}(M;\underline{\Z}).
\end{equation*}
In a forthcoming paper we prove that the duality $\mcal{D}$ is compatible under the cycle maps with the duality $\mcal{P}$. 
\end{remark}

\section{Compatibility of Cycle Maps}
\subsection{Generalized cycle maps}
Let $X$ be a smooth real variety. The generalized cycle map relates motivic cohomology and etale cohomology,
\begin{equation*}
cyc:\H^{2q-k,q}_{\mcal{M}}(X;\Z/2) \to H^{2q-k}_{et}(X;\mu_{2}^{\otimes q}).
\end{equation*}
By \cite{Cox:real} the etale cohomology of a real variety is equal to the Borel equivariant cohomology of its space of complex points,
\begin{equation*}
H^{2q-k}_{et}(X;\mu_{2}^{\otimes q}) \iso H^{2q-k}_{\Z/2}(X_{\C}(\C);\Z/2).
\end{equation*}

On the other hand morphic cohomology and motivic cohomology agree with finite coefficients (see Proposition \ref{fcoeffagr}). 

Combining the generalized cycle map in morphic cohomology and the comparison map between Bredon and Borel equivariant cohomology
 \begin{equation*}
L^{q}H\R^{q-k,q}(X;\Z/2) \to H^{q-k,q}(X_\R(\C);\underline{\Z/2}) \to H^{2q-k}_{\Z/2}(X(\C);\Z/2)
\end{equation*}
together with the isomorphism
$ \H^{2q-k,q}_{\mcal{M}}(X;\Z/2) \xrightarrow{\iso}  L^{q}H\R^{q-k,q}(X;\Z/2)$
gives another  map

%
\begin{equation*}
\H^{2q-k,q}_{\mcal{M}}(X;\Z/2)\iso L^{q}H\R^{q-k,q}(X;\Z/2)\to  H^{2q-k}_{\Z/2}(X_{\C}(\C);\Z/2)\iso H^{2q-k}_{et}(X;\mu_{2}^{\otimes q}).
\end{equation*}
In this section we verify that these two potentially different cycle maps are equal and we explore a few consequences. In particular this allows compatibility of cycle maps allows us to conclude that $L^{q}H\R^{q-p,q}(X;\Z/2^{k}) \to H^{q-p,q}(X_{\R}(\C);\underline{\Z/2^{k}})$ is an isomorphism for $p\geq q$ and for any smooth $X$. 

Before continuing, we show that motivic cohomology and morphic cohomology for real varieties agree with finite coefficients. This is a well-known to the experts, but because of the lack of a good reference we prove it below.
\begin{proposition}\label{fcoeffagr}
Let $X$ be a smooth real variety. Then for any $n>0$
\begin{equation*}
 \H^{2q-k,q}_{\mcal{M}}(X;\Z/n) \xrightarrow{\iso}  L^{q}H\R^{q-k,q}(X;\Z/n).
\end{equation*}
\end{proposition}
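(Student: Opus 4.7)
The plan is to realize both sides of the comparison as the mod-$n$ homology of complexes built from the equidimensional cycle presheaf $z_{equi}(\A^{q}_{\R},0)$, differing only in whether one evaluates at the algebraic or the topological cosimplicial simplex, and then to apply a Suslin-rigidity argument with finite coefficients.

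First I would recall that for smooth $X$ over $\R$, the Friedlander-Voevodsky description of motivic cohomology via bivariant cycles gives
\begin{equation*}
  \H^{2q-k,q}_{\mcal{M}}(X;\Z)\;\iso\;H_{k}\bigl(z_{equi}(\A^{q}_{\R},0)(X\times_{\R}\Delta^{\bullet}_{\R})\bigr),
\end{equation*}
and passing to the mod-$n$ Moore complex computes $\H^{2q-k,q}_{\mcal{M}}(X;\Z/n)$. On the other side, the definition of equivariant morphic cohomology together with Proposition \ref{nqp} gives $L^{q}H\R^{q-k,q}(X) = \pi_{k,0}\mcal{Z}^{q}(X_{\C}) = \pi_{k}\mcal{Z}^{q}(X)$, and Proposition \ref{qisoseq} identifies this further with $H_{k}\bigl(z_{equi}(\A^{q}_{\R},0)(X\times_{\R}\Delta^{\bullet}_{top})\bigr)$, which has an obvious mod-$n$ refinement. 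The identification of the algebraic simplex with a point of its topological simplex of real points produces a natural chain map between the two resulting complexes, which after unwinding the identifications is the cycle comparison map of the proposition.

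It then suffices to show that
\begin{equation*}
  z_{equi}(\A^{q}_{\R},0)(X\times_{\R}\Delta^{\bullet}_{\R})/n \;\longrightarrow\; z_{equi}(\A^{q}_{\R},0)(X\times_{\R}\Delta^{\bullet}_{top})/n
\end{equation*}
is a quasi-isomorphism. This is a Suslin-rigidity statement for the presheaf with transfers $V\mapsto z_{equi}(\A^{q}_{\R},0)(X\times_{\R}V)$ on $Sm/\R$. The most direct route is to use the descent identification $z_{equi}(\A^{q}_{\R},0)(X\times_{\R}V)\iso z_{equi}(\A^{q}_{\C},0)(X_{\C}\times_{\C}V_{\C})^{G}$ to reduce to Suslin-Voevodsky's classical rigidity over $\C$ and then descend equivariantly; alternatively, one applies the recognition principle (Theorem \ref{recog}) to check that the two presheaves coincide as $uad$-sheaves after tensoring with $\Z/n$.

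The main technical obstacle is the equivariant descent step, since formation of Galois fixed points does not in general commute with reduction mod $n$. This is handled by exploiting that Suslin-Voevodsky rigidity is naturally $G$-equivariant, so the spectral sequence for $H^{*}(G,-)$ applied to the mod-$n$ comparison degenerates in a suitable range; equivalently, one uses that the mod-$n$ Nisnevich sheafification of the presheaf with transfers is homotopy invariant, and that rigidity for such sheaves is a formal statement over any perfect base field.
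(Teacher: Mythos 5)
Your reduction of the proposition to the assertion that $z_{equi}(\A^{q}_{\R},0)(X\times_{\R}\Delta^{\bullet}_{\R})\otimes\Z/n \to z_{equi}(\A^{q}_{\R},0)(X\times_{\R}\Delta^{\bullet}_{top})\otimes\Z/n$ is a quasi-isomorphism is the same first step as the paper's (the paper works with $\Delta^{\bullet}_{\R}\times\Delta^{\bullet}_{top}$ and uses Propositions \ref{smheq} and \ref{htpyinv} together with a Bousfield--Friedlander argument to translate this into the statement about morphic cohomology). The gap is in how you propose to prove it. Your ``most direct route'' --- identify $z_{equi}(\A^{q}_{\R},0)(X\times_{\R} V)$ with $z_{equi}(\A^{q}_{\C},0)(X_{\C}\times_{\C} V_{\C})^{G}$, quote Suslin--Voevodsky rigidity over $\C$, and descend equivariantly --- does not work. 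First, the statement you need concerns colimits over maps $\Delta^{d}_{top}\to V(\R)$, i.e.\ topological simplices probing \emph{real} points of real varieties; rigidity over $\C$ compares algebraic simplices with \emph{complex} points and is not the relevant input. Second, the descent is obstructed exactly where you notice: honest $G$-fixed points commute neither with $\otimes\Z/n$ nor with passage to homotopy, and the $H^{*}(G,-)$ spectral sequence you invoke computes \emph{homotopy} fixed points rather than the fixed-point groups that define the real theory; the discrepancy between the two is precisely the nontrivial content of this subject (it is why Bredon and Borel cohomology differ, cf.\ Proposition \ref{binj}), and for $2$-primary $n$ there is no degeneration ``in a suitable range''. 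So the proposed fix does not close the gap.

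What does close it --- and is the paper's argument --- is rigidity applied directly over $\R$. Set $F(U)=\pi_{k}(z_{equi}(\A^{q},0)(X\times\Delta^{\bullet}_{\R}\times U)\otimes\Z/n)$ and let $F_{0}$ be the corresponding constant presheaf; restricted to $Sm/\R$ these are homotopy invariant presheaves with transfers, and by \cite[Lemma 3.8]{FW:real} one has $F(\spec\mcal{O}_{Y,y}^{h})\iso F(\R)$ for every real point $y$ of a smooth $Y$. Because a $uad$-cover is only required to be surjective on real points, this Henselian statement produces, for any section of the kernel or cokernel of $F_{0}\to F$, finitely many etale maps killing it whose union is a $uad$-cover; singular inputs are handled first by a $cdh$-cover by a smooth variety (resolution of singularities), which is again a $uad$-cover. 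Hence kernel and cokernel have trivial $uad$-sheafification, the recognition principle (Theorem \ref{recog}) applies, and the Bousfield--Friedlander spectral sequence converts this into the desired quasi-isomorphism. Your closing clause, that rigidity for such sheaves is ``a formal statement over any perfect base field'', points in this direction, but the two steps that make it bite --- rigidity at Henselizations at \emph{real} points, and the fact that the $uad$-topology only sees real points (plus the $cdh$-resolution step for non-smooth varieties) --- are exactly the ones your sketch omits.
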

\begin{proof}
We show that the natural map of simplicial abelian groups 
$$
z_{equi}(\A^{q},0)(X\times\Delta^{\bullet}_{\R})\otimes\Z/n \to z_{equi}(\A^{q},0)(X\times\Delta^{\bullet}_{\R}\times\Delta^{\bullet}_{top})\otimes\Z/n
$$
is a quasi-isomorphism which implies the result by Proposition \ref{smheq} and Proposition \ref{htpyinv}.
Write $F(U)$ for the presheaf 
$$
U\mapsto \pi_{k}(z_{equi}(\A^{q},0)(X\times\Delta^{\bullet}_{\R}\times U)\otimes\Z/n)
$$
 on $Sch/\R$ and $F_{0}(U)$ for the constant presheaf 
$$
U\mapsto \pi_{k}(z_{equi}(\A^{q},0)(X\times\Delta^{\bullet}_{\R})\otimes\Z/n).
$$ 

Restricted to $Sm/\R$ these are homotopy invariant presheaves with transfers. Recall \cite[Lemma 3.8]{FW:real} that if $F(-)$ is a homotopy invariant presheaf with transfers, $Y$ is smooth, and $y\in Y(\R)$ then $F(\spec\mcal{O}_{Y,y}^{h}) \to F(\R)$ is an isomorphism, where $\mcal{O}_{Y,y}^{h}$ is the Henselization of the local ring $\mcal{O}_{Y,y}$. 

Let $H(-)$ denote either the kernel or the cokernel of the natural transformation $F_{0}(-)\to F(-)$. Let $Y$ be a quasi-projective real variety and $\gamma\in H(Y)$. Let $\tilde{Y}\to Y$ be a $cdh$-cover with $\tilde{Y}$ smooth (in particular it is a $uad$-cover). Since $H(\spec\mcal{O}_{\tilde{Y},y}^{h}) = 0$ for any $y\in \tilde{Y}(\R)$ there are finitely many etale maps $\tilde{Y}_{k}\to \tilde{Y}$ such that $\gamma|_{\tilde{Y}_{k}} = 0$ and $\coprod \tilde{Y}_{k} \to \tilde{Y}$ is a $uad$-cover. 

Therefore $H_{uad} = 0$ and  $(F_{0})_{uad} \to F_{uad}$ is an isomorphism. By Theorem \ref{recog} we conclude that 
$$
\pi_{k}(z_{equi}(\A_{\R}^{q},0)(X\times\Delta^{\bullet}_{\R})\otimes\Z/n) \to \pi_{k}(z_{equi}(\A^{q}_{\R},0)(X\times \Delta^{\bullet}_{\R}\times \Delta^{\bullet}_{top})\otimes\Z/n)
$$
 is an isomorphism. 

An application of the Bousfield-Friedlander spectral sequence finishes the proof.
\end{proof}

Friedlander-Walker introduce in \cite{FW:real} the equivalence relation of \textit{real algebraic equivalence}. Briefly two cycles $\alpha$, $\beta$ on a real variety $X$ are real algebraically equivalent provided there is a smooth real curve $C$, two real points $c_{0}$, $c_{1}$ in the same analytic connected component of $C(\R)$, and a cycle $\gamma$ on $X\times C$ such that $\alpha=\gamma|_{c_{0}}$ and $\beta = \gamma|_{c_{1}}$. Since $L^{q}H\R^{q,q}(X)$ is the group of codimension $q$ cycles on $X$ modulo real algebraic equivalence we obtain the following corollary.

\begin{corollary}
 Let $X$ be a smooth real variety and $0\leq r\leq dim(X)$. Rational equivalence and real algebraic equivalence yield the same equivalence relation on the group of $r-$cycles on $X$ with finite coefficients.
\end{corollary}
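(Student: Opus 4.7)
The plan is to set $q = \dim X - r$, so an $r$-cycle on $X$ is a codimension-$q$ cycle. Write $R_{q} \subseteq A_{q} \subseteq Z^{q}(X)$ for the subgroups of cycles rationally (resp. real algebraically) equivalent to zero. The inclusion $R_{q} \subseteq A_{q}$ holds because $\P^{1}_{\R}$ is a smooth real curve with connected real locus, so any rational equivalence is a special case of a real algebraic equivalence. The desired statement is thus that the natural surjection
\begin{equation*}
CH^{q}(X)/n \;=\; Z^{q}(X)/(R_{q} + nZ^{q}(X)) \;\twoheadrightarrow\; Z^{q}(X)/(A_{q} + nZ^{q}(X))
\end{equation*}
is an isomorphism.

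I would deduce this by applying Proposition \ref{fcoeffagr} at the bidegree corresponding to $k = 0$, which gives a natural isomorphism $\H^{2q,q}_{\mcal{M}}(X;\Z/n) \iso L^{q}H\R^{q,q}(X;\Z/n)$, and then identifying each side with the appropriate quotient. On the motivic side, the Bockstein universal coefficient sequence combined with the vanishing $\H^{p,q}_{\mcal{M}}(X;\Z) = 0$ for $p > 2q$ (the motivic complex $\Z(q)$ is supported in cohomological degrees $\le 2q$) yields $\H^{2q,q}_{\mcal{M}}(X;\Z/n) = CH^{q}(X)/n$. On the morphic side, the paragraph preceding the corollary identifies $L^{q}H\R^{q,q}(X)$ with $Z^{q}(X)/A_{q}$, and the simplicial abelian group model for $L^{q}H\R$ used in the proof of Proposition \ref{fcoeffagr} lies in non-negative degrees, so the analogous universal coefficient sequence collapses to $L^{q}H\R^{q,q}(X;\Z/n) = (Z^{q}(X)/A_{q})/n$. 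Splicing these three isomorphisms produces the claim.

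The main obstacle is not any particular calculation but the bookkeeping in the two universal coefficient collapses. To make them clean, one must translate ``finite coefficients'' on $L^{q}H\R^{q,q}$ into the simplicial abelian group language of $z_{equi}(\A^{q},0)(X\times\Delta^{\bullet}_{\R}\times\Delta^{\bullet}_{top})\otimes\Z/n$ appearing in the proof of Proposition \ref{fcoeffagr} in order to see the relevant $\pi_{-1}$ vanishing, and similarly recall the support of the motivic complex to see $\H^{2q+1,q}_{\mcal{M}}(X;\Z) = 0$. Both vanishings hinge on unwinding the bigrading conventions, but once both are in hand the corollary is an immediate translation of Proposition \ref{fcoeffagr}.
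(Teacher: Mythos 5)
Your proposal is correct and takes essentially the same route as the paper, which presents the corollary as an immediate consequence of Proposition \ref{fcoeffagr} in the bidegree $k=0$ together with Friedlander--Walker's identification of $L^{q}H\R^{q,q}(X)$ with the group of codimension-$q$ cycles modulo real algebraic equivalence and the identification $\H^{2q,q}_{\mcal{M}}(X;\Z/n)=CH^{q}(X)\otimes\Z/n$. Your universal-coefficient bookkeeping is sound; just note that the vanishing $\H^{2q+1,q}_{\mcal{M}}(X;\Z)=0$ is cleanest not from the support of the complex $\Z(q)$ (support in low degrees alone does not bound hypercohomology) but from the paper's own model $\H^{2q-k,q}_{\mcal{M}}(X;\Z)=\pi_{k}\bigl(z_{equi}(\A^{q},0)(X\times\Delta^{\bullet}_{\R})\bigr)$, a simplicial abelian group with no negative homotopy.
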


Recall that $z_{equi}(\P^{q/q-1},0)(U) = z_{equi}(\P^{q},0)(U)/z_{equi}(\P^{q-1},0)(U)$. Write
\begin{align*}
\Z/2(q)(X) & = (z_{equi}(\P_{\R}^{q/q-1},0)(X\times_{\R}\Delta^{\bullet}_{\R})\otimes\Z/2)[-2q] \\
\Z/2(q)^{sst}(X)   & = \sing_{\bullet} (\mcal{Z}^{q}/2(X_{\C})^{G})[-2q] \\
\Z/2(q)^{top}(X)  & = \Hom{cts}{X_{\C}(\C)\times\Delta^{\bullet}_{top}}{\mcal{Z}/2_{0}(\A^{q}_{\C})}^{G}[-2q] \\
\Z/2(q)^{Bor}(X) & = \Hom{cts}{X_{\C}(\C)\times EG\times\Delta^{\bullet}_{top}}{\mcal{Z}/2_{0}(\A^{q}_{\C})}^{G}[-2q]
\end{align*}
where we identify a simplicial abelian group with its associated bounded above cochain complex. These form presheaves of cochain complexes  on $Sm/\R$. These chain complexes compute respectively motivic cohomology, real morphic cohomology, Bredon cohomology, and Borel cohomology. Note  that $\Z/2(q)$, $\Z/2(q)^{top}$ and $\Z/2(q)^{Bor}$ are in fact complexes of etale sheaves on $(Sm/\R)$.

There are natural maps between these complexes,
\begin{equation*}
\Z/2(q)(X)\xrightarrow{\rho} \Z/2(q)^{sst}(X)\xrightarrow{\Phi} \Z/2(q)^{top}(X) \xrightarrow{\psi} \Z/2(q)^{Bor}(X)
\end{equation*}
obtained as follows. 
From Proposition \ref{singmor} and the projection $\Delta^{\bullet}_{\R} \to \spec \R$ we obtain
$$
z_{equi}(\P^{q/q-1},0)(X\times_{\R}\Delta^{\bullet}_{\R}) \to  \sing\mcal{Z}^{q}(X\times_{\R}\Delta^{\bullet}_{\R}) = \sing\mcal{Z}^{q}(X_{\C}\times_{\C}\Delta^{\bullet}_{\C})^{G}
$$
which induces $\Z/2(q)(X) \to \Z/2(q)^{sst}(X)$. The second map $\Phi$ is the map (\ref{morphcomp}) and the third map $\psi$ is induced by the projection $X_{\C}(\C)\times EG \to X_{\C}(\C)$.

\subsection{Nisnevich hypercohomology and descent}
These cohomology theories may be computed as Nisnevich  hypercohomology  groups. This allows us to view these cycle maps as maps in a derived category where we can use a computation of \cite{SV:BK}.

Say that a cartesian square 
\begin{equation}\label{Nisdist}
 \xymatrix{
  V \ar[r]\ar[d] & Y\ar[d]^{f} \\
U\ar@^{{(}->}[r]^{i} & X
 }
\end{equation}
is a \textit{distinguished Nisnevich square} provided the map $Y\xrightarrow{f} X$ is etale, 
$i:U\subseteq X$ is an open embedding, and $f: (Y\backslash V) \to (X\backslash U)$ is an 
isomorphism. The Nisnevich topology is the Grothendieck topology on $Sm/k$ generated by 
covers of the form $U \coprod Y \to X$ where $U\subseteq X$ and $f:Y\to X$ form part of a 
distinguished square as above.

Given a presheaf of chain complexes $F$ and a closed $i:A\subseteq B$ and open complement $j:U \subseteq B$ define
\begin{equation*}
F(B)_{A} = \cone(F(B) \xrightarrow{j^{*}} F(U))[-1], 
\end{equation*}
which fits into the exact triangle
\begin{equation*}
 F(B)_{A} \to F(B) \xrightarrow{j^{*}} F(U).
\end{equation*}

Say that a presheaf $F(-)$ of chain complexes satisfies \textit{Nisnevich descent} provided that for a distinguished square as in (\ref{Nisdist})  the square
\begin{equation*}
 \xymatrix{
   F(X) \ar[r]\ar[d] & F(Y)\ar[d] \\
F(U)\ar[r] & F(V)
 }
\end{equation*}
is homotopy cartesian. Recall this means that this square induces the Mayer-Vietoris exact triangle (in the derived category of abelian groups):
\begin{equation*}
 F(X) \to F(Y)\oplus F(U) \to F(V) .
\end{equation*} 
Equivalently, it means that $ F(Y)_{Z'}\to F(X)_{Z} $ is an isomorphism in the derived category of abelian groups where $Z=X\backslash U$ and $Z'=Y\backslash V$.

When a presheaf of chain complexes $F(-)$ (with $F(\emptyset)=0$) satisfies Nisnevich descent then the Nisnevich hypercohomology of a smooth $X$ with coefficients in $F$ is computed as 
$$H^{p}(F(X)) = H^{p}(F _{Nis}(X))=\H^{p}_{Nis}(X; F_{Nis})$$
(see for example \cite[Theorem 7.5.1]{BO} for presheaves of chain complexes, \cite{Nis} for descent in the case of presheaves of spectra, \cite{BG} for descent in the Zariski topology).

Note that 
\begin{align*}
\H^{2q-p}_{Nis}(X;(\Z/2(q)^{sst})_{Nis}) & 
= L^qH^{q-p,q}(X;\Z/2), \\
\H^{2q-p}_{Nis}(X;\Z/2(q)^{top}) & 
= H^{q-p,q}(X _\R(\C);\underline{\Z/2}), \\
 \H^{2q-p}_{Nis}(X;\Z/2(q)^{Bor}) & 
= H^{2q-p}_{\Z/2}(X(\C);\Z/2).
\end{align*}
In the first case this follows because the motivic complex $\Z/2(q)$ satisfies Nisnevich descent and $\Z/2(q)(X) \to \Z/2(q)^{sst}(X)$ is a quasi-isomorphism of chain complexes for all smooth $X$.

Given  $i:A\subseteq B$ a closed subvariety with open complement $j:U\subseteq B$ and write $C(j)$ for the mapping cone of $j:U(\C) \subseteq B(\C)$. Then by a comparison of exact triangles we see that
$$
\Z/2(q)^{top}(B)_{A} \wkeq \phom{C(j)\wedge\Delta^{\bullet}_{top,+}}{\mcal{Z}/2_{0}(\A^{q}_{\C})}^{G}[-2q]
$$ 
and 
$$
 \Z/2(q)^{Bor}(B)_{A} \wkeq \phom{C(j)\wedge EG_+\wedge\Delta^{\bullet}_{top,+}}{\mcal{Z}/2_{0}(\A^{q}_{\C})}^{G}[-2q] .
$$

Let $F(-)$ denote either $\Z/2(q)^{top}(-)$ or $\Z/2(q)^{Bor}(-)$ and let
\begin{equation*}
 \xymatrix{
  V \ar[r]^{j'}\ar[d] & Y \ar[d]\\
U \ar[r]^{j} & X
 }
\end{equation*}
be a distinguished Nisnevich square in $Sm/\R$ then
 \begin{equation*}
\xymatrix{ 
V(\C)\ar[r]^{j'}\ar[d] & Y(\C)\ar[d] \\
U(\C) \ar[r]^{j} & X(\C)
}
\end{equation*}
 is an equivariant homotopy pushout diagram of $G$-spaces (see for example \cite{DI:hyp} ).
 Therefore $C(j') \xrightarrow{\wkeq} C(j)$ is an equivariant homotopy equivalence.
 Consequently $F(X)_{Z}\to F(Y)_{Z'}$ is an isomorphism in the derived category of abelian groups and therefore
\begin{equation*}
 \xymatrix{
  F(X) \ar[r]\ar[d] & F(U) \ar[d]\\
F(Y) \ar[r] &  F(V) 
 }
\end{equation*}
is homotopy cartesian. This means that both $\Z/2(q)^{top}(-)$ and $\Z/2(q)^{Bor}(-)$ satisfy Nisnevich descent.

\subsection{Compatibility of cycle maps}
We are now ready to show that two cycle maps discussed in the beginning of this section are the same map.

\begin{lemma} \label{Borel}
 Suppose that $V$ is a quasi-projective complex variety considered as a real variety. Then $\pi_{k}\Z/2(q)^{Bor}(V) = H_{sing}^{2q-k}(V(\C);\Z/2)$.
\end{lemma}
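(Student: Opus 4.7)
The plan is to observe that, when a complex variety $V$ is regarded as a real variety by postcomposing its structure map with $\operatorname{Spec}\C\to\operatorname{Spec}\R$, the base change $V_\C = V\times_{\operatorname{Spec}\R}\operatorname{Spec}\C$ splits as a disjoint union. Indeed, the coordinate ring of $V$ is already a $\C$-algebra, so tensoring over $\R$ with $\C$ only introduces the extra factor $\C\otimes_\R\C\cong \C\times \C$; consequently $V_\C\cong V\sqcup V^\sigma$ as $\R$-schemes, with the Galois involution swapping the two components. On $\C$-points this yields a $G$-equivariant homeomorphism
$$
V_\C(\C)\ \cong\ G\times V(\C),
$$
where $G$ acts by left-translation on the $G$-factor and trivially on $V(\C)$; in particular $V_\C(\C)$ is a free $G$-space with orbit space $V(\C)$.

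From here the induction--restriction adjunction performs the main reduction. For any $G$-space $Y$, restriction to $\{e\}\times X$ gives a natural bijection $\mathrm{Map}^G_{\mathrm{cts}}(G\times X,Y)\cong \mathrm{Map}_{\mathrm{cts}}(X,Y)$. Applied with $X = V(\C)\times EG\times \Delta^\bullet_{\mathrm{top}}$ and $Y = \mcal{Z}/2_0(\A^q_\C)$, this identifies the simplicial abelian group underlying $\Z/2(q)^{Bor}(V)$ with the non-equivariant mapping complex
$$
\mathrm{Map}_{\mathrm{cts}}\bigl(V(\C)\times EG\times \Delta^\bullet_{\mathrm{top}},\ \mcal{Z}/2_0(\A^q_\C)\bigr),
$$
absorbing the $G$-action entirely.

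To conclude, I would use that $EG$ is non-equivariantly contractible to replace $V(\C)\times EG$ by $V(\C)$ up to homotopy equivalence, inducing a weak equivalence on the mapping complex. Together with the Dold--Thom identification $\mcal{Z}_0(\A^q_\C)\simeq K(\Z, 2q)$ (and hence $\mcal{Z}/2_0(\A^q_\C)\simeq K(\Z/2, 2q)$ via the short exact sequence $0\to\mcal{Z}_0(\A^q_\C)\xrightarrow{2}\mcal{Z}_0(\A^q_\C)\to \mcal{Z}/2_0(\A^q_\C)\to 0$), a standard mapping-space computation gives
$$
\pi_k\,\mathrm{Map}_{\mathrm{cts}}\bigl(V(\C)\times \Delta^\bullet_{\mathrm{top}},\,K(\Z/2, 2q)\bigr) \ =\ [V(\C),K(\Z/2, 2q-k)] \ =\ H^{2q-k}_{\mathrm{sing}}(V(\C);\Z/2),
$$
as claimed. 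There is no serious obstacle here; the only point to verify carefully is the initial splitting of $V_\C$, which comes down to the elementary computation $\C\otimes_\R\C\cong\C\times\C$ applied to the $\C$-algebra structure on the coordinate ring of $V$.
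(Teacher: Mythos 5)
Your proposal is correct and follows essentially the same route as the paper: both hinge on the splitting $V_\C(\C)\cong V(\C)\amalg V(\C)$ with $G$ swapping the factors, so that the action is free and the Borel construction collapses (via contractibility of $EG$) to singular cohomology of the quotient $V(\C)$. Your version merely makes explicit, via the induction--restriction adjunction and Dold--Thom, the mapping-complex identifications that the paper's one-line argument leaves implicit.
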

\begin{proof}
 If $V$ is a complex variety then $(V\times_{\R}\C)(\C) = V(\C)\amalg V(\C)$ and $G$ acts by interchanging the factors. In particular $G$ acts freely on $V_{\C}(\C)$ and $(V_{\C}(\C)\times EG)/G \to V_{\C}(\C)/G = V(\C)$ is a vector-bundle which immediately implies that $\pi_{k}\Z/2(q)^{Bor}(V) = H^{2q-k} _{sing}(V(\C);\Z/2)$.
\end{proof}

Write $\pi_{0}:(Sm/\R)_{et} \to (Sm/\R)_{Nis}$ for the canonical map of sites.

\begin{proposition} \label{etpb}
 The complex of etale sheaves $\pi_{0}^*\Z/2(q)^{Bor}$ on $(Sm/\R)_{et}$ is canonically quasi-isomorphic to $\mu_{2}^{\otimes q}$.
\end{proposition}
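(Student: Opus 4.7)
The strategy is to check the quasi-isomorphism by computing the etale cohomology sheaves of $\pi_0^*\Z/2(q)^{Bor}$ and showing they are concentrated in degree zero, where they coincide with $\mu_2^{\otimes q}$; canonical truncation will then furnish a canonical quasi-isomorphism $\pi_0^*\Z/2(q)^{Bor}\simeq\mu_2^{\otimes q}[0]$.

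First I would identify the cohomology presheaves. Unwinding the definition of $\Z/2(q)^{Bor}$ using the equivariant Dold-Thom identification $\mcal{Z}_0(\A^q_\C)/2 \simeq K(\underline{\Z/2},\R^{q,q})$, the Bredon-to-Borel comparison
$H^{q,q}(Y\times EG;\underline{\Z/2})\iso H^{2q}_G(Y;\Z/2)$
recalled in the preliminaries, a suspension computation identifying $\pi_n\phom{Y\times EG\times\Delta^\bullet_{top}}{K(\underline{\Z/2},\R^{q,q})}^G$ with $H^{2q-n}_G(Y;\Z/2)$, and careful bookkeeping of the $[-2q]$ shift, one verifies that for every smooth real variety $U$
\begin{equation*}
H^n\bigl(\Z/2(q)^{Bor}(U)\bigr) \iso H^n_G\bigl(U_\C(\C);\Z/2\bigr)\qquad \text{for } 0\leq n\leq 2q,
\end{equation*}
with vanishing outside this range (the vanishing above $2q$ is automatic from the shifted chain complex being concentrated in cohomological degrees $\leq 2q$, and the vanishing below zero is automatic for Borel cohomology).

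Next, I invoke Cox's theorem \cite{Cox:real} (as already cited earlier in the section), which supplies a natural isomorphism of presheaves on $Sm/\R$
\begin{equation*}
H^n_G(U_\C(\C);\Z/2)\xrightarrow{\iso} H^n_{et}(U;\mu_2^{\otimes q}).
\end{equation*}
Sheafifying in the etale topology then identifies the cohomology sheaves of $\pi_0^*\Z/2(q)^{Bor}$: the presheaf $U\mapsto H^0_{et}(U;\mu_2^{\otimes q})=\mu_2^{\otimes q}(U)$ is already the etale sheaf $\mu_2^{\otimes q}$, while for $n>0$ the presheaf $U\mapsto H^n_{et}(U;\mu_2^{\otimes q})$ sheafifies to zero because its etale stalks at geometric points are computed by the higher etale cohomology of the corresponding strict Henselizations, which vanishes for torsion coefficients. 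Hence the etale cohomology sheaves of $\pi_0^*\Z/2(q)^{Bor}$ are concentrated in degree zero, where they equal $\mu_2^{\otimes q}$, and canonical truncation yields
\begin{equation*}
\pi_0^*\Z/2(q)^{Bor} \xleftarrow{\simeq} \tau_{\leq 0}\pi_0^*\Z/2(q)^{Bor} \xrightarrow{\simeq} \mu_2^{\otimes q}[0].
\end{equation*}

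The principal effort lies in the first step, where one must carefully track the shift and grading conventions (the $[-2q]$ shift, the conversion between $\pi_n$ of a simplicial abelian group and the cohomology of its associated cochain complex, and the Bredon dimension shift $(q,q)\leftrightarrow 2q$) so that the cohomology of the shifted complex aligns with Borel cohomology in the matching degree. The remaining inputs (Cox's comparison and the local acyclicity of etale cohomology with torsion coefficients on strict Hensel local rings) are classical.
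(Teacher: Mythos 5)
Your argument is correct, but it follows a genuinely different route from the paper's. The paper does not pass through the full strength of Cox's comparison inside this proof: to kill $\mcal{H}^{i}$ for $i\neq 0$ it restricts a Borel class along the single etale cover $X_{\C}\to X$, uses Lemma \ref{Borel} (for a complex variety viewed as real the action on the complex points is free, so Borel cohomology is just singular cohomology of $X(\C)$), and then quotes Milne's result that singular classes with finite coefficients on a complex variety die on an etale cover; the identification $\mcal{H}^{0}=\Z/2=\mu_{2}^{\otimes q}$ is done by counting connected components, and canonicity is obtained by computing $\mathrm{Hom}_{D^{-}((Sm/\R)_{et})}(\pi_{0}^{*}\Z/2(q)^{Bor},\mu_{2}^{\otimes q})=\Z/2$. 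You instead identify \emph{all} cohomology presheaves of $\Z/2(q)^{Bor}$ with the etale cohomology presheaves $U\mapsto H^{n}_{et}(U;\mu_{2}^{\otimes q})$ via Cox's theorem and then sheafify, using vanishing of higher etale cohomology of strictly henselian local rings, finishing with the truncation zig-zag. Both approaches work: yours is shorter once Cox's theorem is granted (and the paper does cite Cox in this section, so the input is available), while the paper's is more self-contained, needing only the easy free-action case of the Borel--etale comparison plus a classical comparison statement over $\C$. Two points you should make explicit if you keep your route: (i) the sheafification step needs Cox's isomorphism to be compatible with restriction along etale morphisms (it is, since it comes from a comparison of (pro-)homotopy types, but you only assert the objectwise isomorphism); and (ii) the downstream use in the paper requires an actual map $\pi_{0}^{*}\Z/2(q)^{Bor}\to J^{\bullet}$ into the injective resolution, which your zig-zag provides only after inverting the left-hand quasi-isomorphism -- harmless since $J^{\bullet}$ is a bounded-below complex of injectives, but worth a sentence.
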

\begin{proof}
 Write $\mcal{H}^{i}$ for the etale sheafification of the $i$th cohomology presheaf of $\Z/2(q)^{Bor}$. First we show that $\mcal{H}^{i} = 0$ for $i\neq 0$.
It is enough to show that for each real variety $X$ and $\gamma \in  H^{i}( [X_{\C}(\C)\times EG]/G ; \Z/2)$ that we can find an etale covering $(U_j\to X)$ such that $\gamma|_{U_j} = 0$ for each $j$.  The map $Y=X_{\C}\to X$ is an etale cover for any real variety $X$. Write $\gamma' = \gamma|_{Y}$. By the previous Lemma 
$H^{i}( Y _\C(\C)\times EG]/G ; \Z/2) = H^{i}_{sing}( Y(\C) ; \Z/2)$. Since $Y$ has an etale cover $U_j\to Y$ such that  $\gamma'|_{U_j} = 0$ for each $j$ (see e.g. \cite[Lemma III.3.15]{Milne:etale} ) we conclude that $\mcal{H}^{i} = 0$ for $i\neq 0$.

When $X_{\C}$ is connected then $H^{0}( [X_{\C}(\C)\times EG]/G ; \Z/2) = \Z/2$. More generally if $X = \amalg X_{i}$ is the disjoint union of $c$ connected real varieties then $H^{0}( [X_{\C}(\C)\times EG]/G ; \Z/2) =\Z/2^{\times c}$. This shows $\mcal{H}^{0} = \Z/2 = \mu_{2}^{q}$.

Finally since $\mcal{H}^{i} = 0 $ for $i\neq 0 $ we have canonical isomorphisms
\begin{equation*}
 \Z/2 = \Hom{et}{\mcal{H}^{0}}{\mu_{2}^{\otimes q}} = \Hom{D^{-}((Sm/\R)_{et})}{\pi^*\Z/2(q)^{Bor}}{ \mu_{2}^{\otimes q}}
\end{equation*}

\end{proof}
Recall \cite[Section 6]{SV:BK} that there is an injective etale resolution $0\to\mu_2^{\otimes q} \to J^{\bullet}$ such that $\pi_{0*}J^{\bullet}$ is a complex of Nisnevich sheaves with transfers with homotopy invariant cohomology sheaves. Proposition \ref{etpb} gives a canonical map $\pi_{0}^{*}\Z/2(q)^{Bor}\to J^{\bullet}$ and by adjointness we obtain a map 
\begin{equation*}
\Z/2(q)^{Bor} \to \mathbb{R}(\pi_{0})_{*}\mu_{2}^{\otimes q}=(\pi_{0})_*J^{\bullet}  .
\end{equation*}

Consider the following sequence of maps of complexes of Nisnevich sheaves
\begin{equation}\label{maps}
 \Z/2(q) \to (\Z/2(q)^{sst})_{Nis} \xrightarrow{\Phi} \Z/2(q)^{top} \to \Z/2(q)^{Bor} \to \mathbb{R}(\pi_{0})_{*}\mu_{2}^{\otimes q}.
\end{equation}

The complex of Nisnevich sheaves with transfers $B_{2}(q)$ is defined in \cite[Section 6]{SV:BK} to be the truncation 
$$
 B_{2}(q) = \tau_{\leq q}(\pi_{0})_*J^{\bullet} = \tau_{\leq q}(\mathbb{R}\pi_{0*}\mu_{2}^{\otimes q}), 
$$
in particular  $\H^{p}_{Nis}(X; B_{2}(q)) = H^{p}_{et}(X;\mu_2^{\otimes q})$ for $p\leq q$ and all smooth $X$.
Since the cohomology sheaves of $\Z/2(q)$ (and therefore of $\Z/2(q)^{sst}$ as well) vanish in degrees $i> q$ and so the sequence of maps (\ref{maps}) factors  through the truncations, 
\begin{equation}\label{tmaps}
 \Z/2(q) \to (\Z/2(q)^{sst})_{Nis} \to \tau_{\leq q} \Z/2(q)^{top} \to \tau_{\leq q} \Z/2(q)^{Bor} \to B_{2}(q).
\end{equation}
\begin{remark}\label{notriv}
 It is important to note that the composites \ref{maps} and \ref{tmaps} are non-trivial. 
This can be seen, for example, by evaluating on $\spec\C$. 
The map $\Z/2(q)(X) \to  (\Z/2(q)^{sst})_{Nis}(X)$ is a quasi-isomorphism for any smooth real variety $X$ by Proposition \ref{fcoeffagr}. The comparison map $(\Z/2(q)^{sst})_{Nis}(\C) \to  
\Z/2(q)^{top}(\C)$ is an equality. By Proposition \ref{binj} below, for any $X$, the 
map $\Z/2(q)^{top}(X) \to  
\Z/2(q)^{Bor}(X)$ induces an isomorphism on cohomology in degrees $p \leq q$. Finally since $\Z/2(q)^{Bor} \to \mathbb{R}\pi_{0*}\mu_{2}^{\otimes q}$ is obtained as the adjoint of a quasi-isomorphism and $\spec\C$ is an etale point (of $(Sm/\R)_{et}$) the map $\Z/2(q)^{Bor}(\C) \to \mathbb{R}\pi_{0*}\mu_{2}^{\otimes q}(\C)$ cannot be zero. 
\end{remark}

 Write $D^-(Nis)$ (respectively $D^-(NSwT/\R)$) for the derived category of bound above complexes of Nisnevich sheaves (respectively Nisnevich sheaves with transfers). Write $DM^-(\R)\subseteq D^-(NSwT/\R)$ for the full subcategory consisting of complexes with homotopy invariant Nisnevich cohomology sheaves. 
\begin{theorem}\label{cyccomp}
Let $X$ be a smooth real variety. The diagram commutes
\begin{equation}\label{tricom}
 \xymatrix{
 L^{q}H\R^{q-k,q}(X;\Z/2) \ar[d]_-{\Phi} && \ar[ll]_{\iso}\H^{2q-k,q}_{\mcal{M}}(X;\Z/2)  \ar[d]^-{cyc}  \\
H^{q-k,q}(X_{\C}(\C);\underline{\Z/2})  \ar[r] & H_{G}^{2q-k}(X_{\C}(\C);\Z/2)\ar[r] &  H^{2q-k}_{et}(X;\mu_{2}^{\otimes q}),
}
\end{equation}
\end{theorem}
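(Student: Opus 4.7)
The plan is to reinterpret both sides of the diagram as morphisms in the derived category of Nisnevich sheaves (with transfers) on $Sm/\R$, so that the commutativity becomes a consequence of a uniqueness/rigidity statement rather than a direct comparison on cochains. Concretely, both composites factor through the sequence of complexes of presheaves already set up in (\ref{maps}):
\begin{equation*}
\Z/2(q) \to (\Z/2(q)^{sst})_{Nis} \xrightarrow{\Phi} \Z/2(q)^{top} \to \Z/2(q)^{Bor} \to \mathbb{R}(\pi_{0})_{*}\mu_{2}^{\otimes q},
\end{equation*}
and after truncating in degrees $\leq q$ through the sequence (\ref{tmaps}) ending in $B_{2}(q)$. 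I would first verify, using the Nisnevich descent statements established above for $\Z/2(q)^{top}$ and $\Z/2(q)^{Bor}$ together with Proposition \ref{fcoeffagr}, that each arrow in (\ref{tmaps}) is well-defined as a morphism in $D^{-}(Nis)$ and that its evaluation on $X$ computes the corresponding cohomology group appearing in the diagram of the theorem.

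Next I would identify the two cycle maps in the theorem's diagram with two morphisms from $\Z/2(q)$ to $B_{2}(q)$ in $D^{-}(Nis)$. The motivic cycle map is, by its very definition in \cite{SV:BK}, the composite $\Z/2(q)\to B_{2}(q) \wkeq \tau_{\leq q}\mathbb{R}\pi_{0*}\mu_{2}^{\otimes q}$ obtained from the canonical etale quasi-isomorphism $\pi_{0}^{*}\Z/2(q)\to \mu_{2}^{\otimes q}$. The left-then-bottom path in the diagram is the composite of $\rho$, $\Phi$ and the comparison maps in (\ref{tmaps}). Both composites factor through $B_{2}(q)$ because the source has no cohomology in degrees $>q$.

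The key step is then a rigidity argument: by the construction of $B_{2}(q)$ and by Voevodsky's results incorporated in \cite{SV:BK}, the group
\begin{equation*}
\Hom_{DM^{-}(\R)}\bigl(\Z/2(q),\,B_{2}(q)\bigr)=\H^{0}_{Nis}\bigl(\spec\R,\,\mathbb{R}\underline{\Hom}(\Z/2(q),B_{2}(q))\bigr)
\end{equation*}
is canonically $\Z/2$, generated by the motivic cycle map itself. As recorded in Remark \ref{notriv}, evaluation on $\spec\C$ shows that the composite arising from the morphic/Bredon/Borel path is \emph{not} the zero morphism: indeed, $\Z/2(q)\to\Z/2(q)^{sst}$ is a quasi-isomorphism on $\spec\C$ by Proposition \ref{fcoeffagr}, $\Z/2(q)^{sst}(\C)\to\Z/2(q)^{top}(\C)$ is an equality, $\Z/2(q)^{top}\to\Z/2(q)^{Bor}$ induces an isomorphism in degrees $\leq q$, and the final map to $\mathbb{R}\pi_{0*}\mu_{2}^{\otimes q}$ is nonzero on the etale point $\spec\C$. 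Hence both composites lie in the nonzero coset of $\Hom_{DM^{-}(\R)}(\Z/2(q),B_{2}(q))=\Z/2$, so they agree, which gives the commutativity of (\ref{tricom}) after taking cohomology in bidegree $(2q-k,q)$.

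The main obstacle I anticipate is justifying that the intermediate complexes $\Z/2(q)^{top}$ and $\Z/2(q)^{Bor}$ can be promoted to objects of $DM^{-}(\R)$ (or at least that the relevant maps live in a derived category where the rigidity computation applies), since a priori these are only presheaves of complexes without transfer structures. The remedy is to work throughout in $D^{-}(Nis)$, using Nisnevich descent for the topological and Borel complexes (established earlier in the section) and the fact that the truncation $B_{2}(q)$, being a complex of Nisnevich sheaves with transfers admitting a canonical map from $\Z/2(q)$, governs the final comparison through the computation of $\Hom$'s out of $\Z/2(q)$ via Beilinson--Lichtenbaum.
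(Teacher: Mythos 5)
Your proposal follows the paper's argument essentially verbatim: both composites factor through $B_2(q)$, the relevant $\Hom$-group is $\Z/2$ generated by the cycle map, and Remark \ref{notriv} shows the morphic/Bredon/Borel composite is nonzero, so the two agree. The obstacle you correctly flag --- that $\Z/2(q)^{top}$ and $\Z/2(q)^{Bor}$ lack transfers, so the composite a priori lives only in $D^-(Nis)$ --- is resolved exactly as you suggest, and the precise input you were reaching for is \cite[Lemma 6.5]{SV:BK}, which gives $\Hom_{D^-(Nis)}(\Z/2(q),\pi_{0*}J^\bullet)=\Hom_{DM^-(\R)}(\Z/2(q),\pi_{0*}J^\bullet)$ and hence lets the $\Z/2$-rigidity computation apply in $D^-(Nis)$.
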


\begin{proof}
 
By \cite[Corollary 6.11.1]{SV:BK} and the vanishing of the cohomology sheaves of $\Z/2(q)$  above degree $q$, 
$$
\Z/2= \Hom{DM^{-}(\R)}{\Z/2(q)}{B_{2}(q)}=\Hom{DM^{-}(\R)}{\Z/2(q)}{\pi_{0*}J^{\bullet}}.
$$ 

Also by \cite[Lemma 6.5]{SV:BK} the inclusion of bi-complexes 
$$
\Hom{Nis}{\Z/2(q)}{\pi_{0*}J^{\bullet}} \subseteq \Hom{NSWT}{\Z/2(q)}{\pi_{0*}J^{\bullet}}
$$
 is an equality. Therefore 
\begin{equation*}
\Z/2 =\Hom{DM^{-}(\R)}{\Z/2(q)}{\pi_{0*}J^{\bullet}}=\Hom{D^{-}(Nis)}{\Z/2(q)}{\pi_{0*}J^{\bullet}}.
\end{equation*}
Finally, the map  $\Z/2(q)\to \pi_{0*}J^{\bullet}$ obtained from (\ref{maps}) is not trivial by Remark \ref{notriv} and so we conclude that it must be the cycle map. 

\end{proof}

\subsection{Applications and computations}
As a result of the compatibility of cycle maps we can conclude some Beilinson-Lichtenbaum type theorems for morphic cohomology which we need to prove the vanishing theorem. We also use these to make a few computations of equivariant morphic cohomology. 

\begin{corollary}
\label{MCR}
Let $X$ be a smooth real variety. The map
\begin{equation*}
\Phi:L^{q}H\R^{q-k,q}(X;\Z/2^n) \to H^{q-k,q}(X _\C(\C);\underline{\Z/2^n})  
\end{equation*}
is an isomorphism for $q\leq k$ and a monomorphism for $q=k+1.$
\end{corollary}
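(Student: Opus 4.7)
The plan is to prove the $n=1$ case using Theorem~\ref{cyccomp} together with the mod-$2$ Milnor conjecture and Proposition~\ref{binj}, and then to bootstrap to arbitrary $n$ by induction using the coefficient short exact sequence $0\to\Z/2\to\Z/2^{n}\to\Z/2^{n-1}\to 0$.

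For $n=1$, set $r=q-k$. Theorem~\ref{cyccomp} displays $\Phi$ as the first nontrivial factor in the composite
\begin{equation*}
\H^{r+q,q}_{\mcal{M}}(X;\Z/2) \xrightarrow{\iso} L^{q}H\R^{r,q}(X;\Z/2) \xrightarrow{\Phi} H^{r,q}(X_{\C}(\C);\underline{\Z/2}) \to H_{G}^{r+q}(X_{\C}(\C);\Z/2) \xleftarrow{\iso} H^{r+q}_{et}(X;\mu_{2}^{\otimes q}),
\end{equation*}
where the first arrow is the agreement of Proposition~\ref{fcoeffagr} and the last is Cox's theorem. The mod-$2$ Beilinson--Lichtenbaum theorem, a consequence of the Milnor conjecture via \cite{SV:BK}, forces the outer composite (the motivic cycle map) to be an isomorphism whenever $r+q\le q$, i.e.\ $r\le 0$, and a monomorphism when $r+q=q+1$, i.e.\ $r=1$. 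Proposition~\ref{binj} asserts that the Bredon-to-Borel comparison map in the displayed composition is an isomorphism exactly in the range $r+q\le q$. Combining these, for $r\le 0$ the map $\Phi$ is the first factor of a composite of isomorphisms and is hence itself an isomorphism; for $r=1$ it is the first factor of a monomorphism and hence a monomorphism.

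For the induction, the coefficient short exact sequence produces distinguished triangles of the complexes underlying both $L^{q}H\R^{*,q}(X;-)$ and $H^{*,q}(X_{\C}(\C);\underline{-})$, and $\Phi$ induces a morphism between the resulting long exact Bockstein sequences. For the iso claim at $r\le 0$ with $\Z/2^{n}$-coefficients, apply the five-lemma to the relevant segment: by induction $\Phi$ at degrees $r-1$ and $r$ with $\Z/2^{n-1}$-coefficients is an isomorphism; by the base case $\Phi$ at degree $r$ with $\Z/2$-coefficients is an isomorphism; and $\Phi$ at degree $r+1$ with $\Z/2$-coefficients is an isomorphism when $r\le -1$ or a monomorphism when $r=0$, both valid inputs for the five-lemma. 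For the mono claim at $r=1$ with $\Z/2^{n}$-coefficients, apply the four-lemma for monomorphisms: $\Phi$ at degree $0$ with $\Z/2^{n-1}$-coefficients is iso (in particular epi) by induction, while $\Phi$ at degree $1$ with $\Z/2$- and $\Z/2^{n-1}$-coefficients is a monomorphism by the base case and the inductive hypothesis, respectively.

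The substantive inputs are Theorem~\ref{cyccomp} and Proposition~\ref{binj}: the former transports Beilinson--Lichtenbaum to a statement about $\Phi$, and the latter pins down the precise range in which the Bredon-to-Borel comparison is invertible. Once both are available, the remainder of the argument is standard diagram chasing; in particular, no separate appeal to the higher $2$-power Bloch--Kato theorem is required, as those cases emerge from the induction.
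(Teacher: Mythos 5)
Your argument is correct in substance and its base case follows the paper's own route, but your induction is genuinely different, so a comparison is worth recording. For $n=1$ you and the paper both read off the result from the commutative square of Theorem~\ref{cyccomp} together with the mod-$2$ Beilinson--Lichtenbaum theorem and Proposition~\ref{binj}; the one place you take a shortcut is in asserting that the arrow $H_{G}^{2q-k}(X_{\C}(\C);\Z/2)\to H^{2q-k}_{et}(X;\mu_{2}^{\otimes q})$ appearing in that diagram is an isomorphism ``by Cox's theorem.'' Cox's theorem gives an isomorphism of these groups, but it does not by itself identify the particular comparison map constructed in the paper (via the adjunction of Proposition~\ref{etpb}, about which only non-triviality is known, cf.\ Remark~\ref{notriv}) with Cox's isomorphism; the paper deliberately avoids this issue by noting that, once $cyc$ is an isomorphism, the displayed map is a surjection between finite-dimensional $\Z/2$-vector spaces which are abstractly isomorphic by Cox, hence an isomorphism, and only then invokes Proposition~\ref{binj}. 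You should either make that dimension-count argument or justify the identification; the needed fact is true, so this is a repairable imprecision rather than a fatal flaw (and, as you note, for the monomorphism at $r=1$ nothing about the maps after $\Phi$ is needed). For the passage from $\Z/2$ to $\Z/2^{n}$ your route differs from the paper's: you run the Bockstein long exact sequences for the coefficient sequence $0\to\Z/2\to\Z/2^{n}\to\Z/2^{n-1}\to 0$ on the level of cohomology groups and apply the five-lemma (and the four-lemma for the edge monomorphism), with the boundary case $r=0$ handled by the $r=1$ monomorphism in the base case; this is valid, provided you justify that both theories carry these long exact sequences compatibly with $\Phi$ (exactness of the coefficient sequence on the cycle complexes, which holds since the cycle groups are torsion-free, and the coefficient sequence of constant Mackey functors on the Bredon side). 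The paper instead works in $D^-(Nis)$, comparing distinguished triangles of the complexes $(\Z/2^{m}(q)^{sst})_{Nis}$ and $\tau_{\leq q}\Z/2^{m}(q)^{top}$, which forces it to verify that the truncated top row is still a triangle via a surjectivity check on $\mcal{H}^{q}$; your group-level argument sidesteps that truncation subtlety, at the cost of proving only the statement about cohomology groups rather than the stronger quasi-isomorphism of complexes of Nisnevich sheaves that the paper's induction yields.
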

\begin{proof}
Consider the commutative diagram (\ref{tricom}). By \cite{SV:BK} the Milnor conjecture, proved by Voevodsky \cite{Voev:miln}, implies that 
$cyc$ is an isomorphism for $k\geq q$ and an injection for $k=q-1$. This immediately implies 
the statement for injectivity. If $k\geq q$ then since $cyc$ is an isomorphism we conclude that $H_{G}^{2q-k}(X_{\C}(\C);\Z/2) \to H^{2q-k}_{et}(X;\mu_{2}^{\otimes q})$ is a 
surjective map between finitely dimensional $\Z/2$-vector spaces. By \cite{Cox:real} these are 
isomorphic $\Z/2$-vector spaces  and therefore the map is an isomorphism. Since $H^{q-k,q}(X_{\C}(\C);\underline{\Z}) \to H^{2q-k}_{G}(X_{\C}(X);\Z/2)$ is also an 
isomorphism for $k\geq q$ by Proposition \ref{binj} we conclude that $\Phi$ is also an isomorphism for $k\geq q$.  This yields the result for $\Z/2$-coefficients.  

We have the following diagram of distinguished triangles in $D^-(Nis):$
\begin{equation*}
 \xymatrix{
\Z/2(q)^{sst} _{Nis} \ar[r]\ar[d] & \Z/4(q)^{sst} _{Nis} \ar[r]\ar[d] & \Z/2(q)^{sst} _{Nis} \ar[r]\ar[d] & \Z/2(q)^{sst} _{Nis}[1]\ar[d] \\
 \tau_{\leq q}\Z/2(q)^{top} \ar[r] & \tau_{\leq q}\Z/4(q)^{top} \ar[r] & \tau_{\leq q}\Z/2(q)^{top}\ar[r] &\tau_{\leq q}\Z/2(q)^{top}[1] ,
}
\end{equation*}
To see that the bottom row is a triangle in $D^-(Nis)$ it is enough to check that 
the map on cohomology sheaves $\mcal{H}^{q}(\tau_{\leq q}\Z/4(q)^{top}) \to \mcal{H}^{q}(\tau_{\leq q}\Z/2(q)^{top})$ is a surjection.
 This follows from the surjectivity of the composition
 $$
\mcal{H}^q(\Z(q))\to\mcal{H}^q(\tau_{\leq q}\Z/4(q)^{top})\to\mcal{H}^q(\tau_{\leq q}\Z/2(q)^{top}).
$$
 which is a consequence of the local vanishing of $\Z(q)$ and the quasi-isomorphisms $\Z/2(q)\to B_{2}(q)$ and $\tau_{\leq q}\Z/2(q)^{top}\to B_{2}(q)$. Now the conclusion follows from the long exact sequence in hypercohomology associated to the diagram. Using induction on $n$ we conclude that $\Phi:(\Z/2^{n}(q)^{sst})_{Nis}\to \tau_{\leq q}\Z/2^{n}(q)^{top}$ is a quasi-isomorphism. 
\end{proof}
\begin{corollary}
\label{MCC}
Let $X$ be a smooth complex variety. For any $n>0$ the map 
$$
\Phi: L^{q}H^{2q-k}(X;\Z/2^n)\to H^{2q-k} _{sing}(X(\C);\Z/2^n)
$$ 
is an isomorphism for any $q\leq k$ and a monomorphism for $q=k+1$.
\end{corollary}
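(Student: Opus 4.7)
The approach parallels the proof of Corollary \ref{MCR} but is conceptually simpler because over $\C$ there is no Galois action, so the Bredon/Borel layer collapses and singular cohomology of $X(\C)$ takes the role played by $H^{*,*}(X_{\C}(\C);\underline{\Z/2^n})$ and $H^{*}_G(X_{\C}(\C);\Z/2^n)$ in the real case. The plan is to establish the result first with $\Z/2$ coefficients and then bootstrap to $\Z/2^n$ by induction on $n$.

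First, for $\Z/2$ coefficients, I would set up the complex analogue of the commutative diagram in Theorem \ref{cyccomp}, namely
\begin{equation*}
\xymatrix{
L^{q}H^{2q-k}(X;\Z/2) \ar[d]_-{\Phi} & \ar[l]_-{\iso} \H^{2q-k,q}_{\mcal{M}}(X;\Z/2) \ar[d]^-{cyc} \\
H^{2q-k}_{sing}(X(\C);\Z/2) \ar[r]^-{\iso} & H^{2q-k}_{et}(X;\mu_{2}^{\otimes q}).
}
\end{equation*}
Commutativity follows from exactly the same argument as Theorem \ref{cyccomp}: one considers the sequence of complexes of Nisnevich sheaves
\begin{equation*}
\Z/2(q) \to (\Z/2(q)^{sst})_{Nis} \xrightarrow{\Phi} \Z/2(q)^{top} \to B_2(q),
\end{equation*}
where now $\Z/2(q)^{top}(X)=\Hom{cts}{X(\C)\times\Delta^{\bullet}_{top}}{\mcal{Z}/2_{0}(\A^q_{\C})}[-2q]$ (no $G$-fixed points). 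The same arguments establishing Nisnevich descent and computing $\pi_0^{*}\Z/2(q)^{top}\wkeq \mu_2^{\otimes q}$ in the etale topology go through. The identification $\mathrm{Hom}_{DM^-(\C)}(\Z/2(q),B_2(q))=\Z/2$ from \cite{SV:BK} forces the composition to coincide with the cycle map. The left equality comes from the complex analogue of Proposition \ref{fcoeffagr}.

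The bottom map is an isomorphism because for smooth complex varieties the comparison $H^*_{et}(X;\mu_2^{\otimes q})\iso H^*_{sing}(X(\C);\Z/2)$ is classical. The Milnor conjecture (proved in \cite{Voev:miln}, implying Beilinson-Lichtenbaum via \cite{SV:BK}) shows the right vertical cycle map is an isomorphism for $q\leq k$ and an injection for $q=k+1$. Combining these facts yields the corollary with $\Z/2$ coefficients.

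Finally, to pass from $\Z/2$ to $\Z/2^n$, I would induct on $n$ using the Bockstein sequence $0\to\Z/2\to\Z/2^{n+1}\to\Z/2^n\to 0$, which yields distinguished triangles
\begin{equation*}
\Z/2(q)^{sst}_{Nis}\to\Z/2^{n+1}(q)^{sst}_{Nis}\to \Z/2^n(q)^{sst}_{Nis}\to\Z/2(q)^{sst}_{Nis}[1]
\end{equation*}
and similarly after truncation for $\tau_{\leq q}\Z/2^{\bullet}(q)^{top}$. The main subtle point, as in Corollary \ref{MCR}, is verifying that the truncated top row is actually a distinguished triangle in $D^-(Nis)$; this reduces to checking surjectivity on the top cohomology sheaf $\mcal{H}^q$, which follows from the local vanishing of $\Z(q)$ below degree $q$ together with the quasi-isomorphisms $\Z/2(q)\wkeq B_2(q)$ and $\tau_{\leq q}\Z/2(q)^{top}\wkeq B_2(q)$ (using Milnor/Beilinson-Lichtenbaum over $\C$). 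Once this is in place, the five lemma applied to the associated long exact sequences in hypercohomology completes the induction.

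The principal technical obstacle, just as in the real case, is ensuring that the triangles in $D^-(Nis)$ remain distinguished after truncation at degree $q$; everything else is a direct transcription of the real-variety argument with the Galois action suppressed.
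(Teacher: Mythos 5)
Your proof is correct, but it takes a genuinely different (and considerably longer) route than the paper's. The paper's proof of Corollary~\ref{MCC} is a one-line reduction: given a smooth complex variety $X$, regard it as a smooth real variety by restriction of scalars and apply Corollary~\ref{MCR}. The point is that the complexification of $X$-viewed-as-real is $X \amalg \bar X$, so $X_{\C}(\C) = X(\C)\amalg X(\C)$ with $G$ acting freely by swapping the factors; consequently $L^{q}H\R^{q-k,q}(X;\Z/2^n)$ reduces to $L^{q}H^{2q-k}(X;\Z/2^n)$ and $H^{q-k,q}(X_{\C}(\C);\underline{\Z/2^n})$ reduces to $H^{2q-k}_{sing}(X(\C);\Z/2^n)$ (this is exactly the observation recorded in Lemma~\ref{Borel} and in the discussion preceding Theorem~\ref{main}). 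In contrast, you rebuild the entire machinery of Theorem~\ref{cyccomp} over $\C$: a $\C$-analogue of the commutative diagram of cycle maps, the Nisnevich-descent and etale-sheafification arguments identifying $\pi_0^*\Z/2(q)^{top}$ with $\mu_2^{\otimes q}$, the derived-category forcing argument, the Artin comparison $H^*_{et}\iso H^*_{sing}$, and the Bockstein bootstrap to $\Z/2^n$. Every step you sketch is sound and closely parallels the proof of Corollary~\ref{MCR}, so the approach works; but it re-derives from scratch a statement the paper gets for free once MCR is established. The trade-off: your argument is logically independent of MCR and in some respects cleaner (no Galois action to track), while the paper's is far more economical given the surrounding development and exploits the good functorial behavior of the real theory under restriction of scalars.
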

\begin{proof} 
Follows immediately from Corollary \ref{MCR} by viewing $X$ as real variety.
\end{proof}

\begin{corollary}\label{eBL}
 Let $X$ be a smooth real variety and $k>0$. The cycle map
\begin{equation*}
 \Phi:L^{q}H\R^{r,s}(X; \Z/2^{k}) \to H^{r,s}(X_{\C}(\C); \underline{\Z/2^{k}})
\end{equation*}
is an isomorphism if $r\leq 0$ (and $s\leq q$) and an injection if $r=1$ (and $s\leq q$).
\end{corollary}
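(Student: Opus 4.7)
The plan is to induct on $q' = q - s \geq 0$, with base case $q' = 0$ (i.e.\ $s = q$) given by Corollary \ref{MCR} and with the non-equivariant input provided by Corollary \ref{MCC} applied to the complex variety $X_\C$.

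The main tool I will use is the equivariant cofiber sequence
\[
G_+ \to S^{0,0} \to S^{0,1}
\]
arising from the unit sphere $S(\sigma) = G$ of the sign representation $\sigma = \R^{0,1}$ (the disk $D(\sigma)$ is equivariantly contractible). Smashing with $S^{p,q'}$ and, if needed, with the mod-$2^k$ Moore space, and using the equivariant equivalence $G_+ \wedge S^{p,q'} \simeq_G G_+ \wedge S^{p+q'}$ (trivial action on the right-hand factor) together with the induction-restriction adjunction $[G_+ \wedge Z, Y]_G = [Z, Y]_e$, I obtain for any based $G$-space $Y$ the long exact sequence
\[
\cdots \to \pi_{p+1,q'+1}Y \to \pi_{p+1,q'}Y \to \pi^e_{p+q'+1}Y \to \pi_{p,q'+1}Y \to \pi_{p,q'}Y \to \pi^e_{p+q'}Y \to \cdots,
\]
with $\Z/2^k$-coefficients, where $\pi^e_n$ denotes non-equivariant homotopy. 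I apply this comparison to $Y_1 = \mcal{Z}^q(X_\C)$ and $Y_2 = \map{X_\C(\C)}{\mcal{Z}_0(\A^q_\C)}$ linked by the equivariant map $\Phi$. Forgetting the $G$-action, $\Phi$ becomes the usual complex Friedlander-Lawson cycle map for $X_\C$, so Corollary \ref{MCC} gives that $\Phi$ induces an isomorphism on $\pi^e_n$ for $n \geq q$ and an injection for $n = q - 1$.

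Setting $p = q - r$, the base case $q' = 0$ is the isomorphism on $\pi_{p,0}$ for $p \geq q$ and the injection on $\pi_{q-1,0}$, which is exactly Corollary \ref{MCR}. For the inductive step in the isomorphism statement (case $r \leq 0$, i.e.\ $p \geq q$), the five lemma at $\pi_{p,q'+1}$ is powered by iso on the equivariant neighbors $\pi_{p+1,q'}$ and $\pi_{p,q'}$ from the inductive hypothesis (both first indices are $\geq q$) and iso on the non-equivariant neighbors $\pi^e_{p+q'}, \pi^e_{p+q'+1}$ from Corollary \ref{MCC} (since $p + q' \geq q$). For the inductive step in the injectivity statement (case $r = 1$, i.e.\ $p = q - 1$), the injective form of the five lemma applies: the iso at $\pi_{p+1,q'} = \pi_{q,q'}$ is the iso case already established for $r = 0$, and the remaining iso/mono inputs at $\pi^e_{q+q'}$, $\pi^e_{q-1+q'}$, and $\pi_{q-1,q'}$ come from Corollary \ref{MCC} and the inductive hypothesis. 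The only delicate point to check is that $\Phi$ restricts on non-equivariant $\pi^e$ to the classical complex cycle map appearing in Corollary \ref{MCC}; this is a naturality statement that follows directly from the fact that $\Phi$ is defined as the equivariant refinement of the Friedlander-Lawson comparison map between cocycle spaces and mapping spaces.
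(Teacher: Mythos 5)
Your proof is correct, and it takes a genuinely different route from the paper. The paper forms the equivariant homotopy fiber $F_q$ of $\Phi$, uses Corollaries \ref{MCC} and \ref{MCR} to show that $\pi_k F_q = 0$ and $\pi_k F_q^G = 0$ for $k \geq q-1$, invokes the $G$-Whitehead theorem to conclude that $\Omega^{q-1}F_q$ is equivariantly weakly contractible, and then reads off all the $RO(G)$-graded isomorphisms and injections at once from the long exact sequence of the looped fiber sequence. You instead never form the fiber: you feed the same two inputs (Corollary \ref{MCR} for the fixed-point case $q'=0$ and Corollary \ref{MCC} for the underlying non-equivariant cycle map) into the isotropy separation cofiber sequence $G_+ \to S^{0,0} \to S^{0,1}$ and induct on $q' = q - s$ using the five lemma. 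Both arguments ultimately reduce to the fact that a $\Z/2$-equivariant statement can be assembled from the underlying map and the map on fixed points; the paper packages this via the equivariant Whitehead theorem while you do it cell-by-cell via the cofiber sequence. Your version is more elementary (no need for the fiber or $G$-Whitehead) and makes the exact range where each hypothesis is used visible, at the cost of more index bookkeeping. One small cosmetic note: in your injectivity step you mention $\pi^e_{q-1+q'}$, but the four-lemma form you are invoking needs only epi at $\pi_{q,q'}$, mono at $\pi^e_{q+q'}$, and mono at $\pi_{q-1,q'}$; the extra term is harmless but unused.
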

\begin{proof}
 Write $F  _q = \hofib(\mcal{Z}^{q}/2^{k}(X_{\C}) \to \map{X_{\C}(\C)}{\mcal{Z}/2^{k}_{0}(\A^{q}_{\C})})$ for the homotopy fiber of the cycle map. The homotopy fiber construction is equivariant and yields an equivariant homotopy fiber sequence
\begin{equation*}
 F _q\to\mcal{Z}^{q}/2^{k}(X_{\C}) \to \map{X_{\C}(\C)}{\mcal{Z}/2^{k}_{0}(\A^{q}_{\C})}.
\end{equation*}
By Corollary \ref{MCC} and Corollary \ref{MCR} both $\pi_{k}(F _q) = 0$ and $\pi_{k}(F^{G} _q) =0$ for $k\geq q-1$. Therefore $\Omega^{q-1}F _q$ is equivariantly weakly contractible for $q\geq 1$ and if $q=0$ then $F _0$ is equivariantly contractible. The result follows now from the long exact sequence of homotopy groups applied to the equivariant homotopy fiber sequence
\begin{equation*}
\Omega^{q-1}F _q\to\Omega^{q-1}\mcal{Z}^{q}/2^{k}(X_{\C}) \to \Omega^{q-1}\map{X_{\C}(\C)}{\mcal{Z}/2^{k}_{0}(\A^{q}_{\C})}.
\end{equation*}
\end{proof}

\begin{corollary}
\label{cds}
 Let $X$ be a smooth real curve. Then 
\begin{equation*}
 L^{q}H\R^{r,s}(X; \Z) \to H^{r,s}(X_{\C}(\C); \underline{\Z})
\end{equation*}
is an isomorphism for any $q\geq 0$, $r\leq q$, and $s\leq q$.
\end{corollary}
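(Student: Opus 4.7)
The approach is to combine Corollary \ref{eBL} (the mod-$2^k$ equivariant Beilinson-Lichtenbaum isomorphism) with a Bockstein argument and a rational comparison, using the low dimension of the curve $X$ to extend the range of validity from $r\leq 0$ up to $r\leq q$.

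First, for each $k\geq 1$ Corollary \ref{eBL} gives that
\[
\Phi: L^q H\R^{r,s}(X;\Z/2^k) \to H^{r,s}(X_\C(\C); \underline{\Z/2^k})
\]
is an isomorphism for $r\leq 0$, $s\leq q$ and an injection for $r=1$, $s\leq q$. To enlarge the range to $r\leq q$ I would invoke the equivariant Poincare duality of Corollary \ref{PD}, which (using $\dim X=1$) identifies $L^q H\R^{r,s}(X)\cong L_{1-q}H\R_{1-r,1-s}(X)$. The Riemann surface $X_\C(\C)$ carries a parallel equivariant Poincare duality on Bredon cohomology (cf.\ Remark \ref{fpa}), and the two dualities are compatible with the cycle map. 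Under these dualities, the cohomological range $r\leq q$, $s\leq q$ is converted into a Bredon homology range that is already handled by Corollary \ref{eBL} applied to the appropriate dual spaces, so the mod-$2^k$ statement holds in the full range $r\leq q$, $s\leq q$.

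Second, I would pass from $\Z/2^k$ to $\Z$ coefficients using the Bockstein long exact sequences associated with $0\to \Z\xrightarrow{2^k}\Z\to\Z/2^k\to 0$ on both sides. The cycle map commutes with these Bocksteins, so a five-lemma argument combined with the mod-$2^k$ isomorphism for every $k$ yields that the integral cycle map is an isomorphism after $2$-adic completion throughout the range $r\leq q$, $s\leq q$.

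The main obstacle is then to promote this $2$-adic isomorphism to a genuine integral one; this requires controlling the prime $0$. For this I would check directly that $L^q H\R^{r,s}(X;\Q)\to H^{r,s}(X_\C(\C); \underline{\Q})$ is an isomorphism for a smooth real curve. This is a softer statement: rationally, the morphic cohomology of a curve is computable from the homotopy of the $0$- and $1$-cycle spaces (governed by components, the Picard group, and the Jacobian), which matches the rational Bredon cohomology of the Riemann surface $X_\C(\C)$. Assembling the rational and $2$-adic isomorphisms via the arithmetic fracture square, together with finite generation of the groups in each fixed bidegree, then produces the integral isomorphism claimed.
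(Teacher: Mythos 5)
Your proposal takes a genuinely different and substantially more roundabout route than the paper, and it has a real gap. The paper's proof splits into two cases. For $q\geq 1$ it does not touch $\Z/2^k$ coefficients at all: Corollary~\ref{PD} converts $L^qH\R^{r,s}(X)$ into $L_{1-q}H\R_{1-r,1-s}(X)=\pi_{q-r,q-s}\mcal{Z}_{1-q}(X_\C)$ and, because $\dim X=1$ and $q\geq 1$, the space $\mcal{Z}_{1-q}(X_\C)$ \emph{is} a zero-cycle space $\mcal{Z}_0(X_\C\times\A^{q-1}_\C)$. The equivariant Dold-Thom theorem then computes its $RO(G)$-graded homotopy groups as Bredon homology with $\underline{\Z}$ coefficients, and Remark~\ref{fpa} converts this to Bredon cohomology of $X_\C(\C)$. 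That already gives the integral isomorphism; your proposed detour through mod-$2^k$ coefficients, Bocksteins, rational comparison, and fracture is doing a great deal of work to recover something that is direct. Moreover, your step of ``converting the cohomological range $r\leq q$ into a Bredon homology range handled by Corollary~\ref{eBL}'' is not well-posed as stated: Corollary~\ref{eBL} compares morphic cohomology with Bredon cohomology, and after dualizing you are comparing Lawson homology with Bredon homology, which is not the content of that corollary — it is the Dold-Thom theorem that does the job, and it works with integral coefficients.

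The more serious gap is in the $q=0$ case, which you treat by the same mod-$2^k$/rational/fracture scheme. Here Poincar\'e duality sends $\mcal{Z}^0(X_\C)$ to $\mcal{Z}_1(X_\C)$, which is not a zero-cycle space, so Dold-Thom does not apply; Corollary~\ref{eBL} already covers $r\leq 0=q$ without any PD extension, and the burden is entirely on odd-torsion and torsion-free information. You assert that the rational comparison map is an isomorphism because it is ``a softer statement'' but you do not prove it; this is precisely where the paper supplies a transfer argument showing that for any coefficient group $A$ in which $2$ is invertible, $L^0H\R^{r,s}(X;A)\to H^{r,s}(X_\C(\C);\underline{A})$ is an isomorphism. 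The transfer uses the covering $X_\C\to X$ and the fact that the composite induces multiplication by $2$, so away from $2$ the comparison reduces to the known complex case. Without this step your argument is incomplete. Finally, your fracture and Bockstein-limit steps rely on finite generation in each fixed bidegree (to avoid $\varprojlim^1$ problems and to assemble $\Z$ from $\Z_2$ and $\Q$), which is not established in your write-up and is not needed in the paper's argument.
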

\begin{proof}
 By Poincare duality for real Lawson homology and equivariant morphic cohomology and Remark \ref{fpa}, $L^{q}H^{r,s}(X;\Z) \xrightarrow{\iso} H^{r,s}(X_{\C}(\C);\underline{\Z})$ for $q\geq 1$. By Corollary \ref{eBL},  $L^{0}H\R^{r,s}(X;\Z/2^{k}) \to H^{r,s}(X;\underline{\Z/2^{k}})$ is an isomorphism for $r,s\leq 0$. When $A$ is an abelian group and $2$ is invertible in $A$ then a transfer argument shows that 
$$L^{0}H\R^{r,s}(X;A)\stackrel{\iso}{\to} H^{r,s}(X;\underline{A}).$$ 
  This isomorphism and the one with  mod-$2^k$ coefficients give the result of the corollary.
\end{proof}

\begin{corollary}
 Let $X$ be a smooth real surface. Then for any $k>0$
\begin{equation*}
 L^{q}H\R^{r,s}(X;\Z/2^{k}) \to H^{r,s}(X_{\C}(\C);\underline{\Z/2^{k}})
\end{equation*}
is an isomorphism for $q=0$ and $r,s\leq 0$ and it is an injection for $r=1$ and $s\leq 1$. Moreover $L^{1}H\R^{1,s}(X;\Z/2^{k}) = 0$ for $s\leq -2$.
\end{corollary}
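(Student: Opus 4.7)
The plan is to treat the three claims separately, deducing each from Corollary \ref{eBL} together with, for the third claim, equivariant Poincare duality.

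The first claim (isomorphism for $q=0$ and $r,s\leq 0$) is immediate from Corollary \ref{eBL} with $q=0$, since the hypotheses ``$r\leq 0$ and $s\leq q$'' then specialize to ``$r,s\leq 0$''. The second claim (injection for $r=1$ and $s\leq 1$) is Corollary \ref{eBL} applied with $q=1$, so that ``$r=1$ and $s\leq q$'' becomes ``$r=1$ and $s\leq 1$''.

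The third claim, that $L^{1}H\R^{1,s}(X;\Z/2^k)=0$ for $s\leq -2$, is the substantive statement. My plan is to use the injection
\begin{equation*}
\Phi:L^{1}H\R^{1,s}(X;\Z/2^k)\hookrightarrow H^{1,s}(X_{\C}(\C);\underline{\Z/2^k})
\end{equation*}
from Corollary \ref{eBL} (with $q=1$, $r=1$) to reduce the problem to showing $H^{1,s}(X_{\C}(\C);\underline{\Z/2^k})=0$ for $s\leq -2$. For this vanishing I would invoke the equivariant Poincare duality of Remark \ref{fpa}: since $X$ is smooth of dimension $2$, the $G$-manifold $X_{\C}(\C)$ has tangent bundle a complex rank-$2$ bundle on which $G$ acts by conjugation (so the fiber at each fixed point of $X(\R)$ is the representation $\R^{2,2}$), which is a ``real $2$-bundle'' in the sense of Remark \ref{fpa}. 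Poincare duality then gives an isomorphism
\begin{equation*}
H^{1,s}(X_{\C}(\C);\underline{\Z/2^k})\;\cong\;H_{1,2-s}(X_{\C}(\C);\underline{\Z/2^k}).
\end{equation*}
For $s\leq -2$ the target has total bidegree $1+(2-s)=3-s\geq 5$, strictly greater than the real dimension $4$ of $X_{\C}(\C)$; the cellular computation of Bredon homology for a finite-dimensional $G$-CW complex shows such groups vanish.

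The main obstacle I anticipate is ensuring that equivariant Poincare duality applies with $\underline{\Z/2^k}$ coefficients: Remark \ref{fpa} records it for $\underline{\Z}$, but the same argument extends to any Mackey functor compatible with the complex (hence $2^k$-) orientation of $X_{\C}(\C)$. Failing direct access, one can reduce to the integral statement via the Bockstein long exact sequence associated to $0\to\underline{\Z}\xrightarrow{2^k}\underline{\Z}\to\underline{\Z/2^k}\to 0$, in which the only degree requiring extra care is the borderline bidegree $s=-2$.
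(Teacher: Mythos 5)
Your first two claims follow Corollary \ref{eBL} exactly as the paper intends (take $q=0$, resp.\ $q=1$), and the first step of your third claim --- the injection $L^{1}H\R^{1,s}(X;\Z/2^{k})\hookrightarrow H^{1,s}(X_{\C}(\C);\underline{\Z/2^{k}})$ --- is also the paper's first step. The gap is in how you kill the Bredon group. Your principle that $RO(G)$-graded Bredon homology of a finite-dimensional $G$-CW complex vanishes once the total degree exceeds the dimension is false: the $RO(G)$-graded homology of a point has a ``negative cone''. Concretely, by the equivariant Dold--Thom theorem, for the $2$-sphere $S^{2,0}$ with trivial action one has
\begin{equation*}
\tilde{H}_{\R^{0,m}}(S^{2,0};\underline{\Z/2}) \;\iso\; [S^{0,m},\mcal{Z}/2_{0}(S^{2,0})_{0}]_{G} \;\iso\; \tilde{H}^{2}_{sing}(S^{0,m}/G;\Z/2) \;\iso\; \tilde{H}^{1}_{sing}(\mathbb{R}P^{m-1};\Z/2)\;=\;\Z/2
\end{equation*}
for every $m\geq 2$ (the target is a trivial-action $K(\Z/2,2)$, so equivariant maps factor through $S^{0,m}/G$, the unreduced suspension of $\mathbb{R}P^{m-1}$). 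Suspending once, $\tilde{H}_{\R^{1,m}}(S^{3,0};\underline{\Z/2})\neq 0$ for all $m\geq 2$: a $3$-dimensional complex with nonvanishing Bredon homology in bidegrees exactly of the shape $(1,2-s)$ you need, and in arbitrarily large total degree $1+m$. The same phenomenon persists with $\underline{\Z/2^{k}}$ coefficients, so ``total bidegree $3-s\geq 5>4$'' proves nothing, and your cellular argument does not close. (A secondary point: Remark \ref{fpa} only records the duality $\mcal{P}$ with $\underline{\Z}$ coefficients and defers its proof/compatibility to a forthcoming paper, so even the duality input is not available in this paper in the form you invoke; but the dimension-count vanishing is the fatal step.)

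The paper's route, used verbatim in the proof of the next corollary (of which this vanishing is the $d=2$ special case), avoids duality altogether: compose with the Bredon-to-Borel comparison. By Proposition \ref{binj} (the $m=1$ injectivity statement),
\begin{equation*}
H^{1,s}(X_{\C}(\C);\underline{\Z/2^{k}})\;\hookrightarrow\; H^{1,s}(X_{\C}(\C)\times EG;\underline{\Z/2^{k}})\;\iso\; H^{1+s}_{G}(X_{\C}(\C);\Z/2^{k}),
\end{equation*}
and Borel cohomology, being singular cohomology of the homotopy orbit space, vanishes in negative total degree $1+s<0$. Hence $H^{1,s}(X_{\C}(\C);\underline{\Z/2^{k}})=0$ for $s\leq -2$ (indeed for any $G$-CW complex), and combined with your first injection this gives $L^{1}H\R^{1,s}(X;\Z/2^{k})=0$. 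If you want to keep a duality-flavored argument, you must still prove the vanishing of the relevant Bredon group by some such comparison; a dimension bound alone cannot do it.
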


Recall that $\pi_{0}\mcal{Z}^{q}(X_{\C})^{G}$ is the group of codimension $q$ cycles on $X$ modulo real algebraic equivalence. 
\begin{corollary}
 Let $X$ be a smooth real variety of dimension $d$. Then for any $k>0$
\begin{equation*}
 L^{1}H\R^{r,s}(X;\Z/2^{k}) \to H^{r,s}(X; \underline{\Z/2^{k}})
\end{equation*}
is an isomorphism for any $r\leq 0$ and $s\leq 1$ and it is an injection for $r=1$ and $s\leq 1$. Moreover
\begin{align*}
 L^{1}H\R^{1,1}(X;\Z/2^{k}) & = CH^{1}(X)\otimes\Z/2^{k}\subseteq H^{1,1}(X_{\C}(\C);\underline{\Z/2^{k}}) \\
L^{1}H\R^{1,s}(X;\Z/2^{k}) & = 0 \;\;\;\;\; \textrm{for} \;\; s\leq -2
\end{align*}
\end{corollary}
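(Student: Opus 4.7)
The first two assertions are immediate from Corollary \ref{eBL} applied with $q = 1$: since $s \leq 1 = q$, the cycle map $\Phi$ is an isomorphism for $r \leq 0$ and an injection for $r = 1$. For the identification $L^{1}H\R^{1,1}(X;\Z/2^k) = CH^{1}(X) \otimes \Z/2^k$, Proposition \ref{fcoeffagr} supplies a natural isomorphism $L^{1}H\R^{1,1}(X;\Z/2^k) \cong \H^{2,1}_{\mcal{M}}(X;\Z/2^k)$. Since $\Z(1) \simeq \G_m[-1]$ and $H^{p}_{Nis}(X;\G_m) = 0$ for $p \geq 2$ on smooth varieties, integral motivic cohomology $\H^{\bullet,1}_{\mcal{M}}(X;\Z)$ is concentrated in degrees $\leq 2$ with $\H^{2,1}_{\mcal{M}}(X;\Z) = \Pic(X) = CH^{1}(X)$, and the Bockstein sequence collapses to yield $\H^{2,1}_{\mcal{M}}(X;\Z/2^k) = CH^{1}(X) \otimes \Z/2^k$. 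The inclusion into Bredon cohomology is the injection already established.

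For the vanishing $L^{1}H\R^{1,s}(X;\Z/2^k) = 0$ when $s \leq -2$, the plan is to use the injection from the first part to reduce to proving that $H^{1,s}(X_{\C}(\C);\underline{\Z/2^k}) = 0$. The smooth $G$-manifold $X_{\C}(\C)$ has real dimension $2d$, and its equivariant tangent bundle is the complexification of the tangent bundle of the fixed locus, so the representation on each fiber at a point of $X(\R)$ is $\R^{d,d}$. This puts $X_{\C}(\C)$ in the class of manifolds covered by Remark \ref{fpa}, giving an equivariant Poincare duality isomorphism
\begin{equation*}
H^{1,s}(X_{\C}(\C);\underline{\Z/2^k}) \;\cong\; H_{d-1,\,d-s}(X_{\C}(\C);\underline{\Z/2^k}).
\end{equation*}
The representation $\R^{d-1,d-s}$ has underlying real dimension $2d - s - 1 \geq 2d + 1$ for $s \leq -2$, which strictly exceeds the real dimension of the $G$-CW complex $X_{\C}(\C)$. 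Since the Bredon chain complex of a finite $G$-CW complex is concentrated in integer degrees between $0$ and the real dimension, this Bredon homology vanishes, completing the argument.

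The hard part is to justify the extension of the Bredon Poincare duality of Remark \ref{fpa}, which is stated for $\underline{\Z}$-coefficients, to the constant Mackey functor $\underline{\Z/2^k}$. I expect this to be handled either by invoking the general equivariant Poincare duality theorem for oriented closed $G$-manifolds (which is valid for arbitrary Mackey functor coefficients once an equivariant orientation class is chosen), or by a universal coefficient argument: indeed, for $s \leq -3$ both integral groups $H^{1,s}$ and $H^{2,s}$ of $X_{\C}(\C)$ vanish by the same dimension count, and only the boundary case $s = -2$ genuinely requires the full mod-$2^k$ duality statement.
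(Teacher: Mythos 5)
Your handling of the isomorphism and injectivity statements and of the identification $L^{1}H\R^{1,1}(X;\Z/2^{k})=CH^{1}(X)\otimes\Z/2^{k}$ is correct and is essentially the paper's argument: Corollary \ref{eBL} with $q=1$, plus Proposition \ref{fcoeffagr} and the standard weight-one computation of motivic cohomology.

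The last vanishing statement is where there is a genuine gap, and it is not the coefficient issue you flag. The decisive step --- that $H_{d-1,d-s}(X_{\C}(\C);\underline{\Z/2^{k}})=0$ because the representation $\R^{d-1,d-s}$ has total dimension $2d-s-1>2d=\dim X_{\C}(\C)$ --- is false as a general principle about $RO(G)$-graded Bredon homology. The cellular filtration only bounds the integer-graded direction: a fixed cell $G/G$ contributes the full $RO(G)$-graded homology of a point, and this has a nonzero ``negative cone'' in degrees of arbitrarily large positive total dimension. Concretely, in the indexing of this paper $H_{-2,3}(\mathrm{pt};\underline{\Z/2})\iso\Z/2\neq 0$, a class living in total degree $1>0=\dim(\mathrm{pt})$; so the Bredon homology of a finite $G$-$CW$ complex is not concentrated in total degrees at most its dimension, and dually Bredon cohomology need not vanish in negative total degrees, which also undoes your universal-coefficient fallback for $s\leq-3$. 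The vanishing you want is true, but it requires the finer input the paper uses instead of duality: compose the injection of Corollary \ref{eBL} with the Bredon-to-Borel comparison, which by Proposition \ref{binj} is injective in the relevant degree $m=1$; the target $H^{1,s}(X_{\C}(\C)\times EG;\underline{\Z/2^{k}})\iso H^{1+s}_{G}(X_{\C}(\C);\Z/2^{k}(s))$ is an ordinary singular cohomology group of the Borel construction and hence vanishes for $1+s<0$, i.e.\ for $s\leq-2$. It is exactly this passage to the genuinely integer-graded Borel theory that supplies the vanishing your dimension count was meant to provide; the extension of Remark \ref{fpa} to $\underline{\Z/2^{k}}$-coefficients is a secondary (and also unproven) point, but even granting it the argument as written does not close.
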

\begin{proof}
 All statements follow immediately from Corollary \ref{eBL} except the last one. The first part of the last statement follows from Proposition \ref{fcoeffagr}. For rest of the last statement, by Corollary \ref{eBL} together with Proposition \ref{binj} we have 
\begin{equation*}
 L^{1}H\R^{1,s}(X;\Z/2^{k}) \hookrightarrow H^{1,s}(X_{\C}(\C); \underline{\Z/2^{k}}) \hookrightarrow H^{1+s}_{G}(X_{\C}(\C);A) = 0
\end{equation*}
for $1+ s <0$.
\end{proof}

We finish this section with the computation used in Corollary \ref{MCR} that Bredon and Borel cohomology agree in the range relevant to the Beilinson-Lichtenbaum conjecture.
\begin{proposition}\label{binj}
 Let $W$ be a $G$-$CW$ complex, $M$ a $G$-module, and $\underline{M}$ the associated constant Mackey functor.  The map 
$$
H^{m,q}(W;\underline{M}) \to H^{m,q}(W\times EG;\underline{M}) 
$$ 
is an isomorphism for $m\leq 0$ and it is an injection for $m = 1$.

In particular for $q\leq p$ the map 
$$
H^{q-p,q}(W;\underline{\Z/2}) \to H^{2q-p}_{\Z/2}(W,\Z/2)
$$
 is an isomorphism and an injection for $p=q-1$.
\end{proposition}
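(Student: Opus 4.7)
The plan is to establish the proposition via the isotropy separation cofibre sequence. Starting from the standard $G$-cofibration
\[
EG_+ \to S^0 \to \widetilde{EG},
\]
(where $\widetilde{EG}$ denotes the mapping cone, equivalently for $G=\Z/2$ the infinite sign-representation sphere $S^{\infty\sigma}$), I would smash with $W_+$ to obtain
\[
EG_+\wedge W_+ \to W_+ \to \widetilde{EG}\wedge W_+
\]
and apply reduced Bredon cohomology with coefficients in $\underline{M}$. The resulting long exact sequence
\[
\cdots\to \widetilde{H}^{m,q}(\widetilde{EG}\wedge W_+;\underline{M}) \to H^{m,q}(W;\underline{M}) \to H^{m,q}(W\times EG;\underline{M}) \to \widetilde{H}^{m+1,q}(\widetilde{EG}\wedge W_+;\underline{M}) \to \cdots
\]
reduces both assertions of the proposition at once to the single vanishing statement
\begin{equation*}
\widetilde{H}^{m,q}(\widetilde{EG}\wedge W_+;\underline{M}) = 0 \qquad \text{for all } m\leq 1.
\end{equation*}

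For the second step I would invoke the standard equivariant weak equivalence
\[
\widetilde{EG}\wedge (W^G)_+ \xrightarrow{\ \simeq\ } \widetilde{EG}\wedge W_+ ,
\]
proved by induction over the $G$-cells of $W$: a free cell $G_+\wedge D^n$ becomes equivariantly contractible after smashing with $\widetilde{EG}$, because non-equivariantly $\widetilde{EG}$ is contractible and on $G$-fixed points $(G_+\wedge\widetilde{EG})^G = \ast$. After this reduction we may assume $W = W^G$ carries the trivial $G$-action.

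For the third and last step, I would use the colimit presentation $\widetilde{EG}=\colim_n S^{n\sigma}$ together with the Milnor short exact sequence
\[
0 \to {\lim}^1_n H^{m-1,q-n}(W;\underline{M}) \to \widetilde{H}^{m,q}(\widetilde{EG}\wedge W_+;\underline{M}) \to \lim_n H^{m,q-n}(W;\underline{M}) \to 0,
\]
whose transition maps are multiplication by the Euler class $a_\sigma\in H^{0,1}(\mathrm{pt};\underline{M})$. Since $W$ now has trivial $G$-action, a cell-induction on $W$ combined with the Atiyah--Hirzebruch spectral sequence further reduces the desired vanishing to the case $W=\mathrm{pt}$, where it becomes a statement about the $RO(G)$-graded coefficient ring of $H\underline{M}$.

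The main obstacle is this final coefficient computation: one needs that for $m\leq 1$, the tower $\{H^{m,q-n}(\mathrm{pt};\underline{M})\}_n$ (and its $\lim^1$) vanishes as $n\to\infty$. For the Mackey functors of interest in this paper, namely $\underline{M}=\underline{\Z/2^k}$ or $\underline{\Z}$, the explicit description of the bigraded Bredon cohomology of a point (due to Dugger and Hu--Kriz) shows that in the half-plane $m\leq 1$ the non-vanishing classes are confined to a bounded range in the second coordinate, so the tower is eventually zero and the vanishing follows. Substituting $m=q-p$ into the established isomorphism (respectively injection) then yields the particular statement about $H^{q-p,q}(W;\underline{\Z/2})\to H^{2q-p}_{\Z/2}(W;\Z/2)$ via the natural identification $H^{p,q}(W\times EG;\underline{\Z/2}) = H^{p+q}_{\Z/2}(W;\Z/2)$ already recorded in the preliminaries.
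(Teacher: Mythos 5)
Your reduction steps are sound and run parallel to the paper's: smashing the isotropy cofibre sequence $EG_+\to S^0\to\widetilde{E}G$ with $W_+$, and using the $G$-equivalence $\widetilde{E}G\wedge (W^G)_+\simeq\widetilde{E}G\wedge W_+$, correctly reduces both assertions to the vanishing $\widetilde{H}^{m,q}(\widetilde{E}G\wedge W_+;\underline{M})=0$ for $m\le 1$ with $W$ of trivial action. (The paper reaches the trivial-action case differently, via the cofibration $W^G_+\to W_+\to W/W^{G}$, freeness of $W/W^{G}$ and the five lemma, but the two reductions are essentially interchangeable.)

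The genuine gap is in your last step. The proposition is stated for an arbitrary module $M$ and its constant Mackey functor $\underline{M}$, but your final vanishing is argued only for $\underline{\mathbb{Z}/2^k}$ and $\underline{\mathbb{Z}}$ by quoting the Dugger and Hu--Kriz computations of the $RO(G)$-graded coefficients; as written this does not prove the statement in its stated generality, and it imports computational input the paper never needs. (A lesser point: your cell-induction/Atiyah--Hirzebruch reduction to $W=\mathrm{pt}$ needs a remark for infinite-dimensional $W$, e.g.\ a Milnor sequence over skeleta together with the observation that the vanishing range of the point groups is uniform because the first index stays $\le 1$.) The paper sidesteps all of this with one trick: the equivariant equivalence $S^{0,1}\wedge\widetilde{E}G\simeq\widetilde{E}G$ gives $\widetilde{H}^{m,q}(W_+\wedge\widetilde{E}G;\underline{M})\cong\widetilde{H}^{m,0}(W_+\wedge\widetilde{E}G;\underline{M})$ for every $q$, and in weight zero Bredon cohomology is singular cohomology of the orbit space; since $(W_+\wedge\widetilde{E}G)/G\simeq W_+\wedge S^{1,0}\wedge BG$ is a suspension, its reduced cohomology vanishes in degrees $\le 1$ for any coefficient group $M$. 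Replacing your Milnor-sequence-plus-coefficient computation with this weight-shifting argument would make your proof both simpler and valid for all $M$.
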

\begin{proof}
Define $\tilde{E}G = \colim_{n} S^{0,n}$. This space fits into a homotopy cofiber sequence
\begin{equation*}
 EG_{+}\to S^{0} \to \tilde{E}G
\end{equation*}
and $\tilde{E}G/G \wkeq S^{1,0}\wedge BG$.

First we consider the case that $G$ acts trivially on $W$. From the previous homotopy cofiber sequence we obtain the homotopy cofiber sequence
\begin{equation}\label{EGcof}
 W_+\wedge EG_+ \to W_+ \to W_+\wedge \tilde{E}G.
\end{equation}
 Since $G$ acts trivially on $W$ we have that $(W_+\wedge \tilde{E}G)/G = W_+ \wedge \tilde{E}G/G\wkeq W_+ \wedge S^{1,0}\wedge BG$ and
 therefore $\tilde{H}^{k,0}(W_+\wedge \tilde{E}G;\underline{M}) = 
\tilde{H}_{sing}^{k-1}(W_+\wedge BG;M) = 0$ if $k\leq 1$. 
Notice that $S^{0,1}\wedge \tilde{E}G = S^{0,1}\wedge \colim_{n}S^{0,n} = \colim_{n} S^{0,n+1}\iso \tilde{E}G$. Since $\tilde{E}G \iso S^{0,1}\wedge \tilde{E}G$ is an 
equivariant equivalence this induces an isomorphism $$\tilde{H}^{k,s}(W_+\wedge 
\tilde{E}G;\underline{M}) \iso \tilde{H}^{k,0}(W_+\wedge 
\tilde{E}G;\underline{M})$$ for all $s$ and therefore $\tilde{H}^{k,s}(W_+\wedge 
\tilde{E}G, \underline{M})= 0$ for $k\leq 1$ for all $s$. Now from the long exact sequence associated to the 
cofiber sequence (\ref{EGcof}) it follows that for all $q$ the map $H^{m,q}(W;\underline{M}) \to 
H^{m,q}(W\times EG;\underline{M})$ is an isomorphism for $m\leq 0$ and an injection for $m=1$.

Consider now a general $G$-$CW$ complex $W$ and consider the quotient $W/W^G$. Since $G$ acts freely on the based space $W/W^{G}$ we have the isomorphism $$\tilde{H}^{s,t}(X/X^{G};\underline{M})\xrightarrow{\iso} \tilde{H}^{s,t}(X/X^{G}\wedge EG_{+};M).$$ Applying the five lemma to the comparison of long exact sequences obtained from the cofiber sequences $W_{+}^{G}\to W_{+} \to W/W^{G}$  and $(W^{G}\times EG)_{+} \to (W\times EG)_{+} \to W/W^{G}\wedge EG_{+}$ yields the result.

\end{proof}

\section{Vanishing Theorem}

Let $X$ be a  real variety. Continue to write $G = \Z/2$ and $\sigma\in G$ for the nontrivial element. Recall that the reduced real cycle group is defined to be the quotient topological group
\begin{equation*}
 \mcal{R}_{q}(X) = \frac{\mcal{Z}_{q}(X_{\C})^{G}}{\mcal{Z}_{q}(X_{\C})^{av}}.
\end{equation*}
This is the free $\Z/2$-module generated by closed subvarieties $Z\subseteq X$ such that both $Z$ and $Z_{\C}$ are irreducible. In particular, if $X$ is a complex variety viewed as a variety over $\R$ then $\mcal{R}_{q}(X) = 0$. 

Reduced real Lawson homology of $X$ is defined by the homotopy groups of this space,
\begin{equation*}
 RL_{q}H_{n}(X) = \pi_{n-q}\mcal{R}_{q}(X).
\end{equation*}

In this section we prove our main theorem which we state now. 
\begin{varthm}[\textbf{Theorem \ref{main}}]
 Let $X$ be a quasi-projective real variety.   Then 
\begin{equation*}
\pi_k\mcal{R}_n(X) = RL_{n}H_{k+n}(X) = 0 
\end{equation*}
for $k\geq \dim X -n +1$.
\end{varthm}

To avoid difficulties with point-set topology below we work simplicially. Note that if $X$ is a $G$-space then $\sing_{\bullet}X$ is a $G$-simplicial set and $\sing_{\bullet}(X^{G}) = (\sing_{\bullet} X)^{G}$. If $A_{\bullet}$ is a $G$-simplicial set then $|A^G_{\bullet}| = |A_{\bullet}|^{G}$ (see for example \cite[Lemma A.5]{dug:kr}). 

 \begin{definition}
\begin{enumerate}
\item Let $W$ be a $G$-space. Write
\begin{equation*}
 \widetilde{\mcal{Z}}^q_{top}(W) = \uphom{W\times \Delta^{\bullet}_{top}}{\mcal{Z}_0(\A^{q}_{\C})},
\end{equation*}
 and 
\begin{equation*}
 \widetilde{\mcal{Z}}^{q}/2_{top}(W) = \uphom{W\times\Delta^{\bullet}_{top}}{\mcal{Z}/2_0(\A^{q}_{\C})}. 
\end{equation*}
These are simplicial abelian groups and $G$ acts on them by the standard formula $(\sigma f)(x) = \sigma f(\sigma  x)$.

\item Let $X$ be a normal quasi-projective real variety. Write
$$
\widetilde{\mcal{Z}}^q(X_{\C}) = \sing_{\bullet}\mcal{Z}^q(X_{\C})
$$
 and 
$$ 
\widetilde{\mcal{Z}}^{q}/2(X_{\C}) = \sing_{\bullet}\mcal{Z}^{q}/2(X_{\C}).
$$ 

\end{enumerate}
\end{definition}
We have 
\begin{equation*}
  \pi_k\widetilde{\mcal{Z}}^{q}_{top}(X _\C(\C))^G \iso H^{q-k,q}(X_{\C}(\C),\underline{\Z}) 
 \end{equation*}
and
\begin{equation*}
\pi_k\widetilde{\mcal{Z}}^{q}(X _\C)^G \iso L^qH\R^{2q-k}(X;\Z)
\end{equation*}
and similarly for the mod-$2$ groups. In particular if $X$ is a complex variety viewed as a real variety then $X_{\C}(\C) = X(\C)\coprod X(\C)$ (with $G$-action switching the factors) and so 
 \begin{equation*}
  \pi_k\widetilde{\mcal{Z}}^{q}_{top}(X _\C(\C))^G \iso H^{2q-k}_{sing}(X(\C),\underline{\Z}) \;\;\;\;\textrm{and}\;\;\;\;\ \pi_k\widetilde{\mcal{Z}}^q(X _\C)^G \iso L^qH^{2q-k}(X;\Z)
 \end{equation*}
and similarly for the mod-$2$ groups.

The comparison maps (\ref{morphcomp}) of simplicial abelian groups 
\begin{equation*}
\Phi:\widetilde{\mcal{Z}}^q(X_{\C}) \to \widetilde{\mcal{Z}}^{q}_{top}(X _\C(\C))
\hskip 0.5cm \text{and} \hskip 0.5cm
\Phi:\widetilde{\mcal{Z}}^{q}/2(X_{\C}) \to \widetilde{\mcal{Z}}^{q}/2_{top}(X _\C(\C))
\end{equation*} 
are $G$-equivariant for any quasi-projective real variety $X$.


If $M_{\bullet}$ is a simplicial $G$-module write 
$N=1+\sigma:M_{\bullet}\to M_{\bullet}$
and define $M_{\bullet}^{av}$ to be 
\begin{equation*}
M_{\bullet}^{av} = Im(N) =Im(1+\sigma: M_{\bullet} \to M_{\bullet}).
\end{equation*}

\begin{definition}
 Let $W$ be a $G$-space. Define the group of \textit{reduced topological cocycles} (of codimension $q$) to be the quotient simplicial abelian group
\begin{equation*}
 \widetilde{\mcal{R}}^{q}_{top}(W) = \frac{\uphom{W\times\Delta^{\bullet}_{top}}{\mcal{Z}/2_{0}(\A_{\C}^{q})}^G}
{\uphom{W\times\Delta^{\bullet}_{top}}{\mcal{Z}/2_{0}(\A_{\C}^{q})}^{av}} = \frac{\widetilde{\mcal{Z}}^{q}/2_{top}(W)^{G}}{\widetilde{\mcal{Z}}^{q}/2_{top}(W)^{av}}.
\end{equation*}
\end{definition}

To relate the space of reduced algebraic cocycles with the reduced topological cocycles we introduce the following  auxiliary simplicial set for $X$ a quasi-projective normal real variety:
\begin{equation*}
 \widetilde{\mcal{R}}^{q}(X) = \frac{\widetilde{\mcal{Z}^{q}}/2(X_{\C})^{G}}{\widetilde{\mcal{Z}}^{q}/2(X_{\C})^{av}}.
\end{equation*}

\begin{proposition}
Let $X$ be a normal quasi-projective real variety. The following diagrams commute and the horizontal rows are short exact sequences of simplicial abelian groups (and therefore in particular the horizontal rows are homotopy fiber sequences of simplicial sets)
\begin{equation}\label{fib1}
 \xymatrix{
\widetilde{\mcal{Z}}^{q}/2(X_{\C})^{G} \ar[r]\ar[d]^{\Phi^{G}} & \widetilde{\mcal{Z}}^{q}/2(X_{\C}) \ar[r]^{N}\ar[d]^{\Phi} & \widetilde{\mcal{Z}}^{q}/2(X_{\C})^{av} \ar[d]^{\Phi^{av}} \\
\widetilde{\mcal{Z}}^{q}/2_{top}(X_{\C}(\C))^{G} \ar[r] & \widetilde{\mcal{Z}}^{q}/2_{top}(X_{\C}(\C)) \ar[r]^{N} & \widetilde{\mcal{Z}}^{q}/2_{top}(X_{\C}(\C))^{av} ,
}
\end{equation}
and
\begin{equation}\label{fib2}
 \xymatrix{
\widetilde{\mcal{Z}}^{q}/2(X_{\C})^{av} \ar[r]\ar[d]^{\Phi^{av}} & \widetilde{\mcal{Z}}^{q}/2(X_{\C})^{G} \ar[r]\ar[d]^{\Phi^{G}} & \widetilde{\mcal{R}}^{q}(X) \ar[d]^{\overline{\Phi}} \\
\widetilde{\mcal{Z}}^{q}/2_{top}(X_{\C}(\C))^{av} \ar[r] & \widetilde{\mcal{Z}}^{q}/2_{top}(X_{\C}(\C))^{G} \ar[r] & \widetilde{\mcal{R}}^{q}_{top}(X _{\C}(\C)) .
}
\end{equation}
\end{proposition}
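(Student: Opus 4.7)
The plan is to verify exactness of each row first (which is essentially formal) and then observe that commutativity of the squares is an immediate consequence of the $G$-equivariance of $\Phi$.

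For the first row (and its analogue for topological cocycles), I would unwind the definitions. The left map is simply the inclusion of $G$-fixed points into the ambient simplicial abelian group, so it is automatically injective. The map $N = 1 + \sigma$ is surjective onto $\widetilde{\mcal{Z}}^{q}/2(X_{\C})^{av}$ by definition, since that subgroup is precisely the image of $N$. The key point is exactness in the middle, which relies crucially on the mod-$2$ coefficients: for any simplex $x$, we have $N(x) = x + \sigma x$, and since we are in characteristic $2$ this vanishes if and only if $\sigma x = -x = x$, that is, if and only if $x$ is $G$-fixed. Thus $\ker(N)$ agrees with the displayed inclusion, giving a short exact sequence of simplicial abelian groups. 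The identical argument applies verbatim to $\widetilde{\mcal{Z}}^{q}/2_{top}(X_{\C}(\C))$ since it is likewise a simplicial $\Z/2[G]$-module.

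For the second row, the sequence
\begin{equation*}
\widetilde{\mcal{Z}}^{q}/2(X_{\C})^{av} \hookrightarrow \widetilde{\mcal{Z}}^{q}/2(X_{\C})^{G} \to \widetilde{\mcal{R}}^{q}(X)
\end{equation*}
is short exact by the very definition of $\widetilde{\mcal{R}}^{q}(X)$ as the displayed quotient, and similarly for $\widetilde{\mcal{R}}^{q}_{top}(X_\C(\C))$. No mod-$2$ argument is needed here; it is purely formal from the definitions.

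Commutativity of both diagrams reduces to the single fact that $\Phi:\widetilde{\mcal{Z}}^{q}/2(X_{\C}) \to \widetilde{\mcal{Z}}^{q}/2_{top}(X_{\C}(\C))$ is a $G$-equivariant homomorphism of simplicial abelian groups. Equivariance plus additivity immediately imply $\Phi(N(x)) = \Phi(x) + \sigma\Phi(x) = N(\Phi(x))$, so $\Phi$ restricts to a map of fixed-point subgroups $\Phi^G$, carries averaged cocycles to averaged cocycles inducing $\Phi^{av}$, and so descends to a map $\overline{\Phi}$ on the reduced quotients. This makes all squares in both diagrams commute on the nose.

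Finally, for the parenthetical remark about homotopy fiber sequences: any short exact sequence of simplicial abelian groups is in particular a fibration sequence (the surjection is a Kan fibration by standard theory of simplicial abelian groups), so the exactness established above yields the claimed homotopy fiber sequences. The only step requiring any genuine input (as opposed to formal unwinding) is the mod-$2$ identification $\ker(N) = (-)^G$, and this is the main conceptual point of the proposition.
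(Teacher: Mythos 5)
Your proof is correct and takes essentially the same approach as the paper: the paper likewise deduces commutativity from $G$-equivariance of $\Phi$, establishes exactness of the first row via the observation that for a $2$-torsion $G$-module $M$ the sequence $0\to M^{G}\to M\xrightarrow{N}M^{av}\to 0$ is exact, and notes the second row is exact by definition of the reduced quotient.
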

\begin{proof}
These diagrams commute because $\Phi$ is a $G$-homomorphism. 

Whenever $M$ is a $G$-module whose underlying abelian group is $2$-torsion then the sequence of abelian groups $ 0\to M^{G} \to M \xrightarrow{N} M^{av} \to 0$ is exact.

In particular the underlying sequences of simplicial abelian $G$-modules in the first diagram form short exact sequences of simplicial abelian groups.

 In the second diagram the horizontal rows  form short exact 
sequences by definition of $\widetilde{R}^{q}(-)$ and $\widetilde{R}^{q}_{top}(-)$.
\end{proof}

 By definition we have 
\begin{equation*}
 (\sing_{\bullet} \mcal{Z}^{q}/2(X_{\C}))^{av}=\im(\sing_{\bullet}\mcal{Z}^{q}/2(X_{\C})
 \xrightarrow{N} \sing _{\bullet}\mcal{Z}^{q}/2(X_{\C})) . 
\end{equation*}

There is a natural map $i: (\sing_{\bullet}\mcal{Z}^{q}/2(X_{\C}))^{av} \to \sing_{\bullet}(\mcal{Z}^{q}/2(X_{\C})^{av})$  which is simply  
\begin{equation*}
i(f+\overline{f})=f+\overline{f}
\end{equation*} 
for a continuous map $f:\Delta^d _{top}\to \mcal{Z}^{q}/2(X_{\C})$. The map $ i:(\sing_{\bullet}\mcal{Z}^{q}/2(X_{\C}))^{av} \to \sing_{\bullet}(\mcal{Z}^{q}/2(X_{\C})^{av})$ induces a map

\begin{equation}\label{rvcomp}
 \overline{i}: \widetilde{\mcal{R}}^{q}(X) \to \sing_{\bullet}\mcal{R}^{q}(X).
\end{equation}

\begin{lemma}\label{sra}
Let $X$ be a normal real projective variety. The map (\ref{rvcomp}) of simplicial abelian groups
\begin{equation*}
 \widetilde{\mcal{R}}^{q}(X) \to \sing_{\bullet}\mcal{R}^{q}(X)
\end{equation*}
is a homotopy equivalence.
\end{lemma}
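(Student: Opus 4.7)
The plan is to show $\overline{i}$ is a weak equivalence by comparing two homotopy fibre sequences, using Serre fibration properties of quotients of tractable topological abelian groups together with the $2$-torsion nature of $\mcal{R}^{q}(X)$.

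First I would verify that the natural map $i\colon (\sing_{\bullet}\mcal{Z}^{q}/2(X_{\C}))^{av} \to \sing_{\bullet}(\mcal{Z}^{q}/2(X_{\C})^{av})$ is an isomorphism of simplicial abelian groups.  Since the topological abelian groups in question have the homotopy type of CW-complexes by tractability (Corollary \ref{ehtype}), the surjective homomorphism $N\colon \mcal{Z}^{q}/2(X_{\C}) \to \mcal{Z}^{q}/2(X_{\C})^{av}$ is a principal $\ker N$-bundle and hence a Serre fibration.  Consequently every continuous map $\Delta^{d}_{top}\to \mcal{Z}^{q}/2(X_{\C})^{av}$ admits a lift through $N$, so $i$ is surjective (injectivity is immediate).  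A parallel argument, applied to the fibration $\mcal{Z}^{q}/2(X_{\C})^{G}\to \mcal{Z}^{q}/2(X_{\C})^{G}/\mcal{Z}^{q}/2(X_{\C})^{av}$, then shows that the defining short exact sequence of $\widetilde{\mcal{R}}^{q}(X)$ is a homotopy fibre sequence and yields a natural weak equivalence $\widetilde{\mcal{R}}^{q}(X)\simeq \sing_{\bullet}\bigl(\mcal{Z}^{q}/2(X_{\C})^{G}/\mcal{Z}^{q}/2(X_{\C})^{av}\bigr)$.

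Next I would establish the parallel fibre sequence for $\sing_{\bullet}\mcal{R}^{q}(X)$.  Again by tractability, the quotient map $\mcal{Z}^{q}(X_{\C})^{G}\to \mcal{R}^{q}(X)$ is a Serre fibration with fibre $\mcal{Z}^{q}(X_{\C})^{av}$, giving the homotopy fibre sequence
\[
\sing_{\bullet}\mcal{Z}^{q}(X_{\C})^{av}\to \sing_{\bullet}\mcal{Z}^{q}(X_{\C})^{G}\to \sing_{\bullet}\mcal{R}^{q}(X).
\]
Finally I would compare the two fibre sequences via the mod-$2$ reduction $\mcal{Z}^{q}(X_{\C})\to \mcal{Z}^{q}/2(X_{\C})$.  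Because $\mcal{R}^{q}(X)$ is $2$-torsion (since $2\alpha=\alpha+\sigma\alpha\in \mcal{Z}^{q}(X_{\C})^{av}$ for any $\alpha\in \mcal{Z}^{q}(X_{\C})^{G}$), the Bockstein long exact sequences derived from $0\to \mcal{Z}^{q}(X_{\C})\xrightarrow{\times 2}\mcal{Z}^{q}(X_{\C})\to \mcal{Z}^{q}/2(X_{\C})\to 0$ and its $G$-fixed and averaged restrictions allow identification of the two right-hand quotients up to weak equivalence.  The five lemma then gives that $\overline{i}$ induces an isomorphism on all homotopy groups and is thus a weak equivalence.

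The main obstacle will be the last step.  Strictly speaking, the mod-$2$ reductions of $\mcal{Z}^{q}(X_{\C})^{G}$ and $\mcal{Z}^{q}(X_{\C})^{av}$ do not coincide with $\mcal{Z}^{q}/2(X_{\C})^{G}$ and $\mcal{Z}^{q}/2(X_{\C})^{av}$, the discrepancy being controlled by Tate cohomology classes of $\mcal{Z}^{q}(X_{\C})$; verifying that these extraneous contributions cancel when passing to the $2$-torsion quotient $\mcal{R}^{q}(X)$ will require careful bookkeeping of the Bockstein connecting homomorphisms in the comparison of long exact sequences.
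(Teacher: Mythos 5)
Your skeleton is essentially the paper's: compare the defining short exact sequences of $\widetilde{\mcal{R}}^{q}(X)$ and of $\mcal{R}^{q}(X)$ as homotopy fibre sequences, exploiting the CW homotopy type of these cycle groups. The first half of what you propose — deducing a weak equivalence $\widetilde{\mcal{R}}^{q}(X)\simeq\sing_{\bullet}\bigl(\mcal{Z}^{q}/2(X_{\C})^{G}/\mcal{Z}^{q}/2(X_{\C})^{av}\bigr)$ from the commuting fibre sequences — is exactly diagrams (\ref{fibra1}) and (\ref{fibra2}) of the paper. Two remarks here. First, the paper does not claim that $N$ and the other surjections are Serre fibrations, only that short exact sequences of topological abelian groups of CW homotopy type are homotopy fibre sequences (quasifibrations suffice); your stronger principal-bundle claim, from which you extract that $i$ is a simplicial \emph{isomorphism}, is not established in the paper and is more than you need: the five lemma applied to the long exact sequences already gives $i$ as a weak equivalence. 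Second, and more importantly, the step you flag as "the main obstacle" is indeed where your argument breaks down, and a Bockstein comparison will not close it.

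The real content is the identification $\mcal{Z}^{q}/2(X_{\C})^{G}/\mcal{Z}^{q}/2(X_{\C})^{av}\iso\mcal{R}^{q}(X)$, equivalently the exactness of
\begin{equation*}
0\to\mcal{Z}^{q}/2(X_{\C})^{av}\to\mcal{Z}^{q}/2(X_{\C})^{G}\to\mcal{R}^{q}(X)\to 0,
\end{equation*}
and this is \emph{not} a formal consequence of the $\times 2$ long exact sequences. The 2-torsion of $\mcal{R}^{q}(X)$ tells you $2\mcal{Z}^{q}(X_{\C})^{G}\subseteq\mcal{Z}^{q}(X_{\C})^{av}$, but one still has to see that the mod-2 reduction carries $\mcal{Z}^{q}(X_{\C})^{av}$ onto $\mcal{Z}^{q}/2(X_{\C})^{av}$ with the right kernel, and that every class in $\mcal{Z}^{q}/2(X_{\C})^{G}$ lifts to $\mcal{Z}^{q}(X_{\C})^{G}$ modulo averaged cycles. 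The surjectivity here uses that a cycle $\beta$ with $\sigma\beta=-\beta$ lies in the span of the $V-\sigma V$ (so reduces mod 2 to an averaged class); i.e., it uses the concrete description of these cycle groups as free abelian groups on irreducible subvarieties permuted by $G$. This is exactly what Proposition \ref{qimav} supplies as an isomorphism of \emph{topological} groups, via Lemma \ref{mgrp2} and the closedness/cancellation arguments. With that proposition in hand the Bockstein detour is unnecessary: the whole comparison stays in mod-2 coefficients, and the five lemma on diagrams (\ref{fibra1}) and (\ref{fibra2}) finishes. Without it, your proposed "careful bookkeeping" of Tate classes has no obvious way to terminate, because you are trying to prove by homological algebra an identity that actually depends on the basis-permutation structure of the cycle groups.
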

\begin{proof}
By Proposition \ref{qimav} the maps $\mcal{Z}^{q}/2(X_{\C})/\mcal{Z}^{q}/2(X_{\C})^{G}\xrightarrow{} \mcal{Z}^{q}/2(X_{\C})^{av}$ and $\mcal{Z}^{q}(X_{\C})^{av}/2\mcal{Z}^{q}(X)^{G} \xrightarrow{} \mcal{Z}^{q}/2(X_{\C})^{av}$ are isomorphisms of topological groups. Therefore both
\begin{equation*}
0\to\mcal{Z}^{q}/2(X_{\C})^{G} \to \mcal{Z}^{q}/2(X_{\C}) \to \mcal{Z}^{q}/2(X_{\C})^{av} \to 0
\end{equation*}
and
\begin{equation*}
0\to\mcal{Z}^{q}/2(X_{\C})^{av} \to \mcal{Z}^{q}/2(X_{\C})^{G} \to \mcal{R}^q(X)\to 0
\end{equation*}
are short exact sequences of topological abelian groups. These groups all have the homotopy type of a $CW$-complex and therefore these sequences are homotopy fiber sequences \cite{Teh:real}.  Applying $\sing_{\bullet}$ to these homotopy fiber sequence and comparing with the homotopy fiber sequences of the top rows of \ref{fib1} and \ref{fib2} gives commutative diagrams of homotopy fiber sequences of simplicial sets:
\begin{equation}\label{fibra1}
 \xymatrix{
\widetilde{\mcal{Z}}^{q}/2(X_{\C})^{G} \ar[r]\ar[d]^{\simeq} & \widetilde{\mcal{Z}}^{q}/2(X_{\C}) \ar[r]\ar[d]^{\simeq} & \widetilde{\mcal{Z}}^{q}/2(X_{\C})^{av} \ar[d]^{i} \\
\sing_{\bullet}{\mcal{Z}/2}^q(X_{\C})^{G} \ar[r] & \sing _{\bullet}{\mcal{Z}/2}^q(X_{\C}) \ar[r] & \sing_{\bullet}{\mcal{Z}/2}^q(X_{\C})^{av} ,
}
\end{equation}
and
\begin{equation}\label{fibra2}
 \xymatrix{
\widetilde{\mcal{Z}}^{q}/2(X_{\C})^{av} \ar[r]\ar[d]^{i} & \widetilde{\mcal{Z}}^{q}/2(X_{\C})^{G} \ar[r]\ar[d]^{\simeq} & \widetilde{\mcal{R}}^q(X) \ar[d]^{\overline{i}} \\
\sing_{\bullet}{\mcal{Z}/2}(X_{\C})^{av} \ar[r] & \sing_{\bullet}{\mcal{Z}/2}^q(X_{\C})^{G} \ar[r] & \sing_{\bullet}{\mcal{R}}^q(X ) .
}
\end{equation}
From the first diagram we see that  $i:\widetilde{\mcal{Z}}^{q}/2(X_{\C})^{av}\xrightarrow{\wkeq}\sing_{\bullet}(\mcal{Z}^{q}/2(X_{\C})^{av})$ is a weak equivalence of simplicial sets and consequently from the second diagram we conclude that 
$$
\widetilde{\mcal{R}}^{q}(X)\xrightarrow{\wkeq} \sing_{\bullet}\mcal{R}^{q}(X)
$$ 
is a weak equivalences of simplicial abelian groups and therefore is a homotopy equivalence of simplicial sets.
\end{proof}

We now prove our main theorem.
\begin{theorem}\label{main}
 Let $X$ be a quasi-projective real variety of dimension $d$.   Then 
\begin{equation*}
RL_{n}H_{n+k}(X) = \pi_k\mcal{R}_n(X) = 0
\end{equation*}
 for $k\geq d -n +1$.
\end{theorem}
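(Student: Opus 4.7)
The plan is to reduce the theorem to the smooth projective case, apply Poincar\'e duality to convert the vanishing statement about reduced cycles into one about reduced cocycles, and then use the Milnor conjecture together with the compatibility of cycle maps established in Theorem \ref{cyccomp} to identify the reduced cocycle homotopy groups with the singular cohomology of $X(\R)$ in the relevant range, where the desired vanishing is immediate.

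First I would reduce to the case that $X$ is smooth projective. If $X \subseteq \overline{X}$ is quasi-projective with closed complement $Y = \overline{X}\setminus X$, then Lemma \ref{ccom} gives compatible short exact sequences of fixed and averaged cycles; a snake lemma argument produces
\begin{equation*}
 0 \to \mcal{R}_n(Y) \to \mcal{R}_n(\overline{X}) \to \mcal{R}_n(X) \to 0,
\end{equation*}
and the resulting long exact sequence allows induction on dimension to reduce to the projective case. Resolution of singularities combined with a hyperenvelope descent argument then reduces the projective case to the smooth projective case.

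Now assume $X$ is smooth projective of dimension $d$ and set $q = d - n$. The $G$-equivariant homotopy equivalence $\mcal{Z}^q(X_{\C}) \xrightarrow{\simeq} \mcal{Z}_n(X_{\C})$ of Corollary \ref{PD} commutes with the norm map $N = 1+\sigma$, hence descends to a homotopy equivalence $\mcal{R}^q(X) \xrightarrow{\simeq} \mcal{R}_n(X)$ on the quotients. So it suffices to show
\begin{equation*}
 \pi_k \mcal{R}^q(X) = 0 \quad \text{for } k > q.
\end{equation*}

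To establish this, I would compare the algebraic reduced cocycles with their topological counterparts via $\widetilde{\Phi}$. Using the homotopy fiber sequences of diagrams \ref{fib1} and \ref{fib2} together with Lemma \ref{sra}, the map $\widetilde{\Phi}: \pi_k\mcal{R}^q(X) \to \pi_k \mcal{R}^q_{top}(X)$ is controlled by the comparison maps $\Phi^G$ and $\Phi^{av}$ for the fixed and averaged pieces separately. The mod-$2$ Milnor conjecture over $\R$ and over $\C$, packaged in Corollaries \ref{MCR}, \ref{MCC} and \ref{eBL}, gives that these underlying comparisons are isomorphisms in the relevant range. A five-lemma argument on the two resulting long exact sequences then yields that $\widetilde{\Phi}$ is an isomorphism for $k \geq q$. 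Finally, Corollary \ref{rdual} identifies $\widetilde{\Phi}$ with the cycle map
\begin{equation*}
 cyc_k: \pi_k\mcal{R}^q(X) \to H^{q-k}_{sing}(X(\R); \Z/2),
\end{equation*}
whose target vanishes for $k > q$; hence $\pi_k\mcal{R}^q(X) = 0$ in this range.

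The main obstacle will be the comparison step. Corollary \ref{eBL} gives isomorphisms only for $r \leq 0$ and merely an injection at $r = 1$, so the ranges must be tracked carefully through both long exact sequences. In particular, the averaged pieces do not fit directly into Corollary \ref{eBL} but can be handled using the Milnor comparison over $\C$ (Corollary \ref{MCC}), applied to a quotient description of averaged cocycles and assembled with the fixed-point comparisons via the standard short exact sequence associated to a $G$-module of exponent $2$.
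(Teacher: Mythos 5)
Your overall strategy is the same as the paper's: reduce to the smooth projective case, convert to a statement about reduced cocycles via duality, compare algebraic reduced cocycles with topological reduced cocycles using the fiber sequences \ref{fib1}, \ref{fib2} and the mod-$2$ Milnor comparisons, and finally identify the target with $H^{q-k}_{sing}(X(\R);\Z/2)$ which vanishes for $k>q$. However, there are three concrete gaps.

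First, you never treat the base case $n=d$ (equivalently $q=0$). Your argument ends with Corollary \ref{rdual}, which only applies for homotopy degree $k\geq 2$, so it leaves $k=1$ uncovered when $q=0$. The paper handles this case directly: $\mcal{R}_d(X) = \mcal{Z}_d(X_{\C})^G/\mcal{Z}_d(X_{\C})^{av} \iso \Z/2^{\times c}$ (with $c$ the number of irreducible components not defined over $\C$), a discrete group, so all positive homotopy groups vanish trivially. Without this base case the cocycle comparison argument does not cover the full stated range.

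Second, your claim that the equivariant equivalence of Corollary \ref{PD} ``commutes with $N$, hence descends to a homotopy equivalence $\mcal{R}^q(X)\to\mcal{R}_n(X)$'' is not justified. The map in Corollary \ref{PD} is a zigzag, not a single map, and more seriously a $G$-weak equivalence controls the maps on $G$-fixed points but \emph{not} on the averaged subgroups $(-)^{av} = \mathrm{im}(1+\sigma)$, which are not fixed-point subspaces for any subgroup. You would need to show separately that the averaged pieces are matched (e.g.\ by a mod-$2$ argument invoking the exact sequence $0\to A^G\to A\xrightarrow{N} A^{av}\to 0$ for $2$-torsion $A$) and that the relevant quotient sequences are fibration sequences. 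The paper sidesteps all of this by citing Teh's Theorem 5.14, which establishes the equivalence $\mcal{R}^q(X)\wkeq\mcal{R}_{d-q}(X)$ directly.

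Third, the reduction ``resolution of singularities combined with hyperenvelope descent'' is not available here: no descent statement for reduced Lawson homology along abstract blow-ups or hyperenvelopes is established in the paper, and it is not obviously a formal consequence of the localization sequences. The paper's argument is more elementary and should replace it: stratify an arbitrary quasi-projective $X$ by closed subvarieties $X_{-1}=\varnothing\subset X_0\subset\cdots\subset X_d = X$ with each $X_{i+1}\setminus X_i$ smooth, and run an induction on $i$ using the localization fiber sequence $\mcal{R}_n(X_i)\to\mcal{R}_n(X_{i+1})\to\mcal{R}_n(X_{i+1}\setminus X_i)$. Coupled with a dimension induction, the smooth quasi-projective case follows from the smooth projective case by choosing a smooth projective compactification (Hironaka) and applying localization once more, and then the general case follows from the stratification. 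This avoids any need for descent machinery. As a minor point, your claimed range for $\overline{\Phi}$ (``isomorphism for $k\geq q$'') is stronger than what the two five-lemma applications actually produce: one gets isomorphisms for $k\geq q+2$ and injectivity for $k=q+1$, which is exactly enough for the vanishing but worth stating precisely.
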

\begin{proof}
We first consider the case when $X$ is a smooth projective real variety.

In case $n=d=dim(X)$ we have that 
$$
 \mcal{R} _d(X)=\frac{\mcal{Z} _d(X_\C)^G}{\mcal{Z} _d(X_\C)^{av}}=\mathbb{Z}/2^{\times c},
$$ 
where $c$ denotes the number of irreducible components of $X$ which are not defined over $\C$.

Therefore $\pi _0(\mcal{R} _d(X))=\mathbb{Z}/2^{\times c}$ and $\pi _i(\mcal{R} _d(X))=0$ for $i>0$.  

Consider the comparison of homotopy fiber sequences (\ref{fib1}) for $q>0$. 
\begin{equation}
 \xymatrix{
\widetilde{\mcal{Z}}^{q}/2(X_{\C})^{G} \ar[r]\ar[d]^{\Phi^{G}} & \widetilde{\mcal{Z}}^{q}/2(X_{\C}) \ar[r]^{N}\ar[d]^{\Phi} & \widetilde{\mcal{Z}}^{q}/2(X_{\C})^{av} \ar[d]^{\Phi^{av}} \\
\widetilde{\mcal{Z}}^{q}/2_{top}(X_{\C}(\C))^{G} \ar[r] & \widetilde{\mcal{Z}}^{q}/2_{top}(X_{\C}(\C)) \ar[r]^{N} & \widetilde{\mcal{Z}}^{q}/2_{top}^q(X_{\C}(\C))^{av} ,
}
\end{equation}
By the Milnor conjecture  (see Corollary \ref{MCC}) the comparison map $\Phi$ induces an isomorphism on $\pi_k$ for $k\geq q$ and induces an injection for $k =q-1$. By Corollary \ref{MCR} the map $\Phi^{G}$ induces an isomorphism on $\pi_k$ for $k\geq q$ and induces an injection for $k =q-1$. We now conclude by the $5$-lemma that $\Phi^{av}$ induces an isomorphism on $\pi_k$ for $k\geq q+1$. When $k=q$ we have the comparison diagram:
\begin{equation}
 \xymatrix{
\pi_q\widetilde{\mcal{Z}}^{q}/2^{G} \ar[r]\ar[d]^{\iso} & \pi_q\widetilde{\mcal{Z}}^{q}/2 \ar[r]\ar[d]^{\iso} & \pi_q\widetilde{\mcal{Z}}^{q}/2^{av} \ar[r]\ar[d]^{\Phi^{av}} & \pi_{q-1}\widetilde{\mcal{Z}}^{q}/2^{G} \ar@^{(->}[d] \\
\pi_q\widetilde{\mcal{Z}}^{q}/2_{top}^{G} \ar[r] & \pi_q\widetilde{\mcal{Z}}^{q}/2_{top} \ar[r] & \pi_q\widetilde{\mcal{Z}}^{q}/2_{top}^{av} \ar[r] & \pi_{q-1}\widetilde{\mcal{Z}}^{q}/2_{top}^{G}
}
\end{equation}
and so $\Phi^{av}$ induces an injection for $k=q$.

Considering now the comparison diagram (\ref{fib2}) and using the five-lemma we have that  $\overline{\Phi}$ induces an isomorphism on $\pi_{k}$ for $k\geq q+2$ and an injection for $k =q+1$.

By Corollary \ref{rdual}, $\pi_k\widetilde{\mcal{R}}^{q}_{top}(X_{\C}(\C)) =  H^{q -k}(X(\R), \Z/2)$ for $k\geq 2$. In particular $\pi_k\widetilde{\mcal{R}}^{q}(X) = 0$ for $k\geq q+1$, when $q\geq 1$.
By the homotopy equivalences $\widetilde{\mcal{R}}^{q}(X)\wkeq \sing_{\bullet}\mcal{R}^{q}(X)$ (see Lemma \ref{sra})
and the duality \cite[Theorem 5.14]{Teh:real} between reduced cycle and reduced cocycle spaces $\mcal{R}^{q}(X) \xrightarrow{\wkeq} \mcal{R}_{d-q}(X)$  the vanishing $\pi_k\widetilde{\mcal{R}}^{q}(X) = 0$ for $k\geq q+1$  is equivalent to the vanishing $\pi_k\mcal{R}_{n}(X) = 0$ for $k\geq \dim X -n+1$. 

Now let $X$ be a smooth quasi-projective variety and let $X\subseteq \overline{X}$ be a projective closure with closed complement $Z= \overline{X}\backslash X$. The result follows from the projective case and the long exact sequence in homotopy groups induced by the homotopy fiber sequence
\begin{equation*}
 \mcal{R}_{n}(Z) \to \mcal{R}_{n}(\overline{X}) \to \mcal{R}_{n}(X). 
\end{equation*}

Finally let $X$ be an arbitrary quasi-projective variety. There is an increasing filtration of closed subvarieties
\begin{equation*}
 \varnothing = X_{-1} \subseteq X_{0} \subseteq X_{1} \subseteq \cdots \subseteq X_{d} = X
\end{equation*}
such that $X_{i+1}\backslash X_{i}$ is smooth and $\dim X_{i} = i$. We proceed by induction, the case $i=0$ is done. Consider the long exact sequence which arises from the homotopy fiber sequence
\begin{equation*}
 \mcal{R}_{n}(X_{i}) \to \mcal{R}_{n}(X_{i+1}) \to \mcal{R}_{n}(X_{i+1}\backslash X_{i}).
\end{equation*}
Since the result holds for $X_{i}$ by induction and for $X_{i+1}\backslash X_{i}$ because it is smooth we obtain the result for $X_{i+1}$.
\end{proof}

\begin{remark} 
If $X$ is a projective smooth real variety of dimension $d$ it is proved in 
\cite[Theorem 6.7]{Teh:real} that $\pi _k (\mcal{R}_{d-1}(X))= 0$ for any $k\geq 3$. Theorem \ref{main} in case $n=d-1$ improves this vanishing bound. 
\end{remark}

\begin{example} \label{exR}
 If $n=0$ and if $X$ has no real points then $\mcal{R}_{0}(X)=0$ and so $$RL_{0}H_{*}(X) = \pi_{*}\mcal{R}_{0}(X)= H_*^{sing}(X(\R),\Z/2)=0.$$

Let $\P(\H)$ denote the space of complex lines in the quaternions $\H = 
\C \oplus j\C$ where $j^2 = -1$. Multiplication by $j$ defines an involution on $\P(\H)$ and write $Q$ for the corresponding 1-dimensional real curve. We know that $Q$ is the smooth real curve $X^2+Y^2+Z^2=0$ in $\mathbb{P}^2 _\mathbb{R}$. This is the Severi-Brauer variety corresponding to the non-trivial element of $Br(\R) = \Z/2$ and has no real points. This means $\mcal{R}_{0}(Q) = 0 $ and  $\mcal{R}_{1}(Q) = \Z/2$. Thus in this case, 
\begin{equation*}
 0 = RL_{0}H_{0}(Q) = RL_{0}H_{1}(Q) = H_0(Q(\R),\Z/2)
\end{equation*}
and 
\begin{equation*}
 \Z/2 = RL_1H_1(Q). 
 \end{equation*}
 
 Let $X=SP^{2d+1}(Q)$  be the smooth projective real variety given by an odd symmetric power of $Q$. Because $X _\mathbb{C}=\mathbb{P} _\mathbb{C}(\mathbb{H}^{d+1})$, we have $\mcal{R}^{2q}(X)=\mathbb{Z}/2 $  and $\mcal{R}^{2q+1}(X)=0$ for any $2q\leq 2d+1$ (see \cite[Theorem 2.3]{LLM:quat}). This implies that the only nonzero reduced Lawson homology groups of $X$ are $RL _{2r+1}H _{2r+1}(X)=\mathbb{Z}/2$ for any $r\leq d$. Notice that in this case $dim(X)=2d+1$.
  
  These computations show that the vanishing in the above theorem is best possible, even in the case of a real variety with no real points.  
\end{example}
\begin{example} According to \cite{Lam:t}, $RL _rH_n(\P^d _\R)=\Z/2$ for any $0\leq r\leq n\leq d$ and $RL_rH_n(\P^d _\R)=0$ for any $n>d$.
\end{example}
We also obtain the following vanishing result.
\begin{corollary}
\label{av2}
 Let $X$ be a smooth projective real variety of dimension $d$. Then
\begin{equation*}
 \pi_{n}\frac{\mcal{Z}_{p}(X_{\C})^{av}}{2\mcal{Z}_{p}(X_{\C})^{av}} = 0
\end{equation*}
for $n\geq 2d-2p+1$.
\end{corollary}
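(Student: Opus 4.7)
The cases $p \geq d$ are immediate: for $p > d$ the group $\mcal{Z}_p(X_\C)^{av}$ vanishes, and for $p = d$ it is discrete, so the conclusion holds with $\pi_n = 0$ for $n \geq 1 = 2d-2p+1$. Assume henceforth $p < d$. The plan is to combine Theorem~\ref{main}, Poincar\'e duality (Corollary~\ref{PD}), and the equivariant Beilinson--Lichtenbaum comparison (Corollary~\ref{MCR}) via a mod-$2$ reduction of the short exact sequence of topological abelian groups
\begin{equation*}
0 \to \mcal{Z}_p(X_\C)^{av} \to \mcal{Z}_p(X_\C)^G \to \mcal{R}_p(X) \to 0.
\end{equation*}

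First I would apply the homotopy cofiber of $\cdot 2$ throughout. Since $\mcal{Z}_p(X_\C)^{av}$ and $\mcal{Z}_p(X_\C)^G$ are torsion-free, their mod-$2$ cofibers are the strict quotients by $2$. Since $\mcal{R}_p(X)$ is a topological $\Z/2$-module, multiplication by $2$ is zero on it, and its homotopy cofiber has $n$-th homotopy $\pi_n \mcal{R}_p(X) \oplus \pi_{n-1}\mcal{R}_p(X)$. A $3\times 3$ diagram of fiber sequences of topological abelian groups then produces the homotopy fiber sequence
\begin{equation*}
\mcal{Z}_p(X_\C)^{av}/2 \to \mcal{Z}_p(X_\C)^G/2 \to \mcal{R}_p(X) \vee \Sigma \mcal{R}_p(X).
\end{equation*}
By Theorem~\ref{main}, $\pi_k \mcal{R}_p(X) = 0$ for $k \geq d-p+1$; since $2d-2p+1 \geq d-p+2$ when $d > p$, both $\pi_n$ and $\pi_{n+1}$ of the third term vanish for $n \geq 2d-2p+1$. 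The long exact sequence then gives the isomorphism $\pi_n \mcal{Z}_p(X_\C)^{av}/2 \iso \pi_n \mcal{Z}_p(X_\C)^G/2$ in this range, reducing the problem to showing $\pi_n \mcal{Z}_p(X_\C)^G/2 = 0$ for $n \geq 2d-2p+1$.

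By Poincar\'e duality (Corollary~\ref{PD}), $\mcal{Z}_p(X_\C)^G \simeq \mcal{Z}^q(X_\C)^G$ where $q = d-p$. Applying the fixed-point functor $(-)^G$ to the fiber sequence $\mcal{Z}^q(X_\C) \xrightarrow{\cdot 2} \mcal{Z}^q(X_\C) \to \mcal{Z}^q(X_\C)/2$ preserves homotopy fibers, yielding a fiber sequence whose long exact sequence produces (for $n \geq 1$) the Bockstein short exact sequence
\begin{equation*}
0 \to \pi_n \mcal{Z}^q(X_\C)^G/2 \to \pi_n(\mcal{Z}^q(X_\C)/2)^G \to \pi_{n-1}\mcal{Z}^q(X_\C)^G[2] \to 0.
\end{equation*}
The analogous Bockstein SES for the strict quotient $\mcal{Z}^q(X_\C)^G/2\mcal{Z}^q(X_\C)^G$ has the same outer terms, so the vanishing of one middle term is equivalent to the vanishing of the other. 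Since the middle of the displayed SES equals $L^qH\R^{q-n,q}(X;\Z/2)$, it remains to verify that $L^qH\R^{q-n,q}(X;\Z/2) = 0$ for $n \geq 2q+1$. For $n \geq q$, Corollary~\ref{MCR} identifies this with the Bredon cohomology $H^{q-n,q}(X_\C(\C); \underline{\Z/2})$, which by Proposition~\ref{binj} (using $q-n \leq 1$) injects into the Borel cohomology $H^{2q-n}_G(X_\C(\C); \Z/2)$, vanishing whenever $2q-n<0$, i.e.\ $n \geq 2q+1 = 2d-2p+1$. The main technical subtlety is the Bockstein comparison identifying the vanishing of $\pi_n \mcal{Z}_p(X_\C)^G/2$ with that of $L^qH\R^{q-n,q}(X;\Z/2)$, which however follows immediately from the identical structure of the two Bockstein extensions.
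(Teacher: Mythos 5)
Your proof is correct in outline and reaches the stated bound, but it runs along a genuinely different decomposition than the paper's. The paper never mod-$2$ reduces the sequence $0\to\mcal{Z}_{p}(X_{\C})^{av}\to\mcal{Z}_{p}(X_{\C})^{G}\to\mcal{R}_{p}(X)\to 0$ directly; instead it first kills $\pi_{n}\bigl(\mcal{Z}_{p}(X_{\C})^{av}/2\mcal{Z}_{p}(X_{\C})^{G}\bigr)$ in the range, using Teh's strict short exact sequence (\ref{ses1}) $0\to\mcal{Z}_{p}^{G}/2\mcal{Z}_{p}^{G}\to\mcal{Z}_{p}/2\mcal{Z}_{p}\to\mcal{Z}_{p}^{av}/2\mcal{Z}_{p}^{G}\to 0$ together with Corollaries \ref{MCC}, \ref{MCR}, \ref{qimav} and \ref{PD}, and then feeds Theorem \ref{main} into the second sequence $0\to 2\mcal{Z}_{p}^{G}/2\mcal{Z}_{p}^{av}\to\mcal{Z}_{p}^{av}/2\mcal{Z}_{p}^{av}\to\mcal{Z}_{p}^{av}/2\mcal{Z}_{p}^{G}\to 0$, where the kernel is homeomorphic to $\mcal{R}_{p}(X)$. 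You instead use Theorem \ref{main} at the start to identify $\pi_{n}(\mcal{Z}_{p}^{av}/2\mcal{Z}_{p}^{av})$ with $\pi_{n}(\mcal{Z}_{p}^{G}/2\mcal{Z}_{p}^{G})$ in the range, and then kill the latter using only \ref{PD}, \ref{MCR} and \ref{binj} via a Bockstein comparison; this avoids both Teh's sequence and the complex-side comparison \ref{MCC}, which is an economy. What the paper's route buys is that every step is a strict short exact sequence of topological groups already proved elsewhere, whereas your route leans on two homotopical assertions that deserve more care at exactly the point this paper is cautious about: (i) that the strict quotients $\mcal{Z}^{av}/2\mcal{Z}^{av}$, $\mcal{Z}^{G}/2\mcal{Z}^{G}$ compute the homotopy cofibers of multiplication by $2$ (this needs closedness of $2(-)$ and the fibration property, i.e. the CW-type results of the appendix and Teh's criterion), and (ii) that taking $G$-fixed points of $\mcal{Z}^{q}(X_{\C})\xrightarrow{\cdot 2}\mcal{Z}^{q}(X_{\C})\to\mcal{Z}^{q}/2(X_{\C})$ preserves the homotopy fiber sequence. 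Point (ii) is not automatic: fixed points commute with fibers only for $G$-fibrations, and mod-$2$ reduction does not naively commute with $(-)^{G}$ -- the discrepancy is precisely what Teh's sequence and Proposition \ref{qimav} (note the denominator $2\mcal{Z}^{G}$, not $2\mcal{Z}^{av}$) are designed to control. It can be repaired here, either by arguing simplicially as in Section 6, or by noting that the cycle and cocycle groups are permutation $G$-modules so the relevant $H^{1}(G;-)$ obstruction vanishes and the map is a fibration onto a union of components (which suffices since you only use $\pi_{n}$ for $n\geq 1$), but as written this step is asserted rather than proved. Your observation that two extensions with the same outer terms have simultaneously vanishing middle terms is correct and is a clean way to pass between the two Bockstein sequences, and your preliminary reduction to $d>p$ and the numerical check $2d-2p+1\geq d-p+2$ are both right.
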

\begin{proof}
 
By the Corollaries \ref{MCC}, \ref{MCR}, \ref{qimav} and \ref{PD} we conclude that 
$$
\pi_n\frac{\mcal{Z}_{p}(X_{\C})^{av}}{2\mcal{Z}_{p}(X_{\C})^{G}} = 0
$$
 for $n\geq 2d- 2p+1$ from the long exact sequence in homotopy groups induced by the short exact sequence \cite[Proposition 4.3]{Teh:HT}
\begin{equation}
\label{ses1}
 0 \to \frac{\mcal{Z}_{p}(X_{\C})^{G}}{2\mcal{Z}_{p}(X_{\C})^{G}} \to \frac{\mcal{Z}_{p}(X_{\C})}{2\mcal{Z}_{p}(X_{\C})} \to \frac{\mcal{Z}_{p}(X_{\C})^{av}}{2\mcal{Z}_{p}(X_{\C})^{G}} \to 0 .
\end{equation}

Consider the short exact sequences of topological abelian groups
\begin{equation*}
 0 \to \frac{2\mcal{Z}_{p}(X_{\C})^{G}}{2\mcal{Z}_{p}(X_{\C})^{av}} \to 
\frac{\mcal{Z}_{p}(X_{\C})^{av}}{2\mcal{Z}_{p}(X_{\C})^{av}} \to 
\frac{\mcal{Z}_{p}(X_{\C})^{av}}{2\mcal{Z}_{p}(X_{\C})^{G}} \to 0 .
\end{equation*}

Multiplication by $2$ induces a homeomorphism
\begin{equation*}
\mcal{R}_{p}(X) = \frac{\mcal{Z}_{p}(X_{\C})^{G}}{\mcal{Z}_{p}(X_{\C})^{av}} \iso \frac{2\mcal{Z}_{p}(X_{\C})^{G}}{2\mcal{Z}_{p}(X_{\C})^{av}} 
 \end{equation*}
and plugging the vanishing for homotopy groups of $\mcal{R}_{p}(X)$ into the above exact sequence yields the result.
\end{proof}
\begin{remark}
 Using the same arguments as in Theorem \ref{main} shows that the vanishing in Corollary \ref{av2} holds for any quasi-projective real variety.
\end{remark}

\begin{example}
\label{avopt}
Let $X=\mathbb{P}^{d} _{\R}$. Then 
$$
\pi_{n}\left(\frac{\mcal{Z}_{p}(\P^{d}_{\C})^{av}}{2\mcal{Z}_{p}(\P^{d}_{\C})^{av}}\right) = 0
$$ 
for any $n\geq 2d-2p+1$. If $p=d$, then
$$\pi_{2d-2p}\left(\frac{\mcal{Z}_{d}(\P^{d}_{\C})^{av}}{2\mcal{Z}_{d}(\P^{d}_{\C})^{av}}\right) = \Z/2$$
If $p=0$ then, for any real projective variety $X$,  
$$
\pi _* \left(\frac{\mcal{Z}_{p}(X_{\C})^{av}}{2\mcal{Z}_{p}(X_{\C})^{av}}\right)=H _*(X(\C)/G,\Z/2).
$$ 
These computations show that the vanishing bound of Corollary \ref{av2} is the best possible. For these computations see \cite{LLM:quat}.
\end{example}
The following corollary shows that in a range the morphic cohomology of a real variety $X$ can be computed by the homotopy groups of average cycles on $X$. 
\begin{corollary}
\label{m=av}
 Let $X$ be a real quasi-projective variety. Then
$$\pi _q(\mcal{Z} _p(X _\mathbb{C})^G)\simeq \pi _q(\mcal{Z} _p(X _\mathbb{C})^{av})$$
for any $q\geq dim(X)-p+1$.
\end{corollary}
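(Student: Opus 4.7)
The plan is to derive this immediately from Theorem \ref{main} by a long exact sequence argument. By definition of the reduced cycle group, we have the short exact sequence of topological abelian groups
\begin{equation*}
0 \to \mcal{Z}_p(X_\C)^{av} \to \mcal{Z}_p(X_\C)^G \to \mcal{R}_p(X) \to 0.
\end{equation*}
Since each of these spaces has the homotopy type of a CW-complex (see the remark following the definition of $\mcal{R}_q(X)$), this short exact sequence is a homotopy fiber sequence, yielding a long exact sequence
\begin{equation*}
\cdots \to \pi_{q+1}\mcal{R}_p(X) \to \pi_q \mcal{Z}_p(X_\C)^{av} \to \pi_q \mcal{Z}_p(X_\C)^G \to \pi_q \mcal{R}_p(X) \to \cdots.
\end{equation*}

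Now apply Theorem \ref{main}, which gives $\pi_k \mcal{R}_p(X) = 0$ whenever $k \geq \dim X - p + 1$. In the range $q \geq \dim X - p + 1$ both $\pi_q \mcal{R}_p(X) = 0$ and $\pi_{q+1}\mcal{R}_p(X) = 0$ vanish, so the connecting map on the left and the next map on the right both have trivial source/target. Exactness then forces the natural map
\begin{equation*}
\pi_q \mcal{Z}_p(X_\C)^{av} \xrightarrow{\iso} \pi_q \mcal{Z}_p(X_\C)^G
\end{equation*}
to be an isomorphism, which is the claim.

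There is no serious obstacle here: the only nontrivial ingredient is Theorem \ref{main} itself, which was proved just above. The short exact sequence is available by the definition of $\mcal{R}_p(X)$ together with the fact (established earlier in Section \ref{topspcyc}) that $\mcal{Z}_p(X_\C)^{av}$ is a closed subgroup of $\mcal{Z}_p(X_\C)^G$, so the quotient is well-behaved, and the CW homotopy type is recorded in the remark cited above.
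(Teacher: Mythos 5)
Your proof is correct and matches the paper's own argument: both derive the isomorphism from the long exact sequence in homotopy associated to the fibration sequence $\mcal{Z}_p(X_\C)^{av} \to \mcal{Z}_p(X_\C)^G \to \mcal{R}_p(X)$, using Theorem \ref{main} to kill the relevant homotopy groups of $\mcal{R}_p(X)$.
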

\begin{proof} 
This follows from Theorem \ref{main} together with the long exact sequence of homotopy groups associated to the homotopy fiber sequence
$$
0\rightarrow \mcal{Z} _{p}(X _\mathbb{C})^{av}\rightarrow \mcal{Z} _{p}(X _\mathbb{C})^G\rightarrow \mcal{R} _p(X)\rightarrow 0.
$$

\end{proof}
\begin{example}
\begin{enumerate}
 \item In case of divisors $p=dim(X)-1$, Corollary \ref{m=av} and \cite[Proposition 6.2]{Teh:HT} show that 
$$
\pi _q(\mcal{Z} _p(X _\mathbb{C})^{av})=0
$$ 
for any $q\geq 2$. 
\item In the case of zero-cycles $p=0$, we get 
$$
H _{k,0}(X(\mathbb{C}),\underline{ \mathbb{Z}})\simeq H _k(X(\mathbb{C})/G,\mathbb{Z})
$$ 
for any $k\geq dim(X)+1$.
\end{enumerate}
\end{example}

We conclude this section by observing  that the vanishing theorem also  shows that motivic cohomology of a real variety can be computed in a range via the complex of averaged equidimensional cycles on the complexification. 

Let $X$ and $Y$ be a quasi-projective real varieties. The group of reduced equidimensional cycles is defined to be the quotient group
\begin{equation*}
 r_{equi}(Y, r)(X) = \frac{z_{equi}(Y_{\C}, r)(X_{\C})^{G}}{z_{equi}(Y_{\C}, r)(X_{\C})^{av}}.
\end{equation*}

It is essentially a consequence of Suslin rigidity that the complex of reduced equidimensional cycles computes the reduced Lawson homology.

\begin{proposition}
Let $X$ be a quasi-projective real variety.
\begin{enumerate} 
\item The diagram
\begin{equation*}
 r_{equi}(\A^{q},q)(X\times\Delta_{top}^{\bullet}) \xleftarrow{\wkeq} r_{equi}(\P^{q/q-1},q)(X\times\Delta^{\bullet}_{top}) \xrightarrow{\wkeq} \sing_{\bullet}\mcal{R}^{q}(X)
\end{equation*}
consists of homotopy equivalences of simplicial sets. 
\item The map 
\begin{equation*}
 r_{equi}(\P^{q/q-1}, 0)(X\times\Delta^{\bullet}) \xrightarrow{\wkeq} r_{equi}(\P^{q/q-1}, 0)(X\times\Delta^{\bullet}\times\Delta^{\bullet}_{top})
\end{equation*}
is a homotopy equivalence of simplicial sets.
\end{enumerate}
\end{proposition}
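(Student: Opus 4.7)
The strategy for both parts is to lift the arguments of Propositions \ref{qisoseq} and \ref{fcoeffagr} from the non-reduced to the reduced setting via five-lemma reductions on short exact sequences, and, for (2), via the recognition principle Theorem \ref{recog} applied to a $\Z/2$-valued presheaf with transfers.

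For part (1), observe that the zig-zag of Proposition \ref{qisoseq} is equivariant: every presheaf of simplicial abelian groups appearing there carries a natural $G$-action coming from the Galois action on $X_{\C}$ and on $\mcal{C}_{0}(\P^{n}_{\C})$, and the maps respect it. For each term $F$ in the zig-zag there is a short exact sequence of presheaves
\begin{equation*}
0 \to F^{av} \to F^{G} \to F^{red} \to 0,
\end{equation*}
where $F^{red}$ is the reduced quotient ($r_{equi}$ on the cycle side, and $\sing_\bullet\mcal{R}^{q}$ on the cocycle side via Lemma \ref{sra}). Proposition \ref{qisoseq} already gives the weak equivalences on the $F^{G}$ layer, while the original Friedlander-Walker statement applied to $X_\C$ as a complex variety gives them on the full (equivariant) $F$ layer. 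By comparing long exact sequences of homotopy groups and applying the five-lemma, it suffices to establish the zig-zag of weak equivalences on the $F^{av}$ layer. For this, use the companion short exact sequence $0 \to F^{G} \to F \to F^{anti} \to 0$ with $F^{anti} = (1-\sigma)F$: five-lemma on the $F^{G}$ and $F$ layers yields the $F^{anti}$ equivalence, and since the cycle presheaves take values in torsion-free abelian groups (being group completions of tractable, cancellative monoids, see the appendices of the paper), the norm map $N=1+\sigma$ has kernel exactly $F^{anti}$, giving $0 \to F^{anti} \to F \xrightarrow{N} F^{av} \to 0$ and a final five-lemma passage to the $F^{av}$ case.

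For part (2), follow the proof of Proposition \ref{fcoeffagr} verbatim. The presheaf $r_{equi}(\P^{q/q-1},0)$ on $Sm/\R$ is a homotopy invariant presheaf with transfers valued in $\Z/2$-modules (since $r_{equi}$ is $2$-torsion and is obtained as a quotient of presheaves with transfers). Let $F(U) = \pi_{k}(r_{equi}(\P^{q/q-1},0)(X \times \Delta^{\bullet}_{\R}\times U))$ and $F_{0}(U) = \pi_{k}(r_{equi}(\P^{q/q-1},0)(X\times\Delta^{\bullet}_{\R}))$ (constant). By \cite[Lemma 3.8]{FW:real}, $F$ evaluated on Henselian local rings at real points recovers $F(\R) = F_{0}(\R)$; combined with resolution of singularities and the fact that each real variety has a $cdh$-cover by smooth varieties (which is automatically a $uad$-cover), this forces the kernel and cokernel of $F_{0} \to F$ to be trivial in the $uad$-topology. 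The recognition principle (Theorem \ref{recog}) then gives an isomorphism on simplicial homotopy, and a Bousfield-Friedlander spectral sequence argument upgrades this to a weak equivalence of the simplicial abelian groups.

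The main obstacle will be the torsion-freeness check needed for the averaged step in part (1). If that proves inconvenient at some position in the zig-zag, one may instead work mod~2 throughout part~(1)---which costs nothing since $r_{equi}$ is already $2$-torsion---in which case $\ker(1+\sigma) = \ker(1-\sigma) = F^{G}$ automatically and the required short exact sequences become immediate.
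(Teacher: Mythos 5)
Your fallback suggestion for part (1) — work mod~$2$ throughout, observing that $r_{equi}$ is $2$-torsion so that $\ker(1+\sigma)=\ker(1-\sigma)=F^{G}$ — is essentially what the paper does: it rewrites
$r_{equi}(Y,r)(X)$ as $(z_{equi}(Y_{\C},r)(X_{\C})\otimes\Z/2)^{G}/(z_{equi}(Y_{\C},r)(X_{\C})\otimes\Z/2)^{av}$ and then runs Proposition~\ref{smheq} together with the mod-$2$ analogues of the fiber sequences (\ref{fib1}) and (\ref{fib2}) before carrying out the zig-zag as in Proposition~\ref{qisoseq}. However, your primary route through torsion-freeness does not merely risk being ``inconvenient'' — it fails. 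For a torsion-free $\Z[G]$-module one does \emph{not} have $\ker(1+\sigma)=\mathrm{im}(1-\sigma)$: take $M=\Z$ with $\sigma=-1$, where $\ker(1+\sigma)=\Z$ but $\mathrm{im}(1-\sigma)=2\Z$, so $\hat H^{-1}(G,M)=\Z/2\neq 0$. What is needed is cohomological triviality (e.g.\ $2$-divisibility or freeness as $\Z[G]$-module), not just absence of torsion. So the torsion-free argument should be dropped and the mod-$2$ version taken as the actual proof.

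For part (2), you and the paper take genuinely different routes. You apply the recognition principle (Theorem~\ref{recog}) directly to the homotopy presheaves of $r_{equi}$, which requires knowing that $r_{equi}(\P^{q/q-1},0)(X\times-)$ has transfers and is homotopy invariant on $Sm/\R$; that is plausible (the norm $N$ is a morphism of presheaves with transfers, so its image and the ensuing quotient inherit transfers), but it is a new verification. The paper instead leans on what is already established: it invokes Proposition~\ref{fcoeffagr} over $\R$ and the analogous agreement of motivic and morphic cohomology with finite coefficients over $\C$ (applied to $X_{\C}$), and then deduces the reduced statement via the same mod-$2$ fiber-sequence and five-lemma machinery used in part~(1). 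Your route buys a more self-contained argument at the cost of the transfers verification; the paper's route reuses the non-reduced theorems and is shorter, but is less explicit about the role of $r_{equi}$.
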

\begin{proof}
The proof is similar to other proofs in this paper so we only provide a sketch.
First observe that the simplicial abelian group of reduced equidimensional cycles may  be computed as
\begin{equation*}
 r_{equi}(Y, r)(X) = \frac{(z_{equi}(Y_{\C}, r)(X_{\C})\otimes\Z/2)^{G}}{(z_{equi}(Y_{\C}, r)(X_{\C})\otimes\Z/2)^{av}}.
\end{equation*}
 Using Proposition \ref{smheq} and the appropriate analogues of the homotopy fiber sequences (\ref{fib1}) and (\ref{fib2}) we see that 
\begin{equation*}
 r_{equi}(\P^{q},0)(X\times\Delta^{\bullet}_{top})\to 
\frac{(\sing_{\bullet}\Mor{\C}{X_{\C}}{\mcal{C}_{0}(\P^{q}_{\C})}^{+}/2)^{G}}{(\sing_{\bullet}\Mor{\C}{X_{\C}}{\mcal{C}_{0}(\P^{q}_{\C})}^{+}/2)^{av}}
\end{equation*}
is a homotopy equivalence. The first part follows now in a similar fashion as Proposition \ref{qisoseq}. The second part follows from the fact that both over $\C$ and over $\R$ with finite coefficients motivic cohomology agrees with morphic cohomology.
\end{proof}

\begin{corollary}
 Let $X$ be a quasi-projective real variety. Then
\begin{equation*}
 H_{\mcal{M}}^{p}(X;\Z(q)) = \pi_{2q-p}z_{equi}(\A^{q}_{\C}, 0)(X_{\C}\times_{\C}\Delta^{\bullet}_{\C})^{av}
\end{equation*}
for $q-1 \geq p$
\end{corollary}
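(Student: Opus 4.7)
The plan is to deduce the corollary from the homotopy long exact sequence attached to the short exact sequence of simplicial abelian groups
\begin{equation*}
0 \to z_{equi}(\A^{q}_{\C}, 0)(X_{\C}\times_{\C}\Delta^{\bullet}_{\C})^{av} \to z_{equi}(\A^{q}_{\C}, 0)(X_{\C}\times_{\C}\Delta^{\bullet}_{\C})^{G} \to r_{equi}(\A^{q}, 0)(X\times_{\R}\Delta^{\bullet}_{\R}) \to 0
\end{equation*}
coming from the definition of the reduced equidimensional cycle presheaf. I first identify the middle term: the lemma at the start of Section \ref{realdual} gives $z_{equi}(\A^{q}, 0)(X\times_{\R}\Delta^{\bullet}_{\R}) \iso z_{equi}(\A^{q}_{\C}, 0)(X_{\C}\times_{\C}\Delta^{\bullet}_{\C})^{G}$, and the $(2q-p)$-th homotopy group of the left-hand side is by definition $H^{p}_{\mcal{M}}(X;\Z(q))$.

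The corollary thus reduces to the vanishing
\begin{equation*}
\pi_{j}\,r_{equi}(\A^{q}, 0)(X\times_{\R}\Delta^{\bullet}_{\R}) = 0 \qquad \text{for } j\geq q+1,
\end{equation*}
since then the long exact sequence forces the comparison map from $\pi_{2q-p}$ of the averaged cycle complex to $H^{p}_{\mcal{M}}(X;\Z(q))$ to be an isomorphism whenever $2q-p\geq q+1$, i.e.\ $p\leq q-1$.

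To prove this vanishing I would chain the two parts of the previous proposition. Part (1) supplies a homotopy equivalence $r_{equi}(\A^{q},0)(X\times\Delta^{\bullet}_{top}) \wkeq \sing_{\bullet}\mcal{R}^{q}(X)$, while part (2), transported to $\A^{q}$ via part (1), produces a homotopy equivalence between $r_{equi}(\A^{q},0)(X\times\Delta^{\bullet}_{\R})$ and the bisimplicial object $r_{equi}(\A^{q},0)(X\times\Delta^{\bullet}_{\R}\times\Delta^{\bullet}_{top})$. A degeneration along $\Delta^{\bullet}_{\R}$, run exactly as in the proof of Proposition \ref{fcoeffagr} (using that the relevant presheaf is homotopy invariant with transfers and applying the recognition principle, Theorem \ref{recog}), identifies this bisimplicial object with $r_{equi}(\A^{q},0)(X\times\Delta^{\bullet}_{top})$. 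Together these give $\pi_{j}\,r_{equi}(\A^{q},0)(X\times\Delta^{\bullet}_{\R}) \iso \pi_{j}\mcal{R}^{q}(X)$.

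Finally, for $X$ smooth projective of dimension $d$, Teh's duality \cite[Theorem 5.14]{Teh:real} yields $\mcal{R}^{q}(X)\wkeq \mcal{R}_{d-q}(X)$, and Theorem \ref{main} forces $\pi_{j}\mcal{R}^{q}(X)=\pi_{j}\mcal{R}_{d-q}(X)=0$ for $j\geq q+1$, completing the argument. The quasi-projective case then follows by the same stratification argument used at the end of the proof of Theorem \ref{main}: filter $X$ by closed subvarieties with smooth successive quotients and induct along the long exact sequences attached to the resulting homotopy fiber sequences. The main technical obstacle is the combined algebraic-to-topological simplicial identification in the third step; everything else is formal manipulation of long exact sequences once Theorem \ref{main} is in hand.
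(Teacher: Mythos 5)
Your strategy tracks the paper's (implicit) proof: the short exact sequence of simplicial abelian groups
\begin{equation*}
0 \to z_{equi}(\A^{q}_{\C}, 0)(X_{\C}\times\Delta^{\bullet}_{\C})^{av} \to z_{equi}(\A^{q}_{\C}, 0)(X_{\C}\times\Delta^{\bullet}_{\C})^{G} \to r_{equi}(\A^{q}, 0)(X\times\Delta^{\bullet}_{\R}) \to 0,
\end{equation*}
identifying the middle term with the motivic complex, reducing to the vanishing of $\pi_{j}r_{equi}$ for $j\geq q+1$, and deriving that vanishing from the preceding proposition (which identifies $\pi_{j}r_{equi}(\A^{q},0)(X\times\Delta^{\bullet}_{\R})$ with $\pi_{j}\mcal{R}^{q}(X)$) together with Teh duality and Theorem \ref{main}. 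This is the right skeleton, and the numerology (iso on $\pi_{j}$ of the averaged complex for $j=2q-p\geq q+1$, i.e.\ $p\leq q-1$) is correct.

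The gap is in your step 6. The groups $r_{equi}(\A^{q},0)(X\times\Delta^{\bullet}_{\R})$ and $\mcal{R}^{q}(X)$ are \emph{contravariant} in $X$ (they are cocycle/morphic-cohomology-type objects). The closed-stratification device used at the end of the proof of Theorem \ref{main} exploits the localization fiber sequence $\mcal{R}_{n}(Z)\to\mcal{R}_{n}(\overline{X})\to\mcal{R}_{n}(X)$ for the \emph{covariant} reduced Lawson homology, where the first map is a proper pushforward. There is no analogous long exact sequence for a closed stratification on the cohomological side without purity (i.e.\ without smoothness of the strata inside the ambient variety), so ``filter $X$ by closed subvarieties with smooth successive quotients and induct'' does not produce the needed fiber sequences for $r_{equi}(\A^{q},0)(X\times\Delta^{\bullet}_{\R})$. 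To handle non-smooth $X$, the reduction should instead run through $uad$-descent as in the proof of Proposition \ref{fcoeffagr}: the kernel and cokernel presheaves of the relevant comparison are homotopy invariant presheaves with transfers on $Sm/\R$ that vanish $uad$-locally, whence by the recognition principle (Theorem \ref{recog}) the desired quasi-isomorphism, and hence vanishing, holds for all quasi-projective $X$. Alternatively, if one stays within smooth quasi-projective $X$, the duality $\mcal{R}^{q}(X)\wkeq\mcal{R}_{d-q}(X)$ should be obtained from Corollary \ref{PD} rather than from Teh's projective-case duality, since the latter is only stated for smooth projective varieties.
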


\section{Reduced Topological Cocycles}\label{subdual}

For typographical simplicity throughout this section we write $\mcal{Z} = \mcal{Z}/2_{0}(S^{q,q})_{0}$. This section is devoted to the computation that $\pi_{k}\widetilde{\mcal{R}}^{q}_{top}(W)= H^{q-k}_{sing}(W^{G};\Z/2)$, for $k\geq 2$, where $\widetilde{\mcal{R}}^{q}_{top}(W)$ is the quotient simplicial abelian group
$$
\displaystyle{\widetilde{\mcal{R}}^{q}_{top}(W)= \frac{\phom{W\wedge\Delta^{\bullet}_{top,+}}{\mcal{Z}}^{G}}{\phom{W\wedge\Delta^{\bullet}_{top,+}}{\mcal{Z}}^{av}}}
$$ 
and $W$ is a based finite $G$-$CW$ complex.

The idea is to reduce to the case of trivial action. Before doing this we sketch what happens when $G$ acts trivially on $W$. By \cite[Proposition 8.3]{LLM:real} the short exact sequence
\begin{equation*}
0\to \mcal{Z}/2_0(S^{q,q})_{0}^{av} \to \mcal{Z}/2_0(S^{q,q})_{0}^{G} \to \mcal{R}_{0}(S^{q,q})_{0}\to 0
\end{equation*}
is a fibration sequence (in fact principle fibration sequence) of topological spaces. Applying $\phom{W\wedge\Delta^{\bullet}_{top,+}}{-}$ to this sequence yields a homotopy fiber sequence of simplicial sets. Now we compare the homotopy fiber sequences of simplicial abelian groups
\begin{equation*}
 \xymatrix@-1pc{
\Hom{}{W\wedge\Delta^{\bullet}_{top,+}}{\mcal{Z}}^{av} \ar[r]\ar[d]&
\Hom{}{W\wedge\Delta^{\bullet}_{top,+}}{\mcal{Z}}^{G} \ar[r]\ar[d]^{\iso}& 
\widetilde{\mcal{R}}^q_{top}(W)\ar[d] \\
\Hom{}{W\wedge\Delta^{\bullet}_{top,+}}{\mcal{Z}^{av}} \ar[r]&
\Hom{}{W\wedge\Delta^{\bullet}_{top,+}}{\mcal{Z}^{G}} \ar[r]& 
\mcal{H}^{q}(W),
} 
\end{equation*}
where $\mcal{H}^{q}(W) = \phom{W\wedge\Delta^{\bullet}_{top,+}}{\mcal{R}_{0}(S^{q,q})_{0}}$. 
We will see that when $W$ has trivial $G$-action then the left vertical arrow induces an isomorphism on $\pi_{k}$ for $k\geq 1$ (see Corollary \ref{zeroav}). Therefore $\pi_{i}\widetilde{\mcal{R}}^q_{top}(W) =\pi_{i}\mcal{H}^{q}(W) = H^{q-i}_{sing}(W;\Z/2)$ for $i\geq 2$ when $W$ has trivial $G$-action. 

For a based $G$-$CW$ complex $W$ and a topological $G$-module $Z$, write 
$$
\phom{W}{Z}_{0}^{G}
$$
 
for the set of based equivariant maps which are equivariantly homotopic to the $0$-map (via a based homotopy).

\begin{lemma}\label{conncomp}
 Let $W$ be a based $G$-$CW$ complex and let $Z$ be a topological $G$-module. The simplicial set
\begin{equation*}
 d\mapsto\phom{W\wedge \Delta^{d}_{top,+}}{Z}_{0}^{G} 
\end{equation*}
is the path-connected component of the vertex $0\in \phom{W\wedge \Delta^{\bullet}_{top,+}}{Z}^{G}$.
\end{lemma}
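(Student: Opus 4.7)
The plan is to show that the assignment $K^{0}_{d} := \phom{W \wedge \Delta^{d}_{top,+}}{Z}^{G}_{0}$ is a simplicial abelian subgroup of $K_{d} := \phom{W \wedge \Delta^{d}_{top,+}}{Z}^{G}$ and that it coincides with the path component of the vertex $0$. Since $K$ is a simplicial abelian group, it is a Kan complex, and the path component of $0$ is precisely the preimage of $0$ under the canonical map $K \to \pi_{0}K$ (viewed as a constant simplicial set). Concretely, an element $\alpha \in K_{d}$ lies in this preimage if and only if its restriction $\alpha|_{v} \in K_{0}$ at some (equivalently, every) vertex $v$ of $\Delta^{d}_{top}$ is connected to $0$ by a $1$-simplex, and unwinding the definition of a $1$-simplex shows that this happens exactly when $\alpha|_{v}$ is based equivariantly null-homotopic, i.e., $\alpha|_{v} \in \phom{W}{Z}^{G}_{0}$.

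The lemma therefore reduces to the single assertion
\[
\alpha \in \phom{W \wedge \Delta^{d}_{top,+}}{Z}^{G}_{0} \iff \alpha|_{v_{0}} \in \phom{W}{Z}^{G}_{0} \ \text{for some vertex } v_{0}\in \Delta^{d}_{top}.
\]
The forward implication is immediate: restricting a based equivariant null-homotopy $H : (W \wedge \Delta^{d}_{top,+}) \wedge I_{+} \to Z$ of $\alpha$ along the inclusion $\{v_{0}\}_{+} \hookrightarrow \Delta^{d}_{top,+}$ (smashed with $\id_{W} \wedge \id_{I_{+}}$) yields a based equivariant null-homotopy of $\alpha|_{v_{0}}$.

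For the converse I would use the linear contraction $c : \Delta^{d}_{top} \times I \to \Delta^{d}_{top}$, $c(x,t) = (1-t)x + t v_{0}$. Since $G$ acts trivially on both $\Delta^{d}_{top}$ and $I$, this induces a based equivariant map
\[
\widetilde{c} : (W \wedge \Delta^{d}_{top,+}) \wedge I_{+} \to W \wedge \Delta^{d}_{top,+}
\]
which is the identity at $t = 0$ and factors through the projection $\pi : W \wedge \Delta^{d}_{top,+} \twoheadrightarrow W$ (collapsing $\Delta^{d}_{top}$ to $v_{0}$) at $t = 1$. Postcomposing with $\alpha$ exhibits a based equivariant homotopy $\alpha \simeq \alpha|_{v_{0}} \circ \pi$. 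If $\alpha|_{v_{0}}$ is null-homotopic via $H : W \wedge I_{+} \to Z$, then $H \circ (\pi \wedge \id_{I_{+}})$ is a based equivariant null-homotopy of $\alpha|_{v_{0}} \circ \pi$. Concatenating the two homotopies along the standard based reparameterization $I \to I \vee I$ (which carries trivial $G$-action and so respects basedness and equivariance automatically) gives a based equivariant null-homotopy of $\alpha$.

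Simplicial closure of $K^{0}_{\bullet}$ under face and degeneracy maps is then automatic: these operations are induced by smashing with based equivariant (in fact, $G$-trivial) maps $\Delta^{d-1}_{top,+} \to \Delta^{d}_{top,+}$ or vice versa, and pulling back a null-homotopy of $\alpha$ along such a map produces a null-homotopy of the face or degeneracy. The only slightly delicate point is the concatenation step in the converse direction; this is routine in ordinary homotopy theory, but one must verify that basedness and equivariance are preserved, which holds since the parameter interval $I$ and the simplex $\Delta^{d}_{top}$ carry trivial $G$-action and all relevant reparameterizations are based. No deeper obstacle arises.
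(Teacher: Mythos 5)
Your proposal is correct and follows essentially the same route as the paper's proof: reduce to the vertex level using the fact that a simplex lies in the path component of $0$ iff its vertex does (made precise via $K \to \pi_0 K$), identify being in the component of $0$ with being joined to $0$ by a single $1$-simplex (valid since the mapping space is a simplicial abelian group, hence Kan), and then use the equivariant contractibility of $\Delta^{d}_{top}$ to upgrade a null-homotopy of $\alpha|_{v_0}$ to a null-homotopy of $\alpha$. You are more explicit than the paper about the linear contraction and the concatenation of homotopies, but the underlying argument is the same.
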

\begin{proof}
A vertex $g \in \phom{W\wedge\Delta^{\bullet}_{top,+}}{Z}^{G}$ is in the same path component as the $0$-map if and only if there is a $1$-simplex $F\in  \phom{W\wedge\Delta^{1}_{top,+}}{Z}^{G}$ such that $F(0) = 0$ and $F(1) = g$. This happens exactly when $g\in \phom{W}{Z}_{0}^{G}$. 

A $d$-simplex, $f:W\wedge \Delta^{d}_{top,+} \to Z$ is in the path-component of $0$ if and only if its restriction to a vertex is in the path component of $0$. Since $\Delta^{d}_{top,+}$ is equivariantly contractible and the restriction  $f|_{W\wedge\{v\}_+}$ to a vertex is equivariantly homotopic to the constant map $0$ we conclude that $f$ itself is equivariantly homotopic to $0$.

 \end{proof}

\begin{definition} Let $W$ be a based $G$-$CW$ complex.
 \begin{enumerate}
  \item Define $\phom{W\wedge \Delta^{\bullet}_{top,+}}{\mcal{Z}}_{0}^{av}$ to be the path-connected component of the vertex $0$ in the simplicial set $\phom{W\wedge\Delta^{\bullet}_{top,+}}{\mcal{Z}}^{av}$.
\item Define 
\begin{equation*}
\widetilde{\mcal{R}}^{q}_{top}(W)_{0} =\frac{\phom{W\wedge \Delta^{\bullet}_{top,+}}{\mcal{Z}}_{0}^{G}}{\phom{W\wedge \Delta^{\bullet}_{top,+}}{\mcal{Z}}_{0}^{av}},
\end{equation*}
here the quotient is in the category of simplicial abelian groups. 
 \end{enumerate}
\end{definition}

Restricting to $W^{G}$ gives rise to the comparison map
\begin{equation*}
 \widetilde{\mcal{R}}^{q}_{top}(W)_{0} \to \widetilde{\mcal{R}}^{q}_{top}(W) \to \phom{W^{G}\wedge\Delta^{\bullet}_{top,+}}{\mcal{Z}/2_{0}(S^{q})}.
\end{equation*}

Note that $\pi_{i}\widetilde{\mcal{R}}^{q}_{top}(W)_{0} \to \pi_{i}\widetilde{\mcal{R}}^{q}_{top}(W)$ is an isomorphism for $i\geq 2$ and an 
injection for $i=0,1$. To compute $\pi_{i}\widetilde{\mcal{R}}^{q}_{top}(W)$ we will show 
that $\widetilde{\mcal{R}}^{q}_{top}(W)_{0}\to\widetilde{\mcal{R}}^{q}_{top}(W^{G})_{0}$ is an isomorphism. The surjectivity is easy but the injectivity takes some work.

\begin{proposition}
 Let $i:A\hookrightarrow W$ be an equivariant cofibration between based $G$-$CW$-complexes and let $Z$ be a topological $G$-module. Then
\begin{equation*}
 i^*:\phom{W}{Z}^{G}_{0}\to \phom{A}{Z}^{G}_{0}
\end{equation*}
is surjective.
\end{proposition}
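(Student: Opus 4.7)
The plan is to reduce the statement to a direct application of the equivariant homotopy extension property of the cofibration $i:A\hookrightarrow W$.

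Given $g\in \phom{A}{Z}^{G}_{0}$, unpacking the definition supplies an equivariant based homotopy $H:A\wedge I_{+}\to Z$ with $H|_{A\wedge\{0\}_{+}}=0$ and $H|_{A\wedge\{1\}_{+}}=g$. Viewing $H$ as an equivariant (unbased) homotopy $A\times I\to Z$, I would assemble the partial equivariant homotopy
\begin{equation*}
W\times\{0\}\coprod_{A\times\{0\}} A\times I\to Z
\end{equation*}
whose restriction to $W\times\{0\}$ is the constant map $0$ and whose restriction to $A\times I$ is $H$; these agree on the overlap $A\times\{0\}$ because $H_{0}=0$. Since $i:A\hookrightarrow W$ is an equivariant cofibration, the equivariant HEP produces an extension $\widetilde H:W\times I\to Z$ with $\widetilde H|_{W\times\{0\}}=0$ and $\widetilde H|_{A\times I}=H$. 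Setting $\widetilde g:=\widetilde H|_{W\times\{1\}}$ then gives an equivariant extension of $g$, and $\widetilde H$ itself serves as an equivariant homotopy from $0$ to $\widetilde g$.

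The only point requiring a second look is that $\widetilde H$ must be a \emph{based} homotopy in order to conclude $\widetilde g\in\phom{W}{Z}^{G}_{0}$. This is automatic: the basepoint of $W$ lies in $A$ (the inclusion $A\hookrightarrow W$ is a based $G$-$CW$-subcomplex inclusion sharing the common basepoint), and $H$ is based, so for every $t\in I$ we have $\widetilde H(\ast,t)=H(\ast,t)=0$. Consequently $\widetilde H$ descends to a based equivariant map $W\wedge I_{+}\to Z$, witnessing $\widetilde g\in\phom{W}{Z}^{G}_{0}$, and visibly $i^{\ast}\widetilde g=g$, which proves surjectivity. I do not expect a serious obstacle here; the argument is essentially a clean invocation of equivariant HEP, with the based-ness falling out of the containment of the basepoint in $A$.
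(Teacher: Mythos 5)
Your argument is essentially identical to the paper's proof: both apply the equivariant homotopy extension property of $i:A\hookrightarrow W$ to the partial homotopy assembled from $H$ on $A\times I$ and the constant map $0$ on $W\times\{0\}$, and both take the resulting extension at time $1$ as the preimage. Your extra remark that the extension is based (since the basepoint lies in $A$ and $H$ is based) is a useful explicit check that the paper subsumes by phrasing the HEP directly with smash products $W\wedge I_+$.
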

\begin{proof}
 Suppose that $f:A\to Z$ is a based equivariant map which is based equivariantly homotopic to the $0$-map. Let $H:A\wedge I_+\to Z$ be an equivariant homotopy such that $H(-,0) = 0$ and $H(-,1) = f$. 

By the homotopy extension property of cofibrations, an equivariant map $H'$ making the diagram below commute exists
\begin{equation*}
 \xymatrix{
W\wedge\{0\}_+\coprod_{A\wedge\{0\}_+} A\wedge I_+ \ar[r]^-{0\amalg H}\ar@{^{(}->}[d] & Z\\
W\wedge I_+ \ar@{-->}[ur]_-{H'} & .
}
\end{equation*}
 The restriction of $f'=H'(-,1)$ to $A$ is equal to $f$ and $H'$ is an equivariant homotopy between $f'$ and the $0$-map.
\end{proof}

\begin{corollary}\label{rsurj}
 Let  $i:A\hookrightarrow W$ be an equivariant cofibration between based $G$-$CW$-complexes. The induced map
\begin{equation*}
 i^*:\rtop{q}(W)_{0} \to \rtop{q}(A)_{0}
\end{equation*}
is a surjection. 
\end{corollary}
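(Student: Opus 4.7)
The plan is to deduce this from the preceding Proposition by noting that the map in question is induced by a levelwise surjection of simplicial abelian groups. First I would reduce to a degreewise statement: a map of simplicial abelian groups is surjective if and only if it is surjective in each simplicial degree, so it suffices to show that for each $d\geq 0$
\begin{equation*}
i^{*}:\widetilde{\mcal{R}}^{q}_{top}(W)_{0}\bigl([d]\bigr)\to \widetilde{\mcal{R}}^{q}_{top}(A)_{0}\bigl([d]\bigr)
\end{equation*}
is surjective. Since $\widetilde{\mcal{R}}^{q}_{top}(-)_{0}$ is defined as a quotient of the simplicial abelian group $\phom{-\wedge\Delta^{\bullet}_{top,+}}{\mcal{Z}}^{G}_{0}$, it is enough to produce a degreewise surjection on numerators.

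Next I would observe that smashing with the based $G$-CW complex $\Delta^{d}_{top,+}$ (which has trivial $G$-action) preserves equivariant cofibrations; in particular, the inclusion
\begin{equation*}
A\wedge \Delta^{d}_{top,+}\hookrightarrow W\wedge \Delta^{d}_{top,+}
\end{equation*}
is an equivariant cofibration between based $G$-CW complexes. Applying the previous Proposition with $Z=\mcal{Z}=\mcal{Z}/2_{0}(S^{q,q})_{0}$ then yields that
\begin{equation*}
i^{*}:\phom{W\wedge\Delta^{d}_{top,+}}{\mcal{Z}}^{G}_{0}\longrightarrow \phom{A\wedge\Delta^{d}_{top,+}}{\mcal{Z}}^{G}_{0}
\end{equation*}
is surjective for every $d$.

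Finally I would pass to quotients: given a $d$-simplex $[f]\in \widetilde{\mcal{R}}^{q}_{top}(A)_{0}$ represented by some $f\in \phom{A\wedge\Delta^{d}_{top,+}}{\mcal{Z}}^{G}_{0}$, the previous step produces a lift $\tilde f\in\phom{W\wedge\Delta^{d}_{top,+}}{\mcal{Z}}^{G}_{0}$ with $\tilde f|_{A\wedge\Delta^{d}_{top,+}}=f$, and its class $[\tilde f]\in\widetilde{\mcal{R}}^{q}_{top}(W)_{0}$ maps to $[f]$. There is really no obstacle here beyond checking the smash-product compatibility of equivariant cofibrations, so the corollary is essentially an immediate consequence of the Proposition together with the definition of $\widetilde{\mcal{R}}^{q}_{top}(-)_{0}$ as a quotient simplicial abelian group.
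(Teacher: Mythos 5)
Your proof is correct and is essentially the same argument the paper gives: apply the preceding surjectivity proposition degreewise to the equivariant cofibration $A\wedge\Delta^{d}_{top,+}\hookrightarrow W\wedge\Delta^{d}_{top,+}$ to get surjectivity on the numerators, and then pass to the quotients defining $\widetilde{\mcal{R}}^{q}_{top}(-)_{0}$. The paper packages this as a commuting square with both vertical quotient maps and the top horizontal map surjective, but the content is identical to your degreewise lifting argument.
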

\begin{proof}
 Consider the square
\begin{equation*}
 \xymatrix{
\phom{W\wedge\Delta^{\bullet}_{top,+}}{\mcal{Z}}_{0}^{G} \ar@{->>}[r]\ar@{->>}[d] & \phom{A\wedge\Delta^{\bullet}_{top,+}}{\mcal{Z}}_{0}^{G} \ar@{->>}[d]\\
\rtop{q}(W)_{0} \ar[r] & \rtop{q}(A)_{0}.
}
\end{equation*}
By the previous proposition, the top horizontal arrow is surjective. The vertical arrows are surjective by definition and therefore the bottom horizontal arrow is also surjective.
\end{proof}


For a based $CW$-complex $W$ and a topological abelian group $Z$ we will write $\phom{W}{Z}_{0}$ for the set of  based continuous maps which are based homotopic to the $0$-map. Note that the simplicial set 
\begin{equation*}
d\mapsto \phom{W\wedge\Delta^{d}_{top,+}}{Z}_{0}
\end{equation*}
is the path-connected component of the vertex $0\in\phom{W\wedge\Delta^{\bullet}_{top,+}}{\mcal{Z}}$ (for example consider $W$ and $Z$ with trivial $G$-action and apply Lemma \ref{conncomp}). If $Z$ and $W$ have a $G$-action write $\left(\phom{W}{Z}_{0}\right)^{av}$ for the image of $N=1+\sigma$.  This set consists of maps $h:W\to Z$ which can be written as  $h = f+\overline{f}$ where $f$ is a continuous map which is nonequivariantly homotopic to $0$.

We now justify the use of similar notation for two potentially different simplicial sets. Previously we wrote $\phom{W\wedge\Delta^{\bullet}_{top,+}}{\mcal{Z}}^{av}_{0}$ for the path-component of the vertex $0\in \phom{W\wedge\Delta^{\bullet}_{top,+}}{\mcal{Z}}^{av}$. We now verify that this path-component can be described explicitly as 
the image under $N=1+\sigma$ of the path-component of $0\in\phom{W\wedge\Delta^{\bullet}_{top,+}}{Z}$. In otherwords $\phom{W\wedge\Delta^{\bullet}_{top,+}}{\mcal{Z}}_{0}^{av}= (\phom{W\wedge\Delta^{\bullet}_{top,+}}{\mcal{Z}}_{0})^{av}$. This explicit description will be fundamental to our proof of Proposition \ref{kernsurj} below.

\begin{proposition}
 Let $W$ be a based $G$-$CW$-complex.  The simplicial set
\begin{equation*}
 d\mapsto(\phom{W\wedge\Delta^{d}_{top,+}}{\mcal{Z}}_{0})^{av} 
\end{equation*}
is the path-connected component of $0 \in \phom{W\wedge\Delta^{\bullet}_{top,+}}{\mcal{Z}}^{av}$
\end{proposition}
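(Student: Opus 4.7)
The plan is to work entirely within the simplicial abelian $G$-module
$M_\bullet := \phom{W \wedge \Delta^\bullet_{top,+}}{\mcal{Z}}$,
and to identify two sub-simplicial-abelian-groups of $M^{av}_\bullet := N(M_\bullet)$ (the image of the simplicial map $N = 1 + \sigma$): the path-component of $0$ in $M^{av}_\bullet$, call it $(M^{av}_\bullet)^{(0)}$, and $N(M^{(0)}_\bullet)$, where $M^{(0)}_\bullet$ is the path-component of $0$ in $M_\bullet$ (so $M^{(0)}_d = \phom{W\wedge\Delta^d_{top,+}}{\mcal{Z}}_0$ by definition). The proposition asserts the equality of these two subgroups. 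The forward inclusion $N(M^{(0)}_\bullet) \subseteq (M^{av}_\bullet)^{(0)}$ is formal: $N$ is a morphism of simplicial abelian groups, so if $\gamma \in M_1$ is a $1$-simplex connecting $0$ to $f$ in $M_\bullet$, then $N(\gamma)$ is a $1$-simplex of $M^{av}_\bullet$ connecting $0$ to $N(f)$.

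For the reverse inclusion I first handle the vertex level $d = 0$. Given $\alpha \in (M^{av}_0)^{(0)}$, choose a $1$-simplex $\beta \in M^{av}_1$ with $d_1\beta = 0$ and $d_0\beta = \alpha$, and pick a lift $\gamma \in M_1$ with $N(\gamma) = \beta$ (possible by definition of $M^{av}$ as the image of $N$). Then $N(d_1\gamma) = d_1\beta = 0$, so the degenerate $1$-simplex $s_0(d_1\gamma)$ lies in $\ker(N)_1$; replacing $\gamma$ by $\gamma' := \gamma - s_0(d_1\gamma)$ gives an element of $M_1$ with $N(\gamma') = \beta$ and $d_1\gamma' = 0$. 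The face $f_0 := d_0\gamma'$ therefore lies in $M^{(0)}_0$ and satisfies $N(f_0) = d_0\beta = \alpha$. In topological terms this modification is simply the adjustment $\gamma'(w,t) = \gamma(w,t) - \gamma(w,0)$, which is automatically based because $\gamma$ is based.

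To bootstrap to arbitrary $d$, I use that in a simplicial abelian group all vertices of a simplex lie in the same $\pi_0$-class, so $\alpha \in M^{av}_d$ is in $(M^{av}_\bullet)^{(0)}_d$ if and only if $\alpha|_v \in (M^{av}_0)^{(0)}$ for some (equivalently any) vertex $v$ of $\Delta^d$. Given such an $\alpha$, choose any $g \in M_d$ with $N(g) = \alpha$ and, by the vertex case, produce $f_0 \in M^{(0)}_0$ with $N(f_0) = \alpha|_v = N(g|_v)$. Then $g|_v - f_0 \in \ker(N)_0$, and its pullback $\tilde{g}$ to $M_d$ along the equivariant collapse $W\wedge\Delta^d_{top,+} \to W\wedge\{v\}_+ \iso W$ lies in $\ker(N)_d$. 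Setting $f := g - \tilde{g}$ gives $N(f) = \alpha$ and $f|_v = f_0$, so by equivariant contractibility of $\Delta^d_{top}$ the map $f$ is itself based-homotopic to $0$, putting $\alpha = N(f)$ in $N(M^{(0)}_d)$. The only mildly delicate point is noticing that every lift through $N$ we need can be produced simplicially by adjusting with a degenerate simplex lying in $\ker(N)$, so no appeal to a topological fibration structure on $\mcal{Z} \to \mcal{Z}^{av}$ is required.
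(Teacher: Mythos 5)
Your proof is correct and follows the same overall strategy as the paper: characterize membership in the path component of $0$ via a single $1$-simplex with $d_1=0$, then bootstrap to $d$-simplices by producing a lift along $N$ that restricts to a preselected lift at one vertex, and conclude by contractibility of $\Delta^d_{top}$. The only genuine difference is in how the lifts are obtained. Where the paper invokes the standard fact that a surjection of simplicial abelian groups is a Kan fibration (to solve the two lifting problems rel a vertex), you construct those lifts explicitly by correcting an arbitrary preimage under $N$ with a kernel element: the degenerate $1$-simplex $s_0(d_1\gamma)$ in the vertex case, and the pullback of $g|_v - f_0$ along the simplicial map $[d]\to[0]$ in general. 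This is a modest but honest simplification; it works because the only lifting problems arising are against the very simple anodyne inclusions $\{v\}\hookrightarrow\Delta^d$, and it makes the argument self-contained. Two small corrections are in order. First, the contractibility you need at the final step is ordinary, not equivariant: the subscript $0$ in $\phom{-}{\mcal{Z}}_0$ refers to non-equivariant null-homotopy, and the collapse onto the vertex gives an ordinary null-homotopy. Second, your closing remark about avoiding "the topological fibration structure on $\mcal{Z}\to\mcal{Z}^{av}$" slightly misdescribes the comparison: the paper's proof of this proposition also does not use the topological (Serre) fibration, only the simplicial Kan fibration. The Serre fibration is what is invoked elsewhere (Proposition \ref{avfib} and Lemma \ref{zeroav}), so what your argument really accomplishes here is to replace an appeal to the simplicial Kan lifting lemma with an explicit construction, not to remove a reliance on the topological statement.
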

\begin{proof}
First we identify $(\phom{W}{\mcal{Z}}_{0})^{av}$ as the set of vertices of the path-connected component of $0\in \phom{W}{\mcal{Z}}^{av}$. Any $f+\overline{f}\in (\phom{W}{\mcal{Z}}_{0})^{av}$ is in the path component of $0$. Suppose the vertex $h+\overline{h}\in \phom{W}{\mcal{Z}}^{av}$ is in the same path component as $0$. This means there is a map of simplicial sets
\begin{equation*}
 F:I \to \phom{W\wedge\Delta^{\bullet}_{top,+}}{\mcal{Z}}^{av}
\end{equation*}
such that $F(0) = 0$ and $F(1) = h + \overline{h}$. Consider the diagram of simplicial sets,
\begin{equation*}
 \xymatrix{
\{0\}\ar[r]^-{0} \ar@{^{(}->}[d]^-{\wkeq} & \phom{W\wedge\Delta^{\bullet}_{top,+}}{\mcal{Z}} \ar@{->>}[d]^{N} \\
I \ar[r]^-{F}\ar@{-->}[ur]^-{F'} & \phom{W\wedge\Delta^{\bullet}_{top,+}}{\mcal{Z}}^{av} .
}
\end{equation*}
A surjection between simplicial abelian groups is a fibration and therefore an $F'$ exists to make the above square commute. 

The map $F'(1):W\to \mcal{Z}$ is in $\phom{W}{\mcal{Z}}_{0}$ and satisfies 
\begin{equation*}
F'(1) + \overline{F'(1)}= N(F'(1)) = F(1) = h + \overline{h}.
\end{equation*}
 We conclude that $(\phom{W}{\mcal{Z}}_{0})^{av}$ is the set of vertices of  the path-connected component of the $0 \in \phom{W}{\mcal{Z}}^{av}$. 

Now to conclude that the simplicial set 
$d\mapsto(\phom{W\wedge\Delta^{d}_{top,+}}{\mcal{Z}}_{0})^{av}$  is the 
path-connected component of $0$ we need to see that if the restriction $g|_{W\wedge\{v\}_+}$ of $g\in \phom{W\wedge\Delta^{n}_{top,+}}{Z}^{av}$ to a 
vertex $v\in \Delta^{n}_{top}$ lies in $(\phom{W}{Z}_{0})^{av}$  then $g\in (\phom{W\wedge\Delta^{n}_{top,+}}{Z}_{0})^{av}$. That is, if $g\in \phom{W\wedge\Delta^{n}_{top,+}}{Z}^{av}$ and that there is a map $f:W\to \mcal{Z}$ which is homotopic to $0$ such that the restriction of $g$ to some vertex
 $v\in \Delta^{n}_{top}$ satisfies $g|_{W\wedge \{v\}_{+}} = f+\overline{f}$ then we need to see that $g$ can be written $g = f'+\overline{f'}$ for some $f':W\wedge \Delta^{n}_{top,+} \to\mcal{Z}$ which is homotopic to $0$. 
For this we consider the lift $f{'}$ of $g$,
\begin{equation*}
 \xymatrix{
\{v\} \ar@{^{(}->}[d]^-{\wkeq}\ar[r]^-{f} & \phom{W\wedge\Delta^{\bullet}_{top,+}}{\mcal{Z}} \ar@{->>}[d]^{N} \\
\Delta^{n} \ar[r]^-{g}\ar@{-->}[ur]^-{f'}& \phom{W\wedge\Delta^{\bullet}_{top,+}}{\mcal{Z}}^{av}.
}
\end{equation*}
The map $f{'}:W\wedge \Delta^{n}_{top,+}\to \mcal{Z}$ satisfies $f{'}+\overline{f{'}}=N(f{'})=g$, the restriction of $f{'}$ to $v \in \Delta^{n}_{top}$ is 
homotopic to the $0$-map and, since $\Delta^{n}_{top}$ is contractible, $f{'}$ is homotopic to the $0$-map as well. 

Therefore we conclude that 
\begin{equation*}
 d\mapsto (\phom{W\wedge\Delta^{d}_{top,+}}{\mcal{Z}}_{0})^{av} 
\end{equation*}
is the path-component of $0$ in $\phom{W\wedge\Delta^{d}_{top,+}}{\mcal{Z}}^{av}$.
\end{proof}

\begin{proposition}\label{kernsurj}
 Let $i:A\hookrightarrow W$ be an equivariant cofibration. Then 
\begin{equation*}
 \rtop{q}(W/A)_{0}\twoheadrightarrow \ker[\rtop{q}(W)_{0} \xrightarrow{i^*} \rtop{q}(A)_{0}].
\end{equation*}
\end{proposition}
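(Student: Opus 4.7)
The plan is to argue at the $n$-simplex level, producing for each element of the kernel an explicit preimage in $\rtop{q}(W/A)_0$. Start with an $n$-simplex $[g]$ of $\ker i^*$, represented by $g\in \phom{W\wedge\Delta^n_{top,+}}{\mcal{Z}}_0^G$ (equivariant and equivariantly null-homotopic). The kernel hypothesis together with the previous proposition (giving the explicit description $\phom{A\wedge\Delta^\bullet_{top,+}}{\mcal{Z}}_0^{av}=(\phom{A\wedge\Delta^\bullet_{top,+}}{\mcal{Z}}_0)^{av}$) lets me write $i^*g=h+\overline{h}$ for some $h\in \phom{A\wedge\Delta^n_{top,+}}{\mcal{Z}}_0$, i.e.\ a continuous (not necessarily equivariant) map which is non-equivariantly null-homotopic.

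Next I would extend $h$ to a map $\widetilde{h}\in \phom{W\wedge\Delta^n_{top,+}}{\mcal{Z}}_0$. The inclusion $i:A\hookrightarrow W$ of sub-$G$-$CW$-complexes is also a non-equivariant $CW$ cofibration after forgetting the $G$-action, so the same HEP argument used in the preceding proposition (applied with trivial group in place of $G$) produces such an extension with $\widetilde h$ non-equivariantly null-homotopic.

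Now set $g'=g-N(\widetilde h)=g-\widetilde h-\overline{\widetilde h}$. Three properties of $g'$ are immediate: it is equivariant, since both $g$ and $N(\widetilde h)$ are; it restricts to $0$ on $A$, since $i^*N(\widetilde h)=h+\overline{h}=i^*g$; and it is equivariantly null-homotopic, because $g$ is so by assumption and, applying $N$ to a non-equivariant null-homotopy of $\widetilde h$, we obtain an equivariant null-homotopy of $N(\widetilde h)$. Consequently $g'$ factors through the quotient $W/A$ (using that $W\to W/A$ is an equivariant cofiber and the mapping space out of a cofiber is the kernel of restriction) to yield $\widehat g\in \phom{(W/A)\wedge\Delta^n_{top,+}}{\mcal{Z}}_0^G$.

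The class $[\widehat g]\in \rtop{q}(W/A)_0$ maps, under the surjection $W\twoheadrightarrow W/A$, to $[g']=[g]$ in $\rtop{q}(W)_0$, because $N(\widetilde h)\in \phom{W\wedge\Delta^n_{top,+}}{\mcal{Z}}_0^{av}$ is killed in the quotient defining $\rtop{q}(W)_0$. This establishes the required surjectivity; the main technical point is the non-equivariant extension step, for which it is essential to know that $A\hookrightarrow W$ is a cofibration after forgetting $G$ and that the simplicial subgroup $\phom{-}{\mcal{Z}}_0^{av}$ is exactly the image of $N$ restricted to null-homotopic maps, as established earlier in this section.
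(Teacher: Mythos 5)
Your proof is correct and follows essentially the same route as the paper's: represent a kernel class by an equivariantly null-homotopic $g$, use the explicit identification of the averaged path component to write $i^*g = h+\overline h$ with $h$ non-equivariantly null-homotopic, extend $h$ non-equivariantly across the cofibration $A\hookrightarrow W$ by the HEP, subtract $N$ of the extension to kill the restriction to $A$ while changing $[g]$ only by an element of $\phom{W\wedge\Delta^\bullet_{top,+}}{\mcal{Z}}_0^{av}$, and descend the result to $W/A$. The only cosmetic difference is that the paper spells out the extension step via an explicit HEP diagram, while you cite the non-equivariant HEP directly.
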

\begin{proof}
 Suppose that $[f]\in \ker(\rtop{q}(W)_{0} \xrightarrow{i^*} \rtop{q}(A)_{0})$ is a $d$-simplex. Then $[f]$ is represented by an equivariant map $f:W\wedge \Delta^{d}_{top,+} \to \mcal{Z}$ which is equivariantly homotopic to $0$. Since $i^*[f]=0$ this means that $i^{*}f \in \phom{A\wedge\Delta^{d}_{top,+}}{\mcal{Z}}_{0}^{av}$. Thus there is a continuous map $h:A\wedge \Delta^{d}_{top,+} \to \mcal{Z}$, (nonequivariantly homotopic to $0$), such that $i^*f=f|_{A\wedge\Delta^{d}_{top,+}} = h + \overline{h}$. 
Since $h$ is (nonequivariantly) homotopic to the $0$-map, $h: A\wedge \Delta^{d}_{top,+}\to \mcal{Z}$ extends to a continuous map $h':W\wedge \Delta^{d}_{top,+}\to \mcal{Z}$ which is (nonequivariantly) homotopic to the $0$-map.

Explicitely, let $H:A\wedge\Delta^{d}_{top,+}\wedge I_+\to \mcal{Z}$ be a homotopy such that $H(-,0) = 0$ and $H(-,1) = h$. By the homotopy extension property for cofibrations, the dotted arrow exists in the diagram
\begin{equation*}
\xymatrix{
 A\wedge\Delta^{d}_{top+}\wedge I_+ \coprod_{A\wedge\Delta^{d}_{top,+}\wedge\{0\}_+} W\wedge \Delta^{d}_{top,+}\wedge \{0\}_+ \ar@{^{(}->}[d]\ar[r]^-{H\amalg 0} & \mcal{Z}  \\
W\wedge \Delta^{d}_{top,+}\wedge I_+ \ar@{-->}[ur]^{H'} . & 
}
\end{equation*}
Now $H'(-,1)=h'$ is the desired extension of $h$, $H'$ is a homotopy between $h'$ and the $0$-map and $F \eqdef f-(h'+\overline{h'})$ represents the same class as $[f]$. Since $F|_{A\wedge \Delta^{d}_{top,+}} = 0$ the map $F$ defines the map $F':W/A\wedge \Delta^{d}_{top,+} \to \mcal{Z}$ such that 
\begin{equation*}
F =p^{*}F' : W \xrightarrow{p} W/A\wedge\Delta^{d}_{top,+} \to \mcal{Z}.
\end{equation*}

Therefore 
\begin{equation*}
 \rtop{q}(W/A)_{0} \twoheadrightarrow \ker(i^{*}:\rtop{q}(W)_{0} \to \rtop{q}(A)_{0}),
\end{equation*}
because $p^{*}[F'] =[p^*F']=[F] = [f]$.
\end{proof}

\begin{lemma}(c.f. \cite[Lemma 8.8]{LLM:real})
Suppose that $A_{\bullet}$ is a simplicial $G$-module. Then 
\begin{equation*}
 |A_{\bullet}^{av}| = |A_{\bullet}|^{av}
\end{equation*}
\end{lemma}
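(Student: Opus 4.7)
The plan is to express $A_\bullet^{av}$ as the image of $N = 1+\sigma$ via its canonical epi-mono factorization in simplicial abelian groups, and then to transport this factorization through geometric realization. Specifically, we have the short exact sequence of simplicial abelian groups
\begin{equation*}
0 \to \ker N \to A_\bullet \xrightarrow{N} A_\bullet^{av} \to 0.
\end{equation*}

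The next step is to invoke the fact that the geometric realization functor preserves finite products (since realization of simplicial sets commutes with finite products), and therefore sends simplicial abelian groups to topological abelian groups and preserves kernels. Moreover, any surjection of simplicial abelian groups is a Kan fibration and its realization is a principal fibration whose fiber is the realization of the kernel; consequently, realization converts short exact sequences of simplicial abelian groups into short exact sequences of topological abelian groups. Applying this to the sequence above produces a topological isomorphism $|A_\bullet|/|\ker N| \xrightarrow{\iso} |A_\bullet^{av}|$.

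To finish, I would observe that at the level of realizations, $|N|: |A_\bullet| \to |A_\bullet|$ coincides with $1 + \sigma$ and has kernel $|\ker N|$ (kernels being preserved by realization as noted), so by definition $|A_\bullet|^{av} = \operatorname{Im}(|N|) = |A_\bullet|/|\ker N|$ as topological subgroups of $|A_\bullet|$. Combining this with the previous isomorphism yields the desired identity $|A_\bullet^{av}| = |A_\bullet|^{av}$.

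The main obstacle is the topological subtlety: one must confirm that the continuous bijection $|A_\bullet^{av}| \to |A_\bullet|^{av}$ really is a homeomorphism and not merely a continuous bijection. This is precisely what the fiber-sequence argument above ensures, since it guarantees that the quotient topology coming from realizing $A_\bullet \twoheadrightarrow A_\bullet^{av}$ agrees with the subspace topology on $|A_\bullet|^{av} \subseteq |A_\bullet|$ induced by the realized $1+\sigma$.
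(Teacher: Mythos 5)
Your proposal takes a genuinely different route from the paper, but there is a gap at the final step. The paper's proof is a one-liner: geometric realization preserves image factorizations of simplicial set maps, i.e.\ $|\im f_{\bullet}| = \im|f_{\bullet}|$, and then one applies this with $f_{\bullet}=N$. The content of that one-liner is that realization takes levelwise surjections to quotient maps and, crucially, takes levelwise injections (inclusions of simplicial subsets) to \emph{closed topological embeddings}, so the realized epi-mono factorization is again an epi-mono factorization.

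Your argument instead realizes the short exact sequence $0 \to \ker N \to A_{\bullet} \xrightarrow{N} A_{\bullet}^{av} \to 0$ and uses the fibration/exactness property to produce a topological isomorphism $|A_{\bullet}|/|\ker N| \xrightarrow{\iso} |A_{\bullet}^{av}|$. That step is fine, but it only identifies $|A_{\bullet}^{av}|$ with a \emph{quotient} of $|A_{\bullet}|$; the object you want is $|A_{\bullet}|^{av} = \im(|N|)$ with the \emph{subspace} topology inherited from $|A_{\bullet}|$. Your final paragraph claims the fiber-sequence argument guarantees these two topologies agree, but it does not: for a continuous surjective homomorphism of topological groups, identifying the target with the coimage (quotient by the kernel) is automatic, whereas identifying the coimage with the image (a subspace) is exactly the topological first isomorphism theorem and is \emph{not} automatic. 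Nothing in the fibration argument forces the continuous injection $|A_{\bullet}^{av}| \to |A_{\bullet}|$ to be a topological embedding. What does force it is the observation that $A_{\bullet}^{av} \hookrightarrow A_{\bullet}$ is an inclusion of simplicial sets and therefore realizes to the inclusion of a subcomplex of a CW complex, hence a closed embedding. Once you add that fact, the lemma follows immediately --- but at that point the short-exact-sequence and fibration machinery is unnecessary, since the same two facts (epis realize to quotients, monos realize to closed embeddings) already give $|\im N| = \im|N|$ directly, which is precisely the paper's argument.
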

\begin{proof}
 Let $f_{\bullet}:B_{\bullet} \to C_{\bullet}$ be a  map between simplicial sets, then $|\im f_{\bullet}| = \im |f_{\bullet}|$. The lemma follows since $(-)^{av}$ is defined to be the image of the map $N=1+\sigma$.
\end{proof}

\begin{proposition}\label{avfib}
 Suppose that $Y=|Y_{\bullet}|$ is the realization of a based $G$-simplicial set. Then $\mcal{Z}_{0}(Y)_{0} \to \mcal{Z}_{0}(Y)^{av}_{0}$ and $\mcal{Z}/2_{0}(Y)_{0} \to \mcal{Z}/2_{0}(Y)_{0}^{av}$ are Serre fibrations.
\end{proposition}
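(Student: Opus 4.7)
The strategy is to reduce the Serre fibration property to the classical fact that a surjective homomorphism of simplicial abelian groups realizes to a Serre fibration, using the preceding lemma to interchange the operations of realization and of taking averages.

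First, since $Y = |Y_\bullet|$ is a based $G$-simplicial set, I would identify $\mcal{Z}_0(Y)$ with the geometric realization $|\mcal{Z}_0(Y_\bullet)|$ of the simplicial $G$-module obtained by applying $\mcal{Z}_0$ levelwise. This is essentially the content of the equivariant Dold--Thom setup of \cite{LF:Gequiv}: $\mcal{Z}_0$ commutes with the realization of $G$-simplicial sets because symmetric products and naive group completion are continuous functors that respect the colimit defining the realization, and the resulting topologies agree. Combining this identification with the preceding lemma $|A_\bullet^{av}| = |A_\bullet|^{av}$ applied to $A_\bullet = \mcal{Z}_0(Y_\bullet)$ yields $\mcal{Z}_0(Y)^{av} = |\mcal{Z}_0(Y_\bullet)^{av}|$, and hence the homomorphism $N = 1+\sigma: \mcal{Z}_0(Y) \to \mcal{Z}_0(Y)^{av}$ is the geometric realization of the surjective simplicial homomorphism of abelian groups $N: \mcal{Z}_0(Y_\bullet) \twoheadrightarrow \mcal{Z}_0(Y_\bullet)^{av}$.

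Second, I would invoke the classical fact (following from Moore's theorem that simplicial groups are Kan complexes and a standard lifting argument) that a surjective homomorphism of simplicial abelian groups is a Kan fibration. Taking geometric realizations then produces a Serre fibration of topological spaces, so $N: \mcal{Z}_0(Y) \to \mcal{Z}_0(Y)^{av}$ is a Serre fibration.

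Third, I would pass to path components of $0$. Since $\mcal{Z}_0(Y)$ and $\mcal{Z}_0(Y)^{av}$ are topological abelian groups, the path components of the identity $\mcal{Z}_0(Y)_0$ and $\mcal{Z}_0(Y)^{av}_0$ are open-and-closed subgroups; as $N$ is a continuous homomorphism it sends $\mcal{Z}_0(Y)_0$ surjectively onto $\mcal{Z}_0(Y)^{av}_0$ (a path $\gamma$ from $0$ to $f+\sigma f$ in $\mcal{Z}_0(Y)^{av}$ may be lifted via the Serre fibration to a path from $0$ in $\mcal{Z}_0(Y)$, whose endpoint lies in $\mcal{Z}_0(Y)_0$ and maps to $f+\sigma f$). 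The restriction of a Serre fibration to the preimage of an open-and-closed subspace is again a Serre fibration, proving the first claim. The $\Z/2$-coefficient version follows by running the identical argument with the simplicial $G$-module $\mcal{Z}/2_0(Y_\bullet) = \mcal{Z}_0(Y_\bullet)/2\mcal{Z}_0(Y_\bullet)$, whose realization is $\mcal{Z}/2_0(Y)$.

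The main obstacle is the formally ``obvious'' Step 1 --- namely verifying that $\mcal{Z}_0$ commutes with realization of $G$-simplicial sets in a way that is compatible with the $G$-action and the topology on the group-completion --- which is where the Lima-Filho machinery from \cite{LF:Gequiv} does the real work. Once this identification is in place, the rest of the argument is essentially formal.
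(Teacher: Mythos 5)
Your proof follows essentially the same strategy as the paper's --- identify the groups as realizations of simplicial abelian groups, observe that $N$ is a levelwise surjection and hence a Kan fibration, and conclude via the fact that realizations of Kan fibrations are Serre fibrations. However, there is a conceptual slip in your third step. In this paper $\mcal{Z}_0(Y)_0$ denotes the kernel of the degree map $\deg:\mcal{Z}_0(Y)\to\Z$, \emph{not} the path component of the identity; these coincide only when $Y$ is connected. Your passage-to-path-components argument therefore does not apply as written. (It is easy to repair: since $\deg\circ N = 2\cdot\deg$, the degree-zero part of $\mcal{Z}_0(Y)^{av}$ is exactly $N(\mcal{Z}_0(Y)_0)$, and the degree-zero subgroup is clopen, so the restriction is still a Serre fibration.) The paper avoids this detour entirely: it directly uses the McCord/dos Santos identification of the \emph{reduced} free abelian group $\mcal{Z}_0(|Y_\bullet|)_0$ with $|\Z\otimes Y_\bullet|$, where $\Z\otimes Y_\bullet = \oplus_{y\neq *}\Z$ applied levelwise. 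Then the preceding lemma $|A_\bullet^{av}|=|A_\bullet|^{av}$ immediately gives the target as a realization too, and the Kan-fibration-to-Serre-fibration step finishes the proof with no need to first build a fibration on the unreduced group and then restrict. Your approach is slightly more roundabout, and as written the justification for the restriction step is incorrect; working directly with the degree-zero subgroup as the paper does is both simpler and correct.
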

\begin{proof}
 If $Y$ is a based set and $A$ is an abelian group then define $A\otimes Y = \oplus_{y\in Y\backslash \{*\}} A$. If $Y_{\bullet}$ is a based $G$-simplicial set then  $A\otimes Y_{\bullet}$ is a $G$-simplicial set. In case $A= \Z$ or $A=\Z/2$ we have $\mcal{Z}(|Y_{\bullet}|)_{0} =|\Z\otimes Y_{\bullet}|$ and $\mcal{Z}/2_{0}(|Y_{\bullet}|)_{0} =|\Z/2\otimes Y_{\bullet}|$ (see \cite{DS:equiDT, MC:class}).
The map $\Z\otimes Y_{\bullet} \to (\Z\otimes Y_{\bullet})^{av}$ is a surjection between simplicial abelian groups and so is a fibration of simplicial sets and similarly for $\Z/2\otimes Y_{\bullet} \to (\Z/2\otimes Y_{\bullet})^{av}$. 

The realization of a Kan fibration is a Serre fibration and therefore both $\mcal{Z}_{0}(Y)_{0} =|\Z\otimes Y_{\bullet}| \xrightarrow{N} |(\Z\otimes Y_{\bullet})^{av}|=\mcal{Z}_{0}(Y)_{0}^{av}$ and $\mcal{Z}/2_{0}(Y)_{0} =|\Z/2\otimes Y_{\bullet}| \xrightarrow{N} |(\Z/2\otimes Y_{\bullet})^{av}|=\mcal{Z}/2_{0}(Y)_{0}^{av}$  are Serre fibrations of topological spaces. 
\end{proof}

Below we apply this proposition to the cases $Y= S^{q,q}$ and $Y=S^{q,q}\wedge \Z/2_{+}$ so we make explicit that these are realizations of $G$-simplicial sets. We consider $\Z/2$ as a simplicial set in the usual way. The simplicial set $S^{1,0}$ is the ordinary $S^{1}$ with trivial action. The simplicial set 
$S^{0,1}$ is  the simplicial whose nondegenerate simplices are two vertices $\{0\}$ and 
$\{\infty\}$ and two $1$-simplices. The $G$-action fixes the vertices and switches the 
$1$-simplices. The realization of this simplicial set is the usual $S^{0,1}$.  Now $S^{p,q}$ is 
the $G$-simplicial set $S^{p,q} = (S^{1,0})^{\wedge p}\wedge(S^{0,1})^{\wedge q}$ and its realization is the usual $S^{p,q}$.

\begin{lemma}\label{zeroav}
 Let $W$ be a based $G$-space with trivial action.  Suppose that $Z$ is a topological $G$-module such that $Z\xrightarrow{N} Z^{av}$ is a Serre fibration. Then
\begin{equation*}
\phom{W\wedge\Delta^{\bullet}_{top,+}}{Z}^{av}_{0} = \phom{W\wedge\Delta^{\bullet}_{top,+}}{Z^{av}}_{0} . 
\end{equation*}
\end{lemma}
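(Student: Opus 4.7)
The plan is to establish the two inclusions separately. Because $W$ has trivial $G$-action, for any $h \in \phom{W}{Z}$ the map $Nh$ takes values pointwise in $Z^{av}$, so we have a canonical pointwise inclusion $\phom{W\wedge\Delta^{\bullet}_{top,+}}{Z}^{av} \subseteq \phom{W\wedge\Delta^{\bullet}_{top,+}}{Z^{av}}$ of simplicial abelian groups (the target carrying trivial $G$-action since $Z^{av}\subseteq Z^G$). Restricting to path-components of $0$ on each side gives the inclusion $\phom{W\wedge\Delta^{\bullet}_{top,+}}{Z}^{av}_{0} \subseteq \phom{W\wedge\Delta^{\bullet}_{top,+}}{Z^{av}}_{0}$. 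The easy direction is immediate: if $g = Nf$ with $f: W\wedge\Delta^{d}_{top,+} \to Z$ based null-homotopic via $H$, then $NH$ is a based null-homotopy of $g$ that lands in $Z^{av}$.

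For the reverse inclusion I would first note that a $d$-simplex of $\phom{W\wedge\Delta^{\bullet}_{top,+}}{Z^{av}}$ lies in the path-component of $0$ exactly when it is based null-homotopic as a continuous map into $Z^{av}$: the affine contraction of $\Delta^{d}_{top}$ to a vertex together with a null-homotopy at that vertex supplies the needed null-homotopy of the whole simplex (this is the same reasoning used in Lemma \ref{conncomp}). So given $g \in \phom{W\wedge\Delta^{d}_{top,+}}{Z^{av}}_{0}$ I obtain a based null-homotopy $H: W\wedge\Delta^{d}_{top,+}\wedge I_{+} \to Z^{av}$ with $H|_{0} = 0$ and $H|_{1} = g$. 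I then invoke the Serre-fibration hypothesis on $N:Z \to Z^{av}$ together with the fact that the inclusion
\begin{equation*}
W\wedge\Delta^{d}_{top,+}\wedge\{0\}_{+} \hookrightarrow W\wedge\Delta^{d}_{top,+}\wedge I_{+}
\end{equation*}
is a CW cofibration (under the standing hypothesis that $W$ is a based finite $G$-CW complex) to produce a lift $H': W\wedge\Delta^{d}_{top,+}\wedge I_{+} \to Z$ satisfying $NH' = H$ and $H'|_{0} = 0$. Setting $f := H'|_{1}$ yields a based null-homotopic map with $Nf = g$.

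Finally, to conclude that $g \in \phom{W\wedge\Delta^{\bullet}_{top,+}}{Z}^{av}_{0}$, I apply the proposition immediately preceding Lemma \ref{zeroav}, which identifies the path-component of $0$ in $\phom{W\wedge\Delta^{\bullet}_{top,+}}{Z}^{av}$ with the image under $N$ of based null-homotopic maps into $Z$ (that proposition was stated for $\mcal{Z} = \mcal{Z}/2_{0}(S^{q,q})_{0}$, but its proof used only that $N$ is a Serre fibration, which is exactly our hypothesis on $Z$). The main and essentially only obstacle is justifying the Serre-fibration lift, and this is precisely what the Serre-fibration hypothesis on $N$ is there to supply.
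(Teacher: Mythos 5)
Your proof is correct and follows essentially the same approach as the paper: lift the null-homotopy of $g$ through the Serre fibration $N:Z\to Z^{av}$ to obtain a null-homotopic $f:W\wedge\Delta^{d}_{top,+}\to Z$ with $Nf=g$, then identify $g$ as lying in the path-component of $0$ in $\phom{W\wedge\Delta^{\bullet}_{top,+}}{Z}^{av}$. One small correction to your parenthetical: the unnamed proposition (characterizing that path-component as $(\phom{W\wedge\Delta^{\bullet}_{top,+}}{\mcal{Z}}_{0})^{av}$) is proved in the paper using that a surjection of simplicial abelian groups is a Kan fibration, which holds automatically for any topological $G$-module, not using the Serre-fibration hypothesis on $N:Z\to Z^{av}$; so it does generalize, but for a different reason than you stated.
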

\begin{proof}
 Since $W$ and $\Delta^{d}_{top}$ have trivial actions,
\begin{equation*}
 \phom{W\wedge\Delta^{\bullet}_{top,+}}{\mcal{Z}/2_0(Y)_{0}}^{av}_{0} \subseteq \phom{W\wedge\Delta^{\bullet}_{top,+}}{\mcal{Z}/2_0(Y)_{0}^{av}}_{0} ,
\end{equation*}
and we wish to see that it is onto.

Suppose that $f:W\wedge \Delta^{d}_{top,+} \to Z^{av}$ is a map which is homotopic to $0$. Let $H$ be a homotopy between $0$ and $f$ and let $H'$ be a lift of $H$,
\begin{equation*}
 \xymatrix{
W\wedge\Delta^{d}_{top,+}\wedge \{0\}_{+}\ar[r]^-{0}\ar@{^{(}->}[d]^-{\wkeq}  & Z\ar@{->>}[d]^-{N} \\ 
W\wedge\Delta^{d}_{top,+}\wedge I_{+} \ar[r]^-{H}\ar@{-->}[ur]^-{H'} &  Z^{av} ,
}
\end{equation*}
which exists because the right-hand vertical map is a fibration.
Finally the map $f'(-) = H'(-,1)$ satisfies $f' + \overline{f}' = f$ and $H'$ is a homotopy between $0$ and $f'$. 
\end{proof}

\begin{proposition}
 Suppose that $W$ has trivial $G$-action. Then for all $n\geq 0$ and all $q\geq 0$,
\begin{equation*}
 \widetilde{\mcal{R}}^q_{top}(W\wedge\Z/2_+)_{0} = \{0\} .
\end{equation*}
\end{proposition}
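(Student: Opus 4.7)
The idea is that $\Z/2_{+}$ carries a free $G$-action away from the basepoint, so every based equivariant map out of $W\wedge \Z/2_{+}$ admits a canonical expression as $g + \sigma g$ for a non-equivariant map $g$ supported on one of the two wedge summands. My plan is to show directly, at each simplicial degree $d$, that every element of $\phom{W\wedge\Z/2_{+}\wedge\Delta^{d}_{top,+}}{\mcal{Z}}_{0}^{G}$ lies in $(\phom{W\wedge\Z/2_{+}\wedge\Delta^{d}_{top,+}}{\mcal{Z}}_{0})^{av}$; by the proposition just above this set equals $\phom{W\wedge\Z/2_{+}\wedge\Delta^{d}_{top,+}}{\mcal{Z}}^{av}_{0}$, which forces the simplicial quotient $\widetilde{\mcal{R}}^{q}_{top}(W\wedge\Z/2_{+})_{0}$ to vanish at every level.

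The first step is the identification: since $W$ and $\Delta^{d}_{top}$ carry trivial $G$-actions, the action of $\sigma$ on $W\wedge\Z/2_{+}\wedge\Delta^{d}_{top,+}$ merely swaps the two wedge summands indexed by $1,\sigma\in\Z/2$. Consequently, restriction to the summand indexed by $1$ defines a bijection between equivariant based continuous maps $f\colon W\wedge\Z/2_{+}\wedge\Delta^{d}_{top,+}\to\mcal{Z}$ and arbitrary based continuous maps $f_{0}\colon W\wedge\Delta^{d}_{top,+}\to\mcal{Z}$; the inverse sends $f_{0}$ to the equivariant map with $f(w,1,x)=f_{0}(w,x)$ and $f(w,\sigma,x)=\sigma f_{0}(w,x)$.

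The main construction is then immediate: given such an $f$, define the based (non-equivariant) map
\begin{equation*}
g\colon W\wedge\Z/2_{+}\wedge\Delta^{d}_{top,+}\to\mcal{Z},\qquad g(w,1,x) = f_{0}(w,x),\quad g(w,\sigma,x)=0,
\end{equation*}
which is clearly continuous and based. Using $(\sigma g)(y)=\sigma g(\sigma y)$ a direct computation gives
\begin{equation*}
(g + \sigma g)(w,1,x)=f_{0}(w,x)+0=f(w,1,x),\qquad (g+\sigma g)(w,\sigma,x)=0+\sigma f_{0}(w,x)=f(w,\sigma,x),
\end{equation*}
so $f = g + \sigma g$ as equivariant maps.

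To conclude, suppose moreover that $f$ is equivariantly based null-homotopic via $H\colon W\wedge\Z/2_{+}\wedge\Delta^{d}_{top,+}\wedge I_{+}\to\mcal{Z}$ with $H_{0}=0$ and $H_{1}=f$. Applying the same summand-splitting recipe to $H$ (take the $\{1\}$-summand component of $H$ on one side and $0$ on the $\{\sigma\}$-summand) produces a based null-homotopy of $g$. Hence $g\in\phom{W\wedge\Z/2_{+}\wedge\Delta^{d}_{top,+}}{\mcal{Z}}_{0}$, and $f=g+\sigma g$ lies in $(\phom{W\wedge\Z/2_{+}\wedge\Delta^{d}_{top,+}}{\mcal{Z}}_{0})^{av}$, hence represents the zero class in $\widetilde{\mcal{R}}^{q}_{top}(W\wedge\Z/2_{+})_{0}$. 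There is no real technical obstacle here beyond keeping straight the two equivalent senses of ``averaged path-component of $0$'', a point which is precisely what the immediately preceding proposition was designed to address.
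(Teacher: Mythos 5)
Your argument is correct, and it takes a genuinely different route from the paper. The paper's proof uses the adjunction $\phom{Z_+}{\mcal{Z}_0(Y)_0}\iso\mcal{Z}_0(Y\wedge Z_+)_0$ to transfer $\Z/2_+$ from the source to the target, then invokes Lemma \ref{zeroav} (which depends on the Serre fibration structure from Proposition \ref{avfib}) to pull the superscripts $G$ and $av$ inside the mapping space, and finally exploits the fact that on $S^{q,q}\wedge\Z/2_+$ the $G$-action is free, so averaged cycles coincide with fixed cycles. You instead stay entirely on the source side: observing that $W\wedge\Z/2_+\wedge\Delta^{d}_{top,+}$ is a wedge of two copies of $W\wedge\Delta^{d}_{top,+}$ swapped by $\sigma$, you explicitly write any equivariant based map $f$ as $g+\sigma g$ by setting $g$ equal to $f$ on one summand and $0$ on the other, and you apply the same recipe to the equivariant null-homotopy to show $g$ is null-homotopic. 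This is a cleaner, more elementary construction that never touches the target-side adjunction, the CW structure on $\mcal{Z}/2_0(S^{q,q}\wedge\Z/2_+)_0$, or the fibration lemma; both proofs ultimately exploit the same freeness of $\Z/2$ acting off the basepoint, but yours does so through a direct decomposition rather than by recasting the problem in terms of fixed and averaged cycle groups. Your citation of ``the proposition just above'' (the identification of $(\phom{-}{\mcal{Z}}_0)^{av}$ with the path-component of $0$ in $\phom{-}{\mcal{Z}}^{av}$) is the correct ingredient needed to conclude that $f$ vanishes in the quotient.
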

\begin{proof}
First recall \cite[Lemma 2.4]{DS:equiDT} that given a finite $G$-set $Z$ then there is a $G$-homeomorphism
\begin{equation*}
 \Hom{*}{Z_+}{\mcal{Z}_{0}(Y)_{0}} \xrightarrow{\iso}\mcal{Z}_{0}(Y\wedge Z_+)_{0}
\end{equation*}
defined by $f\mapsto \sum_{z\in Z} f(z) \wedge z$.
This yields
 \begin{align*}
 \widetilde{\mcal{R}}^q_{top}(W\wedge \Z/2_+)_{0}  = & \frac{\phom{W\wedge\Z/2_+\wedge\Delta^{\bullet}_{top,+}}{\mcal{Z}/2_{0}(S^{q,q})_{0}}_{0}^G}
{\phom{W\wedge\Z/2_+\wedge\Delta^{\bullet}_{top,+}}{\mcal{Z}/2_0(S^{q,q})_{0}}_{0}^{av}}  = \\ = & \frac{\phom{W\wedge\Delta^{\bullet}_{top,+}}{\mcal{Z}/2_{0}(S^{q,q}\wedge\Z/2_+)_{0}}_{0}^{G}}
{\phom{W\wedge\Delta^{\bullet}_{top,+}}{\mcal{Z}/2_0(S^{q,q}\wedge\Z/2_+)_{0}}_{0}^{av}} = \\ = & 
\frac{\phom{W\wedge\Delta^{\bullet}_{top,+}}{\mcal{Z}/2_{0}(S^{q,q}\wedge\Z/2_+)_{0}^{G}}_{0}}
{\phom{W\wedge\Delta^{\bullet}_{top,+}}{\mcal{Z}/2_0(S^{q,q}\wedge\Z/2_+)_{0}^{av}}_{0}},
\end{align*}
where the last equality follows from Lemma \ref{zeroav} and Proposition \ref{avfib} because $W$ has trivial $G$-action.
But since the action of $G$ on $S^{q,q}\wedge \Z/2_+$ is free we have the isomorphism 
\begin{equation*}
 \mcal{Z}/2_{0}(S^{q,q}\wedge \Z/2_+)_{0}^{av} \xrightarrow{\iso} \mcal{Z}/2_{0}(S^{q,q}\wedge\Z/2_+)_{0}^{G}
\end{equation*}
 and therefore 
\begin{equation*}
 \widetilde{\mcal{R}}^q_{top}(W\wedge \Z/2_+)_{0}  = \frac{\phom{S^n\wedge\Delta^{\bullet}_{top,+}}{\mcal{Z}/2_{0}(S^{q,q}\wedge\Z/2_+)_{0}^{G}}_{0}}
{\phom{W\wedge\Delta^{\bullet}_{top,+}}{\mcal{Z}/2_0(S^{q,q}\wedge\Z/2_+)_{0}^{av}}_{0}} = \{0\}.
\end{equation*}
\end{proof}

Recall that the action of $G$ on a based set $(Y,*)$ is said to be free if $Y^{G} = *$.

\begin{corollary}\label{freezero}
 Suppose that $W$ is a based finite $G$-$CW$ complex with free $G$-action. Then 
\begin{equation*}
 \rtop{q}(W)_{0} = \{0\}.
\end{equation*}
\end{corollary}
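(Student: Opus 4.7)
The plan is to induct on the number of free $G$-cells in a $G$-$CW$ decomposition of $W$. Since $G$ acts freely away from the basepoint, every equivariant cell has the form $D^n\times G$ with boundary $S^{n-1}\times G$. Consequently, if $A\subseteq W$ is a sub-$G$-$CW$ complex obtained by removing one top-dimensional free cell, the cofiber $W/A$ is equivariantly homeomorphic to $S^n\wedge\Z/2_+$ for some $n\geq 0$ (with $G$ acting trivially on $S^n$ and permuting the two summands of $\Z/2_+$). The base case $W=\{*\}$ is trivial because $\widetilde{\mcal{R}}^q_{top}(*)_0=\{0\}$.

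For the inductive step, I apply the immediately preceding proposition (taking its $W$ to be $S^n$ with trivial $G$-action) to obtain
\[
\widetilde{\mcal{R}}^q_{top}(W/A)_0 \;=\; \widetilde{\mcal{R}}^q_{top}(S^n\wedge\Z/2_+)_0 \;=\; \{0\}.
\]
Then by Corollary \ref{rsurj} the restriction $i^{*}:\widetilde{\mcal{R}}^q_{top}(W)_0 \to \widetilde{\mcal{R}}^q_{top}(A)_0$ is surjective, and by Proposition \ref{kernsurj} its kernel is a quotient of $\widetilde{\mcal{R}}^q_{top}(W/A)_0=\{0\}$, hence vanishes. Thus $i^{*}$ is an isomorphism of simplicial abelian groups, and the inductive hypothesis $\widetilde{\mcal{R}}^q_{top}(A)_0=\{0\}$ yields $\widetilde{\mcal{R}}^q_{top}(W)_0=\{0\}$, completing the induction.

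The real content was already exploited in the preceding proposition, which used the freeness of the action on $S^{q,q}\wedge\Z/2_+$ to identify the fixed and averaged summands of $\mcal{Z}/2_0(S^{q,q}\wedge\Z/2_+)_0$. The present corollary is a formal cellular induction glueing this input together with the two cofibration lemmas, so no serious obstacle arises; the only minor bookkeeping concerns the $n=0$ case, where attaching a free $0$-cell amounts to wedging on $\Z/2_+$, a case already covered by the preceding proposition applied to $W=S^0$.
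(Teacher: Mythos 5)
Your proof is correct and follows essentially the same route as the paper: a cellular induction using Proposition \ref{kernsurj}, Corollary \ref{rsurj}, and the preceding proposition to kill the subquotients. The only cosmetic difference is that you peel off one free cell at a time (so each cofiber is a single $S^n\wedge\Z/2_+$), whereas the paper works one skeleton at a time (so each cofiber is a wedge of such spheres); the preceding proposition covers both cases since it applies to any trivially-acted-on $W$, including a wedge of spheres.
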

\begin{proof}

First we observe that $\rtop{q}(W_{n+1})_{0} \to \rtop{q}(W_{n})_{0}$ is an isomorphism 
for any $n$. Indeed since $W$ is a free $G$-$CW$ complex $W_{n}/W_{n-1}$ is a wedge of 
spheres of the form $S^{n}\wedge \Z/2_+$. By the previous proposition $\rtop{q}(W_{n+1}/W_{n})_{0} = \rtop{q}(\vee S^{n+1}\wedge\Z/2_+)_{0} = \{0\}$.

By Proposition \ref{kernsurj}, $\rtop{q}(W_{n+1}/W_{n})_{0} \twoheadrightarrow \ker(\rtop{q}(W_{n+1})_{0} \to \rtop{q}(W_{n})_{0})$ is surjective and therefore $\widetilde{\mcal{R}}^{q}_{top}(W_{n+1})_{0} \subseteq\widetilde{\mcal{R}}^{q}_{top}(W_{n})_{0}$. By Corollary \ref{rsurj} this map is 
onto as well and therefore $\widetilde{\mcal{R}}^{q}_{top}(W_{n+1})_{0}  = \widetilde{\mcal{R}}^{q}_{top}(W_{n})_{0}$.
Since $W = W_{N}$ for large $N$ we conclude that  $\rtop{q}(W)_{0} = \rtop{q}(W_{0})_{0}=\{0\}$.
\end{proof}

\begin{corollary}\label{rfix}
 Suppose that $W$ is a finite $G$-$CW$-complex. Then
\begin{equation*}
 \rtop{q}(W)_{0} \xrightarrow{\iso} \rtop{q}(W^{G})_{0}
\end{equation*}
is an isomorphism of simplicial abelian groups.
\end{corollary}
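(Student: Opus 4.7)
The plan is to exploit the cofiber sequence $W^G_+ \hookrightarrow W \to W/W^G$ together with the three results immediately preceding the corollary: surjectivity of restriction along an equivariant cofibration (Corollary \ref{rsurj}), the identification of kernels via the cofiber (Proposition \ref{kernsurj}), and the vanishing of $\widetilde{\mcal{R}}^{q}_{top}(-)_{0}$ on free $G$-$CW$ complexes (Corollary \ref{freezero}).

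First I would observe that the inclusion of fixed points $W^G \hookrightarrow W$ is an equivariant cofibration since $W$ is a $G$-$CW$ complex. Applying Corollary \ref{rsurj} to this inclusion immediately yields surjectivity of
\begin{equation*}
\rtop{q}(W)_{0} \twoheadrightarrow \rtop{q}(W^{G})_{0}.
\end{equation*}

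For injectivity, I would apply Proposition \ref{kernsurj} to the same cofibration to obtain a surjection
\begin{equation*}
\rtop{q}(W/W^{G})_{0} \twoheadrightarrow \ker\bigl[\rtop{q}(W)_{0} \to \rtop{q}(W^{G})_{0}\bigr].
\end{equation*}
The key observation is that the based $G$-space $W/W^{G}$ has free $G$-action: every point of $W$ not fixed by $G$ survives in the quotient with trivial isotropy, while all fixed points are collapsed to the (invariant) basepoint. Since $W$ is a finite $G$-$CW$ complex so is $W/W^{G}$, and Corollary \ref{freezero} then gives $\rtop{q}(W/W^{G})_{0} = \{0\}$. Combining this with the displayed surjection forces the kernel to vanish.

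The surjectivity from Corollary \ref{rsurj} together with the injectivity just established gives the desired isomorphism of simplicial abelian groups. The only nontrivial point to double-check is that $W/W^G$ really is a $G$-$CW$ complex with free action away from the basepoint; this is a formal consequence of the cellular structure on $W$ (cells of type $D^n \times G/H$ with $H \neq G$ contribute free cells to the quotient, while cells with $H = G$ lie entirely in $W^G$ and are collapsed), so no obstacle arises there. The main substance of the argument has already been packaged into the preceding three results, making this corollary essentially a formal consequence.
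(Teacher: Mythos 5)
Your proposal is correct and follows essentially the same route as the paper: surjectivity from Corollary \ref{rsurj} applied to $W^G \hookrightarrow W$, and injectivity from Proposition \ref{kernsurj} together with Corollary \ref{freezero} applied to the cofiber $W/W^G$, which is a free based $G$-$CW$ complex.
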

\begin{proof}
 Consider the cofibration sequence $W^{G} \hookrightarrow W \to W/W^{G}$. The space $W/W^{G}$ has a free $G$-action and so Proposition \ref{kernsurj} and Corollary \ref{freezero} imply that
\begin{equation*}
 \{0\}= \rtop{q}(W/W^{G})_{0} \twoheadrightarrow \ker[\rtop{q}(W)_{0} \to \rtop{q}(W^{G})_{0}]
\end{equation*}
is surjective. Therefore $\rtop{q}(W)_{0} \subseteq \rtop{q}(W^{G})_{0}$. Since it is also a surjection by Corollary \ref{rsurj} it is an isomorphism.
\end{proof}

For a based $G$-$CW$ complex $W$ define 
$$
\mcal{H}^{q}(W) = \phom{W^{G}\wedge\Delta^{\bullet}_{top,+}}{\mcal{R}_{0}(S^{q,q})_{0}}.
$$
The homotopy groups of $\mcal{H}^{q}(W)$ compute singular cohomology of the fixed point space, $\pi_{k}\mcal{H}^{q}(W) = H^{q-k}_{sing}(W^{G}, \Z/2)$.

\begin{theorem}
 Let $W$ be a finite $G$-$CW$-complex. Then
\begin{equation*}
 \pi_{i}\rtop{q}(W)_{0} \to \pi_{i}\mcal{H}^{q}(W)= H^{q-i}_{sing}(W^{G};\Z/2)
\end{equation*}
is an isomorphism for $i\geq 2$ and an injection for $i=0,1$.
\end{theorem}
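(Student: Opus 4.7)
The plan is to reduce to the case where $G$ acts trivially on $W$ and then to compare two homotopy fiber sequences. For the reduction, Corollary \ref{rfix} gives an isomorphism $\widetilde{\mcal{R}}^{q}_{top}(W)_{0} \xrightarrow{\iso} \widetilde{\mcal{R}}^{q}_{top}(W^{G})_{0}$, and by its very definition $\mcal{H}^q(W) = \phom{W^G\wedge\Delta^{\bullet}_{top,+}}{\mcal{R}_0(S^{q,q})_0}$ depends only on $W^G$; so after replacing $W$ by $W^G$ I may assume that $G$ acts trivially on $W$.

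Assume now $W$ has trivial action. The defining short exact sequence of simplicial abelian groups
\[
\phom{W\wedge\Delta^{\bullet}_{top,+}}{\mcal{Z}}_0^{av} \longrightarrow \phom{W\wedge\Delta^{\bullet}_{top,+}}{\mcal{Z}}_0^G \longrightarrow \widetilde{\mcal{R}}^{q}_{top}(W)_{0}
\]
is a homotopy fiber sequence of simplicial sets. A second homotopy fiber sequence arises by applying $\phom{W\wedge\Delta^{\bullet}_{top,+}}{-}$ to the principal fibration sequence $\mcal{Z}/2_0(S^{q,q})_0^{av} \to \mcal{Z}/2_0(S^{q,q})_0^G \to \mcal{R}_0(S^{q,q})_0$ from \cite[Proposition 8.3]{LLM:real} cited at the start of this section; its base is $\mcal{H}^q(W)$. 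The inclusions $\mcal{Z}^G \hookrightarrow \mcal{Z}$ and $\mcal{Z}^{av} \hookrightarrow \mcal{Z}$ induce a natural map from the first fiber sequence to the second.

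Because $W$ has trivial action, any equivariant map into $\mcal{Z}$ factors through $\mcal{Z}^G$, so $\phom{-}{\mcal{Z}}_0^G = \phom{-}{\mcal{Z}^G}_0$. For the averaged piece, Proposition \ref{avfib} applied to $Y=S^{q,q}$ shows that $\mcal{Z}/2_0(S^{q,q})_0 \to \mcal{Z}/2_0(S^{q,q})_0^{av}$ is a Serre fibration, whence Lemma \ref{zeroav} gives $\phom{-}{\mcal{Z}}_0^{av} = \phom{-}{\mcal{Z}^{av}}_0$. In both cases these are inclusions of the path-component of the zero map into the full mapping space, so they induce isomorphisms on $\pi_i$ for all $i \geq 1$. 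The five lemma, applied to the comparison of the associated long exact sequences of homotopy groups, then yields that $\pi_i \widetilde{\mcal{R}}^{q}_{top}(W)_{0} \to \pi_i \mcal{H}^q(W)$ is an isomorphism for $i \geq 2$ and an injection for $i=1$; the case $i=0$ is automatic since $\widetilde{\mcal{R}}^{q}_{top}(W)_{0}$, being a quotient of connected simplicial abelian groups, is itself connected. I expect the main obstacle to be the identification $\phom{-}{\mcal{Z}}_0^{av} = \phom{-}{\mcal{Z}^{av}}_0$: unlike the corresponding identity for $(-)^G$, this requires the Serre fibration property of $\mcal{Z} \to \mcal{Z}^{av}$ in order to lift homotopies through averaging, and it is the technical content that makes the whole comparison work.
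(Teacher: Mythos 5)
Your proof is correct and follows essentially the same route as the paper: reduce to trivial $G$-action via Corollary \ref{rfix}, use Lemma \ref{zeroav} together with Proposition \ref{avfib} to identify $\phom{-}{\mcal{Z}}_{0}^{av}$ with $\phom{-}{\mcal{Z}^{av}}_{0}$ (and similarly for fixed points), and then compare the two homotopy fiber sequences via the long exact sequences. The only slight imprecision is describing the vertical maps as ``induced by $\mcal{Z}^{G}\hookrightarrow\mcal{Z}$''—they are really the inclusions of path-components at $0$—but you correct this yourself a few lines later, so the argument stands.
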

\begin{proof}
 Since $\mcal{H}^{q}(W) = \mcal{H}^{q}(W^{G})$ and $\rtop{q}(W)_{0} = \rtop{q}(W^{G})_{0}$ by Corollary \ref{rfix} we immediately reduce to the case that $W=W^{G}$. 
Since $G$ acts trivially on $W$,  Lemma \ref{zeroav} and Proposition \ref{avfib} give
\begin{align*}
 \widetilde{\mcal{R}}^q_{top}(W)_{0}  = & \frac{\phom{W\wedge\Delta^{\bullet}_{top,+}}{\mcal{Z}/2_{0}(S^{q,q})_{0}}_{0}^G}
{\phom{W\wedge\Delta^{\bullet}_{top,+}}{\mcal{Z}/2_0(S^{q,q})_{0}}_{0}^{av}}  = \\ = & \frac{\phom{W\wedge\Delta^{\bullet}_{top,+}}{\mcal{Z}/2_{0}(S^{q,q})_{0}^{G}}_{0}}
{\phom{W\wedge \Delta^{\bullet}_{top,+}}{\mcal{Z}/2_0(S^{q,q})_{0}^{av}}_{0}}.
\end{align*}

By \cite[Proposition 8.3]{LLM:real} the short exact sequence
\begin{equation*}
0\to \mcal{Z}/2_0(S^{q,q})_{0}^{av} \to \mcal{Z}/2_0(S^{q,q})_{0}^{G} \to \mcal{R}_{0}(S^{q,q})_{0}\to 0
\end{equation*}
is a principle fibration sequence.

Finally comparing homotopy fiber sequences of simplicial abelian groups
\begin{equation*}
 \xymatrix@-1pc{
\Hom{}{W\wedge\Delta^{\bullet}_{top,+}}{\mcal{Z}^{av}}_{0} \ar[r]\ar[d]&
\Hom{}{W\wedge\Delta^{\bullet}_{top,+}}{\mcal{Z}^{G}}_{0} \ar[r]\ar[d]& 
\widetilde{\mcal{R}}^q_{top}(W)_{0}\ar[d] \\
 \Hom{}{W\wedge\Delta^{\bullet}_{top,+}}{\mcal{Z}^{av}} \ar[r]&
\Hom{}{W\wedge\Delta^{\bullet}_{top,+}}{\mcal{Z}^{G}} \ar[r]& 
\mcal{H}^{q}(W)  
}
\end{equation*}
yields the result.

\end{proof}
\begin{corollary}\label{rdual}
 Let $W$ be a finite $G$-$CW$-complex. Then
\begin{equation*}
 \pi_i\rtop{q}(W) \to \pi_{i}\mcal{H}^{q}(W)=H^{q-i}_{sing}(W^{G};\Z/2)
\end{equation*}
is an isomorphism for $i\geq 2$.
\end{corollary}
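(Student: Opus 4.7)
The plan is to deduce this corollary immediately from the preceding theorem, which asserts that $\pi_{i}\widetilde{\mcal{R}}^{q}_{top}(W)_{0} \to \pi_{i}\mcal{H}^{q}(W)$ is an isomorphism for $i\geq 2$. What must be bridged is the difference between $\rtop{q}(W)_0$ and $\rtop{q}(W)$.

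By construction, $\rtop{q}(W)_0$ is the quotient of path-components of $0$ in the numerator and denominator of $\rtop{q}(W)$. More precisely, the numerator $\phom{W\wedge\Delta^{\bullet}_{top,+}}{\mcal{Z}}_{0}^{G}$ is, by Lemma \ref{conncomp}, the path-component of the basepoint $0$ in $\phom{W\wedge\Delta^{\bullet}_{top,+}}{\mcal{Z}}^{G}$, and the analogous description holds for the denominator (as was verified in the discussion preceding Proposition \ref{kernsurj}). Thus the canonical map
\begin{equation*}
\rtop{q}(W)_{0} \longrightarrow \rtop{q}(W)
\end{equation*}
is a map of simplicial abelian groups whose image is the path-component of $0$. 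Consequently this map induces an isomorphism on $\pi_{i}$ for $i\geq 2$ and an injection for $i=0,1$, as noted in the body of the paper.

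Combining this with the theorem immediately preceding the corollary, the composite
\begin{equation*}
\pi_{i}\rtop{q}(W)_{0} \xrightarrow{\iso} \pi_{i}\rtop{q}(W) \longrightarrow \pi_{i}\mcal{H}^{q}(W)
\end{equation*}
agrees with the cycle map from the theorem and is an isomorphism for $i\geq 2$. Therefore $\pi_{i}\rtop{q}(W) \to \pi_{i}\mcal{H}^{q}(W)$ is an isomorphism in this range. There is no genuine obstacle here; the entire content of the corollary has already been absorbed into the theorem, and the only observation needed is the elementary fact that higher homotopy groups are unaffected by restricting to the path-component of the basepoint.
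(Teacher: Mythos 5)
Your proof is correct and takes the same route as the paper: both deduce the corollary by combining the preceding theorem on $\rtop{q}(W)_{0}$ with the fact that $\pi_i\rtop{q}(W)_{0} \to \pi_i\rtop{q}(W)$ is an isomorphism for $i\geq 2$. Be aware, though, that the intermediate inference is not self-contained as written: knowing that the image of $\rtop{q}(W)_{0}\to\rtop{q}(W)$ is the path-component of $0$ does not by itself yield the isomorphism on $\pi_i$ (the map could have a kernel); one must also compare the short exact sequence of simplicial abelian groups defining $\rtop{q}(W)_{0}$ with that defining $\rtop{q}(W)$ and apply the five lemma, using that passing to the identity component of the numerator and of the denominator changes only $\pi_{0}$ --- which is what the paper's ``Note that\ldots'' is implicitly relying on.
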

\begin{proof}
The map $\pi_{i}\rtop{q}(W)_{0}\to \pi_{i}\rtop{q}(W)$ is an isomorphism for $i\geq 2$ and an injection for $i = 0,1$
\end{proof}

\appendix
\section{Topological Monoids}
In this appendix we collect a few simple results on topological monoids. By \textit{topological monoid} we will mean a compactly generated Hausdorff topological abelian monoid (and similarly for the phrase topological group). An abelian monoid $M$ is said to have the cancellation property if for any $n,m,p\in M$ $n+p=m+p$ implies $m=n$.

\begin{lemma}\label{qmcl}
 Suppose that $M$ is a topological monoid with the cancellation property. If $+: M\times M \to M$ is closed and $N\subseteq M$ is a closed submonoid then the quotient map $\pi: M\to M/N$ is closed. 
\end{lemma}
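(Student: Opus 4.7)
The goal is to show that $\pi : M \to M/N$ is closed, i.e.\ that $\pi^{-1}(\pi(C))$ is closed in $M$ for every closed $C \subseteq M$. By the cancellation property, the equivalence relation defining $M/N$ is $m \sim m'$ iff $m + n = m' + n'$ for some $n, n' \in N$, so
\[
\pi^{-1}(\pi(C)) = \{m \in M : \exists\, c \in C,\ n, n' \in N,\ m + n = c + n'\}.
\]

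My first step is to simplify this description. Set $A = C + N$. Since $C$ and $N$ are both closed in $M$, the product $C \times N$ is closed in $M \times M$, and the hypothesis that $+$ is closed gives that $A = +(C \times N)$ is closed. Because $A + N \subseteq A$, the relation above reduces to
\[
\pi^{-1}(\pi(C)) = \{m \in M : \exists\, n \in N,\ m + n \in A\} = p_M(\Sigma),
\]
where $\Sigma = (+|_{M \times N})^{-1}(A) \subseteq M \times N$ is closed (as the preimage of a closed set under a continuous map, with $M \times N$ closed in $M \times M$) and $p_M : M \times N \to M$ is the first projection.

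The main step is to show that $p_M(\Sigma)$ is closed. Projections of closed sets are not closed in general, so we must exploit the specific structure. My plan is to use the compact generation of $M$: it suffices to show that $p_M(\Sigma) \cap K$ is closed in $K$ for every compact $K \subseteq M$. For such $K$, the restriction $+|_{K \times N} : K \times N \to M$ is still closed (as $K \times N$ is closed in $M \times N$, and restrictions of closed maps to closed subspaces remain closed). I would then combine this closedness with the cancellation property via a tube-lemma-style argument: given $m_0 \in K$ with $m_0 \notin \pi^{-1}(\pi(C))$, the subset $m_0 + N = +(\{m_0\} \times N) \subseteq M$ is closed (image of a closed set under a closed map) and by assumption disjoint from the closed set $A$. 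One then produces an open neighbourhood $V \subseteq K$ of $m_0$ for which $V + N$ remains disjoint from $A$, by using the closedness of $+|_{K \times N}$ to propagate the disjointness from the slice $\{m_0\} \times N$ to a neighbourhood $V \times N$ inside the open set $(+|_{K \times N})^{-1}(M \setminus A)$.

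The hard part is this tube-lemma-style step: the absence of compactness of $N$ means the classical tube lemma does not apply directly, and the argument must instead use the closedness of the restricted addition map, together with cancellation, as a substitute for compactness. Once the tube is established, it follows that $V \cap \pi^{-1}(\pi(C)) = \varnothing$, so $p_M(\Sigma) \cap K$ is closed in $K$; by compact generation of $M$, $p_M(\Sigma) = \pi^{-1}(\pi(C))$ is closed, completing the proof.
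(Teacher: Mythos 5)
Your reduction is correct and sharper than the paper's own one-line argument: you rightly observe that $\pi^{-1}(\pi(C))$ is \emph{not} simply $C+N$ but rather $\{m:(m+N)\cap(C+N)\neq\varnothing\}$, the image of the closed set $\Sigma=(+|_{M\times N})^{-1}(C+N)$ under the projection to $M$, and a projection of a closed set is not automatically closed. The paper instead asserts $\pi^{-1}\pi V=(V+N)\cap(M+N)$ and concludes closedness from closedness of $+$; but since $0\in N$ forces $M+N=M$, that identity collapses to $\pi^{-1}\pi V=V+N$, which your own analysis shows to be wrong (already for $M=\mathbb{N}$, $N=2\mathbb{N}$, $V=\{3\}$: the saturation is all odd naturals, not $\{3,5,7,\dots\}$).

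The difficulty is that your proposal stops at precisely the step that carries all the weight. The ``tube-lemma-style step'' is a plan rather than an argument: you want to propagate disjointness of the slice $\{m_0\}\times N$ from $A$ to a full tube $V\times N$, but no mechanism is given, and neither closedness of $+|_{K\times N}$ nor cancellation is a substitute for compactness of $N$ (a projection $K\times N\to K$ is a closed map for every $K$ exactly when $N$ is compact). In fact this gap cannot be filled, because the lemma as stated is false. Take $M=\mathbb{R}_{\geq 0}^{2}$ with coordinatewise addition and $N=\mathbb{R}_{\geq 0}\times\{0\}$: cancellation holds, $N$ is a closed submonoid, $+$ is a closed map (if $x_k+y_k\to z$ with $x_k,y_k\in M$ then both sequences are bounded, hence subconvergent), and $M/N\iso\mathbb{R}_{\geq 0}$ with $\pi$ the second-coordinate projection; yet the closed set $\{(n,1/n):n\geq 1\}$ has non-closed image $\{1/n:n\geq 1\}$. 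So both your argument and the paper's require an extra hypothesis. In the applications $M$ and $N$ are Chow-type monoids with the weak topology on a filtration by compacta, and the required closedness can be obtained there; a correct general statement would have to build that structure into the hypotheses.
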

\begin{proof}
 Suppose that $V\subseteq M$ is closed. Then since $\pi:M\to M/N$ is a quotient map to see that $\pi V$ is closed it is enough to see that $\pi^{-1}\pi V \subseteq M$ is closed. But $\pi^{-1}\pi V = (V+N) \cap (M +N)$ which is closed.
\end{proof}

\begin{lemma}\label{lemtop1}
 Let $M$ be a topological monoid with the cancellation property and let $N\subseteq M$ be a  submonoid. Suppose that $M/N$ is a topological monoid, $M^{+}$ is a topological group and $N^{+}$ is closed. Then the isomorphism of groups
\begin{equation*}
 \left(\frac{M}{N}\right)^{+} \to \frac{M^{+}}{N^{+}}
\end{equation*}
is an isomorphism of topological groups.
\end{lemma}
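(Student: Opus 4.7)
The plan is to realize both $(M/N)^+$ and $M^+/N^+$ as quotients of $M\times M$ by one and the same equivalence relation. I would first observe that each of the two natural continuous surjections out of $M\times M$ factors as a composition of two quotient maps, namely
\[
M\times M \twoheadrightarrow M^+ \twoheadrightarrow M^+/N^+
\qquad\text{and}\qquad
M\times M \twoheadrightarrow (M/N)\times (M/N) \twoheadrightarrow (M/N)^+.
\]
In the first line, the first arrow is the defining quotient of the naive group completion of $M$, and the second is the projection of the topological group $M^+$ onto its quotient by the closed subgroup $N^+$. In the second line, the second arrow is the defining quotient of $(M/N)^+$, and the first is the product of the quotient map $M\to M/N$ with itself, which is again a quotient map since we are working in the convenient category of compactly generated Hausdorff spaces. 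Hence each composite is itself a quotient map.

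Next I would use the cancellation property of $M$ to check that both composites induce the same equivalence relation on $M\times M$, namely that $(a,b)\sim (c,d)$ if and only if there exist $n_1,n_2\in N$ with $a+d+n_2 = b+c+n_1$. For the first composite this amounts to rewriting the condition $[a,b]-[c,d] = [a+d,b+c]\in N^+$ using cancellation in $M$. For the second composite one unwinds the definition of the naive group completion of $M/N$ (which itself need not enjoy cancellation) and uses cancellation in $M$ to eliminate the auxiliary element $[r]\in M/N$ appearing in the group completion relation. The resulting set-theoretic bijection $(M/N)^+ \leftrightarrow M^+/N^+$ is precisely the natural group homomorphism in the lemma, so the two quotient topologies on its underlying abstract group coincide and the map is a homeomorphism.

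The main potential obstacle is the claim that the product $M\times M\to (M/N)\times (M/N)$ of the quotient $M\to M/N$ with itself remains a quotient map; this can fail in the category of arbitrary topological spaces but holds in the category of compactly generated Hausdorff spaces in which all topological monoids of the paper live. If one wishes to avoid this categorical detour, one can instead verify by hand that the continuous bijection $M^+/N^+\to (M/N)^+$ is an open map, by pulling an open set back to $M\times M$ and using translation by elements of $N$ together with the openness of the projection $M\to M/N$.
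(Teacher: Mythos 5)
Your proposal is correct but takes a genuinely different route from the paper's own proof, which is shorter. The paper simply observes that the composite $M\to M^{+}\to M^{+}/N^{+}$ kills $N$ and hence factors through a continuous monoid homomorphism $M/N\to M^{+}/N^{+}$, which in turn factors through a continuous group homomorphism $\phi\colon (M/N)^{+}\to M^{+}/N^{+}$ by the universal property of naive group completion; dually, $M\to M/N\to (M/N)^{+}$ induces $M^{+}\to (M/N)^{+}$, which kills $N^{+}$ and yields a continuous $\psi\colon M^{+}/N^{+}\to (M/N)^{+}$; one checks $\phi$ and $\psi$ are mutually inverse. This avoids entirely the input you need about products of quotient maps being quotient maps in the compactly generated category. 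Your route, realizing both sides as the quotient of $M\times M$ by the explicit equivalence relation $a+d+n_{2}=b+c+n_{1}$, is more concrete and gives a sharper picture of what the common quotient actually is, but it does genuinely rely on that categorical fact, which you correctly flag. Two small remarks: first, $M/N$ actually does inherit the cancellation property from $M$ (if $a+c+n=b+c+n'$ with $n,n'\in N$ then cancelling $c$ in $M$ gives $a+n=b+n'$, so $[a]=[b]$), so the auxiliary element $[r]$ you worry about in the group-completion relation for $M/N$ disappears immediately and the analysis simplifies. Second, your proposed fallback of checking openness directly by ``using the openness of the projection $M\to M/N$'' is shakier than the main argument: unlike the quotient of a topological group by a subgroup, a monoid quotient $M\to M/N$ is not open in general, so that route would need separate justification.
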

\begin{proof} 
The map $M \to M^{+} \to M^{+}/N^{+}$ sends $N$ to $0$ and so we obtain the monoid 
homomorphism $M/N \to M^{+}/N^{+}$ which is continuous. This induces the continuous 
group homomorphism $\phi:(M/N)^{+} \to M^{+}/N^{+}$. 

On the other hand the topological monoid quotient map $M \to M/N$ induces the  continuous 
group homomorphism $M^{+} \to (M/N)^{+}$. Since $N^{+}$ is mapped to $0$ it induces
the continuous group homomorphism $\psi:M^{+}/N^{+} \to (M/N)^{+}$. 

The continuous maps $\psi$ and $\phi$ are easily seen to be inverse to each other.
\end{proof}

Recall that if $A$ is a topological monoid with $G$-action we write $A^{av}\subseteq A$ for the image of $N=1+\sigma$, so $A^{av}\subseteq A$ is the topological submonoid consisting of elements of the form $a+\sigma a$.

\begin{proposition}\label{appcycagr}
Suppose that $M$ is a Hausdorff topological abelian monoid with the cancellation property and that $M^{+}$ is a Hausdorff group. Suppose that $G$ acts on $M$. Then the isomorphism of groups
\begin{equation*}
 (M^{G})^{+} \xrightarrow{\iso} (M^{+})^{G}
\end{equation*}
is an isomorphism of topological groups. If $(M^{+})^{av}\subseteq M^{+}$ is closed then 
\begin{equation*}
 (M^{av})^{+} \xrightarrow{\iso} (M^{+})^{av}
\end{equation*}
is an isomorphism of topological groups.
\end{proposition}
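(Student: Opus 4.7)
My plan is to prove both statements by constructing continuous inverses to the natural continuous bijections. For the first isomorphism $\iota_1\colon (M^G)^+\to (M^+)^G$, bijectivity is a direct calculation: given $[a,b]\in (M^+)^G$, the equation $[\sigma a,\sigma b]=[a,b]$ in $M^+$ translates, via cancellation in $M$, to $a+\sigma b=b+\sigma a$ in $M$. This identity forces both $a+\sigma b$ and $b+\sigma b$ to lie in $M^G$ (since $\sigma(a+\sigma b)=\sigma a+b=a+\sigma b$), and one computes $[a,b]=[a+\sigma b,b+\sigma b]$ in $M^+$, producing a preimage in $(M^G)^+$. Injectivity is immediate from cancellation in $M^G$.

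For continuity of the inverse, define $\psi\colon (M^+)^G\to (M^G)^+$ by $[a,b]\mapsto [a+\sigma b,b+\sigma b]$; the bijectivity computation together with a short cancellation check shows $\psi$ is well-defined. Let $E=\{(a,b)\in M\times M:a+\sigma b=b+\sigma a\}$, a closed subspace equal to $\pi^{-1}((M^+)^G)$, where $\pi\colon M\times M\to M^+$ is the defining quotient. The composite $E\to M^G\times M^G\to (M^G)^+$ sending $(a,b)\mapsto [a+\sigma b,b+\sigma b]$ is continuous and factors through $\pi|_E\colon E\to (M^+)^G$, so continuity of $\psi$ reduces to showing $\pi|_E$ is a quotient map. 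This follows from the fact that $\pi\colon M\times M\to M^+$ is an open map, which in turn uses the assumption that $M^+$ is a Hausdorff topological group: translations in $M^+$ are homeomorphisms, so the diagonal $M$-action on $M\times M$ (whose quotient realizes $M^+$) is by homeomorphisms, and a standard saturation argument shows $\pi$ is open; restriction of an open quotient to the preimage of any subspace is again a quotient.

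The second statement follows by an entirely analogous argument. Writing $N=1+\sigma$, both $(M^{av})^+$ and $(M^+)^{av}$ coincide with the subset $\{[Na,Nb]:a,b\in M\}\subseteq M^+$, giving a continuous bijection with explicit inverse. Continuity of the inverse uses the hypothesis that $(M^+)^{av}$ is closed in $M^+$ (so the subspace topology is well-behaved) together with the same openness of $\pi$ applied to the preimage $\pi^{-1}((M^+)^{av})$. The main technical obstacle I anticipate is verifying the openness of $\pi\colon M\times M\to M^+$ in the stated generality; this is essentially a point-set argument whose crucial input, that every translation of $M$ inside $M^+$ is a homeomorphism, is a direct consequence of $M^+$ being a Hausdorff topological group containing $M$.
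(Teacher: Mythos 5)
Your overall strategy — produce the continuous map $\psi\colon [a,b]\mapsto[a+\sigma b,\,b+\sigma b]$ and show it is a continuous inverse by reducing to the question of whether $\pi$ restricted to $\pi^{-1}\big((M^+)^G\big)$ is a quotient map — is exactly the paper's approach, and the formula for $\psi$ is the same. However, the step where you justify this restriction being a quotient contains a genuine gap. You claim $\pi\colon M\times M\to M^+$ is open, arguing that since $M^+$ is a topological group, the diagonal $M$-action on $M\times M$ is ``by homeomorphisms.'' This is false: the action of $m\in M$ on $M\times M$ is $(a,b)\mapsto(m+a,\,m+b)$, i.e.\ $T_m\times T_m$ where $T_m\colon M\to M$ is translation by $m$ \emph{inside $M$}. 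By cancellation $T_m$ is injective, but its image is $m+M$, which is generally a proper (typically non-open) subset of $M$ — for $M=[0,\infty)$ and $m=1$ it is $[1,\infty)$. So $T_m$ is not a homeomorphism of $M$, the action on $M\times M$ is not by homeomorphisms, and the standard ``saturation of an open set is open'' argument does not apply. The fact that translations on $M^+$ are homeomorphisms is true but irrelevant, since the action realizing the quotient lives on $M\times M$, not on $M^+\times M^+$. I do not see any way to salvage openness of $\pi$ at this level of generality, and it is not established in the paper.

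The fix is simpler than what you attempted and does not require openness at all. Since $M^+$ is Hausdorff and $G$ is finite, the fixed-point set $(M^+)^G$ is a finite intersection of equalizers of continuous maps with the identity, hence closed in $M^+$. The restriction of a quotient map to the preimage of a \emph{closed} subspace is again a quotient map (this only needs closedness, not openness), so $\pi|_E\colon E\to(M^+)^G$ is a quotient and your argument goes through. The same applies to the second statement, where closedness of $(M^+)^{av}$ is part of the hypotheses — you invoke it but then also lean on openness of $\pi$, which is unnecessary and unsupported. Dropping the openness claim and appealing to closedness alone makes both arguments correct and matches the paper's proof.
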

\begin{proof}
We just have to show that the ``identity'' map
\begin{equation*}
 (M^{+})^{G} \to (M^{G})^{+}
\end{equation*}
is continuous. 

The group completion $M^+$ is topologized as the quotient
\begin{equation*}
 M \times M \xrightarrow{q} M^{+}.
\end{equation*}
where $q(a,b) = a-b$.  The map $q:q^{-1}(M^{+})^{G} \to (M^{+})^{G}$ is again a quotient map since $(M^{+})^{G}$ is closed.

Consider the map $M\times M \xrightarrow{id\times \sigma\pi_{2}} M\times M\times M \xrightarrow{\Delta \times id} M^{\times 4} \xrightarrow{+} M\times M$ which sends $(a,b) \mapsto ( a, b, \sigma b, \sigma b) \mapsto (a+\sigma b, b + \sigma b)$. This is a continuous map. Its restriction to $q^{-1}(M^{+})^{G}$ is a continuous map $q^{-1}(M^{+})^{G} \to M^{G}\times M^{G}$ which induces the identity map on quotients
\begin{equation*}
( M^{+})^{G} \to (M^{G})^{+},
\end{equation*}
and therefore this is a continuous map.

The second statement for averaged cycles is proved in a similar fashion.
\end{proof}

\begin{lemma}\label{mgrp2}
 Let $M$ be a topological monoid with the cancellation property. Suppose that $M^{+}$ is a topological group and $2M^{+}$ is closed in $M^{+}$. Then 
\begin{equation*}
 \frac{M}{2M} \to \frac{M^{+}}{2M^{+}}
\end{equation*}
is an isomorphism of topological abelian groups.
\end{lemma}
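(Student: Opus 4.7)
The plan is to construct an explicit continuous inverse to the natural map $\phi\colon M/2M \to M^{+}/2M^{+}$. The decisive algebraic observation is that $M/2M$ is already an abelian group of exponent $2$: for any $[m] \in M/2M$, we have $[m] + [m] = [2m] = 0$, so every element equals its own inverse. This fact will drive both the well-definedness of the proposed inverse and the group-theoretic bookkeeping. Before this, I would also record the easy facts that $\phi$ is continuous (as the factorization of $M \to M^{+} \to M^{+}/2M^{+}$ through the quotient topology on $M/2M$) and algebraically a bijection: surjectivity because $[a-b] = [a+b]$ in $M^{+}/2M^{+}$ (difference is $2b$), and injectivity because if $m = 2(a-b)$ in $M^{+}$ then $m + 2b = 2a$ in $M$ by cancellation, so $[m] = 0$ in $M/2M$.

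Next, I would define the candidate inverse $\psi\colon M^{+}/2M^{+} \to M/2M$. Start with the manifestly continuous map $\tilde{\psi}\colon M \times M \to M/2M$ sending $(a,b) \mapsto [a+b]$. To show this factors along the quotient $q\colon M \times M \to M^{+}$, $(a,b) \mapsto a-b$, I would verify that whenever $a + b' = a' + b$ in $M$ (the defining relation for fibers of $q$, using that cancellation makes $M \hookrightarrow M^{+}$ injective), one has $[a+b] = [a'+b']$ in $M/2M$. The computation $(a+b) + (a'+b') = 2(a+b') \in 2M$ shows $[a+b] + [a'+b'] = 0$, and the exponent-$2$ observation then forces $[a+b] = [a'+b']$. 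By the universal property of the quotient topology on $M^{+}$ we obtain a continuous map $M^{+} \to M/2M$, which further descends to $\psi$ because $2(a-b) \mapsto [2a+2b] = 0$.

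Verifying that $\phi$ and $\psi$ are mutually inverse is then immediate: $\psi\phi[m] = [m]$ directly, while $\phi\psi[a-b] = [a+b] = [a-b]$ in $M^{+}/2M^{+}$ since their difference $2b$ lies in $2M^{+}$. Together with continuity of both maps, this gives the desired homeomorphism of topological abelian groups. The closedness hypothesis on $2M^{+}$ enters to ensure $M^{+}/2M^{+}$ is a Hausdorff topological group, and $M/2M$ inherits this property through the homeomorphism. The main obstacle is the well-definedness of $\tilde{\psi}$ on fibers of $q$; the exponent-$2$ structure on $M/2M$ is essential to pass from the additive identity $[a+b] + [a'+b'] = 0$ to the equality $[a+b] = [a'+b']$ that the factorization demands. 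Everything else reduces to straightforward diagram chasing and quotient-topology formalities.
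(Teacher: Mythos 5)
Your proof is correct and follows essentially the same approach as the paper's: both define the inverse map by sending $a-b$ (the paper's $[m,n]$) to $[a+b]$ in $M/2M$, obtain continuity by factoring through the quotient $M\times M \to M^{+}$, and use the exponent-$2$ structure of $M/2M$ to make this well defined. You spell out the well-definedness computation a bit more explicitly than the paper, but the underlying argument is identical.
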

\begin{proof}
For $m\in M$ write $[m]$ for its image in $M/2M$. This quotient monoid is a group since $[m]+[m] = 0$.

 For $(m,n)\in M^{+}$ write $[m,n]$ for its image in $M^{+}/2M^{+}$. The map $M/2M \to M^{+}/2M^{+}$ which sends $[m]$ to $[m,0]$ is continuous because $M\to M/2M$ is a quotient. It is an injection because if $[m,0] = [0,0]$ then there is $(2n, 2n')$ such that $m + 2n' = 2n$ which says that $[m] = 0$. It is surjective since $[m,n] = [m+n,n+n] = [m+n,0]$. The inverse $M^{+}/2M^{+} \to M/2M$ is continuous since it is the map $[m,n] \mapsto [m+n, 0]=[m+n, n+n]$.
\end{proof}

\begin{proposition}\label{qimav}
 Let $X$ be a normal quasi-projective real variety. 
\begin{enumerate}
\item The continuous homomorphism $N:\mcal{Z}^{q}/2(X_{\C})\to \mcal{Z}^{q}/2(X_{\C})^{av}$ induces an isomorphism of topological groups 
\begin{equation*}
 \frac{\mcal{Z}^{q}/2(X_{\C})}{\mcal{Z}^{q}/2(X_{\C})^{G}} \to \mcal{Z}^{q}/2(X_{\C})^{av} .
\end{equation*}
\item The continuous homomorphism $\mcal{Z}^{q}(X_{\C})^{av} \to \mcal{Z}^{q}/2(X_{\C})^{av}$ induces an isomorphism of topological groups
\begin{equation*}
 \frac{\mcal{Z}^{q}(X_{\C})^{av}}{2\mcal{Z}^{q}(X)^{G}} \to \mcal{Z}^{q}/2(X_{\C})^{av}.
\end{equation*}

\end{enumerate}
\end{proposition}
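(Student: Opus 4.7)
The plan is to treat both statements in parallel: first verify the underlying algebraic group isomorphisms, and then promote them to topological isomorphisms by a filtration-and-compactness argument modeled on Lemma~\ref{ccom}.

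For the algebraic step, part~(1) follows from the elementary observation that for any $\Z/2$-module $M$ equipped with a $G$-action, $\ker(N = 1 + \sigma) = \{m : \sigma m = -m\} = M^G$ (since $-1 = 1$ mod~$2$); applied to $M = \mcal{Z}^q/2(X_\C)$ this gives a short exact sequence $0 \to M^G \to M \xrightarrow{N} M^{av} \to 0$, and hence the required bijection $M/M^G \to M^{av}$. For part~(2), the surjection $\mcal{Z}^q(X_\C)^{av} \twoheadrightarrow \mcal{Z}^q/2(X_\C)^{av}$ has kernel $\mcal{Z}^q(X_\C)^{av} \cap 2\mcal{Z}^q(X_\C)$, which I claim equals $2\mcal{Z}^q(X_\C)^G$. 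The containment $\supseteq$ is immediate; conversely, an identity $\alpha + \sigma\alpha = 2\gamma$ yields $2\sigma\gamma = \sigma(2\gamma) = 2\gamma$, whence $\gamma = \sigma\gamma$ follows from torsion-freeness of the underlying cocycle group.

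For the topological step, each statement asserts that a continuous bijection of topological groups is a homeomorphism. Following the strategy of Lemma~\ref{ccom}, the spaces $\mcal{Z}^q(X_\C)$ and $\mcal{Z}^q/2(X_\C)$, together with their $G$-fixed and averaged subspaces, carry the weak topology with respect to compatible compact filtrations $\{F_{\leq d}\}$ inherited from the Chow monoid filtration of $\Mor{\C}{X_\C}{\mcal{C}_{0}(\P_{\C}^{q})}$. On each filtration piece the continuous bijection restricts to a continuous surjection of compact Hausdorff spaces whose algebraic kernel is identified by the computations above, and therefore induces a homeomorphism on the filtration-piece quotient. The filtrations on the two sides of each statement are cofinal -- verified by tracking how the degree behaves under $N$, under the mod-$2$ quotient, and under the inclusions of $G$-fixed and averaged subspaces -- so passage to the colimit gives the desired topological isomorphism.

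The main technical task is the bookkeeping needed to compare and match the compact filtrations on these various subspace and quotient constructions, and to confirm the weak-topology claim in each case. This is entirely parallel to the topological portions of Lemma~\ref{ccom} and Proposition~\ref{Gcocomm}, and the only genuinely new algebraic input beyond those arguments is the torsion-freeness computation used for part~(2).
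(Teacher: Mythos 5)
Your algebraic computation of the kernels is correct and matches the paper: $\ker(N) = M^G$ on a $2$-torsion $G$-module for part~(1), and the torsion-freeness argument identifying $\mcal{Z}^{q}(X_{\C})^{av}\cap 2\mcal{Z}^{q}(X_{\C}) = 2\mcal{Z}^{q}(X_{\C})^{G}$ for part~(2). (Torsion-freeness of $\mcal{Z}^{q}(X_{\C})$ does hold since it is a quotient of a free abelian group by a subgroup generated by a subset of the free generators.)

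However, your topological step is where the argument has a real gap, and it is also where the paper's route diverges from yours. You invoke a ``filtration-and-compactness argument modeled on Lemma~\ref{ccom}'' and explicitly defer ``the weak-topology claim in each case.'' But that claim is precisely where the work lies: to know that a quotient group inherits the weak topology from images of compact filtration pieces one needs the quotient map to be closed, which in turn requires that addition on the relevant monoid be a closed map. That fact is not free; the paper establishes it by invoking Lemma~\ref{qmcl} together with the closedness of addition on $\mcal{C}_{0}(\P^{q}_{\C})(X_{\C})$ from the proof of Proposition~\ref{Gcocomm}. Moreover, the filtration compatibility you describe as ``bookkeeping'' has a concrete obstruction: a class in $\mcal{Z}^{q}_{\leq d}(X_{\C})^{av}$ is of the form $\alpha + \overline{\alpha}$ with $\alpha + \overline\alpha$ of bounded degree, but $\alpha$ itself need not be of bounded degree, so the filtration piece of the target does not obviously sit inside the image under $N$ of a filtration piece of the source.

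The paper takes a cleaner and genuinely different route. For~(1) it uses Lemma~\ref{mgrp2} to identify $\mcal{C}^{q}/2(X_{\C})\xrightarrow{\iso}\mcal{Z}^{q}/2(X_{\C})$ as topological groups; since addition is closed on the monoid it is closed on $\mcal{Z}^{q}/2(X_{\C})$, hence $N=1+\sigma$ is a closed surjection and thus a quotient map, which is exactly what makes the induced bijection a homeomorphism. For~(2) the paper does not revisit filtrations at all: it writes down a commutative square and observes that the putative inverse $g$ is continuous because the composite $\mcal{Z}^{q}(X_{\C})\to\mcal{Z}^{q}/2(X_{\C})\xrightarrow{N}\mcal{Z}^{q}/2(X_{\C})^{av}$ is already known (from part~(1)) to be a quotient map. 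If you want to pursue your filtration approach you would need to supply exactly the closedness facts the paper uses, at which point you have essentially re-derived the paper's argument; as written, the proposal does not complete the topological step.
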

\begin{proof}
 Addition is a closed map for the monoid $\mcal{C}_{0}(\P_{\C}^{q})(X_{\C})$ (see the proof of Proposition \ref{Gcocomm}) and therefore we conclude by Lemma \ref{qmcl} that addition is also closed for both the effective cocycles $\mcal{C}^{q}(X_{\C}) = 
\mcal{C}_{0}(\P_{\C}^{q})(X_{\C})/\mcal{C}_{0}(\P_{\C}^{q-1})(X_{\C})$ and  closed for $\mcal{C}^{q}/2(X_{\C})$ . 

By Lemma \ref{mgrp2} the map $\mcal{C}^{q}/2(X_{\C}) \xrightarrow{\iso} \mcal{Z}^{q}/2(X_{\C})$ is an isomorphism of topological groups and therefore addition is closed for $\mcal{Z}^{q}/2(X_{\C})$. In particular $N=1+\sigma:\mcal{Z}^{q}/2(X_{\C}) \to \mcal{Z}^{q}/2(X_{\C})$ is closed. Since $\mcal{Z}^{q}/2(X_{\C})$ is $2$-torsion $\ker(N) = \mcal{Z}^{q}/2(X_{\C})^{G}$. It now follows that 
\begin{equation*}
 \frac{\mcal{Z}^{q}/2(X_{\C})}{\mcal{Z}^{q}/2(X_{\C})^{G}} \xrightarrow{N} \mcal{Z}^{q}/2(X_{\C})^{av} 
\end{equation*}
is an isomorphism of topological groups.
For the second statement we need to conclude that the continuous bijection $\mcal{Z}^{q}(X_{\C})^{av}/2\mcal{Z}^{q}(X_{\C})^{G} \to \mcal{Z}^{q}/2(X_{\C})^{av}$ has a continuous inverse. Write $g$ for the inverse. Then
\begin{equation*}
 \xymatrix{
\mcal{Z}^{q}(X_{\C}) \ar[r]\ar[d] & \mcal{Z}^{q}(X_{\C})^{av} \ar[d] \\
\mcal{Z}^{q}/2(X)^{av} \ar[r]^{g} & \displaystyle{\frac{\mcal{Z}^{q}(X_{\C})^{av}}{2\mcal{Z}^{q}(X_{\C})^{G}}}
}
\end{equation*}
is commutative and each map except possibly $g$ is continuous. By the first part of the proposition the composition $\mcal{Z}^{q}(X_{\C}) \to \mcal{Z}^{q}/2(X_{\C})\xrightarrow{N} \mcal{Z}^{q}/2(X_{\C})^{av}$ is a quotient map and therefore $g$ is continuous. We conclude that the map is an isomorphism of topological groups.
\end{proof}

\section{Tractable Monoid Actions}\label{tract}
We recall Friedlander-Gabber's notion of tractability for a topological monoid.

\begin{defn}\cite{FG:cyc}
\begin{enumerate}
\item If $M$ is a Hausdorff topological monoid which acts on a topological space $T$, the action is said to be $\textit{tractable}$ if $T$ is the topological union of inclusions
\begin{equation*}
\varnothing = T_{-1} \subseteq T_0 \subseteq T_1 \subseteq \cdots
\end{equation*}
such that for each $n\geq 0$ the inclusion $T_{n-1}\subseteq T_{n}$ fits into a push-out of 
$M$-equivariant maps (with $R_{0}$ empty)
\begin{equation}\label{potr}
\begin{CD}
 R_n\times M @>>> S_n\times M \\
@VVV @VVV \\
T_{n-1} @>>> T_n,
\end{CD}
\end{equation}
where the upper horizontal map is induced by a cofibration $R_n\hookrightarrow S_n$ of 
Hausdorff spaces. 

The monoid $M$ is said to be \textit{tractable} if the diagonal action of $M$ on $M\times M$ 
is tractable. 

\item  If in addition $M$, $T$ are $G$-spaces say that the action of $M$ on $T$ is 
\textit{equivariantly tractable} if the action map is $G$-equivariant, the 
$R_{n}\hookrightarrow S_{n}$ are equivariant cofibrations between $G$-spaces, and the 
pushout squares (\ref{potr}) are $G$-equivariant. 
\end{enumerate}
\end{defn}

Fixed points of an equivariant cofibration is a cofibration and fixed points preserve pushouts along a closed inclusion. Therefore if $T$ is an equivariantly tractable $M$-space then it is in particular a tractable $M$-space and $T^{G}$ is a tractable $M^{G}$-space.
 
The most important feature of tractability is that the naive group completion $M\to M^{+}$ of a tractable monoid is a homotopy group completion \cite{FG:cyc}. 

It is useful to know that all of our topological groups have reasonable equivariant homotopy 
types. Below we observe that this is the case by using essentially the same reasoning as in 
\cite[Proposition 2.5]{FW:funcspc}. The essential topological property used here is that Hironaka's triangulation theorem implies that the complexification of a real variety may be 
\textit{equivariantly} triangulated (see for example \cite[Theorem 1.3]{KW:real}). 

\begin{proposition}\label{trcw}
Suppose that $T$ is a tractable $M$-space. If  $R_{n}$, $S_{n}$ have the homotopy type of a 
$CW$-complex then so does $T/M$ 
Suppose that $T$ is an equivariantly tractable $M$-space. If  $R_{n}$, and $S_{n}$ have the equivariant homotopy type of a $G$-$CW$ complex then $T/M$ has the equivariant homotopy type of a $G$-$CW$ complex.
\end{proposition}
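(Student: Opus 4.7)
The plan is to run a straightforward induction along the filtration $\{T_n\}$, using that the quotient functor $(-)/M : M\text{-}\mathbf{Top} \to \mathbf{Top}$ is a left adjoint (to the trivial action functor) and therefore preserves pushouts. Applying $(-)/M$ to the pushout square (\ref{potr}) defining the tractable filtration, and observing that $(R_n \times M)/M = R_n$ and $(S_n\times M)/M = S_n$, we obtain a pushout square
\begin{equation*}
\begin{CD}
R_n @>>> S_n \\
@VVV @VVV \\
T_{n-1}/M @>>> T_n/M.
\end{CD}
\end{equation*}
Since $R_n \hookrightarrow S_n$ is a cofibration, so is $T_{n-1}/M \to T_n/M$.

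The inductive step then reduces to the following standard fact: if $A \hookrightarrow X$ is a cofibration, $A \to Y$ is any map, and $A$, $X$, $Y$ all have the homotopy type of a $CW$-complex, then $X\cup_A Y$ has the homotopy type of a $CW$-complex (Milnor). Starting from $T_{-1}=\varnothing$ and applying this at each stage gives that every $T_n/M$ has $CW$ type. Since $T/M = \bigcup_n T_n/M$ is the topological union along a sequence of cofibrations of $CW$-type spaces, $T/M$ itself has the homotopy type of a $CW$-complex (telescoping / standard colimit argument along cofibrations).

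For the equivariant statement, exactly the same argument runs through verbatim once we replace each ingredient by its $G$-equivariant analogue: the functor $(-)/M$ still preserves pushouts, the square (\ref{potr}) is now a pushout of $G$-spaces along a $G$-cofibration $R_n\hookrightarrow S_n$, and $T_{n-1}/M\to T_n/M$ is again a $G$-cofibration. The corresponding equivariant gluing result, namely that a pushout of $G$-$CW$-type spaces along a $G$-cofibration is again of $G$-$CW$-type (a theorem of Illman / Matumoto), completes the inductive step, and passage to the colimit along $G$-cofibrations again preserves $G$-$CW$ type.

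The only non-formal ingredient is the $CW$-gluing lemma (and its equivariant counterpart); the preservation of pushouts by $(-)/M$ is purely categorical, and the tractability hypothesis is exactly what makes the induction go through since it exhibits each $T_n$ as obtained from $T_{n-1}$ by attaching a single free $M$-piece along a cofibration. I expect the main point to double-check is that the pushouts computed in $M\text{-}\mathbf{Top}$ agree with the pushouts formed in compactly generated Hausdorff spaces (so that the quotient really is the same space), which holds here because the inclusions $R_n\times M\hookrightarrow S_n\times M$ are closed cofibrations.
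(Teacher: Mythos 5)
Your proof is correct and follows essentially the same strategy as the paper's: apply $(-)/M$ to the defining pushout squares, induct using the gluing lemma for ($G$-)$CW$-type spaces along ($G$-)cofibrations, and pass to the colimit. The paper handles the two points you flag at the end by citing the proof of Lemma 1.3 of Friedlander's algebraic cocycles paper for the identification $T/M = \colim_n T_n/M$ and Waner's Theorem 4.9 for the colimit step, but your reasoning matches its substance.
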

\begin{proof}
We prove the second statement, the first follows in the same manner by discarding equivariant considerations.
 Modding out by the $M$-action in (\ref{potr}) we obtain equivariant pushout-squares
 \begin{equation*}
\begin{CD}
 R_n @>>> S_n \\
@VVV @VVV \\
T_{n-1}/M @>>> T_n/M.
\end{CD}
\end{equation*}
By induction and homotopy invariance of pushouts along $G$-cofibrations we see that $T_{n}/M$ has the homotopy type of a $G$-$CW$ complex and that $T_{n-1}/M\to T_{n}/M$ is a $G$-cofibration. By \cite[Theorem 4.9]{Waner:miln} we conclude that $\colim_{n} T_{n}/M$ has the homotopy type of a $G$-$CW$ complex. We are done since $T/M=\colim_{n} T_{n}/M$ by  the proof of \cite[Lemma 1.3]{F:algco}.
\end{proof}

\begin{proposition}\label{concof}
Let $\mcal{E}\subseteq X$ be a constructable subset of a real projective variety.
\begin{enumerate}
\item  The space $\mcal{E}_{\C}$ has the homotopy type of a $G$-$CW$ complex.
\item Suppose that $\mcal{F}\hookrightarrow \mcal{E}$ is a closed constructable embedding.
Then $\mcal{F}_{\C}\hookrightarrow \mcal{E}_{\C}$ is an equivariant cofibration.

\end{enumerate}
\end{proposition}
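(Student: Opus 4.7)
The plan is to reduce both statements to standard facts about $G$-$CW$ complexes via an equivariant version of Hironaka's triangulation theorem, which is the tool already invoked earlier in this appendix to guarantee reasonable equivariant homotopy types. Recall that \cite{KW:real} provides the following: given a real algebraic variety $X$ and any finite collection of constructable subsets of $X_{\C}$, there is a semi-algebraic $G$-equivariant triangulation of $X_{\C}$ compatible with the specified subsets, in the sense that each specified subset is a union of open simplices of the triangulation.

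First I would apply this theorem to the pair $(X_{\C}, \mcal{E}_{\C})$ for part (1) and to the triple $(X_{\C}, \mcal{E}_{\C}, \mcal{F}_{\C})$ for part (2). Fix such a triangulation once and for all. The simplicial complex underlying $X_{\C}$ is naturally a $G$-$CW$ complex, with the action permuting cells in each dimension.

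For part (1), if $\mcal{E}$ is closed in $X$ then $\mcal{E}_{\C}$ is a union of closed simplices, hence is literally a $G$-subcomplex of $X_{\C}$ and therefore a $G$-$CW$ complex. In general a constructable subset $\mcal{E}_{\C}$ is only a union of open simplices. One treats this by induction on the dimensions of simplices appearing in $\mcal{E}_{\C}$: filter $\mcal{E}_{\C}$ by $\mcal{E}_{\C,\leq n}$, the subspace consisting of simplices of dimension $\leq n$ contained in $\mcal{E}_{\C}$, and observe that $\mcal{E}_{\C,\leq n}$ is obtained from $\mcal{E}_{\C,\leq n-1}$ by equivariantly attaching cells of the form $D^{n}\times G/H$ along their boundary spheres (the faces of the simplex that lie in $\mcal{E}_{\C}$ map into $\mcal{E}_{\C,\leq n-1}$; the ones that do not are handled by deformation-retracting onto those that do, inside the closed star). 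This produces an equivariant homotopy equivalence between $\mcal{E}_{\C}$ and a $G$-$CW$ complex.

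For part (2), choose a triangulation compatible with both $\mcal{E}_{\C}$ and $\mcal{F}_{\C}$. Because $\mcal{F}$ is closed in $\mcal{E}$, the space $\mcal{F}_{\C}$ is a closed union of open simplices inside $\mcal{E}_{\C}$, so in the $G$-$CW$ structure built in step (1) it is an equivariant subcomplex. Inclusions of sub-$G$-$CW$ complexes are equivariant cofibrations, a fact already recorded in Section 2 of the paper. Explicitly, one produces the required equivariant retraction of $\mcal{E}_{\C}\times I$ onto $(\mcal{E}_{\C}\times\{0\})\cup (\mcal{F}_{\C}\times I)$ cell by cell, extending over each equivariant cell $D^{n}\times G/H$ using the standard non-equivariant retraction, which automatically commutes with the $G$-action because it is produced uniformly on each orbit.

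The main technical obstacle is the non-closed case of (1): one has to argue carefully that a union of open simplices in an equivariant triangulation has the homotopy type of a $G$-$CW$ complex even though it may not be a subcomplex itself. The equivariant deformation retraction of each open cell onto its intersection with $\mcal{E}_{\C,\leq n-1}$ has to be constructed so that it is compatible across the orbits, but this is automatic once one performs the retraction on a single representative of each $G$-orbit of simplices and extends equivariantly.
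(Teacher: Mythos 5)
The overall strategy is the same as the paper's: invoke the equivariant triangulation theorem of \cite{KW:real} so that $\mcal{E}_{\C}$ and $\mcal{F}_{\C}$ become unions of open simplices, then argue via deformation retraction. For part (1) your inductive-filtration argument is a reasonable substitute for the paper's appeal to \cite[Proposition 2.5]{FW:funcspc}, which constructs the equivariant deformation retraction of $\mcal{E}_{\C}$ onto a subsimplicial complex directly.

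However, there is a genuine gap in your proof of part (2). You first observe (correctly) that $\mcal{E}_{\C}$ only has the \emph{homotopy type} of a $G$-$CW$ complex — a union of open simplices need not itself carry a $G$-$CW$ structure compatible with the triangulation (a half-open edge inside a $2$-simplex is the basic example). Yet two sentences later you treat $\mcal{F}_{\C}\hookrightarrow\mcal{E}_{\C}$ as an inclusion of $G$-$CW$ subcomplexes and invoke the fact that subcomplex inclusions are cofibrations, and the "cell by cell" extension you describe presupposes an actual $G$-$CW$ filtration of $\mcal{E}_{\C}$. This is not available, and the homotopy equivalence you built in part (1) does not repair it, because whether a given map is a cofibration is not a property that can be transported across a homotopy equivalence of the ambient spaces. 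The paper avoids this by verifying the NDR characterization of cofibrations at the level of the spaces themselves: following \cite[Lemma C1]{FL:dual}, one constructs an open neighborhood $U$ of $\mcal{F}_{\C}$ inside $\mcal{E}_{\C}$ together with an equivariant deformation retraction of $U$ onto $\mcal{F}_{\C}$ (and a compatible function), which is what actually proves the inclusion is an equivariant cofibration. Your proof needs to be rerouted through such a direct construction rather than through the subcomplex criterion.
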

\begin{proof}
 Let $\overline{\mcal{E}_{\C}}$, $\overline{\mcal{F}_{\C}}$ be closures (in $X_{\C}$) of
 $\mcal{E}_{\C}$ and $\mcal{F}_{\C}$.  There is an equivariant triangulation of $\overline{E}_{\C}$ so that 
$\overline{\mcal{F}}_{\C}$ and $\overline{\mcal{E}}_{\C}\backslash \mcal{E}_{\C}$ are 
subcomplexes \cite[Theorem 1.3]{KW:real}. Then $\mcal{E}_{\C}$ and $\mcal{F}_{\C}$ are unions of open simplices. The deformation retract of $\mcal{E}_{\C}$ onto a subsimplicial complex given in the proof of \cite[Proposition 2.5]{FW:funcspc} is an equivariant retract onto a $G$-simplicial complex (and similarly for $\mcal{F}_{\C}$) which gives the first statement. 
The construction of a deformation retract onto $\mcal{F}_{\C}$ of an open neighborhood $U$ of $\mcal{F}_{\C}$ in $\mcal{E}_{\C}$ given in \cite[Lemma C1]{FL:dual} works equivariantly which shows that $\mcal{F}_{\C}\hookrightarrow \mcal{E}_{\C}$ is a cofibration. 

\end{proof}

As shown in \cite[Proposition 1.3]{FG:cyc} the Chow monoids associated to complex varieties are tractable and in \cite[Proposition C.3]{FL:dual} these results are extended to certain constructable submonoids of Chow monoids. Their proofs work equivariantly to give the equivariant analogue of their result. A submonoid $N\subseteq M$ is said to be \textit{full} if whenever $m+m'\in N$ then both $m$, $m'\in N$. The condition below on $\mcal{E} \subseteq \mcal{C}_{k}(X)$ in the proposition is satisfied if $\mcal{E}$ is Zariski closed or if it is a full submonoid. 

\begin{proposition}\label{gtract}
Let $X$ be a projective real variety and $\mcal{E} \subseteq \mcal{C}_{k}(X)$ by a constructable submonoid such that $+:\mcal{E}\times\mcal{E} \to \mcal{E}$ is a Zariski closed mapping. Then
\begin{enumerate}
\item $\mcal{E}_{\C}$ is an equivariantly tractable monoid.
\item $\mcal{E}_{\C}^{+}$ has the homotopy type of a $G$-$CW$ complex.
\item If  $\mcal{F}\subseteq \mcal{E}$ is a closed constructable embedding then $\mcal{E}_{\C}$ is tractable as an $\mcal{F}_{\C}$-space and $\mcal{E}_{\C}/\mcal{F}_{\C}$ is an equivariantly tractable monoid.
\item Suppose that $\mcal{F}\hookrightarrow \mcal{E}$ is a closed constructable embedding.
Then $\mcal{F}^{+}\subseteq \mcal{E}^{+}$ is closed and the sequence
 \begin{equation*}
0\to  \mcal{F}_{\C}^{+}\to\mcal{E}_{\C}^{+}\to \left(\mcal{E}_{\C}/\mcal{F}_{\C}\right)^{+}\to 0
 \end{equation*}
 is an equivariant short exact sequence of groups of spaces of the homotopy type of a $G$-$CW$ complex. 

\end{enumerate}
\end{proposition}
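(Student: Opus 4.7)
The overarching strategy is to import the non-equivariant proofs of \cite[Proposition 1.3]{FG:cyc} and \cite[Proposition C.3]{FL:dual} and verify that every step carries a compatible $G$-structure. The guiding observation is that all filtrations in these proofs are indexed by degree, and since complex conjugation preserves the degree of a cycle, every such filtration is automatically $G$-stable. Combined with Proposition \ref{concof}, which supplies equivariant cofibrations and $G$-CW homotopy types for constructable subsets of complex projectivizations of real varieties, this upgrades the classical tractability arguments to the equivariant setting.

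For part (1), I would filter $\mcal{E}_\C$ by the compact constructable subspaces $\mcal{E}_{\C,\leq d}$ of cycles of degree at most $d$; this is a $G$-invariant filtration by closed embeddings. The diagonal action of $\mcal{E}_\C$ on $\mcal{E}_\C\times\mcal{E}_\C$ is then filtered by $T_n=\mcal{E}_{\C,\leq n}\times\mcal{E}_{\C,\leq n}$, with the defining pushout squares constructed exactly as in \cite[Proposition C.3]{FL:dual} out of the inclusions $\mcal{E}_{\C,\leq n-1}\hookrightarrow\mcal{E}_{\C,\leq n}$ and the addition map (which is closed by hypothesis, so its restriction to each filtration piece is proper). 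By Proposition \ref{concof} each such inclusion is a $G$-equivariant cofibration of $G$-CW type, and since the pushout squares involve only $G$-equivariant maps (addition is equivariant because $\overline{\alpha+\beta}=\overline{\alpha}+\overline{\beta}$) we obtain equivariant tractability.

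Part (2) is then immediate from Proposition \ref{trcw}: $\mcal{E}_\C^+=(\mcal{E}_\C\times\mcal{E}_\C)/\mcal{E}_\C$ is the orbit space of an equivariantly tractable action whose cofibration data $R_n\hookrightarrow S_n$ have $G$-CW homotopy type, so $\mcal{E}_\C^+$ itself has $G$-CW homotopy type. For part (3), the same degree filtration exhibits $\mcal{E}_\C$ as an equivariantly tractable $\mcal{F}_\C$-space (using the constructable embedding $\mcal{F}_\C\hookrightarrow\mcal{E}_\C$ from Proposition \ref{concof} to supply the equivariant cofibration data); modding out by the $\mcal{F}_\C$-action then yields equivariant tractability of the monoid $\mcal{E}_\C/\mcal{F}_\C$ by a direct translation of \cite[Proposition C.3(ii)]{FL:dual}.

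For part (4), closedness of $\mcal{F}_\C^+\subseteq\mcal{E}_\C^+$ follows from the properness of addition on each filtration piece together with the weak topology on the group completions, after which the naive cokernel computation identifies the quotient with $(\mcal{E}_\C/\mcal{F}_\C)^+$ as topological groups (this is the point argued in Lemma \ref{lemtop1}). The equivariant $G$-CW type of each term is already guaranteed by part (2) applied to $\mcal{F}$, $\mcal{E}$, and to $\mcal{E}/\mcal{F}$ via part (3). The main technical obstacle I anticipate is the last one: to get a genuine short exact sequence of topological groups (as opposed to a bijection of underlying groups), one must verify that the quotient topology on $\mcal{E}_\C^+/\mcal{F}_\C^+$ coincides with the topology on $(\mcal{E}_\C/\mcal{F}_\C)^+$, which requires the closed-map property of the filtered addition to propagate through group completion; this is precisely where the hypothesis that $+$ is Zariski closed on $\mcal{E}$ is indispensable.
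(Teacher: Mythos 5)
Your overall strategy is exactly the paper's: lift the filtration data of \cite[Proposition 1.3]{FG:cyc} and \cite[Proposition C.3]{FL:dual} to the $G$-equivariant setting by observing that conjugation preserves degree (so the filtrations are $G$-stable), invoke Proposition~\ref{concof} to upgrade the closed constructable embeddings $R_n\hookrightarrow S_n$ to $G$-cofibrations, use Proposition~\ref{trcw} for the $G$-CW type of the orbit space, and conclude part~(4) via the inductive cofibration comparison plus Lemma~\ref{lemtop1}. That is the right skeleton and the right collection of lemmas.

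However, there is a concrete slip in part~(1) that would invalidate the tractability filtration as you stated it. You propose to filter the diagonal $\mcal{E}_\C$-action on $\mcal{E}_\C\times\mcal{E}_\C$ by $T_n=\mcal{E}_{\C,\leq n}\times\mcal{E}_{\C,\leq n}$, but these subspaces are not stable under the diagonal translation by $\mcal{E}_\C$ (adding a positive-degree element to both factors leaves the bounded-degree product), so they cannot serve as the filtration pieces in the definition of a tractable action, and the required $M$-equivariant pushout squares would not exist with this choice. The correct filtration, which the paper takes from \cite[Proposition C.3]{FL:dual}, is the $\mcal{E}_\C$-saturation
\begin{equation*}
T_n=\im\Bigl(\bigl(\bigcup_{\nu(a,b)\le n}\mcal{E}(a)\times\mcal{E}(b)\bigr)\times\mcal{E}_\C\to\mcal{E}_\C\times\mcal{E}_\C\Bigr),
\end{equation*}
with $S_n=\mcal{E}(a_n)\times\mcal{E}(b_n)$ and $R_n$ the image of $\bigcup_{c\ge 0}\mcal{E}(a_n-c)\times\mcal{E}(b_n-c)\times\mcal{E}(c)$ in $S_n$; the Zariski-closedness of $+$ is used precisely to make $R_n\hookrightarrow S_n$ a closed constructable embedding so that Proposition~\ref{concof} applies. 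Since you defer to the reference for the actual pushout squares this is recoverable, but as written your $T_n$ would fail. Similarly, in part~(4), ``properness of addition plus weak topology'' is not quite the mechanism: the paper shows by induction on the filtration that $T'_n/\mcal{F}_\C\hookrightarrow T_n/\mcal{E}_\C$ is a cofibration (hence closed), and then passes to the colimit to get closedness of $\mcal{F}_\C^+$ in $\mcal{E}_\C^+$; you should replace your hand-wave with this cofibration comparison before invoking Lemma~\ref{lemtop1}.
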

\begin{proof}
In \cite[Proposition C.3]{FL:dual} the monoid $\mcal{E}_{\C}$ is shown to be tractable as follows. Write  $\mcal{E}(d)= \mcal{E}_{\C}\cap\mcal{C}_{k,d}(X_{\C})$ and let $\nu:\mathbb{N}^{2}\to\mathbb{N}$ be a bijection such that if $a\leq c$, $b\leq d$ then $\nu(a,b) \leq \nu(c,d)$. Define 
$$
S_{n} = \mcal{E}(a_{n})\times \mcal{E}(b_{n}), \;\;\textrm{where}\;\; \nu(a_{n},b_{n}) = n 
$$
 $$R_{n} = \im\left(\bigcup_{c\geq 0}\mcal{E}(a_{n}-c)\times \mcal{E}(b_{n}-c)\times \mcal{E}(c) \to \mcal{E}(a_{n})\times \mcal{E}(b_{n})\right) \subseteq S_{n}
$$
and 
$$
T_{n} = \im\left((\bigcup_{\nu(a,b)\leq n} \mcal{E}(a)\times \mcal{E}(b))\times \mcal{E}_{\C} \to\mcal{E}_{\C}\times\mcal{E}_{\C}\right).
$$
The spaces $R_{n}$, $S_{n}$, and $T_{n}$ are $G$-spaces, fit into the appropriate pushout 
squares, and $R_{n}\hookrightarrow S_{n}$ is a closed constructable embedding since 
addition is closed on $\mcal{E}_{\C}$ therefore by Proposition \ref{concof} the 
$R_{n}\hookrightarrow S_{n}$ are equivariant cofibrations. 
This shows that $\mcal{E}$ is equivariantly tractable. The third item follows from similar 
consideration of \cite[Proposition C.3]{FL:dual}. The second item follows by applying 
Proposition \ref{trcw}. 

For the last part write $R'_{n}$, $S'_n$, and $T'_{n}$ for the spaces above giving the 
tractability of $\mcal{F}_{\C}$. Then $R'_{n}\subseteq R_{n}$ and $S_{n}'\subseteq S_{n}$ are cofibrations. Considering the comparison of pushouts 
$$
T'_{n}/\mcal{F}_{\C}= T'_{n-1}/\mcal{F}_{\C}\bigcup_{R'_{n}}S'_{n} \to T_{n}/\mcal{E}_{\C}= T_{n-1}/\mcal{E}_{\C}\bigcup_{R_{n}}S_{n}
$$
we see by induction that $T'_{n}/\mcal{F}_{\C} \hookrightarrow T_{n}/\mcal{E}_{\C}$ is a cofibration and in particular is closed. Therefore $\mcal{F}_{\C}^{+}=\colim_{n}T'_{n}/\mcal{F}_{\C} \subseteq \colim_{n}T_{n}/\mcal{E}_{\C}= \mcal{E}_{\C}^{+}$ is a closed subspace \cite[Proposition A.5.5]{FP:cell}. Finally $\mcal{E}_{\C}/\mcal{F}_{\C})^{+}= \mcal{E}_{\C}^{+}/\mcal{F}_{\C}^{+}$ by Lemma \ref{lemtop1} which gives the displayed exact sequence.
\end{proof}

Spaces of algebraic cycles and algebraic cocycles on complex varieties are shown to have $CW$-structures or homotopy type of $CW$-spaces in \cite{LF:qproj, FW:funcspc} and in \cite{Teh:real} for real varieties.
\begin{corollary}\label{ehtype}
Let $U$ be a quasi-projective real variety. Then the spaces $\mcal{Z}_{k}(U_{\C})$, $\mcal{Z}/\ell_{k}(U_{\C})$, $\mcal{Z}^{q}(U_{\C})$, $\mcal{Z}^{q}/\ell(U_{\C})$ all have the homotopy type of a $G$-$CW$ complex. The spaces $\mcal{Z}_{k}(U_{\C})^{av}$, $\mcal{Z}/\ell_{k}(U_{\C})^{av}$, $\mcal{R}_{k}(U)$, $\mcal{Z}^{q}(U_{\C})^{av}$, $\mcal{Z}^{q}/\ell(U_{\C})^{av}$, and $\mcal{R}^{q}(U_{\C})$ all have the homotopy type of a $CW$-complex. 
\end{corollary}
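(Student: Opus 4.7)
The plan is to work outward from Proposition \ref{gtract} by a sequence of quotient and fixed-point constructions.

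\textbf{First family ($G$-$CW$ type).} For projective $Y$, Proposition \ref{gtract}(2) applied to $\mcal{E} = \mcal{C}_k(Y_\C)$ directly gives that $\mcal{Z}_k(Y_\C) = \mcal{C}_k(Y_\C)^+$ has the homotopy type of a $G$-$CW$ complex. For quasi-projective $U$ with projectivization $U \subseteq \overline{U}$ and closed complement $U_\infty$, Proposition \ref{gtract}(4) yields an equivariant short exact sequence $0 \to \mcal{Z}_k((U_\infty)_\C) \to \mcal{Z}_k(\overline{U}_\C) \to \mcal{Z}_k(U_\C) \to 0$ with the left inclusion an equivariant cofibration of $G$-$CW$ complexes, so the cofiber has $G$-$CW$ type. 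For $\mcal{Z}^q(U_\C)$ with $U$ normal, apply Proposition \ref{gtract}(4) to the closed constructable embedding $\mcal{E}_0(\P^{q-1}_\C)(\overline{U}_\C) \subseteq \mcal{E}_0(\P^q_\C)(\overline{U}_\C)$ (together with the quotient by the cycles supported on $U_\infty$), which produces $\mcal{Z}^q(U_\C)$ as a quotient of $G$-$CW$ complexes along an equivariant cofibration.

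\textbf{Mod-$\ell$ quotients.} For $\mcal{Z}/\ell_k(U_\C)$, multiplication by $\ell$ is a continuous equivariant homomorphism. Closedness of addition (Remark \ref{addqp}) together with the filtration by compact subspaces inherited from the Chow variety structure shows that $\ell\mcal{Z}_k(U_\C) \subseteq \mcal{Z}_k(U_\C)$ is a closed subgroup whose intersection with each compact piece is a sub-$G$-$CW$-complex by Proposition \ref{concof}. Passing to the colimit gives an equivariant cofibration and therefore $G$-$CW$ type on $\mcal{Z}/\ell_k(U_\C)$. The argument for $\mcal{Z}^q/\ell(U_\C)$ is identical.

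\textbf{Second family ($CW$ type).} On each of these spaces the $G$-action is trivial, since $\sigma(\alpha + \sigma\alpha) = \sigma\alpha + \alpha$, so it suffices to produce a non-equivariant $CW$ homotopy type. The fixed points $\mcal{Z}_k(U_\C)^G$ of a $G$-$CW$ complex form a $CW$ complex, and similarly for the mod-$\ell$ versions. The norm map $N = 1+\sigma$ is a continuous equivariant homomorphism with image $\mcal{Z}_k(U_\C)^{av}$, closed by Remark \ref{addqp}, and factors through a continuous bijection $\mcal{Z}_k(U_\C)/\ker(N) \to \mcal{Z}_k(U_\C)^{av}$ which one verifies to be a homeomorphism on each compact piece of the filtration, hence a homeomorphism. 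Since the left side is a quotient of $CW$ groups by a closed subcomplex, it has $CW$ type, and therefore so does $\mcal{Z}_k(U_\C)^{av}$. The reduced space $\mcal{R}_k(U) = \mcal{Z}_k(U_\C)^G/\mcal{Z}_k(U_\C)^{av}$ is then a quotient of a $CW$ group by a closed subgroup that is itself a subcomplex, so has $CW$ type. The cocycle variants $\mcal{Z}^q(U_\C)^{av}$, $\mcal{Z}^q/\ell(U_\C)^{av}$, and $\mcal{R}^q(U_\C)$ are handled identically.

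\textbf{Main obstacle.} The recurring technical point is that at each stage one must verify that a certain subgroup inclusion is a cofibration compatible with the compact filtration. The only general tool available is Proposition \ref{concof}, which handles closed constructable inclusions; the mod-$\ell$ and averaged cases require the extra observation that multiplication by $\ell$ and the norm map $N$, although not arising from closed constructable inclusions, do respect the compact filtration well enough that the standard colimit construction used in the proof of Proposition \ref{gtract} still applies.
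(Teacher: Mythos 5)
Your overall strategy — feed things into Proposition \ref{gtract} — is the right one, but you consistently apply the machinery at the wrong level, and this produces a gap that is not cosmetic. Proposition \ref{gtract} (and Proposition \ref{concof}, which underlies it) deals with \emph{constructable submonoids of Chow monoids of a projective variety}. Your proof instead manipulates the already group-completed spaces $\mcal{Z}$: you claim $\ell\mcal{Z}_{k}(U_{\C})\subseteq\mcal{Z}_{k}(U_{\C})$ ``is a closed subgroup whose intersection with each compact piece is a sub-$G$-$CW$-complex by Proposition \ref{concof},'' but $\ell\mcal{Z}_{k}(U_{\C})$ is not a constructable subset of any variety, so Proposition \ref{concof} says nothing about it. Similarly, in the averaged case you want to know that $\mcal{Z}_{k}(U_{\C})/\ker(N)$ has $CW$ type by treating $\ker(N)$ as ``a closed subcomplex,'' but you never exhibit a compatible $CW$-structure in which $\ker(N)$ sits as a subcomplex, and there is no tool in the paper that hands you this for free at the group-completion level. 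You flag this yourself in your ``main obstacle'' paragraph, but that paragraph is an acknowledgement of the gap rather than a resolution of it.

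The paper's proof avoids the problem entirely by staying at the monoid level throughout. Each target space is written as the naive group completion $(\mcal{E}/\mcal{F})^{+}$ where $\mcal{F}\subseteq\mcal{E}$ are (closed) constructable submonoids of an honest Chow monoid of a projective variety: for example $\mcal{Z}/\ell_{k}(U_{\C})\iso(\mcal{C}_{k}(U_{\C})/\ell\mcal{C}_{k}(U_{\C}))^{+}$ (using that $\ell\mcal{C}_{k}(U_{\C})$ is a closed constructable submonoid, not $\ell\mcal{Z}_{k}$), $\mcal{R}_{k}(U)\iso(\mcal{C}_{k}(U_{\C})^{G}/\mcal{C}_{k}(U_{\C})^{av})^{+}$, and for cocycles one introduces the auxiliary $\mcal{F}_{0}(\P^{q-1}_{\C})(U_{\C})=\mcal{E}_{0}(\P^{q-1}_{\C})(U_{\C})+\mcal{C}_{d}(\P^{q}_{\C}\times\overline{U}_{\C})$ so that $\mcal{Z}^{q}(U_{\C})\iso(\mcal{E}_{0}(\P^{q}_{\C})(U_{\C})/\mcal{F}_{0}(\P^{q-1}_{\C})(U_{\C}))^{+}$. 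Once everything is in this form, Propositions \ref{gtract}(2)--(4) apply verbatim; the closedness of the relevant submonoid inclusions is checked once and for all on effective cycles, where constructability and properness of addition are available. To repair your proof you would need to push the mod-$\ell$ and norm constructions down to the monoids $\mcal{C}$ and $\mcal{E}$ before group-completing, which is exactly what the paper does.
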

\begin{proof}
 That $\mcal{Z}_{k}(U_{\C})$ has the homotopy type of a $G$-$CW$ complex follows immediately from the previous proposition. 
Let $U\subset \overline{U}$ be a projectivization. Write $\mcal{F}_{0}(\P_{\C}^{q-1})(U_{\C}) = \mcal{E}_{0}(\P_{\C}^{q-1})(U_{\C}) + \mcal{C}_{d}(\P^{q}_{\C}\times \overline{U}_{\C})$. 
This is a closed constructable submonoid $\mcal{F}_{0}(\P_{\C}^{q-1})(U_{\C}) \subseteq \mcal{E}_{0}(\P_{\C}^{q})(U_{\C})$ and 
$(\mcal{E}_{0}(\P^{q}_{\C})(U_{\C})/\mcal{F}_{0}(\P^{q-1}_{\C})(U_{\C}))^{+}\iso \mcal{Z}^{q}(U_{\C})$ has the equivariant homotopy type of a $G$-$CW$ complex. 
Since $(\ell\mcal{C}_{k}(U_{\C}))^{+}\subseteq \mcal{C}_{k}(U_{\C})^{+}$ is closed we 
easily see that $(\ell\mcal{C}_{k}(U_{\C}))^{+} = \ell(\mcal{C}_{k}(U_{\C}))^{+}\subseteq \mcal{C}_{k}(U_{\C})^{+}$. Therefore $\mcal{Z}/\ell_{k}(U_{\C}) \iso (\mcal{C}_{k}(U_{\C})/\ell\mcal{C}_{k}(U_{\C}))^{+}$ 
has the equivariant homotopy type of a $G$-$CW$ complex. 
Similarly one sees that $\mcal{Z}^{q}/\ell(U_{\C}) \iso (\mcal{E}_{0}(\P_{\C}^{q})(U_{\C})/\mcal{F}_{0}(\P^{q-1}_{\C})(U_{\C})+\ell\mcal{E}_{0}(\P_{\C}^{q})(U_{\C}))^{+}$ has the equivariant homotopy type of a $G$-$CW$ complex.

The monoid inclusions $\ell\mcal{C}_{k}(U_{\C})\cap \mcal{C}_{k}(U_{\C})^{av}\subseteq \mcal{C}_{k}(U_{\C})^{av}\subseteq \mcal{C}_{k}(U_{\C})^{G}\subseteq \mcal{C}_{k}(U_{\C})$ are all closed and so $\mcal{Z}_{k}(U_{\C})^{av}$, $\mcal{R}_{k}(U) \iso (\mcal{C}_{k}(U_{\C})^{G}/\mcal{C}_{k}(U_{\C})^{av})^{+}$, and $\mcal{Z}/\ell_{k}(U_{\C})^{av}\iso(\mcal{C}_{k}(U_{\C})^{av}/\ell\mcal{C}_{k}(U_{\C})\cap\mcal{C}_{k}(U_{\C})^{av})^{+}$ all have the homotopy type of a $CW$-complex. 

Similarly $\mcal{Z}^{q}(U_{\C})^{av} \iso (\mcal{E}_{0}(\P^{q}_{\C})(U_{\C})^{av}/\mcal{F}_{0}(\P^{q-1}_{\C})(U_{\C})\cap \mcal{E}_{0}(\P^{q}_{\C})(U_{\C}))^{+}$, $\mcal{Z}^{q}/\ell(U_{\C})^{av} \iso (\mcal{E}_{0}(\P^{q}_{\C})(U_{\C})^{av}/(\mcal{F}_{0}(\P^{q-1}_{\C})(U_{\C})+\ell\mcal{E}_{0}(\P^{q}_{\C})(U_{\C}))\cap \mcal{E}_{0}(\P^{q}_{\C})(U_{\C})^{av})^{+}$, and $\mcal{R}^{q}(U_{\C}) \iso (\mcal{E}_{0}(\P^{q}_{\C})(U_{\C})^{G}/\mcal{F}_{0}(\P^{q-1}_{\C})(U_{\C})^{G}+\mcal{E}_{0}(\P^{q}_{\C})(U_{\C})^{av})^{+}$ all have the homotopy type of a $CW$ complex.
\end{proof}

\bibliographystyle{amsalpha} 
\bibliography{remreal}
\end{document}